\documentclass[a4paper]{amsart}
\usepackage{amssymb,lmodern,amsmath,amsthm,mathrsfs,enumitem,color,hyperref,url,graphicx}
\usepackage[T1]{fontenc}      
\usepackage[utf8]{inputenc}   
\usepackage{lmodern}          

\usepackage{chngcntr}
\counterwithin{section}{part} 
 
\newtheorem{thm}{Theorem}
\newtheorem{ques}[thm]{Question} 
\newtheorem{lemma}[thm]{Lemma} 
\newtheorem{prop}[thm]{Proposition} 
\newtheorem{cor}[thm]{Corollary}

\newtheorem{claim}{Claim}

\theoremstyle{definition} 
 
\newtheorem{remark}[thm]{Remark} 

\newtheorem{definition}[thm]{Definition} 

\newtheorem{notation}[thm]{Notation}
\newtheorem*{definition*}{Definition}

\makeatletter 
\renewcommand{\thesection}{%
  \ifnum\value{part}>0
    \thepart.%
  \fi
  \arabic{section}%
}
\makeatother

\makeatletter 
\renewcommand{\thethm}{%
\ifnum\value{part}>0
    \thepart.\arabic{thm}%
  \else
    \arabic{thm}%
  \fi
}
\makeatother
\renewcommand{\thepart}{\Roman{part}} 

\makeatletter
\@addtoreset{thm}{part}
\makeatother

\newtheorem{thmx}{Theorem}

\def\Nat{\mathbb N} 
\def\Rea{\mathbb R}

\def\B{\mathcal B} 
\def\C{\mathcal C}

\def\F{\mathcal F} 
\def\N{\mathcal N}

\def\PP{\mathcal P} 
\def\pot{\textnormal{Pow}}    

\def\P{\mathcal P}

\def\diam{\operatorname{diam}} 
\def\co{\operatorname{conv}} 
\def\aco{\operatorname{aco}} 
\def\height{\operatorname{ht}} 
\def\ep{\varepsilon}

\def\K{\mathcal K} 
\def\Nat{\mathbb N} 
 
\def\er{\mathbb R} 
\def\Rat{\mathbb Q}

\def \sgn{\operatorname{sgn}}

\def\Clop{\operatorname{Clop}} 
\def \At {\operatorname{At}}

\def \ext {\operatorname{ext}}

\def\span{\operatorname{span}} 
\def\closedSpan{\overline{\span}}

\def\iff{\Longleftrightarrow}

\def \1{\boldsymbol 1}

\newcommand{\norm}[1]{\left\|#1\right\|}

\newcommand{\abs}[1]{\left|#1\right|}
\newcommand{\setsep}{;\,}

\newcommand\isomtrclass[2][]{\langle #2\rangle_{\equiv}^{#1}}

\newcommand\homeoclass[1]{\langle #1\rangle_{\sim}}

\makeatletter
\newcommand{\sectionnotoc}[1]{%
  \begingroup
  \let\addcontentsline\@gobblethree
  \section*{#1}%
  \endgroup
}
\makeatother

\title[Borel Complexity of $C(K)$ Spaces with $K$ Countable]{Borel Complexity of Isometry Classes of $C(K)$ Spaces with Countable Compacta}
\author[{M. C\' uth}]{Marek C\'uth}
\author[{M. Dole\v zal}]{Martin Dole\v zal}
\author[O. Kurka]{Ond\v rej Kurka}
\author[J.~Rondo\v s]{Jakub Rondo\v s}

\email{cuth@karlin.mff.cuni.cz}
\email{dolezal@math.cas.cz}
\email{kurka.ondrej@seznam.cz}
\email{rondojak@fel.cvut.cz}

\address[M.~C\' uth]{Charles University, Faculty of Mathematics and Physics, Department of Mathematical Analysis, Sokolovsk\'a 83, 186 75 Prague 8, Czech Republic}
\address[M.~Dole\v zal, O.~Kurka]{Institute of Mathematics of the Czech Academy of Sciences, \v Zitn\'a 25, 115 67 Prague 1, Czech Republic}
\address[J.~Rondo\v s]{Department of Mathematics, Faculty of Electrical Engineering, Czech Technical University in Prague, Technick\'a 2,
166 27 Prague 6, Czech Republic.}

\subjclass[2010] {54H05,  46B04, 28A05 (primary), 46B20, 46B25, 54E15 (secondary)}

\keywords{isometry classes of Banach spaces, Borel complexity, $\C(K)$ space, $L_1$-predual, homeomorphism classes of compact metric spaces, Szlenk derivative} 

\thanks{M. C\'uth was supported by the Czech Science Foundation, project no. GACR 24-10705S. Research of Martin Dole\v{z}al and Ond\v{r}ej Kurka was supported by the Academy of Sciences of the Czech Republic (RVO 67985840). 
}

\begin{document}

\begin{abstract}
For every countable compact space $K$, we determine the exact Borel complexity of the isometry class of the Banach space $\C(K)$. As a byproduct, we also determine the precise Borel complexity of the homeomorphism class of a fixed countable compact space $K$, improving earlier results of Cenzer and Mauldin. The above results provide concrete and natural examples of sets with arbitrarily high, still exactly determined, Borel complexity.

Moreover, we find a new characterization of those real $L_1$-preduals that are isometric to $\C(K)$ for some zero-dimensional compact space $K$ and we determine the precise Borel complexity of $\C(2^\Nat)$.
\end{abstract}

\maketitle

\setcounter{tocdepth}{1}
\tableofcontents

\sectionnotoc{Introduction} 

The classification of various classes of separable Banach spaces is a natural problem in functional analysis. A systematic framework for studying and comparing such classifications is provided by descriptive set theory, which enables one to assess their relative complexity via the notion of Borel complexity. Within this framework, one can meaningfully distinguish between ``simpler'' and ``more complex'' classification problems. The main aim of this work is, given an infinite countable compact space $K$, to determine the exact complexity of the isometry class of the real Banach space $\C(K)$.

Before presenting our results, we note that apart from the Banach space theory, this work is closely related to the research program in invariant descriptive set theory (IDST), which provides a unified framework for comparing classification problems across mathematics and revealing connections between them. We refer e.g. to the monograph \cite{Gao09}, where the appropriate context may be found. Both classical and invariant DST
in Banach space theory have been intensively studied in recent years, leading to a range of interesting results and applications. These include (a) the study of universality problems, initiated by the work of Bourgain \cite{Bourgain1980} and subsequently developed through numerous contributions, including, for instance, the recent work \cite{AntBeanBraga2021}; (b) the analysis of the complexity of various classification problems via Borel reducibility, an approach rooted in Bossard’s coding of separable Banach spaces \cite{Bossard2002} and further advanced, for example, in the recent work \cite{CRF23}; and (c) the analysis of the complexity of various classes of Banach spaces, for which we refer, e.g., to the survey \cite{G17} as well as to more recent contributions such as \cite{GodSR, K19}.

A standard approach -- introduced by Bossard in~\cite{Bossard1993,Bossard2002} -- is to code separable Banach spaces by elements of the standard Borel space \( SB(X) \) of closed linear subspaces of \( X \),
where \( X \) is a universal separable Banach space (for instance \( X = \C(2^\mathbb{N}) \)). In this work, we use the codings $\P_\infty$ and $\B$ introduced in \cite{CDDK1}, which appears more natural in our context; see Remark~\ref{rem:admissible} for a detailed discussion of this choice. Let us recall the coding $\B$, for its variant $\P_\infty$ we refer to the introduction to Part~\ref{part2}, where more details may be found.\\

\begin{definition*}
    Let $V$ be the countable $\Rat$-linear vector
space of elements of $c_{00}$ having rational entries, and define $\B$ to be the set of
all functions $\mu\in [0,\infty)^V$ which can be extended (uniquely) to a norm on $c_{00}$. For $\mu\in\B$ we denote by $X_\mu$ the completion of $(V,\mu)$.
\end{definition*}
It is easy to check that then for any infinite-dimensional separable Banach space $X$ there exists $\mu\in\B$ such that $X$ is linearly isometric to $X_\mu$ and that $\B\subseteq [0,\infty)^V$ is a $G_\delta$ set and therefore $\B$ is Polish space when endowed with the product topology inherited from $[0,\infty)^V$. We refer to the introduction to Part~\ref{part2} for more details including further references.

Given an infinite-dimensional separable Banach space $X$ we denote \[\isomtrclass{X}:=\{\mu\in\B\setsep X\text{ is linearly isometric to }X_\mu\}\]
and we say that the set $\isomtrclass{X}$ is the \emph{isometry class of $X$}. In \cite{CDDK2} the authors proved that $\ell_2$ is the only space with closed isometry class, that $\isomtrclass{L_p([0,1])}$  is $G_\delta$-complete (hence, $G_\delta$ and not $F_\sigma$) whenever $p\in[1,\infty)\setminus\{2\}$, while the classes of $\ell_p$ and $c_0$ are $F_{\sigma\delta}$-complete (hence, $F_{\sigma\delta}$ and not $G_{\delta\sigma}$). Other closely related papers dealing with similar problems using the coding $\B$ are e.g. \cite{CDR26} (characterizing Banach spaces with $G_\delta$ isometry class), \cite{complexityDaugavet} (devoted to the classification of the Daugavet and several related diameter two properties), or \cite{S26} (devoted to the study of complexities in the context of Lipschitz-free spaces).

In order to shorten the notation, in what follows given an ordinal $\alpha$ we write $\C(\alpha)$ instead of $\C([0,\alpha])$. The Cantor set is denoted as $2^\Nat$. Following the classical monograph \cite{Kechrisbook}, $\boldsymbol{\Pi}^0_\alpha$ are multiplicative Borel classes and by $D_2(\boldsymbol{\Gamma})$ we denote the class of differences of two sets from \( \boldsymbol{\Gamma} \). We note that the statement that a set is $\boldsymbol{\Gamma}$-complete means that it belongs to the class 
$\boldsymbol{\Gamma}$, and that this bound is optimal, we refer to Notation~\ref{not:dstBasics} for more details. Our main results are the following.

\begin{thmx}\label{thm:Intro1}Let $\beta$ be either $0$ or a countable limit ordinal and let $n\in\Nat\cup\{0\}$ be such that $\beta+n>0$.
\begin{itemize}
    \item The isometry class of \( \C(\omega^{\beta+n}) \) is a \( \boldsymbol{\Pi}^0_{\beta+2n+1} \)-complete set.
    \item The isometry class of \( \C(\omega^{\beta + n}\cdot k) \) is a \( D_2(\boldsymbol{\Pi}^0_{\beta+2n+1}) \)-complete set for every $k\in\Nat$ with $k\geq 2$.
    \item The isometry class of \( \C(2^\Nat) \) is a \( \boldsymbol{\Pi}^0_3 \)-complete set.
\end{itemize}
\end{thmx}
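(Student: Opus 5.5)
The plan splits each item into an upper bound (membership in the class) and a lower bound (hardness), and treats the scattered case through the Cantor--Bendixson/Szlenk structure of the dual ball. By Bessaga--Pe\l czy\'nski and the Mazurkiewicz--Sierpi\'nski classification, every countable compact $K$ is homeomorphic to $[0,\omega^\alpha\cdot k]$. By Banach--Stone, $X_\mu\equiv \C(\omega^{\beta+n}\cdot k)$ if and only if $X_\mu$ is isometric to some $\C(L)$ with $L$ countable of Cantor--Bendixson rank exactly $\beta+n+1$ and with exactly $k$ points in the top derived set.

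\emph{Upper bounds.} First I show that the set of $\mu\in\B$ with $X_\mu\equiv \C(L)$ for some zero-dimensional $L$ is Borel of low class. I would use an $L_1$-predual characterization via the extreme points $\pm\delta_t$ of $B_{X_\mu^*}$ and a lattice of "characteristic functions" ($\pm1$-valued norm-one elements); this should be $\boldsymbol{\Pi}^0_3$ or better in $\mu$. Next I code the derivatives $L^{(\gamma)}$ inside $B_{X_\mu^*}$ through a Szlenk-type derivation on the weak$^*$-compact set $\overline{\ext}$. I would prove by induction that the condition ``$L^{(\gamma)}\neq\emptyset$'' is $\boldsymbol{\Sigma}^0_{\gamma'}$ and ``$L^{(\gamma)}=\emptyset$'' is $\boldsymbol{\Pi}^0_{\gamma'}$, where $\gamma'$ grows like $2$ per successor step. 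This matches $\beta+2n+1$: each successor derivative costs two quantifier alternations (an $\exists$ over rational test elements of $V$ and a $\forall$ over $\ep$-approximations), while limits take countable intersections. Then $\langle \C(\omega^{\beta+n})\rangle$ is $\{\,L^{(\beta+n)}\neq\emptyset\,\}\cap\{\,L^{(\beta+n+1)}=\emptyset\,\}$, modulo the statement ``$|L^{(\beta+n)}|\le 1$''. I expect the nonemptiness part to be absorbed into the $\boldsymbol{\Pi}^0_{\beta+2n+1}$ condition, using that a single top point can be expressed as a universal statement. For $k\ge2$, ``$|L^{(\beta+n)}|\ge k$'' is the $\boldsymbol{\Sigma}$ part and ``$\le k$'' together with rank bounds is the $\boldsymbol{\Pi}$ part, which gives $D_2(\boldsymbol{\Pi}^0_{\beta+2n+1})$. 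For $\C(2^\Nat)$, I would use that, among $\C(L)$ with $L$ zero-dimensional, $L$ is perfect if and only if no $\delta_t$ is isolated. This is a $\forall\exists\forall$ condition on rational data, hence $\boldsymbol{\Pi}^0_3$.

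\emph{Lower bounds.} I would construct continuous maps $\Phi$ from a complete set into $\B$, in the form $x\mapsto \mu_{K_x}$. Here $x\mapsto K_x$ is a continuous choice of countable compact subsets (for example, via trees on $\Nat$ or closed subsets of $\omega^\omega+1$), arranged so that $\C(K_x)$'s norm on $V$ depends continuously on $x$. The key homeomorphism-class reduction is to prove by transfinite induction that there are continuous maps from $\boldsymbol{\Pi}^0_{\beta+2n+1}$-complete sets $A$ with $x\in A \iff K_x\cong[0,\omega^{\beta+n}]$, and that for $x\notin A$ we have $K_x\cong[0,\omega^{\beta+n}\cdot 2]$ or rank smaller. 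This is done by gluing: the successor step replaces points by copies of $[0,\omega]$ controlled by a $\boldsymbol{\Sigma}^0_2$-relative condition, and limits use disjoint sums converging to a point. The $D_2$ hardness combines the two such reductions, one forcing at least $k$ and one forcing at most $k$ top points. The $\C(2^\Nat)$ hardness reduces $P_3=\{x\in 2^{\Nat\times\Nat}: \forall m\ \text{finitely many } 1\text{'s in row }m\}$ by splitting clopen pieces unless isolated points are forced.

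The main obstacle I expect is the sharp upper bound at successor steps: showing that one Cantor--Bendixson derivative costs exactly two levels and not three, uniformly across limits. This needs a clever description of $L^{(\gamma)}$ inside $X_\mu^*$ using only rational elements of $V$. I would first try expressing ``$t\in L^{(\gamma+1)}$'' through pairs of disjointly supported norm-one functions, $\|f+g\|=\max(\|f\|,\|g\|)$, which are closed conditions in $\mu$.
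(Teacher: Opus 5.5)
Your Szlenk-derivative accounting for the scattered upper bound and your $D_2$ combination are compatible with the paper's route. The proposal has three genuine gaps, however, and the first is an approach that cannot work.

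\textbf{The lower bound for $\C(2^\Nat)$ fails.} Suppose every input outside $P_3$ is sent to some $\C(K_x)$ in which $K_x$ has an isolated point. A $\C(K)$ space has the Daugavet property exactly when $K$ has no isolated points. So your map satisfies $\Phi^{-1}(\isomtrclass{\C(2^\Nat)})=\Phi^{-1}(D)$, where $D=\{\mu\setsep X_\mu \text{ has the Daugavet property}\}$ is known to be $\boldsymbol{\Pi}^0_2$. This preimage is therefore $G_\delta$, and it cannot equal the $\boldsymbol{\Pi}^0_3$-complete set $P_3$.

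Your general lower-bound scheme has the same defect on its own. If $x\mapsto K_x$ is Vietoris-continuous into $\K(Y)$ for a compact metric $Y$, then by Banach--Stone the preimage of the Cantor class is a preimage of $\homeoclass{2^\Nat}^Y$. That set is $G_\delta$, since infinite perfect sets and zero-dimensional sets are both $G_\delta$ in $\K(Y)$.

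Consequently, some inputs outside $P_3$ must be sent to $\C(K)$ with $K$ perfect but \emph{not zero-dimensional}. The map into $\B$ must also be built directly rather than through a continuous choice of compacta. The paper does exactly this. It extends functions on the dyadic rationals level by level, copying from the left neighbour or averaging the two neighbours according to the entries of a row of $\sigma$. A row with finitely many ones gives a Cantor piece, a row with infinitely many ones gives a circle, and each norm depends on only finitely many coordinates of $\sigma$.

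\textbf{The $\boldsymbol{\Pi}^0_3$ bound for zero-dimensional $\C(L)$ is unsupported.} All three of your upper bounds rest on the step you leave as ``should be $\boldsymbol{\Pi}^0_3$'': that $\{\mu\setsep X_\mu\equiv\C(L)$ for some zero-dimensional $L\}$ is $\boldsymbol{\Pi}^0_3$. This bound is sharp already for $\C(\omega\cdot k)$ and $\C(2^\Nat)$. Descriptions through $\ext B_{X_\mu^*}$ or through a lattice of $\pm1$-valued elements are not conditions on the norm on $V$. They become such only once you say how to recognise those elements isometrically and prove that having many of them forces $X_\mu$ to be a $\C(L)$. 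The classical characterisation ($L_1$-predual with $w^*$-closed $\ext B_{X^*}$) gives no such bound.

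The missing idea is an intrinsic condition: for every $\ep>0$, the closed absolutely convex hull of $\{a\in S_X\setsep \max\{\|a-y\|,\|a+y\|\}\geq 2-\ep \text{ for all } y\in S_X\}$ is $B_X$. Over countable dense sets this is a $\forall\exists\forall$ statement, hence $\boldsymbol{\Pi}^0_3$. Together with the $\boldsymbol{\Pi}^0_2$ condition of being an $\mathcal L_{\infty,1+}$-space, it characterises the zero-dimensional $\C(K)$'s. The hard direction goes in three steps. First show $|z^*(a)|\geq 1-4\ep$ for every such $a$ and every $z^*\in\ext B_{X^*}$. Deduce that $\ext B_{X^*}$ is $w^*$-closed and splits as $K\cup(-K)$. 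Then extend odd continuous functions from $\ext B_{X^*}$ (Kalenda--Spurn\'y) to get $X\equiv\C(K)$.

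\textbf{Two steps of your lower-bound induction fail as stated.} At a limit $\lambda$, a disjoint sum converging to a point has height governed by the supremum of the heights of its pieces. A single failing piece therefore does not lower the height. You need a construction in which height behaves like a minimum, as in Cenzer--Mauldin's intersection with a fixed compact $C_\lambda$.

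For finite heights, codings obtained from a continuous $x\mapsto K_x$ give pseudonorms in $\P$, not norms in $\B$. Transferring hardness through the $F_\sigma$-measurable map $\P_\infty\to\B$ preserves it only at levels $\geq\omega$. A direct construction with continuously varying, linearly independent generators is required there.
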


By the Mazurkiewicz-Sierpi\'{n}ski theorem, for every infinite countable compact space $K$ there is a unique countable ordinal $\alpha$ and unique natural number $k$ such that $K$ is homeomorphic to $[0,\omega^\alpha\cdot k]$. Thus, Theorem~\ref{thm:Intro1} covers complexity of all the isometry classes of $\C(K)$ spaces with $K$ infinite countable compacta or $K$ being the Cantor set.

As a byproduct of our method of proof, we obtain the following additional results. Answering \cite[Question 6]{CDDK2} in the negative, we show that the class of Banach spaces with a summable Szlenk index is $\boldsymbol{\Sigma}^0_\omega$-hard (see Corollary~\ref{cor:szlenk}). Furthermore, we establish a precise correspondence between topological and isometric complexity: given any uncountable compact metric space $X$, the class of compact subsets of $X$ that are homeomorphic to \( [0,\omega^{\beta + n}] \),
respectively to \( [0,\omega^{\beta + n}\cdot k] \) with \( k \ge 2 \), has exactly the same Borel complexity as the isometry class of \( \C(\omega^{\beta + n}) \),
respectively of \( \C(\omega^{\beta + n}\cdot k) \) (see Theorem~\ref{thm:main2Part2}). In contrast, for the Cantor set $2^\Nat$, the isometry class of $\C(2^\Nat)$ is different from the homeomorphism class of $2^\Nat$. Indeed, it follows from known results that the latter is always a $G_\delta$ set, regardless of the ambient space $X$; see Remark~\ref{rem:cantorHomeo}.

The highlight of our result is that it provides concrete and natural examples of classes whose complexities are arbitrarily high, still exactly determined, in the Borel hierarchy.
Such examples are rather scarce in analysis and topology.
For instance, in the classical monograph by Kechris \cite[p.~189]{Kechrisbook} the author asks for natural sets which are, at least, from the \( 5 \)th Borel class but not in any lower Borel class.

For the homeomorphism classes of compact sets, examples have been known. Indeed, Cenzer and Mauldin \cite{CM82,CM83} (see also \cite[p.~182]{Kechrisbook}) proved that given $\beta=0$ or $\beta$ being a countable limit ordinal and $n\in\Nat\cup\{0\}$, the set of compact spaces with Cantor-Bendixson derivative less than $\beta + n$ is $\boldsymbol{\Sigma}^0_{\beta + 2n}$-complete. 
Moreover, it follows essentially from their proofs (see Remark~\ref{rem:cenzerMauldinLoweEstimate} for further details) that the set of those compact subsets of \( 2^\mathbb{N} \)
which are homeomorphic to \( [0,\omega^{\beta + n}] \) is \( \boldsymbol{\Pi}^0_{\beta + 2n} \)-hard.
However, we improve this in three directions:
a) we improve the lower bound on the Borel complexity by \( 1 \) (by proving that the set is, in fact, \( \boldsymbol{\Pi}^0_{\beta + 2n +1} \)-hard);
b) we show that this improved lower bound is optimal, by proving that it is also an upper bound (that is, we show that the set is, in fact, \( \boldsymbol{\Pi}^0_{\beta + 2n +1} \)-complete);
c) we provide an analogous result also for \( [0,\omega^{\beta + n}\cdot k] \) with \( k \ge 2 \).

Although our main motivation for this paper, as explained above, lies in the heart of descriptive set theory,
Part~\ref{part1} may be of independent benefit for readers primarily interested in functional analysis. We start by correcting a minor inaccuracy from the proof of \cite[Theorem 4.13]{CDDK2}, where part of the argument was false, we find a new argument which fills in this gap. The main goal of this part is to provide a suitable characterization of the Banach spaces isometric to \( \C(\omega^\alpha\cdot k) \),
for a fixed countable ordinal \( \alpha \) and \( k \in \mathbb{N} \) (see e.g. Corollary~\ref{cor:isometricCharacterizationCKcountable})
which is later applied to obtain the upper bound on the Borel complexities of the corresponding isometry classes (see Theorem~\ref{thm:upperBoundFinal}).

While proving this auxiliary result, we obtain several facts concerning Banach spaces \( \C ( K ) \) (where \( K \) is a compact space), as well as general isometric \( L_1 \)-preduals, which may be of independent interest. The key new ingredient is the following surprising condition on a Banach space $X$, which makes it possible to identify $\C(K)$ spaces with zero-dimensional $K$ within the class of isometric real $L_1$-preduals.

\begin{equation}\label{eq:newCondition}
    \overline{\aco}\Big\{a\in S_X\colon\; \max\{\|a-y\|,\|a+y\|\} = 2\text{ for every $y\in S_X$}\Big\} = B_X.
\end{equation}

Our second main result is then the following. 

\begin{thmx}\label{thm:Intro2}
  Let $X$ be a real Banach space such that $X^*$ is isometric to $L_1(\mu)$ for some measure $\mu$. Then $X$ satisfies condition \eqref{eq:newCondition} if and only if there exists a zero-dimensional compact space $K$ such that $X$ is isometric to $\C(K)$.
\end{thmx}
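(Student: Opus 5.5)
We prove the two implications separately. Write $A:=\{a\in S_X\colon \max\{\|a-y\|,\|a+y\|\}=2 \text{ for every } y\in S_X\}$.

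\emph{Necessity.} Let $X=\C(K)$ with $K$ zero-dimensional. We claim that $\pm 1_U-1_{K\setminus U}$, for $U$ clopen, lies in $A$. More generally, any $a$ with $|a|\equiv 1$ belongs to $A$. Indeed, for $y\in S_X$ pick $t$ with $|y(t)|=1$; then $|a(t)-y(t)|=2$ or $|a(t)+y(t)|=2$, according to whether the signs of $a(t)$ and $y(t)$ differ or agree. Every continuous function with finitely many values in $[-1,1]$ on a finite clopen partition is a convex combination of such sign functions. This is a coordinatewise statement: each value in $[-1,1]$ is a convex combination of $\pm1$, and one builds a common convex combination by the usual ``layer'' decomposition, ordering the thresholds. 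Such step functions are dense in $B_{\C(K)}$ by zero-dimensionality. Hence \eqref{eq:newCondition} holds.

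\emph{Sufficiency.} Let $X^*=L_1(\mu)$ and assume \eqref{eq:newCondition}. Put $E:=\overline{\ext B_{X^*}}^{w^*}$. The plan is to represent $X$ as a space of affine functions and extract a $\C(K)$ structure.

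\textbf{Step 1.} We analyse $a\in A$. For an $L_1$-predual the extreme points of $B_{X^*}$ form a set on which $e$ and $-e$ are both extreme points. We aim to show that $a\in A$ implies $|e(a)|=1$ for all $e\in\ext B_{X^*}$, and hence for all $e\in E$. Suppose instead that $|e_0(a)|<1$ for some extreme $e_0$. Using the $L_1$-predual intersection property (Lindenstrauss: balls that intersect pairwise intersect, equivalently the weak$^*$-closed face structure), we construct $y\in S_X$ with $e_0(y)=1$ chosen so that $\|a\pm y\|<2$. Here $y$ is close to $a$ on the ``rest'' of $E$, and we require $\sup_{e\in E}|e(a)\pm e(y)|<2$.

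\textbf{Step 2.} We use these $a$ to define the set $K$. On $E$ each $a\in A$ takes only the values $\pm1$, so $\{e\in E\colon e(a)=1\}$ is weak$^*$ clopen in $E$. Let $K:=E/\!\pm$, or better the set of weak$^*$ cluster points modulo the identification $e\sim -e$. Choose $K$ as the quotient of $E$ by the relation ``$e\sim e'$ iff $e(a)=e'(a)$ for all $a\in A$'', suitably symmetrised. Since $\overline{\aco}A=B_X$ and $X$ separates points of $E$, the functions in $A$ separate the points of $E$ modulo sign. This makes $K$ compact and zero-dimensional: the sets $\{e(a)=1\}$ and their complements provide a clopen base.

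\textbf{Step 3.} We exhibit the isometry. Fix one $a_0\in A$, which exists since $A\neq\emptyset$. Use $a_0$ to pick the sign, i.e. take $K:=\{e\in E\colon e(a_0)=1\}$. This set is a weak$^*$ compact clopen subset of $E$ with $E=K\cup(-K)$. Define $T\colon X\to \C(K)$ by $Tx(e)=e(x)$. Then $T$ is isometric, since $\|x\|=\sup_E|e(x)|$ and $E=K\cup -K$. It remains to show that $T$ is onto. We argue that for $a\in A$ the product $Ta\cdot Ta_0$ takes values in $\{\pm1\}$; this yields characteristic-type functions. We then check that $T(X)$ is a closed subspace that is a lattice. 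For this we would use the $L_1$-predual fact that $K$ should consist of points whose images are extreme and on which $X$ acts as an $A(K)$-type space with all of $\C$ attained. By Stone--Weierstrass for lattices, $T(X)$ contains $1$ and the clopen indicators $(1+Ta\,Ta_0)/2$ (this requires showing these lie in $T(X)$), so $T(X)=\C(K)$.

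The main obstacle is Step 1 together with the surjectivity in Step 3: showing that elements of $A$ are $\pm1$ on $E$ and that $T(X)$ is closed under the operations producing indicators. We would first try the characterization of $L_1$-preduals via the weak$^*$-closure of extreme points and Lazar--Lindenstrauss type extension arguments (real $L_1$-preduals are the spaces of odd affine functions $A_\sigma(B_{X^*})$), and reduce the problem to when $A_\sigma$ equals $\C$ on a symmetric set split by a clopen sign function.
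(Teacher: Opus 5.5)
Your necessity direction is correct. Unimodular functions lie in $A$, the layer decomposition writes every clopen step function in $B_{\C(K)}$ as a finite convex combination of them, and such functions are dense. This is a direct, constructive substitute for the paper's Hahn--Banach argument via $\|\mu\|=\sup\sum_i|\mu(C_i)|$ over clopen partitions.

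The sufficiency direction, however, has a genuine gap: surjectivity of $T$ in Step 3, which is the heart of the theorem, is not proved. Since $Ta_0\equiv 1$ on $K$, the indicators $(1+Ta)/2=T\big((a_0+a)/2\big)$ lie in $T(X)$ trivially. But their span is not shown to be closed under products or lattice operations, so Stone--Weierstrass cannot be invoked, and no reason is given why $T(X)$ should be a lattice.

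Moreover, you work with $E=\overline{\ext B_{X^*}}^{w^*}$ without showing that $E=\ext B_{X^*}$. If $K$ contained a non-extreme point $\psi$, $T$ could not be onto. Indeed, $T^*\delta_\psi=\psi$, and an onto isometry $T$ would make $T^*$ an onto isometry, which maps extreme points to extreme points.

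Step 1 is also only asserted. The claim is true, but your sketch runs the wrong way. The condition $\max\{\|a-y\|,\|a+y\|\}<2$ means $\sup_e(|e(a)|+|e(y)|)<2$, so $y$ must be uniformly small where $|e(a)|$ is near $1$. Taking $y$ close to $a$ there forces $\|a+y\|$ close to $2$, and no construction of the required $y$ is given.

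The paper closes these holes with three ingredients you would need.

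First, a quantitative form of Step 1 (Lemma~\ref{lem:bigInEveryExtremePoint}). Hahn--Banach gives $(1-\ep)B_{X^*}\subseteq\overline{\co}^{w^*}\{z^*\in\ext B_{X^*}\colon |z^*(x)|\geq 1-2\ep\}$. Since extreme points of $B_{L_1(\mu)}$ are normalized atoms, splitting $(1-\ep)z^*=\alpha u^*+\beta v^*$ then yields $|z^*(x)|\geq 1-4\ep$ on $\ext B_{X^*}$.

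Second, $w^*$-closedness of $\ext B_{X^*}$ (Proposition~\ref{prop:closedExtremePoints}). This is where the density $\overline{\aco}A=B_X$ is really used, not merely for point separation. For $\psi=\alpha u^*+\beta v^*\in\overline{\ext B_{X^*}}^{w^*}\setminus\ext B_{X^*}$, one picks $a_0\in A\cup(-A)$ almost norming $v^*-u^*$ and gets $|\psi(a_0)|<1-4\ep$, contradicting the lemma.

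Third, the Kalenda--Spurn\'y theorem (Theorem~\ref{thm:spurnyKalenda}). For an $L_1$-predual with $w^*$-closed extreme boundary, every odd continuous function on $\ext B_{X^*}=K\sqcup(-K)$ extends to an odd $w^*$-continuous affine function on $B_{X^*}$. By Banach--Dieudonn\'e this extension is given by an element of $X$, which is what makes $T$ onto.

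Your closing remark about $A_\sigma(B_{X^*})$ points toward the third ingredient, but it cannot be applied without the second.
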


This result enables us to find new characterizations of $\C(K)$ spaces with $K$ countable as well as, for example, an isometric characterization of the space of continuous functions on the Cantor set. We refer to Part~\ref{part1} for further details and related results. Let us note that it is well-known a real Banach space $X$ is isometric to a $\C(K)$ space if and only if it is $L_1$-predual, $\ext B_{X^*}$ is $w^*$-closed set and $\ext B_X \neq \emptyset$, see \cite[Corollary on p. 337]{LW69} or also \cite[Theorem 6.6]{L64}. There are even other deep Banach space characterizations of spaces isometric to $\C(K)$ spaces (see, e.g., \cite[Chapter 3]{LaceyBook} for a list of such conditions). However, none of these seem to easily provide the upper bound on the Borel complexity which we need, as all of them involve extreme points or another additional structure. Our condition \eqref{eq:newCondition} may be viewed as a variant that replaces the notion of an extreme point with a more tractable alternative, which in particular naturally yields a an optimal estimate on the Borel complexity.

The paper is organized as follows. In Part~\ref{part1}, we collect results related to characterizations and properties of $\C(K)$ spaces. In particular, this part contains the proof of Theorem~\ref{thm:Intro2} as well as a new isometric characterization of $\C(K)$ spaces with $K$ countable or $K$ being the Cantor set (see Theorem~\ref{thm:charactCkcountableInCk} and Theorem~\ref{thm:mainPart1Strong}). In Part~\ref{part2}, the main objective is to establish Theorem~\ref{thm:Intro1}, together with several related results. Each part begins with a more detailed overview of its contents and includes the preliminaries needed specifically for that part. The division into two parts is intended to make the paper more accessible and to highlight aspects that may be of independent interest to different groups of readers. At the same time, the paper remains logically unified: Part~\ref{part2} relies substantially on some of the main results obtained in Part~\ref{part1}, and the development of Part~\ref{part1} is in turn motivated by the needs of Part~\ref{part2}.

\subsection*{Notation:} Let us conclude the introduction by setting up some basic notation used throughout both parts of the paper. Basic notation and terminology are adopted as in \cite{BiortogBook} for Banach space theory and in \cite{Kechrisbook} for descriptive set theory. All Banach spaces in this paper are considered over the field of real numbers.

As mentioned already above, given an ordinal $\alpha$ we write $\C(\alpha)$ instead of $\C([0,\alpha])$ and the Cantor set is denoted as $2^\Nat$. Given Banach spaces $X$ and $Y$, the symbol $X \equiv Y$ means that $X$ and $Y$ are isometrically isomorphic. A Banach space $X$ is called \emph{$L_1$-predual} (or a \emph{Lindenstrauss space}) if $X^*$ is isometric to the space $L_1(\mu)$ for some measure space $(\Omega, \Sigma, \mu)$. Further, $X$ belongs to the class $\mathcal{L}_{\infty, 1+}$ if for every finite-dimensional subspace $E\subseteq X$ and every $\ep>0$, there exists a finite-dimensional subspace $F\subseteq X$ containing $E$ such that $d_{BM}(F,\ell_\infty^{\dim F})<1+\ep$, where $d_{BM}$ denotes the Banach-Mazur distance. We recall that a Banach space is  $\mathcal{L}_{\infty, 1+}$ space if and only if it is an $L_1$-predual, see \cite[\S 23 Theorem 2]{LaceyBook}. We say $X$ has the \emph{Daugavet property} if every rank-one bounded linear operator $T \colon X \rightarrow X$ satisfies the Daugavet equation $\norm{T+Id}=1+\norm{T}$. 

Given $\ep>0$, the \emph{Szlenk derivative} of a $w^*$ compact subset $K$ of $X^*$ is defined as
\[s_{\ep}(K)=s_{\ep}^1(K)=\{ x \in K \setsep \text{ for each }w^* \text{neighbourhood } V \text{ of } x, \diam(V \cap K) \geq \ep \},\]
and, for an ordinal $\alpha$ we set $s^0_{\ep}=K$, $s_{\ep}^{\alpha+1}(K)=s_{\ep}(s_{\ep}^\alpha(K))$, and $s_{\ep}^\alpha(K)=\bigcap_{\beta< \alpha} s_{\ep}^\beta(K)$ if $\alpha$ is a limit ordinal.

Given a set $S$, we write $[S]^{<\omega}$ for the collection of all finite subsets of $S$ and $\pot(S)$ for the collection of all of its subsets.

All compact spaces in this paper are assumed to be Hausdorff. Recall that a compact space is \emph{zero-dimensional} if it has a basis consisting of clopen sets. Further, for a compact space $K$, let $D^0(K)=K$ and let $D^1(K) = D(K)$, the \emph{Cantor-Bendixson derivative of} $K$, be the set of all accumulation points of $K$ or, equivalently, $D(K)=K\setminus\{x\in K\setsep x\text{ is isolated in }K\}$. Furthermore, for an ordinal number $\alpha>1$, let $D^\alpha(K)=D(D^\beta(K))$ if $\alpha=\beta+1$, and $D^\alpha(K)=\bigcap_{\beta<\alpha} D^\beta(K)$ if $\alpha$ is a limit ordinal. $K$ is \emph{perfect} if $D(K)=K$, and $K$ is said to be \emph{scattered} if there exists an ordinal $\alpha$ such that $D^\alpha(K)=\emptyset$. In this case the minimal ordinal $\alpha$ such that $D^\alpha(K)=\emptyset$ is called the \emph{height} of $K$ and is denoted by $\height(K)$. Note that $\height(K)$ is a successor ordinal and $D^{\height(K)-1}(K)$ is a finite set. Also, we recall that a compact metric space is scattered if and only if it is countable. Further, we note for each ordinal number $\alpha$, each $k\in\Nat$ and the ordinal interval $[0,\omega^{\alpha}k]$, we have $D^\alpha([0, \omega^{\alpha}k])=\{\omega^{\alpha}i\setsep i=1,\ldots,k\}$, thus $\height([0, \omega^{\alpha}k])=\alpha+1$. By the classical  Mazurkiewicz--Sierpi\'{n}ski classification, each countable compact space $K$ is homeomorphic to the ordinal interval $[0, \omega^{\alpha}k]$, where $\alpha=\height(K)-1$ and $k=\abs{D^\alpha(K)}$.

\part{Characterization of spaces isometric to \texorpdfstring{$\C(K)$}{C(K)} for \texorpdfstring{$K$}{K} countable}\label{part1}

The main results of this part are Theorem~\ref{thm:Intro2} together with a new isometric characterization of $\C(K)$ spaces, where $K$ is either the Cantor set or a countable infinite compact set. Our motivation comes from the fact that, when studying upper bounds on the Borel complexity of the isometry class of a Banach space, one must first find a suitable isometric characterization of that space. In many cases, this requires introducing a new characterization designed specifically for this purpose. In \cite[Theorem 4.13]{CDDK2} the authors obtained isometric characterization of the Banach space $c_0$, which suggests pursuing analogous ideas in our setting. A careful analysis of the proof of \cite[Theorem 4.13]{CDDK2} reveals a gap in the part of the argument where a condition ensuring that $X^*$ is isometric to $\ell_1$ is established. We address this issue by providing an argument that is useful in our setting, and by supplying details that fill the gap in the proof of \cite[Theorem 4.13]{CDDK2}. This is the content of the very short Section~\ref{sec:ell1Predual}. The next step is to exploit Szlenk derivatives similarly as it was done in \cite[Theorem 4.13]{CDDK2}. Accordingly, in Section~\ref{sec:szlenk} we compute the Szlenk derivatives for the unit ball in $\C(K)$ spaces with $K$ being a scattered compact space (not even necessarily metrizable) and, as a consequence, we obtain isometric characterization of $\C(\omega^\alpha\cdot k)$ spaces among separable $\C(K)$ spaces, see Theorem~\ref{thm:charactCkcountableInCk}. However, when trying to characterize the space $c$, we observed that the Szlenk derivative alone is insufficient to distinguish it from other $L_1$-preduals such as e.g. the space $c\oplus_\infty c_0$. After a non-negligible effort we obtained Theorem~\ref{thm:mainPart1Strong}, which implies Theorem~\ref{thm:Intro2} and provides a characterization of $\C(K)$ spaces with $K$ zero-dimensional within the class of $L_1$-preduals. Notably, our method applies even in the nonseparable setting. This constitutes the central and technically most demanding part of Part~\ref{part1}, and is developed in Section~\ref{sec:mainPart1}. As a byproduct of our proofs we also obtain an isometric characterization of $\C(2^\Nat)$. The isometric characterization of $\C(\omega^\alpha\cdot k)$ spaces is then obtained as a combination of Theorem~\ref{thm:charactCkcountableInCk} with Theorem~\ref{thm:mainPart1Strong}, see Corollary~\ref{cor:isometricCharacterizationCKcountable}.

We finish this introductory part with a short list of additional notation which we use below. Given a compact space $K$, we identify the dual of the Banach space $\C(K)$ with the space $M(K)$ of all regular signed Borel measures on $K$ endowed with the variation norm. Unless otherwise stated, the ball $B_{M(K)}$ is endowed with the weak$^*$ topology given by this duality. For $\mu \in M(K)$, its variation is denoted by $\abs{\mu}$.

\section{Conditions guaranteeing that \texorpdfstring{$X^*$}{X*} is isometric to \texorpdfstring{$\ell_1$}{ell1}}\label{sec:ell1Predual}

In this section we provide a condition, which we shall use later, guaranteeing that a separable Banach space has dual isometric to $\ell_1$, see Proposition~\ref{prop:sufficientForDualEll1}.
It also seems to be an appropriate place to fill in a gap from the proof of \cite[Theorem 4.13]{CDDK2}, see Proposition~\ref{prop:correction} below. The following basic tool will be used throughout without further explicit reference.

\begin{lemma}\label{lem:suffEll1Predual}Let $X$ be a separable infinite-dimensional $\mathcal{L}_{\infty,1+}$-space. Then the following conditions are equivalent.
\begin{itemize}
    \item $X^*$ is isometric to $\ell_1$,
    \item $X^*$ is separable,
    \item $X^*$ does not contain a subspace isometric to $L_1([0,1])$.
\end{itemize} 
\end{lemma}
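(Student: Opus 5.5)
The plan is to go around the cycle (i)$\Rightarrow$(ii)$\Rightarrow$(iii)$\Rightarrow$(i). The first two implications are elementary. For (i)$\Rightarrow$(ii), note that $\ell_1$ is separable. For (ii)$\Rightarrow$(iii), note that $L_1([0,1])^*=L_\infty([0,1])$ is not separable, while every subspace of a separable space is separable. Moreover, a separable dual space has the Radon--Nikod\'ym property, and $L_1([0,1])$ fails it, so $L_1([0,1])$ cannot embed isomorphically, let alone isometrically, into $X^*$.

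The substance lies in (iii)$\Rightarrow$(i). Since $X$ is an $\mathcal L_{\infty,1+}$-space, it is an $L_1$-predual by the result quoted from \cite[\S 23 Theorem 2]{LaceyBook}. Hence $X^*\equiv L_1(\mu)$ for some measure $\mu$.

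By Kakutani's representation of abstract $L$-spaces together with Maharam's classification of measure algebras, $L_1(\mu)$ is isometric to an $\ell_1$-sum
\[
\ell_1(\Gamma)\oplus_1\Big(\bigoplus_{i} L_1(\{0,1\}^{\kappa_i})\Big)_{\ell_1}.
\]
Here the first summand corresponds to the atomic part of $\mu$, and each remaining summand corresponds to a homogeneous nonatomic part. If the nonatomic part is nonzero, then it contains a nonatomic finite measure piece. That piece yields an isometric copy of $L_1$ of a nonatomic measure of Maharam type at least $\omega$. Such a space contains $L_1([0,1])$ isometrically: choose a sub-$\sigma$-algebra generated by a sequence of independent sets of measure $1/2$, and conditional expectation shows the corresponding subspace is complemented and isometric to $L_1([0,1])$. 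This contradicts (iii). So $\mu$ is purely atomic, and $X^*\equiv\ell_1(\Gamma)$.

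It remains to show $\Gamma$ is countably infinite. Since $X$ is infinite-dimensional, $\Gamma$ is infinite. Since $X$ is separable, $B_{X^*}$ is $w^*$-metrizable and $w^*$-separable. The unit vectors $e_\gamma$ of $\ell_1(\Gamma)$ are extreme points of the dual ball. For $\gamma\neq\gamma'$ they satisfy $\|e_\gamma\pm e_{\gamma'}\|=2$.

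To turn this into a countability bound, I would use the cardinality bound $|\ell_1(\Gamma)|\le|X^*|\le\mathfrak c$. That alone does not suffice, so instead I would use a standard fact: a separable $L_1$-predual whose dual is $\ell_1(\Gamma)$ has $\Gamma$ countable. The reason is that $\ext B_{X^*}=\{\pm e_\gamma\}$, each $e_\gamma$ is a $w^*$-exposed point, and by the Lindenstrauss--Phelps theorem for separable $X$, $B_{X^*}$ has only countably many extreme points unless the dual is nonseparable. More directly, if $\Gamma$ were uncountable, then $X^*=\ell_1(\Gamma)$ would be nonseparable. By Lindenstrauss--Phelps, $\ext B_{X^*}$ would then be uncountable, which is consistent, so I would argue differently: for each $x$ in a countable dense set $D\subseteq X$, the function $\gamma\mapsto e_\gamma(x)$ has countable support. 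Hence there is a $\gamma_0$ with $e_{\gamma_0}(x)=0$ for all $x\in D$, forcing $e_{\gamma_0}=0$, a contradiction. That last step is clean, so $\Gamma$ is countable and $X^*\equiv\ell_1$.

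The main obstacle is the measure-theoretic step showing that a nonatomic part yields an isometric copy of $L_1([0,1])$. This is standard via Maharam's theorem, and I would cite it rather than reprove it.
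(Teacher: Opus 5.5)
The gap is in the last step, where you show that $\Gamma$ is countable. You assert that for $x\in D$ the function $\gamma\mapsto e_\gamma(x)$ has countable support, but you give no reason for it. The space $X$ sits in $X^{**}=\ell_\infty(\Gamma)$, not in $c_0(\Gamma)$: that is the canonical predual of $\ell_1(\Gamma)$, and $X$ need not be it. Elements of $\ell_\infty(\Gamma)$ can have full support, so the claim is as strong as the conclusion you want.

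None of the features you invoke rules out uncountably many $e_\gamma$. In $X=\C(2^\Nat)$ the Dirac measures $\delta_t$, $t\in 2^\Nat$, are extreme points of $B_{X^*}$, are pairwise at distance $2$, and span an isometric copy of $\ell_1(2^\Nat)$ in $X^*$. Yet $t\mapsto\delta_t(\mathbf 1)$ has uncountable support. What must be used is that the $e_\gamma$ span all of $X^*$, and exploiting this needs a genuine theorem. One option is Stegall's theorem: a separable space whose dual has the Radon--Nikod\'ym property has separable dual, and $\ell_1(\Gamma)$ has that property. Another is the Lazar--Lindenstrauss dichotomy (Lacey, \S 22 Theorem 5): the dual of a separable infinite-dimensional $L_1$-predual is isometric either to $\ell_1$ or to $\C[0,1]^*$. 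This is what the paper cites, and it gives all three equivalences at once. The second alternative is nonseparable and contains $L_1[0,1]$ isometrically, while $\ell_1$ has the Schur property and so cannot contain $L_1[0,1]$. With either citation inserted after your Kakutani--Maharam reduction to $X^*\equiv\ell_1(\Gamma)$, the argument closes.

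A minor point on (ii)$\Rightarrow$(iii): your first remark proves nothing. $L_1[0,1]$ is itself separable, and nonseparability of $L_\infty[0,1]$ does not contradict separability of $X^*$. The Radon--Nikod\'ym argument you give next is the one that actually works.
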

\begin{proof}Recall that any $\mathcal{L}_{\infty,1+}$-space is $L_1$-predual (see \cite[\S 23 Theorem 2]{LaceyBook}) and for any separable infinite-dimensional $L_1$-predual $X$ we have either $X^*\equiv \ell_1$ or $X^*\equiv (\C[0,1])^*$ (see \cite[\S 22 Theorem 5]{LaceyBook}). Assuming that $X^*\equiv (\C[0,1])^*$, $X^*$ is not separable and it contains a subspace isometric to $L_1[0,1]$ (see e.g. \cite[p. 226]{LaceyBook}). On the other hand, if $X^*\equiv \ell_1$ then of course $X^*$ does not contain a subspace isometric to $L_1[0,1]$ (e.g. because $\ell_1$ has the Schur property while $L_1[0,1]$ does not).
\end{proof}

Recall that a Banach space $X$ has the \emph{the diameter two property} ($D2P$) if each nonempty weakly open subset of $B_X$ has diameter $2$. The following sufficient condition for the separability of $X^*$ is a corollary of the well-known fact that $L_1[0,1]$ has the $D2P$ (which follows, for example, from the fact that $L_1[0,1]$ has the Daugavet property and every Banach space with the Daugavet property has the $D2P$, see e.g. \cite[Theorem 1.2.1 and p. 354-355]{DaugavetBook}).

\begin{prop}\label{prop:sufficientForDualEll1}
    Let $X$ be a separable infinite-dimensional $\mathcal{L}_{\infty,1+}$-space and let $s_2^\alpha(B_{X^*})$ be subset of a finite-dimensional space for some ordinal $\alpha$. Then $X^*$ is isometric to $\ell_1$.
\end{prop}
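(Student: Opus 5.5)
By Lemma~\ref{lem:suffEll1Predual}, it suffices to show that $X^*$ contains no subspace isometric to $L_1[0,1]$. I will argue by contradiction. Suppose $T\colon L_1[0,1]\to X^*$ is a linear isometric embedding, and let $Y=T(L_1[0,1])$.

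The goal is to show, by transfinite induction on $\beta$, that
\[
T(B_{L_1[0,1]})\subseteq s_2^\beta(B_{X^*})\quad\text{for every ordinal }\beta.
\]
At $\beta=\alpha$ this would place the infinite-dimensional set $T(B_{L_1})$ inside a finite-dimensional subspace, which is absurd. Limit steps are immediate, since both sides are intersections. For the successor step, assume $Z:=T(B_{L_1})\subseteq K:=s_2^\beta(B_{X^*})$. Fix $x^*\in Z$ and a $w^*$-neighbourhood $V$ of $x^*$. I need to show that $\diam(V\cap K)\ge 2$, and for this it is enough that $\diam(V\cap Z)\ge 2$.

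The key point is that $V\cap Z$ contains a relatively weakly open subset of $Z$ containing $x^*$. Indeed, $V$ may be taken to be a basic neighbourhood determined by finitely many $x_1,\dots,x_m\in X\subseteq X^{**}$. Their restrictions to $Y$, pulled back by $T$, are functionals in $L_\infty=L_1^*$. Hence $T^{-1}(V)\cap B_{L_1}$ is a nonempty relatively weakly open subset of $B_{L_1}$.

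Now I apply the diameter two property of $L_1[0,1]$: every nonempty relatively weakly open subset of $B_{L_1}$ has diameter $2$. Since $T$ is an isometry, $\diam(V\cap Z)=2$. This shows $x^*\in s_2(K)$, which completes the induction.

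One subtlety is that the definition of $s_\ep$ uses the condition ``$\diam\ge\ep$'' with $\ep=2$. The supremum $2$ need not be attained, but the definition only requires the diameter, which is a supremum, to be at least $2$, so this is fine.

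The main obstacle is the observation that $w^*$-open sets of $X^*$ induce weakly open sets on $T(B_{L_1})$. This is where we use that the weak topology on $L_1$ is finer than the topology induced via $T$ from the $w^*$ topology, together with the D2P of $L_1$.
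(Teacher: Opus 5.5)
Your proposal is correct and follows essentially the same route as the paper. You reduce via Lemma~\ref{lem:suffEll1Predual} to excluding an isometric copy of $L_1[0,1]$ in $X^*$, use that $w^*$-open sets trace relatively weakly open sets on the copy's unit ball, so that the diameter two property places that ball in $s_2(B_{X^*})$, and then induct transfinitely to contradict the finite-dimensionality of $s_2^\alpha(B_{X^*})$.
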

\begin{proof}In order to get a contradiction, assume $X^*$ contains a subspace $Z$ isometric to $L_1[0,1]$. We pick any $w^*$-open set $U$ in $X^*$ intersecting $B_Z$. Then $U$ is weakly open in $X^*$, and so $U\cap Z$ is weakly open in $Z$ and $U\cap B_Z$ is weakly open subset of $B_Z$, and therefore $U\cap B_Z$ has diameter $2$ (because, as mentioned above, $Z\equiv L_1[0,1]$ has $D2P$). Thus, $B_Z\subseteq s_2(B_{X^*})$. Now, a simple inductive argument implies that $B_Z\subseteq s_2^\alpha(B_{X^*})$ for every ordinal $\alpha$, so $s_2^\alpha(B_{X^*})$ cannot be a subset of a finite-dimensional space.
\end{proof}

Now, we proceed to fill a gap from the proof of \cite[Theorem 4.13]{CDDK2}. The error in the proof of the mentioned result is caused by the fact that, under the assumption that $X$ is a separable $\mathcal{L}_{\infty,1+}$-space which for some $\varepsilon>0$ satisfies the equality $s_{2 \varepsilon}(B_{X^*}) = (1-\varepsilon)B_{X^*}$, it is deduced in \cite{CDDK2} using \cite[Proposition 3 and Theorem 1]{Lan06} that $X^*$ is separable. However, Proposition 3 from \cite{Lan06} could only be used to deduce that conclusion if the above equality for the Szlenk derivative was true for \textbf{each} $\varepsilon>0$. Thus, in order to correct the error in question, we need to prove the following. 

\begin{prop}\label{prop:correction}
    Let $X$ be a separable $\mathcal{L}_{\infty,1+}$-space and assume that there exists $\varepsilon>0$ such that 
    \[
    s_{2\varepsilon}(B_{X^*}) = (1-\varepsilon)B_{X^*}.
    \]
    Then $X^*$ is separable.
\end{prop}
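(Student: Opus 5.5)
Following the pattern of Proposition~\ref{prop:sufficientForDualEll1}, we argue by contradiction through Lemma~\ref{lem:suffEll1Predual}. If $X$ is finite-dimensional there is nothing to prove, so assume $X$ is infinite-dimensional. Suppose $X^*$ is not separable. Then by Lemma~\ref{lem:suffEll1Predual} (via \cite[\S 22 Theorem 5]{LaceyBook}), $X^*$ contains a subspace $Z$ isometric to $L_1[0,1]$. In fact we may take $X^*\equiv M[0,1]$, and the copy of $L_1[0,1]$ then sits inside $X^*$ as an $L$-summand (a band). Our aim is to produce a point of norm $1$ in $s_{2\varepsilon}(B_{X^*})$, which contradicts $s_{2\varepsilon}(B_{X^*})=(1-\varepsilon)B_{X^*}$.

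The key step is to show that $S_Z\subseteq s_{2\varepsilon}(B_{X^*})$, and in fact that $B_Z\subseteq s_2(B_{X^*})$. This is exactly the argument in the proof of Proposition~\ref{prop:sufficientForDualEll1}. Take $z\in B_Z$ and any $w^*$-open neighbourhood $U$ of $z$ in $X^*$. Then $U\cap Z$ is weakly open in $Z$, and since $L_1[0,1]$ has the D2P, $U\cap B_Z$ has diameter $2$. Hence $\diam(U\cap B_{X^*})\ge 2\ge 2\varepsilon$ (we may assume $\varepsilon\le 1$, otherwise the hypothesis is vacuous or trivially contradictory), so $z\in s_{2\varepsilon}(B_{X^*})$. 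Choosing $z\in S_Z$ gives $\norm{z}=1>1-\varepsilon$, so $z\notin(1-\varepsilon)B_{X^*}$. This is the desired contradiction.

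The only point that needs care is that the D2P statement is about weakly open subsets of $B_Z$ in the norm-weak topology of $Z$, while $w^*$-open subsets of $X^*$ restrict to $\sigma(Z,X)$-open sets. Since $\sigma(Z,X)$ is coarser than the weak topology of $Z$, every $w^*$-open $U$ meeting $B_Z$ gives a weakly open nonempty subset $U\cap B_Z$ of $B_Z$. So the inclusion goes in the right direction and the argument is immediate. I therefore expect no serious obstacle. The main check is that the dichotomy of \cite[\S 22 Theorem 5]{LaceyBook}, that $X^*$ is either $\ell_1$ or $(\C[0,1])^*$, indeed provides an isometric copy of $L_1[0,1]$ whenever $X^*$ is nonseparable, which is exactly the content of Lemma~\ref{lem:suffEll1Predual}.
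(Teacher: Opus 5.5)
Your proposal is correct and matches the paper's proof: you embed $L_1[0,1]$ isometrically into $X^*$, use the D2P to get $B_Z\subseteq s_2(B_{X^*})\subseteq s_{2\varepsilon}(B_{X^*})$, and contradict $s_{2\varepsilon}(B_{X^*})=(1-\varepsilon)B_{X^*}$ with a norm-one element of $Z$. Your separate handling of the finite-dimensional case and your reduction to $\varepsilon\le 1$ are harmless refinements of points the paper leaves implicit.
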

\begin{proof}
Assume that $ X^{*} $ is not separable. In the same way as in the proof of Proposition~\ref{prop:sufficientForDualEll1}, we can find a subspace $ Z \subseteq X^{*} $ isometric to $ L_{1}[0, 1] $ and find out that $ B_{Z} \subseteq s_{2}(B_{X^{*}}) $. Then $ B_{Z} \subseteq s_{2}(B_{X^{*}}) \subseteq s_{2\epsilon}(B_{X^{*}}) = (1-\varepsilon)B_{X^{*}} $, which is not possible.
\end{proof}

\section{The Szlenk derivative of \texorpdfstring{$\C(K)$}{C(K)}, \texorpdfstring{$K$}{K} scattered}\label{sec:szlenk}
In this section, we prove a relation between the concepts of the Cantor-Bendixson derivative of a compact space $K$ and the Szlenk derivative of the ball $B_{M(K)}$, which we use afterwards in order to characterize $\C(\omega^\alpha\cdot k)$ spaces among other separable $\C(K)$ spaces, see Theorem~\ref{thm:charactCkcountableInCk}. While it is known that the Szlenk index of $\C(K)$ can be computed directly from the Cantor-Bendixson height of $K$, see \cite{CAUSEY_C(K)_index} or \cite{Causey_szlenk_hulls}, we were not able to find anywhere in literature an exact formula for the Szlenk derivative of $B_{M(K)}$ in terms of the Cantor-Bendixson derivative of $K$. 

Actually, the results that we prove in this section are stronger than we will need later. Indeed, we will apply Corollary~\ref{cor:SzlenkDerivative2} only in the case when the compact $K$ is countable. However, the proofs are not much more complicated in the more general setting. Before embarking on the results, let us recall that the sets
\[U[\mu, f_1, \ldots, f_n, \eta]=\Big\{\nu \in B_{M(K)} \setsep \abs{\mu(f_j)-\nu(f_j)}<\eta \text{ for every $j=1,\ldots,n$}\Big\},\]
where $\eta>0$ and $f_1, \ldots, f_n \in B_{\C(K)}$, form a basis of open neighborhoods of a measure $\mu$ in $B_{M(K)}$ endowed with the $w^*$ topology.

\begin{prop}
\label{prop:derivative_of_C(K)}
If $K$ is an infinite scattered compact space, then for each $0<\varepsilon\leq 1$,
\[s_{2 \varepsilon}(B_{\C(K)^*})=\{\mu \in B_{M(K)}\setsep \abs{\mu}(K \setminus D(K)) \leq 1-\varepsilon\}.\]
\end{prop}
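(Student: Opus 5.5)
We prove the two inclusions separately, writing $I=K\setminus D(K)$ for the set of isolated points. Since $K$ is scattered, every $\mu\in M(K)$ is purely atomic, $\mu=\sum_{x\in K}\mu(\{x\})\delta_x$ with $\sum_x\abs{\mu(\{x\})}=\|\mu\|$. Each $x\in I$ gives a continuous function $\1_{\{x\}}$, so $\nu\mapsto\nu(\{x\})$ is $w^*$-continuous. Hence for every finite $F\subseteq I$ the map $\nu\mapsto\sum_{x\in F}\abs{\nu(\{x\})}$ is $w^*$-continuous, and $\nu\mapsto\abs{\nu}(I)=\sup_F\sum_{x\in F}\abs{\nu(\{x\})}$ is $w^*$-lower semicontinuous.

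\textbf{Inclusion $\subseteq$.} Let $\mu\in B_{M(K)}$ with $\abs{\mu}(I)>1-\varepsilon$. Choose a finite $F\subseteq I$ and $\delta>0$ with $\sum_{x\in F}\abs{\mu(\{x\})}>1-\varepsilon+2\delta$. Take the $w^*$-neighbourhood
\[V=\Big\{\nu\in B_{M(K)}\setsep \abs{\nu(\{x\})-\mu(\{x\})}<\delta/\abs F \text{ for all } x\in F\Big\}.\]
For $\nu,\nu'\in V$, split $\nu-\nu'$ into its part on $F$ and its part on $K\setminus F$. The part on $F$ has norm $<2\delta$. Each of $\nu,\nu'$ has mass on $F$ greater than $1-\varepsilon+\delta$, so each has mass on $K\setminus F$ less than $\varepsilon-\delta$, and the part of $\nu-\nu'$ on $K\setminus F$ has norm $<2\varepsilon-2\delta$. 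Thus $\|\nu-\nu'\|<2\varepsilon$, i.e. $\diam(V)<2\varepsilon$, and $\mu\notin s_{2\varepsilon}(B_{M(K)})$. This is why the isolated part of the mass is tested.

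\textbf{Inclusion $\supseteq$.} Let $\abs{\mu}(I)\le 1-\varepsilon$ and fix a basic neighbourhood $U=U[\mu,f_1,\dots,f_n,\eta]$. We must produce $\nu\in U$ with $\|\nu-\mu\|\ge 2\varepsilon-\eta'$ for arbitrary small $\eta'$; then $\diam(U)\ge 2\varepsilon$.
\begin{enumerate}
\item Approximate $\mu$ in norm by a finitely supported measure $\mu'=\sum_{i} a_i\delta_{x_i}$ with $\|\mu'\|\le\|\mu\|$ and $\abs{\mu'}(I)\le 1-\varepsilon$, by truncating the atomic series.
\item The free mass $r=1-\|\mu'\|$ and the nonisolated mass $\sum_{x_i\in D(K)}\abs{a_i}$ together total at least $\varepsilon$ (up to a small error).
\item Since $K$ is infinite and compact, $D(K)\neq\emptyset$. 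Pick any $p\in D(K)$ and a net of distinct isolated points $y_\gamma\to p$; the same holds near each $x_i\in D(K)$.
\item For a nonisolated atom $x_i$, replace $a_i\delta_{x_i}$ by $a_i\delta_{y}$, where $y$ is an isolated point close to $x_i$ in the sense that $\abs{f_j(y)-f_j(x_i)}$ is small for all $j$ (the finitely many $f_j$ are continuous). This leaves the measure in $U$ and moves it by $2\abs{a_i}$ in norm.
\item Spend the free mass $r$ by adding $\frac r2(\delta_{y}-\delta_{y'})$, where $y,y'$ are two isolated points close to $p$ and not among the previous atoms. This changes the $f_j$-values by arbitrarily little, increases the norm by exactly $r$, and moves the measure by $r$ in norm.
\item Take the resulting $\nu$ and compare it with the measure $\nu''$ obtained by applying the same procedure with different points, or with $\mu'$ itself. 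Then $\|\nu-\mu'\|\ge 2\sum_{x_i\in D(K)}\abs{a_i}+r$.
\end{enumerate}
The count in step 6 gives only $r$ from the free mass, so use instead two measures $\nu_1,\nu_2\in U$ carrying $\pm\frac r2$ at disjoint isolated pairs. Then $\|\nu_1-\nu_2\|\ge 2\sum_{x_i\in D(K)}\abs{a_i}+2r\ge 2\varepsilon-\eta'$.

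The main obstacle is exactly this bookkeeping: the free norm budget and the nonisolated mass must each contribute twice their size to the diameter, while the $f_j$-values and the norm constraint $\le 1$ are preserved. The fix is to compare two perturbed measures $\nu_1,\nu_2$ rather than comparing one of them to $\mu$.
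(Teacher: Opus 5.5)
Your proof is correct. For $\subseteq$ you do essentially what the paper does: a finite set $F$ of isolated points carrying most of $\abs{\mu}$, the neighbourhood defined by the functions $\chi_{\{x\}}$, $x\in F$, and the splitting of $\nu-\nu'$ over $F$ and $K\setminus F$. Your bound is in fact uniform, $\|\nu-\nu'\|\le 2\bigl(1-\sum_{x\in F}\abs{\mu(\{x\})}\bigr)+4\delta<2\varepsilon$, and this uniformity is what gives $\diam(V)<2\varepsilon$ rather than just $\le 2\varepsilon$.

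For $\supseteq$ your route is genuinely different. The paper never decomposes $\mu$ into atoms. It uses regularity and the Hahn--Jordan decomposition to obtain disjoint compacta $L_1,L_2\subseteq D(K)$ on which $\mu$ has constant sign. It then partitions them into pieces $V_i^s$ on which the $f_j$ oscillate little, and replaces $\mu|_{V_i^s}$ by $\mu(V_i^s)\delta_{x_i^s}$ at an isolated point. The sign condition is exactly what guarantees $\abs{\mu(V_i^s)}=\abs{\mu}(V_i^s)$. Finally, $\nu_1$ discretizes $L_1$ and keeps $\mu|_{L_2}$, while $\nu_2$ does the reverse.

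You instead use that every Radon measure on a scattered compact space is purely atomic, and truncate to a finitely supported $\mu'$. Moving atoms one at a time avoids any cancellation, so the Hahn decomposition and the careful covers disappear. You even get $\|\nu_1-\nu_2\|=2\bigl(1-\abs{\mu'}(K\setminus D(K))\bigr)\ge 2\varepsilon$ with no error term. The price is the atomicity theorem (a classical result of Rudin), which you use without justification. You should cite it or prove it; it follows quickly by taking a relatively isolated point of the support of the continuous part of $\abs{\mu}$. The paper's argument, by contrast, uses scatteredness only through the density of isolated points in $K$.

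Two expository fixes. First, your initially stated goal, a single $\nu\in U$ with $\|\nu-\mu\|\ge 2\varepsilon-\eta'$, is unattainable in general (e.g.\ $\mu=0$ and $\varepsilon>\tfrac12$), so state the two-measure goal from the outset. Second, say explicitly that all isolated points used in $\nu_1$ and $\nu_2$ are pairwise distinct and lie off the support of $\mu'$. This is possible because every neighbourhood of a point of $D(K)$ contains infinitely many isolated points, and it is needed, since otherwise the moved atoms could cancel in $\nu_1-\nu_2$.
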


\begin{proof}
``$\supseteq $'': We pick a measure $\mu \in B_{M(K)}$ satisfying $\abs{\mu}(K \setminus D(K)) \leq 1-\varepsilon$. We also fix $f_1, \ldots, f_n \in B_{\C(K)}$ and $\eta>0$.
We wish to show that $U[\mu, f_1, \ldots, f_n, \eta] \cap B_{M(K)}$
has diameter at least $2 \varepsilon$.

To this end, we first pick an arbitrary $0<\delta<\eta$, and we put $C=\frac{1-\norm{\mu}}{2}$. Using the regularity of the measure $\mu$, we find a compact set $F \subseteq K \setminus D(K)$ such that $\abs{\mu}(K \setminus (D(K) \cup F))<\frac{\delta}{6}$, and, by the standard Hahn-Jordan decomposition theorem and using the regularity of $\mu$ again, we find disjoint compact sets $L_s \subseteq D(K)$ for $s=1, 2$ such that $\abs{\mu}(D(K) \setminus (L_1 \cup L_2))<\frac{\delta}{6}$, $\mu_{|L_1} \geq 0$ and $\mu_{|L_2} \leq 0$. Then 
\[\abs{\mu}(K \setminus (F \cup L_1 \cup L_2))=\abs{\mu}(K \setminus (D(K)\cup F))+\abs{\mu}(D(K) \setminus (L_1 \cup L_2)) < \frac{\delta}{6}+\frac{\delta}{6}=\frac{\delta}{3}.\]

Next, we find an open cover $(U_{i}^{1})_{i=1}^{k_1}$ of the compact set $L_1$ such that for each $j=1, \ldots, n$, $i=1, \ldots, k_1$, and $x, y \in U_i^{1}$, it holds $\abs{f_j(x)-f_j(y)}<\frac{\delta}{3}$, $U_i^1 \cap F=\emptyset$ and $\overline{U_i^1} \cap L_2=\emptyset$. 
Similarly, we find an open cover $(U_{i}^{2})_{i=1}^{k_2}$ of the compact set $L_2$ such that for each $j=1, \ldots, n$, $i=1, \ldots, k_2$, and $x, y \in U_i^{2}$, it holds $\abs{f_j(x)-f_j(y)}<\frac{\delta}{3}$, $U_i^2 \cap F=\emptyset$ and $U_i^2 \cap \overline{\bigcup_{m=1}^{k_1} U_m^1}=\emptyset$. 

Further, for each $s=1, 2$ and $i=1, \ldots, k_{s}$, we find a point $x_i^s \in U_i^s$ isolated in $K$, and we also pick two more points $y_1, y_2 \in U_1^1$ isolated in $K$, such that all the points $x_1^1, \ldots, x_{k_1}^1, x_1^2, \ldots, x_{k_2}^2, y_1, y_2$ are distinct (notice that we may assume that each $U_i^s$ contains infinitely many isolated points of $K$, since every open set containing an accumulation point of $K$ has this property). Next, let for $s=1, 2$,
\[V_1^s=U_1^s \cap L_s \text{ and } V_i^s=(U_i^s \cap L_s) \setminus \bigcup_{m < i} U_m^s \text{ for } i=2, \ldots, k_s.\]
Notice that $L_s=\bigcup_{i=1}^{k_s} V_i^s$ for $s=1, 2$.

Now, we consider the measures
\[\nu_1=\mu_{|F}+\mu_{|L_2}+\sum_{i=1}^{k_1} \mu(V_i^1) \delta_{x_i^1}+C\delta_{y_1}-C\delta_{y_2}\]
and 
\[\nu_2=\mu_{|F}+\mu_{|L_1}+\sum_{i=1}^{k_2} \mu(V_i^2) \delta_{x_i^2}-C\delta_{y_1}+C\delta_{y_2}.\]
Then, we have 
\begin{equation}
\nonumber
\begin{aligned}
\norm{\nu_1} &\leq \abs{\mu}(F \cup L_2)+\sum_{i=1}^{k_1} \abs{\mu(V_i^1)}+2C
\leq \abs{\mu}(F \cup L_1 \cup L_2)+2C \leq \abs{\mu}(K)+2C=1,   
\end{aligned}
\end{equation}
and similarly $\norm{\nu_2} \leq 1$.

Next, we show that the measures $\nu_1, \nu_2$ belong to the set $U[\mu, f_1, \ldots, f_n, \eta]$. We show this for the measure $\nu_1$, the proof for the measure $\nu_2$ is completely analogous.
So, let $j \in \{1, \ldots, n\}$ be arbitrary. Then
\begin{equation}
\nonumber
\begin{aligned}
&\abs{\mu(f_j)-\nu_1(f_j)}=\abs{\int_{K \setminus (F \cup L_2)} f_j d\mu-\sum_{i=1}^{k_1} \mu(V_i^1)f_j(x_i^1)-Cf_j(y_1)+Cf_j(y_2)}=\\&=
\abs{\int_{K \setminus (F \cup L_1 \cup L_2)} f_j d\mu+\sum_{i=1}^{k_1} \int_{V_i^1} (f_j(x)-f_j(x_i^1)) d\mu(x)+C(f_j(y_2)-f_j(y_1))}
\leq \\& \leq
\abs{\mu} (K \setminus (F \cup L_1 \cup L_2))+\sum_{i=1}^{k_1} \abs{\mu}(V_i^1) \cdot \sup_{x \in V_i^1}\abs{f_j(x)-f_j(x_i^1)}+\\&+C\abs{f_j(y_2)-f_j(y_1)}
\leq \frac{\delta}{3}+\abs{\mu}(K) \frac{\delta}{3}+C\frac{\delta}{3} \leq \delta<\eta.
\end{aligned}
\end{equation}
Finally, we have
\[\nu_1-\nu_2 = \mu_{|L_2}-\mu_{|L_1}+ \sum_{i=1}^{k_1} \mu(V_i^1)\delta_{x_i^1}-\sum_{i=1}^{k_2} \mu(V_i^2)\delta_{x_i^2}+2C(\delta_{y_1} - \delta_{y_2}),\]
thus, since all the measures $\mu|_{L_1}, \mu|_{L_2}, \delta_{x_1^1}, \ldots, \delta_{x_{k_1}^1}, \delta_{x_1^2}, \ldots, \delta_{x_{k_2}^2}, \delta_{y_1}, \delta_{y_2}$ have disjoint compact supports, and since for each $s=1, 2$ and $i \in \{1, \ldots, k_s\}$, $\abs{\mu(V_i^s)}=\abs{\mu}(V_i^s)$, we have  
\begin{equation}
 \nonumber
 \begin{aligned}
\norm{\nu_1-\nu_2}&=\abs{\mu}(L_1)+\abs{\mu}(L_2)+\sum_{s=1}^2\sum_{i=1}^{k_s} \abs{\mu}(V_i^s)+4C=\\&=
2(\abs{\mu}(L_1 \cup L_2)+2C) \geq 2\Big(\abs{\mu}(D(K))-\frac{\delta}{6}+1-\abs{\mu}(K)\Big)
=\\&=2\big(1-\abs{\mu}(K \setminus D(K))\big)- \frac{\delta}{3} \geq 2\varepsilon- \frac{\delta}{3}.     
 \end{aligned}
\end{equation}
Since $\delta>0$ was arbitrary, this shows that $\diam(U[\mu, f_1, \ldots, f_n, \eta]\cap B_{M(K)})\geq 2\varepsilon$ and therefore $\mu\in s_{{2\varepsilon}}(B_{\C(K)^*})$.

\noindent``$\subseteq $'': Pick $\mu \in B_{M(K)}$ such that $\abs{\mu}(K \setminus D(K))>1-\ep$. Find $0<\varepsilon'<\ep$ and $F \subseteq K \setminus D(K)$ compact such that $\abs{\mu}(F)>1-\varepsilon'$. Since $F$ is a compact set consisting of isolated points, it is finite, say, $F=\{x_1, \ldots, x_n\}$. Fix $\delta>0$ such that $4\delta n+2\varepsilon'<2 \varepsilon$. We prove that $\diam(U[\mu, \chi_{x_1}, \ldots, \chi_{x_n}, \delta]\cap B_{M(K)})<2\varepsilon$. To this end, notice that for each $\nu \in U[\mu, \chi_{x_1}, \ldots, \chi_{x_n}, \delta]\cap B_{M(K)}$, 
\[\abs{\nu}(K \setminus F)\leq 1 - \abs{\nu}(F) = 1 - \sum_{i=1}^n |\nu(\{x_i\})|\leq 1 - \abs{\mu}(F) + \delta n\leq \delta n + \varepsilon'.\]
Thus, for arbitrary $\nu_1, \nu_2 \in U(\mu, \chi_{x_1}, \ldots, \chi_{x_n}, \delta)\cap B_{M(K)}$ we have 
\begin{equation}
\nonumber
\begin{aligned}
& \norm{\nu_1-\nu_2} \leq \sum_{i=1}^n \abs{\nu_1(\{x_i\})-\nu_2(\{x_i\})}+\abs{\nu_1}(K \setminus F)+\abs{\nu_2}(K \setminus F) \leq 4\delta n+2\varepsilon'<2 \varepsilon, 
\end{aligned}
\end{equation}
which finishes the proof.
\end{proof}

Combining the above result with a simple transfinite argument, we obtain the following corollary. Let us note that, for a compact set $F \subseteq K$, we identify a measure $\mu \in M(K)$ carried by $F$ with a member of $M(F)$.

\begin{cor}\label{cor:SzlenkDerivative2}
If $K$ is a scattered compact space, then for each ordinal $\alpha$,
\[s_2^{\alpha}(B_{\C(K)^*})=B_{M(D^{\alpha}(K))}.\] 
\end{cor}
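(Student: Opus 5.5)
We argue by transfinite induction on $\alpha$, writing $K_\alpha := D^\alpha(K)$ and viewing $B_{M(K_\alpha)}$ as the set of $\mu \in B_{M(K)}$ carried by $K_\alpha$. For $\alpha=0$ there is nothing to prove. For the successor step, assume $s_2^{\alpha}(B_{\C(K)^*})=B_{M(K_\alpha)}$. The key observation is that the restriction map $R\colon M(K)\to M(K_\alpha)$ is adjoint to the restriction $\C(K)\to\C(K_\alpha)$. This map is a quotient map by Tietze's theorem: every $g\in B_{\C(K_\alpha)}$ extends to some $f\in B_{\C(K)}$. Consequently, on the set of measures carried by $K_\alpha$, the $w^*$-topology inherited from $M(K)$ coincides with the $w^*$-topology of $M(K_\alpha)=\C(K_\alpha)^*$. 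Indeed, for $\mu,\nu$ carried by $K_\alpha$ and $f\in\C(K)$ we have $\mu(f)=\mu(f|_{K_\alpha})$, and conversely every $g\in\C(K_\alpha)$ is such a restriction. Norms agree as well. Hence the Szlenk derivative of $B_{M(K_\alpha)}$ computed inside $M(K)$ equals the one computed in $\C(K_\alpha)^*$, so
\[s_2^{\alpha+1}(B_{\C(K)^*})=s_2(B_{M(K_\alpha)})=s_2(B_{\C(K_\alpha)^*}).\]

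Now apply Proposition~\ref{prop:derivative_of_C(K)} to the scattered compact space $K_\alpha$ with $\varepsilon=1$. It gives
\[s_2(B_{\C(K_\alpha)^*})=\{\mu\in B_{M(K_\alpha)}\setsep |\mu|(K_\alpha\setminus D(K_\alpha))=0\}=B_{M(K_{\alpha+1})},\]
which is the claim for $\alpha+1$. The proposition requires $K_\alpha$ to be infinite, so the finite cases must be handled separately. If $K_\alpha$ is finite and nonempty, then $M(K_\alpha)=\ell_1^m$ carries the norm topology on its ball, so every $w^*$-open subset of $B_{M(K_\alpha)}$ contains small-diameter neighbourhoods. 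Thus $s_2(B_{M(K_\alpha)})=\emptyset$. At first sight this seems to clash with $B_{M(K_{\alpha+1})}=B_{M(\emptyset)}=\{0\}$, but it does not: $0$ is an interior point of the ball, and its neighbourhood $\{\nu\setsep \|\nu\|<1/2\}$ has diameter $<2$, so $0\notin s_2$, i.e. $s_2=\emptyset$. We therefore adopt the convention $B_{M(\emptyset)}=\emptyset$ (an empty $K_\alpha$ gives an empty derivative, and $s_2(\emptyset)=\emptyset$). I expect exactly this degenerate case, the precise reading of $B_{M(\emptyset)}$ when the derivative becomes finite or empty, to be the main point requiring care. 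The rest is bookkeeping: the infinite case is Proposition~\ref{prop:derivative_of_C(K)}, and the empty case propagates trivially.

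For a limit ordinal $\alpha$ we compute
\[s_2^\alpha(B_{\C(K)^*})=\bigcap_{\beta<\alpha}B_{M(K_\beta)}.\]
A measure in $B_{M(K)}$ lies in this intersection iff $|\mu|(K\setminus K_\beta)=0$ for all $\beta<\alpha$. Since $K\setminus K_\alpha=\bigcup_{\beta<\alpha}(K\setminus K_\beta)$ is an increasing union of open sets and $|\mu|$ is a regular Radon measure, we get $|\mu|(K\setminus K_\alpha)=\sup_\beta |\mu|(K\setminus K_\beta)=0$. Here $\tau$-additivity on open sets, which holds for Radon measures, handles the possibly uncountable cofinality in the nonmetrizable case. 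Hence the intersection is $B_{M(K_\alpha)}$. If $K_\alpha=\emptyset$, compactness forces $K_\beta=\emptyset$ for some $\beta<\alpha$, and the convention above applies consistently.
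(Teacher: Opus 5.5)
Your proof is correct and follows the same route as the paper: transfinite induction, with Proposition~\ref{prop:derivative_of_C(K)} (taking $\varepsilon=1$) applied to $D^\beta(K)$ at successor steps, a separate treatment of the finite case, and a direct computation of the intersection at limit steps. You also make explicit several points the paper leaves tacit, namely the Tietze-based identification of the $w^*$-topologies on measures carried by $D^\alpha(K)$, the $\tau$-additivity argument at limits, and the convention $B_{M(\emptyset)}=\emptyset$, which the paper uses implicitly when it says that in the finite case ``both sets are empty''; note, however, that your interior-point remark confirms, rather than removes, the clash with the literal reading $B_{M(\emptyset)}=\{0\}$, and that clash is exactly why the convention is needed.
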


\begin{proof}
We proceed by transfinite induction. The case $\alpha=0$ is trivial. When $\alpha=1$ the equality follows by Proposition \ref{prop:derivative_of_C(K)} in the case when $K$ is infinite, and in the finite case it follows by standard arguments that both sets are empty. 

If $\alpha=\beta+1$ is a successor ordinal and the statement is true for $\beta$, then again by Proposition \ref{prop:derivative_of_C(K)} we have 
\[s_2^{\alpha}(B_{\C(K)^*})=s_2(s_{2}^{\beta}(B_{\C(K)^*}))=s_2(B_{M(D^{\beta}(K))})=B_{M(D(D^{\beta}(K)))}=B_{M(D^{\alpha}(K))}.\]
Likely, if $\alpha$ is a limit ordinal and the statement holds for each $\beta<\alpha$, then 
\[s_2^{\alpha}(B_{\C(K)^*})=\bigcap_{\beta<\alpha} s_2^{\beta}(B_{\C(K)^*})=\bigcap_{\beta<\alpha} B_{M(D^{\beta}(K))}=B_{M(D^{\alpha}(K))}.\]
\end{proof}

Now, we are ready to formulate and prove the characterization of $\C(\omega^\alpha\cdot k)$ spaces among all separable $\C(K)$ spaces.

\begin{thm}\label{thm:charactCkcountableInCk}
Let $X$ be a separable Banach space, $\alpha<\omega_1$ countable nonzero ordinal and $k\in\Nat$. Then $X$ is isometric to $\C(\omega^\alpha\cdot k)$ if and only if the following conditions hold.
\begin{enumerate}[label=(\roman*)]
    \item\label{it:CkAny} $X$ is isometric to some $\C(K)$ space.
    \item\label{it:derivativeCharacterization} $s_2^\alpha(B_{X^*})$ is a nonempty subset of $k$-dimensional space, and if $k>1$ then it is not a subset of $(k-1)$-dimensional space.
\end{enumerate}
In particular, if $X$ is isometric to a $\C(K)$ space and it satisfies condition \ref{it:derivativeCharacterization} for some ordinal $\alpha$ and $k \in \Nat$, then $K$ is countable.
\end{thm}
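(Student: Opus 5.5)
The plan is to use Corollary~\ref{cor:SzlenkDerivative2} to read off the Cantor--Bendixson data of $K$ from the Szlenk derivative. The one non-formal input is a separate argument that $K$ must be scattered. Throughout, note that Szlenk derivatives are isometric invariants: a surjective isometry $T\colon X\to \C(K)$ induces a $w^*$-homeomorphic isometry $T^*\colon B_{\C(K)^*}\to B_{X^*}$. Hence $s_2^\alpha(B_{X^*})=T^*\big(s_2^\alpha(B_{M(K)})\big)$, and linear dimensions of spans are preserved.

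For the ``only if'' direction, take $K=[0,\omega^\alpha\cdot k]$. This $K$ is scattered and satisfies $D^\alpha(K)=\{\omega^\alpha i\setsep i=1,\dots,k\}$. By Corollary~\ref{cor:SzlenkDerivative2},
\[
s_2^\alpha(B_{\C(K)^*})=B_{M(D^\alpha(K))}=\Big\{\sum_{i=1}^k a_i\delta_{\omega^\alpha i}\setsep \sum_{i=1}^k|a_i|\le 1\Big\}.
\]
This set is nonempty and spans exactly a $k$-dimensional space, so it is not contained in any $(k-1)$-dimensional space. Condition \ref{it:derivativeCharacterization} therefore holds, and condition \ref{it:CkAny} is trivial.

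For the ``if'' direction, suppose $X\equiv \C(K)$ with $K$ compact, metrizable since $X$ is separable. The first step is to show that $K$ is scattered, hence countable. If $K$ were not scattered, it would contain a nonempty perfect compact set $P$. I would show by transfinite induction that $B_{M(P)}\subseteq s_2^\beta(B_{M(K)})$ for every ordinal $\beta$. For this it suffices to check that $s_2(B_{M(P)})=B_{M(P)}$ for perfect $P$, since $s_2$ is monotone in the set. The ``$\supseteq$'' part of the proof of Proposition~\ref{prop:derivative_of_C(K)} should go through verbatim with $F=\emptyset$: there $D(P)=P$, and every open set meeting $P$ contains infinitely many points of $P$, which can replace the isolated points $x_i^s,y_1,y_2$ by Dirac masses at distinct points of $P$. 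This gives diameter $\ge 2$ in every neighbourhood. Alternatively, one could argue as in Proposition~\ref{prop:sufficientForDualEll1}: $M(K)\supseteq M(P)$ contains an isometric copy of $L_1[0,1]$ whose ball survives all Szlenk derivatives. Either way, $s_2^\alpha(B_{X^*})$ would contain the ball of the infinite-dimensional space $M(P)$, which contradicts \ref{it:derivativeCharacterization}. This step proves the final ``in particular'' claim and is the main point requiring care.

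Once $K$ is scattered, Corollary~\ref{cor:SzlenkDerivative2} gives $s_2^\alpha(B_{M(K)})=B_{M(D^\alpha(K))}$. The span of this set is $M(D^\alpha(K))$, whose dimension is $|D^\alpha(K)|$, or infinite if that set is infinite. Condition \ref{it:derivativeCharacterization} therefore forces $D^\alpha(K)$ to be nonempty with exactly $k$ points. In particular $D^\alpha(K)$ is finite, so $D^{\alpha+1}(K)=\emptyset$. Thus $\height(K)=\alpha+1$ and $|D^{\height(K)-1}(K)|=k$. By the Mazurkiewicz--Sierpi\'nski classification recalled in the introduction, $K$ is homeomorphic to $[0,\omega^\alpha\cdot k]$, and hence $X\equiv\C(\omega^\alpha\cdot k)$. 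Since $\alpha$ is nonzero, $K$ is infinite; the hypothesis $\alpha\neq0$ is needed only so that the case $k$ points with $\alpha=0$, which is finite-dimensional, does not arise.
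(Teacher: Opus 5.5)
Your proof is correct. The ``only if'' direction and the final identification of $K$ via Corollary~\ref{cor:SzlenkDerivative2} and the Mazurkiewicz--Sierpi\'nski classification agree with the paper. The paper applies the classification first and then matches $\beta,m$ with $\alpha,k$, which is only a reordering.

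The genuine difference is how you show that $K$ is scattered.

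\textbf{The paper's route.} It notes that $\C(K)$ is an $\mathcal{L}_{\infty,1+}$-space. Proposition~\ref{prop:sufficientForDualEll1} then gives $X^*\equiv\ell_1$; this rests on Lacey's dichotomy that the dual of a separable $L_1$-predual is $\ell_1$ or $\C[0,1]^*$. Hence $X$ is Asplund, and $K$ is scattered by the Asplund characterization of $\C(K)$ spaces.

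\textbf{Your route.} You argue by contraposition directly inside $M(K)$. A non-scattered $K$ contains a nonempty perfect compact set $P$. Either $B_{M(P)}$ itself, or the ball of a copy of $L_1[0,1]$ in $M(P)$, survives every Szlenk derivative, which contradicts (ii). Such a copy is, e.g., $L_1$ of a non-atomic measure on a Cantor set inside $P$.

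\textbf{Comparison.} Your argument avoids both Lacey's classification and the Asplund/scattered theorem, so it is more self-contained for $\C(K)$ spaces. The paper's route yields the stronger intermediate conclusion $X^*\equiv\ell_1$ for arbitrary separable $\mathcal{L}_{\infty,1+}$-spaces. The paper reuses that fact, e.g.\ in the remark preceding Corollary~\ref{cor:isometricCharacterizationCKcountable}.

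\textbf{A fix needed in your first variant.} The ``$\supseteq$'' argument of Proposition~\ref{prop:derivative_of_C(K)} does not transfer completely verbatim to a perfect $P$. The computation $\norm{\nu_1-\nu_2}=\abs{\mu}(L_1)+\abs{\mu}(L_2)+\dots$ needs the Dirac masses to be singular with respect to $\mu|_{L_1}$ and $\mu|_{L_2}$.
\begin{itemize}
\item In the original this is automatic, because isolated points lie outside $D(K)\supseteq L_1\cup L_2$.
\item Points of $P$, however, may lie in $L_1\cup L_2$ and be atoms of $\mu$. For instance, $x_i^1$ or $y_1,y_2$ could be atoms of $\mu|_{L_1}$, and then mass cancels.
\item You must therefore choose $x_i^s,y_1,y_2$ outside the countable set of atoms of $\abs{\mu}$.
\item This is possible because every nonempty relatively open subset of a nonempty perfect compact metric space is uncountable. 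Merely infinite, as you wrote, is not enough.
\end{itemize}
The $L_1[0,1]$ variant needs no such care.
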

\begin{proof}First, assume $X$ is a $\C(K)$ space satisfying condition \ref{it:derivativeCharacterization}. Since any $\C(K)$ space is a $\mathcal{L}_{\infty,1+}$-space, by Proposition~\ref{prop:sufficientForDualEll1} we obtain that $X^*$ is isometric to $\ell_1$, which implies $X$ is Asplund, therefore $K$ is scattered (see e.g. \cite[Theorem 14.25]{FHHMZ}) and since it is metrizable, it is countable (see e.g. \cite[Lemma 14.21]{FHHMZ}). By the Mazurkiewicz-Sierpi\'{n}ski classification (see e.g. \cite[Theorem 2.56]{BiortogBook}), there are a countable ordinal $\beta<\omega_1$ and $m\in\Nat$ with $X$ being isometric to $\C(\omega^\beta\cdot m)$. By Corollary~\ref{cor:SzlenkDerivative2}, we obtain that $\beta = \alpha$ (because $\beta<\alpha$ implies that $s_2^\alpha(B_{X^*})$ is not even one-dimensional while $\beta>\alpha$ implies $s_2^\alpha(B_{X^*})$ is not contained in a finite-dimensional space) and $m=k$. Thus, $X$ is isometric to $\C(\omega^\alpha\cdot k)$.

On the other hand, if $X$ is isometric to $\C(\omega^\alpha\cdot k)$, then it is of course a $\C(K)$ space and it satisfies condition \ref{it:derivativeCharacterization} by Corollary~\ref{cor:SzlenkDerivative2}.
\end{proof}

\section{Isometric characterizations of spaces \texorpdfstring{$\C(K)$}{C(K)}, \texorpdfstring{$K$}{K} zero-dimensional}\label{sec:mainPart1}

In this section we aim at characterization of those $L_1$-preduals which are isometric to a $\C(K)$ space with $K$ zero-dimensional. The main result of this section is Theorem~\ref{thm:mainPart1Strong} which in particular implies Theorem~\ref{thm:Intro2}. The key here is the condition \eqref{eq:newCondition} from the Introduction. Let us briefly explain the idea behind. The starting observation is that conditions obtained so far in previous sections do not distinguish between e.g. the Banach spaces $c$ and $c\oplus_\infty c_0$. Now, we may consider the condition
\begin{equation}\label{eq:babyVersion}
\exists a\in S_{X} \; \forall y\in S_X:\quad \max\{\|a-y\|,\|a+y\|\} = 2.
\end{equation}
We note that \eqref{eq:babyVersion} is satisfied in the space $X=c$ with $a$ being e.g. the constant one function, while one can prove that \eqref{eq:babyVersion} does not hold in the space $X=c\oplus_\infty c_0$. On the other hand, there are still some other isometric $\ell_1$ preduals which satisfy the condition above and still they are not isometric to $c$, those can be obtained e.g. by considering several isometric $\ell_1$ preduals described by Casini, Miglierina and Piasecki in \cite{CMP24}. After considering some variants of the condition above, we arrived at \eqref{eq:newCondition} which eventually led to the results we needed.

We start by explaining some notation. In this section, if $X$ is a Banach space with $X^*$ being isometric to some $L_1(\mu)$ space, by the $w^*$ topology on $L_1(\mu)$ we understand the topology generated by $X$, that is, $w^* = \sigma(L_1(\mu),X)$. A fact which we shall use frequently below is that given any Banach space $X$, by the Krein-Milman theorem we have $B_{X^*} = \overline{\co}^{w^*} (\ext B_{X^*})$ and therefore $\ext B_{X^*}$ is \emph{boundary} of $B_{X^*}$, that is,
\[
\|x\| = \max_{x^*\in \ext B_{X^*}} |x^*(x)|,\quad x\in X.
\]

We start with the most demanding part of the argument, which shows that whenever $X$ is an $L_1$-predual satisfying slightly weaker version of our condition \eqref{eq:newCondition}, the set $\ext B_{X^*}$ is $w^*$-closed, see Proposition~\ref{prop:closedExtremePoints}. To this end, we first establish the following.

\begin{lemma}\label{lem:bigInEveryExtremePoint}
Let $ X $ be an $L_1$-predual and $\varepsilon\in (0,\tfrac{1}{4})$. Let $ x \in S_{X} $ have the property that
$$ \forall y \in S_{X} : \max \{ \Vert x + y \Vert, \Vert x - y \Vert \} \geq 2 - \varepsilon. $$
Then $ |z^{*}(x)| \geq 1 - 4 \varepsilon $ for each $ z^{*} \in \ext B_{X^{*}} $.
\end{lemma}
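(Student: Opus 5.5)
The plan is to argue by contradiction using the $\mathcal L_{\infty,1+}$ structure of $X$. Suppose $z^*\in\ext B_{X^*}$ satisfies $|z^*(x)|<1-4\varepsilon$; replacing $z^*$ by $-z^*$ if needed, we may assume $t:=z^*(x)\in[0,1-4\varepsilon)$. We will build $y\in S_X$ with $\max\{\|x+y\|,\|x-y\|\}<2-\varepsilon$, contradicting the hypothesis. Heuristically, $y$ should be a ``bump'' of height $1$ located at $z^*$ and essentially zero elsewhere. Then $\|x\pm y\|$ is at most roughly $\max\{1,\,|t|+1\}<2-4\varepsilon$, with some room left for approximation errors.

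\emph{Step 1 (finite-dimensional model).} Fix a small $\delta>0$, to be chosen in terms of $\varepsilon$. Since $X$ is an $L_1$-predual, it is an $\mathcal L_{\infty,1+}$-space (\cite[\S 23 Theorem 2]{LaceyBook}). So there is a finite-dimensional $F\ni x$ and an isomorphism $T\colon\ell_\infty^n\to F$ with $\|T\|\,\|T^{-1}\|<1+\delta$; normalise so that $\|T^{-1}\|\le 1$. Let $e_1,\dots,e_n$ be the images of the unit vectors and $e_i^*$ the coordinate functionals on $F$. Write $x=\sum_i x_ie_i$, so $|x_i|\le 1$, and write $z^*|_F=\sum_i\lambda_i e_i^*$. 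Then $\sum_i|\lambda_i|\le\|z^*|_F\|_{F^*}\le 1+\delta$ and $\sum_i\lambda_i x_i=t$.

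\emph{Step 2 (localising $z^*$ on one coordinate).} This is the main obstacle. We want, for $\delta$ small (depending on $\varepsilon$), an index $i$ with $|\lambda_i|\ge 1-\eta$, where $\eta=\eta(\delta)\to0$. What I would try first: since $\ell_\infty^n$ is $1$-injective, there is a projection $P\colon X\to F$ with $\|P\|\le 1+\delta$. The functionals $e_i^*\circ P$ lie in $(1+\delta)B_{X^*}$, and $z^*$ nearly agrees with $\sum_i\lambda_i(e_i^*\circ P)$ on $F$. Because $X^*$ is an $L_1$-space and $z^*$ is extreme, $z^*$ is $\pm$ a normalised atom. If the mass of $(\lambda_i)$ were spread over several coordinates, we could split $z^*$ (approximately) as a nontrivial convex combination of two norm-one functionals. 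Extremality rules this out, up to an error controlled by $\delta$.

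If this direct route resists, there is an alternative that needs only a weaker conclusion. Use the test vectors $y=\pm e_i$ and $y=\sum_{i\in A}\pm e_i$, and average over sign patterns, to show that spread-out mass also leads to a contradiction with the hypothesis on $x$. For instance, choose $A$ to be the set of $i$ carrying $\lambda$-mass where $|x_i|$ is not close to $1$. Since $\sum_i\lambda_ix_i=t<1-4\varepsilon$, a definite portion of the $\lambda$-mass must sit on such coordinates.

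\emph{Step 3 (the test vector).} Take $y=e_i/\|e_i\|$ for the index $i$ from Step 2, so $\|e_i\|\in[1,1+\delta]$. For every $j\ne i$ the $j$-th coordinate of $x\pm e_i$ is $x_j$, and the $i$-th coordinate is $x_i\pm1$. Since $\sum_j\lambda_jx_j=t$ and $|\lambda_i|\ge 1-\eta$, we get $|x_i|\le t+O(\eta)$. Hence
\[
\|x\pm y\|\le(1+\delta)\max\{1,\,1+t+O(\eta)\}+O(\delta)<2-\varepsilon
\]
once $\delta$ (and thus $\eta$) is small relative to $\varepsilon$. The constant $4\varepsilon$ in the statement leaves ample slack for these errors. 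This contradicts the assumption on $x$ and finishes the proof.
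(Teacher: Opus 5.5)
\emph{The gap is Step 2.} It is not a technical obstacle but the entire content of the lemma, and neither route you sketch for it works.

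Extremality of $z^*$ in $B_{X^*}$ gives no information about $z^*|_F$. The functionals $z^*$ and $\sum_i\lambda_i\,e_i^*\circ P$ are only known to agree on $F$, so splitting $z^*|_F$ inside $B_{F^*}$ does not split $z^*$. Here is a concrete example. In $X=\C([0,1])$ put $f_1(s)=1-s$ and $f_2(s)=s$. Then $\|af_1+bf_2\|=\max\{|a|,|b|\}$, so the span $F$ of $f_1,f_2$ is isometric to $\ell_\infty^2$. The extreme point $z^*=\delta_{1/2}$ restricts to $\tfrac12(e_1^*+e_2^*)$, which is spread over both coordinates. Now take $x=f_1-f_2$. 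Then $z^*(x)=0$, and yet $\max\{\|x+y\|,\|x-y\|\}=2$ for \emph{every} $y\in S_F$. The vector witnessing the failure of the hypothesis is a bump at $1/2$, which lies outside $F$.

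Your alternative route breaks down for the same reason. Testing with $y=\pm e_j/\|e_j\|$ already forces $|x_j|\ge 1-\varepsilon-O(\delta)$ for every $j$. So the set of coordinates where $|x_i|$ is far from $1$ is empty. The smallness of $t=\sum_i\lambda_i x_i$ must then come from sign cancellation (as in the example) or from $\|z^*|_F\|$ being small, and no test vector inside $F$ can refute either. If Step 2 were granted, Step 3 would be fine, precisely because it contradicts $|x_i|\ge1-\varepsilon-O(\delta)$.

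What is missing is a global mechanism producing a norm-one $y$ that is large at $z^*$ and small at every extreme point $w^*$ with $|w^*(x)|$ close to $1$. In a general $L_1$-predual there is no peak-function construction available. A subspace $F$ supplied by the $\mathcal L_{\infty,1+}$ property does not see where $z^*$ lives, and choosing $F$ to contain a suitable peaking vector would require exactly the vector you are trying to construct.

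The paper argues dually instead:
\begin{enumerate}
\item Set $A=\{w^*\in\ext B_{X^*}\setsep |w^*(x)|\ge 1-2\varepsilon\}$. The hypothesis gives $\sup_{w^*\in A}|w^*(y)|\ge 1-\varepsilon$ for all $y\in S_X$, hence $(1-\varepsilon)B_{X^*}\subseteq\overline{\co}^{w^*}A$ by Hahn--Banach.
\item Write $(1-\varepsilon)z^*=\alpha u^*+\beta v^*$ with $u^*\in\overline{\co}^{w^*}A^+$ and $v^*\in\overline{\co}^{w^*}A^-$.
\item Since $z^*$ is a multiple of the indicator of an atom of $\mu$, one of $u^*,v^*$ carries mass at least $1-\varepsilon$ (with the sign of $z^*$) on that atom. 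It is therefore within $2\varepsilon$ of $z^*$ in norm, which gives $|z^*(x)|\ge1-4\varepsilon$.
\end{enumerate}
This last step is where the $L_1$-structure and the extremality of $z^*$ genuinely enter, and your finite-dimensional localisation has no substitute for it.
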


\begin{proof}
Let us denote
$$ A = \{ z^{*} \in \mathrm{ext} \, B_{X^{*}} \setsep |z^{*}(x)| \geq 1 - 2\varepsilon \}, $$
$$ A^{+} = \{ z^{*} \in \mathrm{ext} \, B_{X^{*}} \setsep z^{*}(x) \geq 1 - 2\varepsilon \}, \quad A^{-} = \{ z^{*} \in \mathrm{ext} \, B_{X^{*}} \setsep z^{*}(x) \leq -(1 - 2\varepsilon) \}. $$
For $ y \in S_{X} $, we have
$$ \sup_{z^{*} \in \mathrm{ext} \, B_{X^{*}}} (|z^{*}(x)| + |z^{*}(y)|) = \max \{ \Vert x + y \Vert, \Vert x - y \Vert \} \geq 2 - \varepsilon. $$
The supremum can be taken over $ z^{*} \in A $, since for $ z^{*} \notin A $ we have $ |z^{*}(x)| + |z^{*}(y)| < 2 - 2\varepsilon $. This in turn implies that
$$ \sup_{z^{*} \in A} |z^{*}(y)| \geq 1 - \varepsilon. $$
For a general $ y \in X $, we then obtain $ \sup_{z^{*} \in A} |z^{*}(y)| \geq (1 - \varepsilon) \Vert y \Vert $, which by the Hahn-Banach theorem provides
$$ (1 - \varepsilon) B_{X^{*}} \subseteq \overline{\mathrm{co}}^{w^{*}} A. $$

Let us fix $ z^{*} \in \mathrm{ext} \, B_{X^{*}} $. Then there are $ u^{*}_{i} \in \mathrm{co} \, A^{+}, v^{*}_{i} \in \mathrm{co} \, A^{-} $ and $ \alpha_{i}, \beta_{i} \geq 0 $ with $ \alpha_{i} + \beta_{i} = 1 $ such that
$$ \alpha_{i} u^{*}_{i} + \beta_{i} v^{*}_{i} \to (1 - \varepsilon) z^{*} $$
in the $ w^{*} $-topology.
We can assume that there are $ u^{*} \in \overline{\mathrm{co}}^{w^{*}} A^{+}, v^{*} \in \overline{\mathrm{co}}^{w^{*}} A^{-} $ and $ \alpha, \beta \geq 0 $ with $ \alpha + \beta = 1 $ such that $ u^{*}_{i} \to u^{*}, v^{*}_{i} \to v^{*} $ and $ \alpha_{i} \to \alpha, \beta_{i} \to \beta $. It follows that
$$ \alpha u^{*} + \beta v^{*} = (1 - \varepsilon) z^{*}. $$
Note that $ u^{*}(x) \geq 1 - 2\varepsilon $ and $ v^{*}(x) \leq -(1 - 2\varepsilon) $, since $ u^{*} \in \overline{\mathrm{co}}^{w^{*}} A^{+}, v^{*} \in \overline{\mathrm{co}}^{w^{*}} A^{-} $. Recall that extreme points of $ B_{L_{1}(\mu)} $ are characteristic functions of atoms of $\mu$ multiplied by a suitable constant. Thus, if we identify $ X^{*} $ with $ L_{1}(\mu) $, there is an atom $ S $ of $\mu$ such that $ z^{*} $ is a multiple of the characteristic function $ \chi_{S} $. Without loss of generality, we suppose that $ z^{*}(s) > 0 $ for every $s\in S$. We can express $ u^{*} $ and $ v^{*} $ in the form $ u^{*} = u^{*}_{I} + u^{*}_{II} $ and $ v^{*} = v^{*}_{I} + v^{*}_{II} $, where $ u^{*}_{I}, v^{*}_{I} $ are multiples of $\chi_S$ and $ u^{*}_{II}, v^{*}_{II} $ are equal to zero on the set $ S $. Then for $s\in S$ we obtain $ \alpha u^{*}_{I}(s) + \beta v^{*}_{I}(s) = \alpha u^{*}(s) + \beta v^{*}(s) = (1 - \varepsilon) z^{*}(s) $, and it follows that $ u^{*}_{I}(s) \geq (1 - \varepsilon) z^{*}(s) $ or $ v^{*}_{I}(s) \geq (1 - \varepsilon) z^{*}(s) $, so $\|u^{*}_{I}\|\geq 1-\varepsilon$ or $\|v^{*}_{I}\|\geq 1-\varepsilon$. In the case that $ u^{*}_{I}(s) \geq (1 - \varepsilon) z^{*}(s) $, using that $S$ is atom and therefore both $u_I$ and $z^*$ are constant a.e. on $S$, we obtain 
\[
\Vert u^{*} - z^{*} \Vert = \Vert u^{*}_{I} - z^{*} \Vert + \Vert u^{*}_{II} \Vert = \Vert z^{*} \Vert - \Vert u^{*}_{I} \Vert + \Vert u^{*} \Vert - \Vert u^{*}_{I} \Vert \leq 2(1 - \Vert u^{*}_{I} \Vert) \leq 2\varepsilon ,
\]
and thus $ z^{*}(x) \geq u^{*}(x) - 2\varepsilon \geq 1 - 4\varepsilon $. Similarly, in the case that $ v^{*}_{I}(s) \geq (1 - \varepsilon) z^{*}(s) $, we obtain $ \Vert v^{*} - z^{*} \Vert \leq 2\varepsilon $, and thus $ z^{*}(x) \leq v^{*}(x) + 2\varepsilon \leq -(1 - 4\varepsilon) $.
\end{proof}

The following is the central result of this section.

\begin{prop}\label{prop:closedExtremePoints}
Let $ X $ be an $L_1$-predual. Assume further that the following formula holds  \begin{equation}\label{eq:FsigmaDeltaCondition}
    \forall\varepsilon>0:\; \overline{\aco}\Big\{a\in S_X\setsep \forall y\in S_X:\;\max\{\|a-y\|,\|a+y\|\}\geq 2-\varepsilon\Big\} = B_X.
\end{equation}
Then $\ext B_{X^{*}} $ is closed in the $ w^{*} $-topology.
\end{prop}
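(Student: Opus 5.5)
The plan is to take an arbitrary $z^*$ in the $w^*$-closure of $\ext B_{X^*}$ and show directly that $z^*\in\ext B_{X^*}$. We identify $X^*$ with $L_1(\mu)$ and use the description recalled in the proof of Lemma~\ref{lem:bigInEveryExtremePoint}: a norm-one element of $L_1(\mu)$ is extreme in the unit ball iff it is a multiple of the characteristic function of an atom. For $\varepsilon\in(0,\tfrac14)$ write
\[
A_\varepsilon=\Big\{a\in S_X\setsep \forall y\in S_X:\;\max\{\|a-y\|,\|a+y\|\}\geq 2-\varepsilon\Big\},
\]
so that \eqref{eq:FsigmaDeltaCondition} says $\overline{\aco}\,A_\varepsilon=B_X$ for every $\varepsilon>0$.

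\textbf{Step 1: $\|z^*\|=1$.} Let $(e_i)$ be a net in $\ext B_{X^*}$ converging $w^*$ to $z^*$, and fix $a\in A_\varepsilon$. By Lemma~\ref{lem:bigInEveryExtremePoint}, $|e_i(a)|\geq 1-4\varepsilon$ for every $i$, and passing to the limit gives $|z^*(a)|\geq 1-4\varepsilon$. Since $A_\varepsilon\subseteq B_X$, this yields $\|z^*\|\geq 1-4\varepsilon$ for every $\varepsilon$, hence $\|z^*\|=1$. The point to retain is the pointwise estimate
\[
|z^*(a)|\geq 1-4\varepsilon\quad\text{for all } a\in A_\varepsilon.
\]

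\textbf{Step 2: $z^*$ is extreme.} Suppose not. Since $\|z^*\|=1$, $z^*$ is not a multiple of the characteristic function of an atom, so we can split its support into two disjoint measurable pieces. This gives $z^*=f_1+f_2$ with $f_1,f_2$ nonzero, disjointly supported, and $\|f_1\|+\|f_2\|=1$. Define
\[
g=\frac{f_1}{\|f_1\|}-\frac{f_2}{\|f_2\|}\in X^*,
\]
which has $\|g\|=2$ because the supports are disjoint.

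Now fix $a\in A_\varepsilon$. We have $|f_1(a)|+|f_2(a)|\leq \|f_1\|+\|f_2\|=1$, while $|f_1(a)+f_2(a)|=|z^*(a)|\geq 1-4\varepsilon$. Together these force $f_1(a)$ and $f_2(a)$ to have a common sign $\sigma$ (up to negligible cases) with
\[
\sigma f_j(a)\geq \|f_j\|-4\varepsilon,\qquad j=1,2.
\]
Dividing by $\|f_j\|$ gives
\[
|g(a)|\leq 4\varepsilon\Big(\frac1{\|f_1\|}+\frac1{\|f_2\|}\Big)=:C\varepsilon,
\]
where $C$ does not depend on $\varepsilon$. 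The set $\{x\in B_X\setsep |g(x)|\le C\varepsilon\}$ is closed and absolutely convex. It contains $A_\varepsilon$, so it contains $\overline{\aco}\,A_\varepsilon=B_X$. Hence $\|g\|\leq C\varepsilon$ for every $\varepsilon$, so $g=0$, contradicting $\|g\|=2$. Therefore $z^*\in\ext B_{X^*}$, and $\ext B_{X^*}$ is $w^*$-closed.

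\textbf{Expected obstacle.} The only delicate point is the elementary inequality bookkeeping in Step 2: from $|f_1(a)|+|f_2(a)|\le 1$ and $|f_1(a)+f_2(a)|\ge 1-4\varepsilon$ one must deduce a common sign and near-maximality of each summand. If one of $|f_j(a)|$ is very small, the sign may be ambiguous, but the bound $\|f_j\|-4\varepsilon\le |f_j(a)|$ still holds. So the estimate on $|g(a)|$ survives after a short case check. The heavy lifting is already done by Lemma~\ref{lem:bigInEveryExtremePoint}.
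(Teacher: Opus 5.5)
Your argument is correct. It uses the same ingredients as the paper: Lemma~\ref{lem:bigInEveryExtremePoint} passed to the $w^*$-limit, a nontrivial segment through the non-extreme point, and condition~\eqref{eq:FsigmaDeltaCondition} to show that the direction of that segment vanishes. The difference is that you run the last step in dual form.

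\emph{How the paper runs the last step.} It fixes one $\varepsilon$ adapted to the decomposition $\psi=\alpha u^*+\beta v^*$ and takes $x_0$ almost norming $v^*-u^*$. It approximates $x_0$ by a finite absolutely convex combination of good elements, and thereby extracts a single good $a_0$ at which $v^*-u^*$ is almost normed. The estimate $|\psi(a_0)|<1-4\varepsilon$ then contradicts the Lemma.

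\emph{How you run it.} You bound the direction $g$ by $C\varepsilon$ on all of $A_\varepsilon$. The closed absolutely convex set $\{x\in B_X\colon |g(x)|\le C\varepsilon\}$ then contains $\overline{\aco}\,A_\varepsilon=B_X$. This avoids the finite approximation and the tuning of $\varepsilon$ to $\alpha$, $\beta$ and $\|v^*-u^*\|$.

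\emph{What is more than you need.} Your Step~1 and the disjoint-support splitting in $L_1(\mu)$ can be dropped. As in the paper, an arbitrary proper convex combination suffices, e.g.\ $z^*=\tfrac12(u^*+v^*)$ with distinct $u^*,v^*\in B_{X^*}$. With $\sigma=\sgn z^*(a)$ you get $\sigma u^*(a)+\sigma v^*(a)\ge 2-8\varepsilon$, which forces $|(u^*-v^*)(a)|\le 8\varepsilon$ on $A_\varepsilon$. This treats $\|z^*\|<1$ and norm-one non-extreme $z^*$ at once, and it uses no lattice structure of $X^*$ beyond the Lemma.

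Regarding the obstacle you flag, the case check is unnecessary. Take $\sigma:=\sgn z^*(a)$, which is nonzero because $1-4\varepsilon>0$. Then $\sigma f_j(a)\ge 1-4\varepsilon-\|f_{3-j}\|=\|f_j\|-4\varepsilon$ for $j=1,2$ directly, with a common $\sigma$. It is this signed inequality that gives $|g(a)|\le 4\varepsilon\max\{1/\|f_1\|,1/\|f_2\|\}$. The unsigned bound $|f_j(a)|\ge\|f_j\|-4\varepsilon$ alone would not, since it says nothing about relative signs.
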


\begin{proof}In order to get a contradiction, pick $\psi \in \overline{\ext B_{X^*}}^{w^*}\setminus \ext B_{X^*}$. Then there are distinct $ u^{*}, v^{*} \in B_{X^{*}} $ and $ \alpha, \beta > 0 $ with $ \alpha + \beta = 1 $ such that
$$ \psi = \alpha u^{*} + \beta v^{*}. $$
Let us choose $ \varepsilon\in (0,\tfrac{1}{4}) $
such that $ 4\varepsilon \leq \min \{ \alpha, \beta \} (1 - 2\varepsilon) \Vert v^{*} - u^{*} \Vert $. We define
$$ z^{*} = \frac{1}{\Vert v^{*} - u^{*} \Vert} (v^{*} - u^{*}). $$
There is $ x_0 \in S_{X} $ such that $ z^{*}(x_0) > 1 - \varepsilon $. By the assumption, there is a finite set $A\subseteq S_X$ such that
\begin{itemize}
    \item for every $a\in A$ and $y\in S_X$ we have $\max\{\|a-y\|,\|a+y\|\}\geq 2-\varepsilon$,
    \item there exists $q \in\Rea^A$ with $\sum_{a\in A} |q_a| \leq 1$ and $\|\sum_{a\in A}q_a a-x_0\|<\varepsilon$.
\end{itemize}

Then
$$ z^{*} \Big( \sum_{a\in A} q_a a \Big) > 1 - 2\varepsilon, $$
so there must be $ a_0\in A \cup (-A)$ such that $ z^{*}(a_0) > 1 - 2\varepsilon $, that is,
$$ (v^{*} - u^{*})(a_0) > (1 - 2\varepsilon) \Vert v^{*} - u^{*} \Vert. $$
Since 
\[ \psi(a_0) = \alpha u^{*}(a_0) + \beta v^{*}(a_0) = v^{*}(a_0) - \alpha (v^{*}(a_0) - u^{*}(a_0)) < 1 - \alpha (1 - 2\varepsilon) \Vert v^{*} - u^{*} \Vert \leq 1 - 4\varepsilon \]
and similarly
\[ \psi(a_0) = u^{*}(a_0) + \beta (v^{*}(a_0) - u^{*}(a_0)) > -1 + \beta (1 - 2\varepsilon) \Vert v^{*} - u^{*} \Vert \geq - (1 - 4\varepsilon) ,\]
we obtain
$$ |\psi(a_0)| < 1 - 4\varepsilon. $$
On the other hand, we have $\max\{\|a+y\|,\|a-y\|\}\geq 2-\varepsilon$ for every $y\in S_X$ and $a\in A$. Thus, using Lemma~\ref{lem:bigInEveryExtremePoint} we obtain that $ |z^{*}(a_0)| \geq 1 - 4 \varepsilon $ for each $ z^{*} \in \mathrm{ext} \, B_{X^{*}} $, which implies that $ |\psi(a_0)| \geq 1 - 4 \varepsilon $, a contradiction.
\end{proof}

The last technical step is to observe that \eqref{eq:FsigmaDeltaCondition} holds in a $\C(K)$ space if and only if $K$ is zero-dimensional.

\begin{prop}\label{prop:CKcountable}
    Let $K$ be a compact space. Then for the Banach space $X:=\C(K)$ the following conditions are equivalent.
    \begin{enumerate}[label=(\roman*)]
        \item\label{it:zeroDim} $K$ is zero-dimensional.
        \item\label{it:FsigmaDeltaStrong} Condition \eqref{eq:newCondition}.
        \item\label{it:ZeroDimFSigmadelta} Condition~\eqref{eq:FsigmaDeltaCondition}.
        \item\label{it:exVariant_ZeroDimFSigmadelta} We have 
        \[
    \exists \varepsilon\in (0,1): \overline{\aco}\Big\{a\in S_X\setsep \max\{\|a-y\|,\|a+y\|\}\geq 2-\varepsilon\text{ for every $y\in S_X$}\Big\} = B_X.
    \]
    \end{enumerate}
\end{prop}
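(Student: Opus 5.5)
The cycle I would prove is \ref{it:zeroDim}$\Rightarrow$\ref{it:FsigmaDeltaStrong}$\Rightarrow$\ref{it:ZeroDimFSigmadelta}$\Rightarrow$\ref{it:exVariant_ZeroDimFSigmadelta}$\Rightarrow$\ref{it:zeroDim}. The two middle implications are trivial. The set in \eqref{eq:newCondition} is contained in the set appearing in \eqref{eq:FsigmaDeltaCondition} for every $\varepsilon>0$. Condition \eqref{eq:FsigmaDeltaCondition} in turn gives \ref{it:exVariant_ZeroDimFSigmadelta} by taking, say, $\varepsilon=1/8$.

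For \ref{it:zeroDim}$\Rightarrow$\ref{it:FsigmaDeltaStrong}, I would show that every function $a=\chi_U-\chi_{K\setminus U}$ with $U$ clopen lies in the set of \eqref{eq:newCondition}. Given $y\in S_X$, pick $t\in K$ with $|y(t)|=1$. Since $|a(t)|=1$, one of $a\pm y$ has absolute value $2$ at $t$, so $\max\{\|a-y\|,\|a+y\|\}=2$. Next I would show that the absolutely convex hull of these functions is dense in $B_X$. Fix $f\in B_X$ and $\delta>0$. By zero-dimensionality and compactness, $K$ splits into finitely many disjoint clopen sets $W_1,\dots,W_m$ on each of which $f$ oscillates by less than $\delta$. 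Then $f$ is $\delta$-close to a step function $g=\sum_i c_i\chi_{W_i}$ with $|c_i|\le 1$. Each such $g$ is a convex combination of the $2^m$ sign patterns $\sum_i \pm\chi_{W_i}$, because the cube $[-1,1]^m$ is the convex hull of its vertices. Every sign pattern is a function of the form $\chi_U-\chi_{K\setminus U}$ with $U$ clopen, which finishes this implication. (If $K$ is finite the same argument works.)

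The main step is \ref{it:exVariant_ZeroDimFSigmadelta}$\Rightarrow$\ref{it:zeroDim}. Here $\ext B_{X^*}=\{\pm\delta_t\colon t\in K\}$. The hypothesis of Lemma~\ref{lem:bigInEveryExtremePoint} needs $\varepsilon<1/4$, so I would first try to pass from a given $\varepsilon\in(0,1)$ to a usable one. Failing that, I would argue directly: if $a\in S_X$ satisfies $\max\{\|a\pm y\|\}\ge 2-\varepsilon$ for all $y\in S_X$, then $|a(t)|\ge 1-\varepsilon$ for every $t$. To see this, suppose $|a(t_0)|<1-\varepsilon$. Take $y$ to be a Urysohn bump with $y(t_0)=1$, supported where $|a|<1-\varepsilon$. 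Then $|a\pm y|\le \max(1,|a|+1)$ off the support and $<2-\varepsilon$ on it; to make this strict on the support I would choose the support inside $\{|a|<1-\varepsilon-\eta\}$ for a small $\eta>0$. This gives a contradiction, so $|a|\ge 1-\varepsilon>0$ everywhere. Consequently each such $a$ has constant sign on each set of a clopen partition: $U_a=\{a>0\}$ is clopen, since $\{a>0\}=\{a\ge 1-\varepsilon\}$ is both open and closed.

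Now assume $K$ is not zero-dimensional and fix a point $t$ with an open neighbourhood $O$ containing no clopen neighbourhood of $t$. Actually it suffices to use the weaker fact that the clopen sets separate points. A compact Hausdorff space in which the clopen sets separate points is zero-dimensional (totally disconnected compact spaces are zero-dimensional), so if $K$ is not zero-dimensional there are $s\neq t$ not separated by any clopen set. Every admissible $a$ then has $\sgn a(s)=\sgn a(t)$ and $|a(s)|,|a(t)|\in[1-\varepsilon,1]$, hence $|a(s)-a(t)|\le\varepsilon$. This bound passes to absolutely convex combinations and to norm limits, so every $g$ in the closed absolutely convex hull satisfies $|g(s)-g(t)|\le\varepsilon<1$. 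This contradicts the choice of an $f\in B_X$ with $f(s)=1$ and $f(t)=-1$, for which the difference is $2$. The delicate point I expect is the Urysohn step (strict inequality) and citing the fact that totally disconnected compact spaces are zero-dimensional.
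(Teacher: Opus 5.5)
Your proof is correct. The cycle and the trivial middle steps coincide with the paper's, but you argue both nontrivial implications differently.

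For (i)$\Rightarrow$(ii), the paper uses the same functions $a$ with $|a|\equiv 1$. It obtains $\overline{\aco}=B_X$ by duality: on a zero-dimensional $K$, the variation of $\mu\in M(K)$ is the supremum of $\sum_i|\mu(C_i)|$ over finite clopen partitions, so these functions norm $M(K)$, and Hahn--Banach finishes. You instead approximate uniformly by clopen step functions and write points of $[-1,1]^m$ as convex combinations of vertices. This is more elementary and avoids both the measure-theoretic fact and Hahn--Banach.

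For (iv)$\Rightarrow$(i), your Urysohn-bump proof of $|a|\ge 1-\varepsilon$ is the same idea as the paper's, which places a bump in an arbitrary neighbourhood of $x$ and passes to a net. The real difference is the topological input. The paper takes a nondegenerate closed connected $S\subseteq K$, using that a compact space which is not zero-dimensional is not hereditarily disconnected, and lets connectedness force each admissible $a$ to have constant sign on $S$. You only need two points $s\neq t$ not separated by clopen sets, plus the observation that $\{a>0\}=\{a\ge 1-\varepsilon\}$ is clopen.

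Your route therefore needs only the elementary compactness argument that clopen sets separating points implies zero-dimensionality. You should state the fact in exactly that form rather than through the ambiguous phrase ``totally disconnected''. The paper's route implicitly uses the deeper fact that components and quasi-components coincide in compact Hausdorff spaces.
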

\begin{proof}\ref{it:zeroDim}$\Rightarrow$\ref{it:FsigmaDeltaStrong}: Assume $K$ is zero-dimensional. Consider the set
\[
J:=\{f\in \C(K)\setsep |f(x)|=1\text{ for every }x\in K\}.
\]
Recall that since $K$ is zero-dimensional, for any $\mu\in\C(K)^* = M(K)$ we have
\[
\|\mu\|:=\sup\Big\{\sum_{i\in F}|\mu(C_i)|\setsep \{C_i\}_{i\in F}\text{ is a finite partition of $K$ consisting of clopen sets}\Big\}
\]
and therefore, $\|\mu\| = \sup_{f\in J} |\mu(f)|$ (because given any finite partition $\{C_i\}_{i\in F}$ of $K$ consisting of clopen sets, we obtain $\sum_{i\in F}|\mu(C_i)| = \mu(f)$ for a function $f$ defined as $f = \sum_{i\in F}\sgn(\mu(C_i))\chi_{C_i}$). This by the Hahn-Banach theorem implies that $\overline{\aco} J = B_X$, because otherwise there would exist $f\in B_X\setminus \overline{\aco} J$ and $\mu\in X^*=M(K)$ with $\sup_{g\in J}|\langle \mu,g\rangle| < \mu(f)$, contradiction. Finally, notice that for every $j\in J$ and $y\in S_X$ we have $\max\{\|y-j\|,\|y+j\|\} = \sup_{x\in K}\{|y(x)-1|,|y(x)+1|\} = 2$. Since we have already proved that $\overline{\aco} J = B_X$, this finishes the proof that \ref{it:FsigmaDeltaStrong} holds.

\smallskip

\noindent \ref{it:FsigmaDeltaStrong}$\Rightarrow$\ref{it:ZeroDimFSigmadelta}$\Rightarrow$\ref{it:exVariant_ZeroDimFSigmadelta} is obvious.

\smallskip

\noindent \ref{it:exVariant_ZeroDimFSigmadelta}$\Rightarrow$\ref{it:zeroDim}: Assume that \ref{it:exVariant_ZeroDimFSigmadelta} holds with some $\varepsilon\in (0,1)$. In order to get a contradiction, assume $K$ is not zero-dimensional. Then, since $K$ is compact, $K$ is not hereditarily disconnected, that is, there exists a closed connected subset $S\subseteq K$ with at least two distinct points $x_0$ and $y_0$. Now, we pick a function $f\in S_{\C(K)}$ with $f(x_0)=-1$ and $f(y_0)=1$ and since $S$ is connected, we have $[-1,1]\subseteq f(S)$. Let us denote
\[
J:=\{h\in S_X\setsep \max\{\|h-g\|,\|h+g\|\}\geq 2-\varepsilon\text{ for every }g\in S_X\}.
\]
We shall note that every $h\in J$ satisfies that $|h(x)|\geq 1-\varepsilon$ for every $x\in K$.

Indeed, given $x\in K$ and $h\in J$, for any $U\in \mathcal{U}(x)$ (where by $\mathcal{U}(x)$ we denote the set of open neighborhoods of $x$) we pick function $g\in S_{\C(K)}$ with $g\equiv 0$ on $K\setminus U$ and then, since $\max\{\|h-g\|,\|h+g\|\}\geq 2-\varepsilon$, we obtain $y_U\in U$ with $|h(y_U)|\geq 1-\varepsilon$. Then the net $(y_U)_{U\in\mathcal{U}(x)}$ converges to $x$ and so we obtain $|h(x)|\geq 1-\varepsilon$.

Thus, since $S$ is connected, for any $h\in J$ we have either $h(S)\subseteq [1-\varepsilon,1]$ or $h(S)\subseteq [-1,-1+\varepsilon]$. Hence, we obtain $|h(x_0)-h(y_0)|\leq \varepsilon$ for every $h\in J$. Since the set $E:=\{f \in B_{\C(K)}\setsep |f(x_{0})-f(y_{0})| \leq \varepsilon\}$ is closed and absolutely convex we therefore obtain that $B_{\C(K)} = \overline{\aco} J \subseteq E $, which is not possible. Thus, $K$ must be zero-dimensional.
\end{proof}

Now, we are ready to formulate the result which we shall need in order to obtain later characterization of $\C(K)$ spaces with $K$ infinite countable compact. We suspect that the equivalence between conditions \ref{it:CKcountable} and \ref{it:weakStarClosed} below is known, but we did not find a direct reference.

\begin{thm}\label{thm:charakterizace_CKcountable}Let $X$ be a separable Banach space with $X^*$ isometric to $\ell_1$. Then the following conditions are equivalent.
\begin{enumerate}[label=(\roman*)]
    \item\label{it:CKcountable} There exists a countable compact space $K$ such that $X\equiv \C(K)$.
    \item\label{it:FsigmaDeltaCondition} Condition \eqref{eq:FsigmaDeltaCondition} holds.
    \item\label{it:weakStarClosed} Given a basis $(e_n^*)_{n=1}^\infty$ of $X^*$ that is $1$-equivalent to the canonical basis of $\ell_1$, the set $\{\pm e_n^*\setsep n\in\Nat\}$ is $w^*$-closed in $X^*$.
\end{enumerate}
\end{thm}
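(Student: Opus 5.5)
I plan to prove the cycle \ref{it:CKcountable}$\Rightarrow$\ref{it:FsigmaDeltaCondition}$\Rightarrow$\ref{it:weakStarClosed}$\Rightarrow$\ref{it:CKcountable}.

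The implication \ref{it:CKcountable}$\Rightarrow$\ref{it:FsigmaDeltaCondition} is immediate. A countable compact space is scattered and metrizable, hence zero-dimensional. Proposition~\ref{prop:CKcountable}, \ref{it:zeroDim}$\Rightarrow$\ref{it:ZeroDimFSigmadelta}, then gives \eqref{eq:FsigmaDeltaCondition}.

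For \ref{it:FsigmaDeltaCondition}$\Rightarrow$\ref{it:weakStarClosed}, note first that $X^*\equiv\ell_1$ makes $X$ an $L_1$-predual. Proposition~\ref{prop:closedExtremePoints} therefore shows that $\ext B_{X^*}$ is $w^*$-closed. It remains to identify $\ext B_{X^*}$ with $\{\pm e_n^*\setsep n\in\Nat\}$ for any basis $(e_n^*)$ that is $1$-equivalent to the canonical $\ell_1$ basis. The map $T\colon\ell_1\to X^*$ sending the canonical basis to $(e_n^*)$ is an onto isometry, so it maps extreme points of $B_{\ell_1}$ onto extreme points of $B_{X^*}$. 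Since $\ext B_{\ell_1}=\{\pm e_n\}$, we get $\ext B_{X^*}=\{\pm e_n^*\}$, and that set is $w^*$-closed.

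For \ref{it:weakStarClosed}$\Rightarrow$\ref{it:CKcountable}, set $E=\{\pm e_n^*\}=\ext B_{X^*}$, which is $w^*$-closed and hence $w^*$-compact. The plan is to use the classical characterization cited in the introduction: a real $L_1$-predual whose set $\ext B_{X^*}$ is $w^*$-closed and with $\ext B_X\neq\emptyset$ is a $\C(K)$ space \cite{LW69}. The step I expect to be the main obstacle is proving $\ext B_X\neq\emptyset$, or, if I avoid the citation, building the isometry directly.

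To get a direct construction, I would argue as follows.
\begin{itemize}
\item $0\notin E$, because $\|e^*_n\|=1$ and $E$ is $w^*$-closed.
\item The map $x^*\mapsto -x^*$ is a $w^*$-homeomorphism of $E$ without fixed points, so $E$ splits as $K\cup(-K)$ with $K\cap(-K)=\emptyset$. Choosing such a $K$ that is also $w^*$-closed, as opposed to just choosing one of $\pm e_n^*$ for each $n$, is the delicate point.
\item Given such a clopen splitting, the map $X\to\C(K)$, $x\mapsto(k\mapsto k(x))$, is an isometry because $E$ is a norming boundary.
\item It is onto: $M(K)$ is $\ell_1(K)$ because $K$ is countable, so $M(K)$ and $X^*$ are both $\ell_1$ on the same atoms. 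The adjoint is therefore an isometric isomorphism, and then $\C(K)$ equals the closed range.
\end{itemize}

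To produce the splitting, I would use that $E$ is a countable compact metrizable space (here separability of $X$ is used), hence zero-dimensional. Take a compatible metric $d$ and set $\delta=\min d(x^*,-x^*)>0$, which is positive by compactness. Cover $E$ by finitely many clopen sets of diameter $<\delta$; each such set $C$ satisfies $C\cap(-C)=\emptyset$. Refining greedily, one obtains a clopen $K$ with $K\cap -K=\emptyset$ and $K\cup -K=E$: add $C\setminus(K\cup -K)$ step by step. Finally, $K$ is countable, and $\C(K)$ is the desired space.
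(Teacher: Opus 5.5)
Your proof is correct. The cycle of implications, the appeals to Propositions~\ref{prop:CKcountable} and~\ref{prop:closedExtremePoints}, the evaluation map $T\colon X\to\mathcal C(K)$, and the surjectivity argument via $T^*\delta_k=k$ all match the paper. The only genuine difference is how you obtain a $w^*$-clopen half $K$ of $E=\{\pm e_n^*\}$ with $E=K\cup(-K)$ disjointly.

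The paper does this in one line using the linear structure. By the Baire category theorem there is $z\in X$ lying outside every closed hyperplane $\ker e_n^*$. Then
\[
K=\{x^*\in E\colon x^*(z)\ge 0\}=\{x^*\in E\colon x^*(z)>0\}
\]
is a slice of $E$ by a $w^*$-continuous functional that does not vanish on $E$, so it is automatically clopen. This mirrors the proof of Theorem~\ref{thm:mainPart1Strong}, where the splitting is also given by a single vector, supplied there by Lemma~\ref{lem:bigInEveryExtremePoint}.

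Your argument is purely topological: a fixed-point-free involution on a compact zero-dimensional space admits a clopen fundamental domain. You build it from a finite cover by clopen sets of small diameter followed by a greedy refinement. That refinement does preserve $K\cap(-K)=\emptyset$ and yields a clopen $K$. Your route costs a little more work and uses metrizability of $(E,w^*)$, which separability provides. Even that could be dropped: for each $p\in E$ choose disjoint clopen $U\ni p$, $W\ni -p$ and use $C_p=U\cap(-W)$. In exchange it needs neither the Baire category theorem nor any vector of $X$.

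The preliminary idea of invoking the classical characterization from the introduction is unnecessary. As you note, it would leave $\ext B_X\neq\emptyset$ unproved, but your direct construction makes it moot.
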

\begin{proof}Implication \ref{it:CKcountable}$\implies$\ref{it:FsigmaDeltaCondition} follows from Proposition~\ref{prop:CKcountable} and implication \ref{it:FsigmaDeltaCondition}$\implies$\ref{it:weakStarClosed} is Proposition~\ref{prop:closedExtremePoints}, because extreme points in $B_{\ell_1}$ are exactly $\{\pm e_n\setsep n\in\Nat\}\subseteq \ell_1$. Finally, assume that \ref{it:weakStarClosed} holds and put $K_0:=\{\pm e_n^*\setsep n\in\Nat\}$. By the Baire category theorem, there is $ z \in X $ such that $ e^{*}_{n}(z) \neq 0 $ for every $ n $. Then the set
$$ K = \{ \sigma_{n} e^{*}_{n} \setsep n \in \mathbb{N} \}, \quad \textrm{where $ \sigma_{n} = \mathrm {sgn} \, e^{*}_{n}(z) $,} $$
is also $ w^{*} $-closed, as $ K = \{ x^{*} \in K_{0} \setsep x^{*}(z) \geq 0 \} $. We claim that $ X $ is isometric to $ \C(K) $.

Let $ T : X \to \C(K) $ be given by $ (Tx)(\sigma_{n} e^{*}_{n}) = \sigma_{n} e^{*}_{n}(x) $. Then $ T $ is an isometric embedding. Moreover, the adjoint $ T^{*} : \C(K)^{*} \to X^{*} $ satisfies $ T^{*} \delta_{\sigma_{n} e^{*}_{n}} = \sigma_{n} e^{*}_{n} $, and it follows that $ T^{*} $ is an isometry as well. In particular, $ T^{*} $ is injective, hence $ T $ is surjective.
\end{proof}

In order to prove an analogous result for $L_1$-preduals, we shall use the following result which follows from \cite[Theorem 2.7]{SpurnyKalenda16}. We would like to thank J.~Spurn\'y for suggesting this strategy to prove the second part of Theorem~\ref{thm:mainPart1Strong}.

\begin{thm}[Kalenda, Spurn\'y]\label{thm:spurnyKalenda}
Let $ X $ be an $L_1$-predual such that $\ext B_{X^*}$ is $w^*$-closed and let $\widetilde{S}:(\ext B_{X^*},w^*)\to \Rea$ be continuous bounded function, which is odd, that is, $\widetilde{S}(x^*) = -\widetilde{S}(-x^*)$ for every $x^*\in \ext B_{X^*}$. Then, there exists a continuous affine odd function $S:(B_{X^*},w^*)\to \Rea$ extending $\widetilde{S}$.
\end{thm}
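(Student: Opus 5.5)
The plan is to reduce the statement to a surjectivity claim for a natural restriction operator, and to prove that claim by a duality argument in which the $L_1$-predual hypothesis enters through uniqueness of representing measures. This is the same pattern as the last step of the proof of Theorem~\ref{thm:charakterizace_CKcountable}.

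Put $E:=\ext B_{X^*}$. By hypothesis $E$ is $w^*$-closed, hence $w^*$-compact, and $E=-E$. Let $C_{odd}(E)$ be the Banach space of continuous odd functions on $(E,w^*)$; a continuous function on a compact set is automatically bounded. Define $T\colon X\to C_{odd}(E)$ by $(Tx)(e^*)=e^*(x)$. Since $E$ is a boundary of $B_{X^*}$ (Krein--Milman, as recalled at the beginning of this section), $T$ is a linear isometric embedding. It therefore suffices to show that $T$ is onto. Indeed, if $\widetilde S=Tx$, then $S(x^*):=x^*(x)$ is a $w^*$-continuous, affine and odd function on $B_{X^*}$ extending $\widetilde S$.

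Since $T$ is an isometry, its range is closed. So, by Hahn--Banach, surjectivity follows once we know that $T^*\colon C_{odd}(E)^*\to X^*$ is injective. The dual $C_{odd}(E)^*$ can be identified with the odd Radon measures on $E$, that is, those $\mu$ with $\mu(-B)=-\mu(B)$. Every functional on $C_{odd}(E)$ is represented by some measure on $E$, and we replace that measure by its odd part $\tfrac12(\mu-\mu\circ(-\mathrm{id}))$. Under this identification $T^*\mu$ is the functional $x\mapsto \int_E e^*(x)\,d\mu(e^*)$, i.e.\ the ``barycenter'' of $\mu$.

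The key step, and the main obstacle, is to show that an odd measure on $E$ with $T^*\mu=0$ vanishes. This is where the $L_1$-predual hypothesis is used. Write $\mu=\mu^+-\mu^-$; oddness gives $\mu^-=\mu^+\circ(-\mathrm{id})$. After normalising, $T^*\mu=0$ says that the probability measures $\mu^+$ and $\mu^+\circ(-\mathrm{id})$ have the same barycenter in $B_{X^*}$. Both are carried by the closed set $E$ of extreme points, so both are maximal measures.

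I would then invoke the structure theory of $L_1$-preduals: the unique-odd-maximal-representing-measure property, which is exactly the input behind \cite[Theorem 2.7]{SpurnyKalenda16} (equivalently, the Lazar--Lindenstrauss/Effros characterization of $L_1$-preduals). It says that two maximal probability measures with the same barycenter have the same odd part. Applying this to $\mu^+$ and $\mu^+\circ(-\mathrm{id})$ shows that their odd parts agree. But $\mu^+\circ(-\mathrm{id})$ is the reflection of $\mu^+$, so its odd part is the negative of that of $\mu^+$; hence the odd part of $\mu^+$ is zero. Since $\mu=\mu^+-\mu^+\circ(-\mathrm{id})$ equals twice the odd part of $\mu^+$, we get $\mu=0$.

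If the uniqueness statement is not available in exactly this form, I would instead cite \cite[Theorem 2.7]{SpurnyKalenda16} directly. It provides an odd affine extension $S(x^*)=\int\widetilde S\,d\mu_{x^*}$, with $\mu_{x^*}$ a maximal representing measure; this is well defined by oddness and affine by uniqueness. One would then check that $S$ is $w^*$-continuous whenever $\widetilde S$ is continuous and $E$ is closed. Either route reduces the problem to the dual uniqueness of odd maximal measures for $L_1$-preduals.
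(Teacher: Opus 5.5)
Your argument is correct, and it takes a different route from the paper. The paper gives no proof of its own: it simply derives the statement from \cite[Theorem 2.7]{SpurnyKalenda16}, and your fallback paragraph is essentially that route.

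Instead, you show directly that the evaluation map $T\colon X\to \C_{\mathrm{odd}}(\ext B_{X^*})$ is onto. Hahn--Banach reduces this to injectivity of $T^*$ on odd Radon measures. You obtain injectivity from the classical characterization of real $L_1$-preduals (maximal probability measures with a common barycenter have the same odd part), applied to the normalized positive part $\nu$ of $\mu$ and its reflection $\nu\circ(-\mathrm{id})$. Your Jordan-decomposition and reflection computations are right. The one ingredient you should state explicitly is that a probability measure carried by a $w^*$-closed subset of $\ext B_{X^*}$ is maximal. This follows, for instance, from Mokobodzki's test, since $\hat f=f$ on $\ext B_{X^*}$ for every continuous convex $f$.

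What your route buys is that the extension is literally $x|_{B_{X^*}}$ for some $x\in X$. So $w^*$-continuity, affinity and oddness come for free, unlike in your fallback, where continuity of $x^*\mapsto\int\widetilde S\,d\mu_{x^*}$ would still have to be checked. The subsequent appeal to the Banach--Dieudonn\'e theorem in the proof of Theorem~\ref{thm:mainPart1Strong} also becomes superfluous. In effect you prove that $X$ is isometric, via $T$, to the space of odd continuous functions on $\ext B_{X^*}$, which makes transparent exactly where the $L_1$-predual hypothesis enters.

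The price is that you still rest on a uniqueness theorem for odd parts of maximal measures, of the same depth as the cited result. You also use compactness of $\ext B_{X^*}$ in an essential way, to identify the dual of $\C_{\mathrm{odd}}(\ext B_{X^*})$ with odd Radon measures. Citing the Kalenda--Spurn\'y result packages all of this.
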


The following variant of Theorem~\ref{thm:charakterizace_CKcountable} for general $L_1$-preduals is the main result of this section.

\begin{thm}\label{thm:mainPart1Strong}Let $ X $ be an $L_1$-predual. Then the following conditions are equivalent.
\begin{enumerate}[label=(\roman*)]
    \item\label{it:CkZeroDim} There is a zero-dimensional compact space $ K $ such that $ X \equiv \C(K)$.
    \item\label{it:newConditionInThm} Condition \eqref{eq:newCondition}.
    \item\label{it:newConditionOldVersionInThm} Condition \eqref{eq:FsigmaDeltaCondition}.
\end{enumerate}
Moreover, $X$ is isometric to $\C(2^\Nat)$ if and only if $X$ is a separable $L_1$-predual with the Daugavet property satisfying condition \eqref{eq:FsigmaDeltaCondition}.
\end{thm}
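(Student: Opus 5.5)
Implications \ref{it:CkZeroDim}$\Rightarrow$\ref{it:newConditionInThm}$\Rightarrow$\ref{it:newConditionOldVersionInThm} follow from Proposition~\ref{prop:CKcountable}: condition \eqref{eq:newCondition} is an isometric invariant, and \eqref{eq:newCondition} trivially implies \eqref{eq:FsigmaDeltaCondition}. The substance is \ref{it:newConditionOldVersionInThm}$\Rightarrow$\ref{it:CkZeroDim}. By Proposition~\ref{prop:closedExtremePoints}, $E:=\ext B_{X^*}$ is $w^*$-closed, hence $w^*$-compact, and symmetric, $E=-E$.

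The key step is to produce a $w^*$-clopen set $K\subseteq E$ with $E=K\sqcup(-K)$. By Lemma~\ref{lem:bigInEveryExtremePoint}, every $a$ in the set from \eqref{eq:FsigmaDeltaCondition} (for $\varepsilon<1/4$) satisfies $|z^*(a)|\geq 1-4\varepsilon>0$ on $E$. Take one such $a$ and put $K:=\{z^*\in E\setsep z^*(a)>0\}=\{z^*\in E\setsep z^*(a)\geq 1-4\varepsilon\}$; this set is clopen in $E$, and $-K=E\setminus K$. Define $T\colon X\to\C(K)$ by $Tx(k)=k(x)$. Because $E$ is a boundary and $|k(x)|=|(-k)(x)|$, $T$ is an isometric embedding.

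For surjectivity I would use Theorem~\ref{thm:spurnyKalenda}. Given $f\in\C(K)$, define $\widetilde S$ on $E$ by $\widetilde S(k)=f(k)$ and $\widetilde S(-k)=-f(k)$ for $k\in K$. This function is continuous (since $K$ is clopen), bounded and odd. The theorem then gives a $w^*$-continuous affine odd $S$ on $B_{X^*}$ extending $\widetilde S$. Oddness together with affinity make $S$ the restriction of a $w^*$-continuous linear functional, so by the standard Banach--Dieudonné/Krein--Šmulian argument $S$ is evaluation at some $x\in X$, and $Tx=f$. Hence $X\equiv\C(K)$.

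It remains to show $K$ is zero-dimensional. Each $a$ from the defining set is almost $\pm1$-valued on $E$, so its sign pattern splits $K$ into clopen pieces. To separate points $k_1\neq k_2$ of $K$, I would pick $x$ with $k_1(x)\neq k_2(x)$, approximate it by $\sum q_a a$ using \eqref{eq:FsigmaDeltaCondition} with small $\varepsilon$, and conclude that some $a$ has different signs at $k_1$ and $k_2$ (in the sense $k_1(a)$ versus $k_2(a)$). The sets $\{k\in K\setsep k(a)>0\}$ are clopen, so they separate points, and a compact space whose points are separated by clopen sets is zero-dimensional. Alternatively, once $X\equiv\C(K)$ is known, Proposition~\ref{prop:CKcountable} gives zero-dimensionality directly, which is cleaner and is what I would use.

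For the Cantor set: $\C(2^\Nat)$ is a separable $L_1$-predual with the Daugavet property (its compact is perfect) and satisfies \eqref{eq:FsigmaDeltaCondition}. Conversely, the first part gives $X\equiv\C(K)$ with $K$ zero-dimensional, and $K$ is metrizable by separability. $\C(K)$ has the Daugavet property iff $K$ has no isolated points, so $K$ is perfect, and Brouwer's theorem yields $K\cong 2^\Nat$. I expect the main obstacle to be making rigorous the passage from the odd affine $w^*$-continuous extension $S$ to an element of $X$, which is a standard but fiddly step.
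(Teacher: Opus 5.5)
Your proposal is correct and follows the paper's proof essentially step for step. The steps are: $w^*$-closedness of $\ext B_{X^*}$ from Proposition~\ref{prop:closedExtremePoints}; the splitting $\ext B_{X^*}=K\sqcup(-K)$ via Lemma~\ref{lem:bigInEveryExtremePoint}, where the paper simply fixes $\varepsilon=\tfrac18$ and uses the threshold $z^*(x)\geq\tfrac12$; surjectivity of $T$ via Theorem~\ref{thm:spurnyKalenda} and the Banach--Dieudonn\'e theorem; zero-dimensionality of $K$ from Proposition~\ref{prop:CKcountable}; and the Daugavet/Brouwer argument for $\C(2^\Nat)$.
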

\begin{proof}Implication \ref{it:CkZeroDim}$\implies$\ref{it:newConditionInThm} follows from Proposition~\ref{prop:CKcountable}, implication \ref{it:newConditionInThm}$\implies$\ref{it:newConditionOldVersionInThm} is trivial. Assume now that \ref{it:newConditionOldVersionInThm} holds. By Proposition~\ref{prop:closedExtremePoints}, the set $ \mathrm{ext} \, B_{X^{*}} $ is closed in the $ w^{*} $-topology. Obviously, condition \eqref{eq:FsigmaDeltaCondition} implies there is some $ x \in S_X $ such that
$$ \forall y \in S_{X} : \max \{ \Vert x + y \Vert, \Vert x - y \Vert \} \geq 2 - \tfrac{1}{8}. $$
We define
$$ K = \{ z^{*} \in \mathrm{ext} \, B_{X^{*}} \setsep z^{*}(x) \geq \tfrac{1}{2} \}. $$
This is indeed a compact set in the $w^*$ topology. By Lemma~\ref{lem:bigInEveryExtremePoint}, $ \mathrm{ext} \, B_{X^{*}} $ is the disjoint union of $ K $ and $ -K $. We claim that $ X $ is isometric to $ \C(K) $.

Let $ T : X \to \C(K) $ be given by $ (Ty)(z^{*}) = z^{*}(y) $, $y\in X$. Then $ T $ is an isometric embedding. We claim $T$ is surjective. Given $f \in \C(K)$, we define a function $g:\ext B_{X^*} \rightarrow \er$ by setting
\[g(z^*)=\begin{cases}
 f(z^*), \quad z^* \in K,\\
 -f(-z^*), \quad z^* \in -K.\\
\end{cases}\]
Then, $g$ is a continuous and odd function on $\ext B_{X^*}$. By Theorem~\ref{thm:spurnyKalenda}, we can extend $g$ to an odd $w^*$-continuous affine function on $B_{X^*}$, so, by the Banach-Dieudonn\'e theorem, to a restriction of an element $x \in X$  (considered as an element of $X^{**}$) to $B_{X^*}$. Now, it is easy to check that $Tx=f$. Indeed, given $z^* \in K$, we have 
\[T(x)(z^*)=z^*(x)=g(z^*)=f(z^*).\] 
Thus, $T$ is surjective and therefore it is an isometry between $X$ and $\C(K)$. Finally, from Proposition~\ref{prop:CKcountable} we obtain that $K$ is zero-dimensional.

The ``Moreover'' part follows from the classical result that a $\C(K)$ space has Daugavet property if and only if $K$ does not have isolated points, see e.g. \cite[p. 3]{DaugavetBook}. We just need to realize that a zero-dimensional compact metric space does not have isolated points if and only if it is homeomorphic to the Cantor set, see e.g. \cite[Theorem I.7.4]{Kechrisbook}.
\end{proof}

Finally, let us formulate the isometric characterization of $\C(\omega^\alpha\cdot k)$ spaces which follows directly from Theorem~\ref{thm:charactCkcountableInCk} and Theorem~\ref{thm:mainPart1Strong} (note that Theorem~\ref{thm:charakterizace_CKcountable} would be enough, because condition \ref{it:derivativeCharacterization} from Theorem~\ref{thm:charactCkcountableInCk} implies by Proposition~\ref{prop:sufficientForDualEll1} that $X^*$ is isometric to $\ell_1$).

\begin{cor}\label{cor:isometricCharacterizationCKcountable}Let $X$ be a separable Banach space, $\alpha<\omega_1$ countable nonzero ordinal and $k\in\Nat$. Then $X$ is isometric to $\C(\omega^\alpha\cdot k)$ if and only if it is an $L_1$-predual satisfying condition \eqref{eq:newCondition} and condition \ref{it:derivativeCharacterization} from Theorem~\ref{thm:charactCkcountableInCk}.
\end{cor}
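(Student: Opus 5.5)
The plan is to combine Theorem~\ref{thm:charactCkcountableInCk} with Theorem~\ref{thm:mainPart1Strong}, so the argument reduces to checking the two directions separately.

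For the forward implication, suppose $X\equiv\C(\omega^\alpha\cdot k)$. The steps are:
\begin{enumerate}
\item $X$ is a $\C(K)$ space, hence an $L_1$-predual; indeed its dual is $\ell_1$ since $K=[0,\omega^\alpha\cdot k]$ is countable.
\item $K$ is countable and compact, hence zero-dimensional (a countable compact metric space has a basis of clopen sets). So Proposition~\ref{prop:CKcountable} (implication \ref{it:zeroDim}$\Rightarrow$\ref{it:FsigmaDeltaStrong}) gives condition \eqref{eq:newCondition}.
\item Condition \ref{it:derivativeCharacterization} of Theorem~\ref{thm:charactCkcountableInCk} follows from the ``only if'' part of that theorem, i.e.\ from Corollary~\ref{cor:SzlenkDerivative2}. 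Concretely, $s_2^\alpha(B_{X^*})=B_{M(\{\omega^\alpha i: i\le k\})}$, the unit ball of a $k$-dimensional space that is not contained in any $(k-1)$-dimensional subspace.
\end{enumerate}

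For the converse, let $X$ be a separable $L_1$-predual satisfying \eqref{eq:newCondition} and condition \ref{it:derivativeCharacterization}. The steps are:
\begin{enumerate}
\item Condition \eqref{eq:newCondition} is condition \ref{it:newConditionInThm} of Theorem~\ref{thm:mainPart1Strong}. That theorem therefore yields a zero-dimensional compact $K$ with $X\equiv\C(K)$, so condition \ref{it:CkAny} of Theorem~\ref{thm:charactCkcountableInCk} holds.
\item Together with condition \ref{it:derivativeCharacterization}, Theorem~\ref{thm:charactCkcountableInCk} gives $X\equiv\C(\omega^\alpha\cdot k)$.
\end{enumerate}
Separability of $X$ is part of the hypotheses of Theorem~\ref{thm:charactCkcountableInCk}, so that theorem applies. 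Its proof also takes care of showing that $K$ is countable, via Proposition~\ref{prop:sufficientForDualEll1}, the Asplund property and scatteredness.

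Alternatively, one could replace Theorem~\ref{thm:mainPart1Strong} by Theorem~\ref{thm:charakterizace_CKcountable}. Since $X$ is a separable $\mathcal{L}_{\infty,1+}$-space, Proposition~\ref{prop:sufficientForDualEll1} gives $X^*\equiv\ell_1$, and \eqref{eq:newCondition} implies \eqref{eq:FsigmaDeltaCondition}. Both routes work, and I would present the first as the main line.

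There is no real obstacle here; all the substance is in the cited results. The only points requiring care are that countable compacta are zero-dimensional, and that the hypotheses of each invoked theorem (separability, $L_1$-predual) are actually available at the point where the theorem is used.
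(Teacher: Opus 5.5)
Your proposal is correct and matches the paper's argument: the corollary is stated as a direct combination of Theorem~\ref{thm:charactCkcountableInCk} and Theorem~\ref{thm:mainPart1Strong}. The paper likewise remarks that Theorem~\ref{thm:charakterizace_CKcountable} would suffice, because condition~\ref{it:derivativeCharacterization} gives $X^*\equiv\ell_1$ via Proposition~\ref{prop:sufficientForDualEll1}. One small detail for your alternative route: that proposition is stated for infinite-dimensional spaces, and this holds automatically here, since $s_2^\alpha(B_{X^*})=\emptyset$ whenever $X$ is finite-dimensional and $\alpha\geq 1$.
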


\part{Borel complexities of \texorpdfstring{$\C(K)$}{C(K)} and \texorpdfstring{$\homeoclass{K}$}{<K>} for \texorpdfstring{$K$}{K} countable or the Cantor set}\label{part2}

The aim of this part is to obtain results needed for the proof of Theorem~\ref{thm:Intro1}. The content of Part~\ref{part2} is almost independent of Part~\ref{part1}; the only results from Part~\ref{part1} required here are summarized in Theorems~\ref{thm:part1Main1} and \ref{thm:part1Main2}, which we recall below, see Section~\ref{sec:upperBounds}.

To formulate and explain the main results of Part~\ref{part2}, we begin by introducing the framework in which we shall work.

\begin{definition}{\cite[Definition 2.1]{CDDK1}}\label{def:coding}
By $V$ we denote the countable $\Rat$-linear vector space of elements of $c_{00}$ having rational entries. By $\PP$ we denote the space of all pseudonorms on the vector space $V$. We often identify $\mu\in\PP$ with its extension to the pseudonorm on the space $c_{00}$, that is, on the $\Rea$-linear vector space of all finitely supported sequences of real numbers.

For every $\mu\in\PP$ we denote by $X_\mu$ the Banach space given as the completion of the quotient space $X/N$, where $X = (c_{00},\mu)$ and $N = \{x\in c_{00}\setsep \mu(x) = 0\}$. In what follows we often consider $V$ as a subspace of $X_\mu$, that is, we identify every $v\in V$ with its equivalence class $[v]_N\in X_\mu$.

By $\PP_\infty$ we denote the set of those $\mu\in\PP$ for which $X_\mu$ is infinite-dimensional Banach space, and by $\B$ we denote the set of those $\mu\in\PP_\infty$ for which the extension of $\mu$ to $c_{00}$ is a norm. We equip $\P$, $\PP_\infty$, and $\B$ with the subspace topologies induced by $\Rea^V$. For any $\N\in\{\P,\P_\infty,\B\}$, a subbasis of this topology is given by sets of the form $U[v,I]\cap \N$, where $U[v,I]:=\{\mu\in\Rea^V\setsep \mu(v)\in I\}$, with $v\in V$ and $I$ an open interval. Recall that the topologies on $\PP$, $\PP_\infty$ and $\B$ are Polish, we refer to \cite[Corollary 2.5]{CDDK1} for a proof.

Given an infinite-dimensional separable Banach space $X$ and $\N\in\{\PP_\infty,\B\}$, we put
\[
\isomtrclass[\N]{X}:=\{\mu\in\N\setsep X_\mu\equiv X\}.
\]
If it is obvious from the context what $\N$ is we write just $\isomtrclass{X}$. We say that $\isomtrclass[\N]{X}$ is the \emph{isometry class of $X$ in $\N$}, we consider it as a subset of the Polish space $\N$.
\end{definition}

More generally, given a class $\C$ of infinite-dimensional separable Banach spaces stable under isometries, we study the Borel complexity of $\langle \C\rangle^\N:=\{\mu\in\N\setsep X_\mu\in\C\}$ in the space $\N$, where $\N\in\{\P_\infty,\B\}$. It is not clear which of the two choices, $\PP_\infty$ or $\B$, is more natural; for this reason, we formulate and prove our results for both. In general, $\B \subseteq \PP_\infty$, and by \cite[Proposition 3.6]{CDDK1} there exists an $F_\sigma$-measurable map $\varphi : \PP_\infty \to \B$ such that $X_\mu \equiv X_{\varphi(\mu)}$ for every $\mu \in \PP_\infty$. Thus, the difference between $\langle \C\rangle^{\PP_\infty}$ and $\langle \C\rangle^{\B}$ is small; however, since we are interested in precise complexity bounds, it remains relevant. Since $\B \subseteq \PP_\infty$, any upper bound on the Borel complexity of $\langle \C\rangle^{\PP_\infty}$ immediately yields the same bound for $\langle \C\rangle^{\B}$. Consequently, it is easier to work with $\B$ for upper bounds and with $\PP_\infty$ for lower bounds, while treating the opposite cases may introduce nontrivial technical difficulties.

\begin{remark}\label{rem:admissible} As we have mentioned in the introduction, a standard approach is to code separable Banach spaces by elements of the standard Borel space \( SB(X) \). This framework allows one to determine whether a given class of separable Banach spaces is Borel; however, it does not provide a canonical topology on \( SB(X) \) and therefore the notion of the exact Borel complexity of a class becomes ambiguous. Godefroy and Saint-Raymond partially addressed this issue in \cite{GodSR} by introducing so-called admissible topologies. Still, there is no canonical choice of an admissible topology. This is the reason why we consider the coding presented in Definition~\ref{def:coding} as being more canonical and more convenient to work with. We refer the interested reader also to \cite[Introduction and Remark 1.3]{CDDK2}, where a more detailed discussion concerning our choice of coding may be found. More detailed comparisons and relations are studied in \cite{CDDK1}.
\end{remark}

Let us recall at this point some notions from the classical descriptive set theory, which we shall use to formulate our results.

\begin{notation}\label{not:dstBasics}We denote by $\boldsymbol{\Sigma}^0_{\alpha}$ and $\boldsymbol{\Pi}^0_{\alpha}$ the additive and multiplicative Borel classes of order $\alpha<\omega_1$ in Polish spaces. 
Given subsets $A\subseteq X$ and $B\subseteq Y$ of Polish spaces $X$ and $Y$, we say that $A$ is \emph{Wadge reducible} to $B$, in symbols $A\leq_W B$, if there is a continuous function $f:X\to Y$ such that $A = f^{-1}(B)$, we say that such $f$ is a \emph{continuous reduction from $A$ to $B$}. Given a class $\boldsymbol{\Gamma}$ of subsets of Polish spaces, we say that a subset $ A $ of a Polish space $ X $ is a $ D_{2}(\boldsymbol{\Gamma}) $ set if $ A = B \setminus C $ for some $ B, C \in \boldsymbol{\Gamma}(X) $. Further, we say that a set $B\subseteq Y$ is $\boldsymbol{\Gamma}$-hard if $A\leq_W B$ whenever $A\in\boldsymbol{\Gamma}(X)$ and $X$ is Polish and zero-dimensional, and $\boldsymbol{\Gamma}$-complete if $B$ is $\boldsymbol{\Gamma}$-hard and in addition belongs to $\boldsymbol{\Gamma}(Y)$.

It is well-known that if $A\subseteq X$ is $\boldsymbol{\Pi}^0_{\alpha}$-complete, then $A\in \boldsymbol{\Pi}^0_{\alpha}(X)$ but $A\notin \boldsymbol{\Sigma}^0_{\alpha}(X)$. Moreover, if $A$ is  $D_2(\boldsymbol{\boldsymbol{\Pi}^0_{\alpha}})$-complete, then $A$ can be expressed as the intersection of a set in $\boldsymbol{\boldsymbol{\Pi}^0_{\alpha}}(X)$ with a set in $\boldsymbol{\boldsymbol{\Sigma}^0_{\alpha}}(X)$, but it cannot be written as the union of such sets. We refer the reader e.g. to \cite[Section 22]{Kechrisbook} for more details.
\end{notation}

The main result of Part~\ref{part2} is the following, from which Theorem~\ref{thm:Intro1} immediately follows.

\begin{thm}\label{thm:main1Part2}Let $\beta$ be either $0$ or a countable limit ordinal and let $n\in\Nat\cup\{0\}$ be such that $\beta+n>0$. Pick $\N\in\{\PP_\infty,\B\}$.
\begin{enumerate}[label=(\roman*)]
    \item\label{it:kIsOneComplete} The isometry class of $\C(\omega^{\beta+n})$ in $\N$ is $\boldsymbol{\Pi}^0_{\beta+2n+1}$-complete.
    \item\label{it:kIsNotOneComplete}  The isometry class of $\C(\omega^{\beta+n}\cdot k)$ in $\N$ is $D_2(\boldsymbol{\Pi}^0_{\beta+2n+1})$-complete for every $k\in\Nat$ with $k\geq 2$.
    \item\label{it:cantorComplete} The isometry class of $\C(2^\Nat)$ in $\N$ is $\boldsymbol{\Pi}^0_3$-complete.
\end{enumerate}
\end{thm}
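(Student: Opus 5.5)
The proof splits into upper bounds, obtained from the isometric characterizations in Part~\ref{part1}, and lower bounds, obtained by continuous reductions from compact sets.

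\textbf{Upper bounds.} Since $\B\subseteq\PP_\infty$, it suffices to work in $\PP_\infty$. By Corollary~\ref{cor:isometricCharacterizationCKcountable}, $\mu\in\isomtrclass{\C(\omega^\alpha\cdot k)}$ if and only if three conditions hold: $X_\mu$ is an $L_1$-predual; condition \eqref{eq:newCondition} (equivalently \eqref{eq:FsigmaDeltaCondition}, by Theorem~\ref{thm:mainPart1Strong}) holds; and the Szlenk condition \ref{it:derivativeCharacterization} holds. I will estimate each condition separately.

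\begin{enumerate}[label=(\alph*)]
\item Being $\mathcal L_{\infty,1+}$ is expressed through finite rational configurations in $V$, which gives a $G_\delta$ (hence $\boldsymbol\Pi^0_3$) condition.
\item Condition \eqref{eq:FsigmaDeltaCondition} has the form ``for every $\varepsilon$, every $v\in V$ is approximated by absolutely convex combinations of elements $a$ with the property $\forall y$''. The inner property is closed, and the approximation step adds an $\exists$, so the condition is $F_{\sigma\delta}$, i.e.\ $\boldsymbol\Pi^0_3$.
\item For the Szlenk part, I will prove by transfinite induction that the relation ``$x\in s_2^\gamma(B_{X_\mu^*})$'' is coded with controlled complexity. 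The approach is to describe $s_\varepsilon^\gamma$ through trees of weakly null-like finite sequences in $V$ (as in \cite{CDDK2}). The target is that ``$s_2^{\beta+n}(B_{X^*})$ lies in a $k$-dimensional space'' is $\boldsymbol\Pi^0_{\beta+2n+1}$, while ``not in a $(k-1)$-dimensional space'' is $\boldsymbol\Sigma^0_{\beta+2n+1}$.
\end{enumerate}

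Intersecting these gives $\boldsymbol\Pi^0_{\beta+2n+1}$ for $k=1$ (where nonemptiness is automatic once the space is infinite-dimensional) and $D_2(\boldsymbol\Pi^0_{\beta+2n+1})$ for $k\ge2$. For $\C(2^\Nat)$, Theorem~\ref{thm:mainPart1Strong} adds the Daugavet property, which is $G_\delta$, so the class is $\boldsymbol\Pi^0_3$.

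\textbf{Lower bounds.} I will reduce from the homeomorphism classes of compacta. First I will build a continuous map $K\mapsto\mu_K$ from $\K(2^\Nat)$ into $\PP_\infty$ with $X_{\mu_K}\equiv\C(K\cup\{\text{fixed countable set}\})$. Concretely, fix a dense sequence of clopen indicators, code $\C(K)$ through the norms $\mu_K(v)=\max_{x\in K}|\sum v_i\chi_{C_i}(x)|$, and check that $K\mapsto\mu_K(v)$ is continuous. It then remains to show that $\{K:K\cong[0,\omega^{\beta+n}]\}$ is $\boldsymbol\Pi^0_{\beta+2n+1}$-hard, respectively that the class for $\omega^{\beta+n}\cdot k$ is $D_2$-hard. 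I will go by induction on $\beta+n$, following the Cenzer--Mauldin construction but gaining one level. Given a continuous reduction of a $\boldsymbol\Sigma^0_{\beta+2n}$-complete set to ``$\height\le\beta+n$'', I will glue sequences of such compacta accumulating to a single point. This yields $\boldsymbol\Pi^0_{\beta+2n+1}$-hardness of ``$D^{\beta+n}(K)$ is exactly one point'', with a uniform padding that forces height at least $\beta+n+1$. The $D_2$ case combines a $\boldsymbol\Pi$-hard and a $\boldsymbol\Sigma$-hard coordinate via disjoint unions, realized as $\ell_\infty$-sums. For the Cantor set I will reduce a $\boldsymbol\Pi^0_3$-complete set such as $\{x\in2^{\Nat\times\Nat}:\forall m\ \text{finitely many } x(m,j)=1\}$, mapping $x$ to a zero-dimensional compactum that is perfect exactly when $x$ lies in the set (isolated points get ``split'' by Cantor pieces only finitely often). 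To handle $\B$, where codes must be genuine norms, I will take the sum with a fixed copy of the space, so that the image lies in $\B$ while the isometry type is unchanged.

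\textbf{Main obstacle.} I expect the hardest step to be the exact upper bound on the Szlenk condition. The naive count loses levels at limit stages and at each derivative; achieving exactly two levels per derivative requires expressing the statement ``$s_2^\gamma$ lies in a finite-dimensional space'' through a carefully chosen countable dense family of rational functionals and weak$^*$ neighbourhoods. The matching lower-bound construction is a close second.
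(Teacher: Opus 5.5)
Your overall skeleton matches the paper's: upper bounds from the Part~\ref{part1} characterization, lower bounds through homeomorphism classes. Several steps fail, however, and the first is fatal.

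\textbf{The lower bound for $\C(2^\Nat)$.} Suppose every space produced by your reduction is $X_{\mu_x}\equiv\C(K_x)$ with $K_x$ zero-dimensional. By Banach--Stone and Brouwer, $X_{\mu_x}\equiv\C(2^\Nat)$ iff $K_x$ has no isolated points, iff $X_{\mu_x}$ has the Daugavet property. The Daugavet set is $\boldsymbol{\Pi}^0_2$ in $\PP_\infty$, the same fact you use for the upper bound. So the preimage of $\isomtrclass{\C(2^\Nat)}$ under any such continuous map is $G_\delta$ and can never be $\boldsymbol{\Pi}^0_3$-complete. Equivalently, $\homeoclass{2^\Nat}$ is $G_\delta$ in $\K(2^\Nat)$ (Remark~\ref{rem:cantorHomeo}), so homeomorphism classes of zero-dimensional compacta are a dead end here.

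The missing idea is that in the ``bad'' case the compact space must fail to be zero-dimensional. Then what breaks is the $\boldsymbol{\Pi}^0_3$ condition \eqref{eq:FsigmaDeltaCondition}, not the Daugavet property. This is what Proposition~\ref{prop:Cantorvscircle} does: the dyadic ``copy or average'' vectors $u^\varrho_s$ span $\C(2^\Nat)$ when $\varrho$ has finitely many ones and $\C(\text{circle})$ otherwise. The norm of each finite combination depends on only finitely many coordinates of $\varrho$, and the rows of $\sigma$ are joined by a one-point compactification, giving a continuous map into $\B$.

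\textbf{The passage to $\B$.} A clopen-indicator code $\mu_K$ is typically not a norm; for instance $\mu_K(e_i)=0$ whenever $K\cap C_i=\emptyset$. Each natural reading of ``summing with a fixed copy'' fails:
\begin{itemize}
\item A direct sum with a fixed space keeps this kernel on the old coordinates.
\item Taking $\max\{\mu_K,\nu\}$ with a fixed norm $\nu$ on the same coordinates gives a norm, but its completion is the closure of a diagonal in $X_{\mu_K}\oplus_\infty X_\nu$, not $\C(K)$ in general.
\item In any case $\C(K)\oplus_\infty\C(L)\equiv\C(K\sqcup L)$, and for $K\sim[0,\omega\cdot k]$ with $L$ infinite this has more than $k$ accumulation points, so the isometry type changes.
\end{itemize}
The paper handles $\B$ in two ways. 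For limit $\beta$ it uses Lemma~\ref{lem:PSameAsB} with the $F_\sigma$-measurable map $\PP_\infty\to\B$ of \cite{CDDK1}, which only works at levels $\ge\omega$. For $\beta=0$ it builds Boolean-algebra codes directly (Section~\ref{sec:betaKonecne}), and regards this as the hardest part of the lower bound.

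\textbf{The $D_2$ case.} Your $k=1$ gluing is correct and shorter than the paper's induction: apply $\forall m$ to the $\boldsymbol{\Sigma}^0_{\beta+2n}$-hard set $\{D^{\beta+n}=\emptyset\}$ and pad with $[0,\omega^{\beta+n}]$. The $D_2$ case, however, needs exact counts of top-level points. Write $\alpha=\beta+n$. Consider a split $K_1(x)\sqcup K_2(y)$ with $K_1\sim[0,\omega^\alpha]$ iff $x\in C$ and $K_2\sim[0,\omega^\alpha\cdot(k-1)]$ iff $y\notin D$. For $k=2$ this would exhibit a $\boldsymbol{\Sigma}^0_{\beta+2n+1}$-complete set as a continuous preimage of the $\boldsymbol{\Pi}^0_{\beta+2n+1}$ set $\homeoclass{[0,\omega^\alpha]}$, which is impossible.

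What you need are building blocks whose $\alpha$-th derivative is always empty or a single point, so that Proposition~\ref{prop:difHard}\ref{it:DifComplete} applies. Bare completeness of ``$\height\le\alpha$'' gives no such control. The paper extracts it from the Cenzer--Mauldin proofs (Proposition~\ref{prop:initialOrdinal}, Theorem~\ref{thm:countableOrdinal}) and propagates it through Proposition~\ref{prop:countableOrdinal2}.

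\textbf{A minor point in the upper bound.} Nonemptiness of $s_2^\alpha(B_{X^*})$ is not automatic: $s_2^2(B_{c^*})=\emptyset$, so $c$ would pass your test for $\C(\omega^2)$. This is easily repaired. Since $\{\emptyset\}$ is clopen in the hyperspace and $s_2^\alpha$ is $\boldsymbol{\Sigma}^0_{\beta+2n+1}$-measurable (Lemma~\ref{lem:szlenkComplexity}, from \cite{CDDK2}), nonemptiness is both $\boldsymbol{\Sigma}^0_{\beta+2n+1}$ and $\boldsymbol{\Pi}^0_{\beta+2n+1}$. That measurability is also the ready-made form of the Szlenk estimate you single out as the main obstacle.
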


Apart from the above we also prove an analogous statement concerning the homeomorphism classes of compact spaces. The notation we use in this context is the following.

\begin{notation}
    Given a topological space $X$, we denote by $\K(X)$ the space of all compact subspaces of $X$ endowed with the Vietoris topology (that is, topology with subbasis given by sets $\{K\in\K(X)\setsep K\cap U\neq \emptyset\}$ and $\{K\in \K(X)\setsep K\subseteq U\}$ for $U\subseteq X$ open). It is well-known that if $X$ is a metrizable compact space, then $\K(X)$ is a metrizable compact space as well and if $X$ is moreover uncountable, any zero-dimensional metrizable compact space does embed into $X$. We refer the interested reader to \cite[Sections 4, 6 and 7]{Kechrisbook} for more details.

    Further, given two compact spaces $K$ and $L$, we write $K\sim L$ if they are homeomorphic. Moreover, if $X$ is uncountable metrizable compact space and $K$ is zero-dimensional metrizable compact, we put $\homeoclass{K}^X:=\{L\in \K(X)\setsep L\sim K\}$ and we say that $\homeoclass{K}^X$ is the \emph{homeomorphism class of $K$ in $\K(X)$}, we understand it as a (necessarily nonempty) subset of the Polish space $\K(X)$. In the case that $X$ is clear from the context we write $\homeoclass{K}$ instead of $\homeoclass{K}^X$.
\end{notation}

Our main result concerning homeomorphism classes of compact spaces is the following.

\begin{thm}\label{thm:main2Part2}
Let $X$ be an uncountable metrizable compact space, $\beta$ be either $0$ or a countable limit ordinal and let $n\in\Nat\cup\{0\}$ be such that $\beta+n>0$.
\begin{enumerate}[label=(\roman*)]
    \item The homeomorphism class of $[0,\omega^{\beta+n}]$ in $\K(X)$ is $\boldsymbol{\Pi}^0_{\beta+2n+1}$-complete.
    \item The homeomorphism class of $[0,\omega^{\beta+n}\cdot k]$ in $\K(X)$ is $D_2(\boldsymbol{\Pi}^0_{\beta+2n+1})$-complete for every $k\in\Nat$ with $k\geq 2$.
\end{enumerate}
\end{thm}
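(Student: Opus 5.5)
The argument splits into an upper bound and a lower bound. Set $\gamma=\beta+2n+1$ and $\alpha=\beta+n$.

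\textbf{Upper bound.} For a countable compact $L$, we have $L\sim[0,\omega^\alpha\cdot k]$ if and only if $D^\alpha(L)$ has exactly $k$ points. This is equivalent to the conjunction of two conditions:
\[
\text{(a) } |D^{\alpha}(L)|\le k
\qquad\text{and}\qquad
\text{(b) } |D^\alpha(L)|\ge k.
\]
Condition (a) already forces $D^{\alpha+1}(L)=\emptyset$, so $L$ is countable.

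Cenzer--Mauldin gives that $\{L\setsep D^{\alpha+1}(L)=\emptyset\}$ is $\boldsymbol{\Sigma}^0_{\beta+2n+2}$. That is too weak, so we argue level by level in the Vietoris topology. We show by transfinite induction on $\xi$ that for each open $U\subseteq X$ and $m\in\Nat$:
\begin{itemize}
\item the set $\{L\setsep |D^\xi(L)\cap \overline U|\ge m\}$ is $\boldsymbol{\Pi}^0_{\beta'+2n'}$ when $\xi=\beta'+n'$;
\item the set $\{L\setsep D^\xi(L)\cap U\neq\emptyset\}$ is $\boldsymbol{\Sigma}^0$ of the analogous level.
\end{itemize}
The successor step uses the formula
\[
x\in D^{\xi+1}(L)\iff \forall \text{ basic } V\ni x:\ |D^\xi(L)\cap V|\ge 2,
\]
and each derivative costs two quantifier levels. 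Limits are handled by countable intersections.

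With these bounds:
\begin{itemize}
\item ``$|D^\alpha(L)|\ge k$'' is expressed as the existence of $k$ pairwise disjoint rational closed balls each meeting $D^\alpha(L)$. This is $\boldsymbol{\Sigma}^0_{\gamma}$, and in fact $\boldsymbol{\Pi}^0_\gamma$ after the careful count.
\item ``$|D^\alpha(L)|\le k$'' says that for every $k+1$ disjoint basic sets, one of them misses $D^\alpha(L)$. This is $\boldsymbol{\Pi}^0_\gamma$.
\end{itemize}
For $k=1$ the lower-count condition ``$D^\alpha\neq\emptyset$'' is implied modulo the upper count, so one $\boldsymbol{\Pi}^0_\gamma$ set suffices. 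For $k\ge 2$ the class is (a)$\setminus$($|D^\alpha|\le k-1$), which is $D_2(\boldsymbol{\Pi}^0_\gamma)$.

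The delicate point is to get $\gamma$ and not $\gamma+1$. I would calibrate the induction exactly against the known Cenzer--Mauldin base case $\xi=1$: the condition ``$|D(L)|\le 1$'' must come out $\boldsymbol{\Pi}^0_3$. I would check this first before trusting the induction.

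\textbf{Lower bound.} It suffices to treat $X=2^\Nat$, since $2^\Nat$ embeds into any uncountable compact $X$ and $\K(2^\Nat)\hookrightarrow\K(X)$ continuously. The key step, which I expect to be the main obstacle, is to build continuous maps $x\mapsto L_x$ from $2^{\Nat}$ (or $\Nat^\Nat$) into $\K(2^\Nat)$ reducing a $\boldsymbol{\Pi}^0_\gamma$-complete set $P$ to $\homeoclass{[0,\omega^\alpha]}$.

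For $\gamma=3$ I would take $P=\{x\in 2^{\Nat\times\Nat}\setsep \forall i\ \text{only finitely many } x(i,j)=1\}$. The reduction sends $x$ to the space $L_x$ formed by a convergent sequence of points $p_i\to p_\infty$, where each $p_i$ receives an attached set of isolated points that is finite exactly when row $i$ is finite. To keep $D(L_x)$ a single point when $x\in P$, the row-$i$ points should instead accumulate at $p_\infty$, and an infinite row should create a new accumulation point of order $1$. This gives $L_x\sim[0,\omega]$ if and only if $x\in P$, and otherwise $L_x$ acquires extra limit points or height $\ge 2$.

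The general case follows by transfinite iteration of this gadget. At successor steps we substitute copies of the previous construction for isolated points, which raises $\gamma$ by $2$. At limit steps we take a sequence of constructions converging to a point, with Wadge codes for $\boldsymbol{\Pi}^0_\beta$ given as countable intersections. For $k\ge 2$ we take the disjoint sum of the $\boldsymbol{\Pi}^0_\gamma$ gadget with a $\boldsymbol{\Sigma}^0_\gamma$ gadget that produces an extra point in $D^\alpha$ exactly on the complement, which yields $D_2$-hardness by the standard Wadge argument.
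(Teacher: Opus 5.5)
\paragraph{Upper bound.} Your upper bound takes a different route from the paper, and the route is sound. The paper never computes in $\K(X)$. It pulls back the bound on $\isomtrclass[\PP_\infty]{\C(K)}$ from Theorem~\ref{thm:upperBoundFinal} along the map $\rho$ of Lemma~\ref{lem:Ck} (Corollary~\ref{cor:Ck}), and that bound rests on the Part~\ref{part1} characterization and the measurability of $s_2^\alpha$. A direct Vietoris computation is more elementary, but your induction is misstated. The correct statements are these.
\begin{itemize}
\item For closed $F$ and $\xi=\beta'+n'\ge 1$, one has $\{L\setsep D^\xi(L)\cap F\neq\emptyset\}\in\boldsymbol{\Pi}^0_{\beta'+2n'}$. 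This uses that $D^{\xi+1}(L)\cap F\neq\emptyset$ iff every open $W\supseteq F$ meets $D^\xi(L)$ in infinitely many points, together with compactness at limit steps.
\item Consequently $\{L\setsep D^\xi(L)\cap U\ne\emptyset\}\in\boldsymbol{\Sigma}^0_{\beta'+2n'+1}$ for open $U$.
\item Counting conditions with $m\ge 2$ are only $\boldsymbol{\Sigma}^0_{\beta'+2n'+1}$. For instance, if $\{L\setsep |D(L)|\ge 2\}$ were $\boldsymbol{\Pi}^0_2$, then $\homeoclass{[0,\omega]}$ would be $\boldsymbol{\Sigma}^0_3$, contradicting (i).
\item Likewise $\{|D^\alpha(L)|\ge k\}$ is not $\boldsymbol{\Pi}^0_\gamma$ for $k\ge 2$, or (ii) would fail.
\end{itemize}
With these bounds, $\{|D^\alpha(L)|\le k\}\in\boldsymbol{\Pi}^0_\gamma$ and $\{D^\alpha(L)\neq\emptyset\}\in\boldsymbol{\Pi}^0_{\gamma-1}$, and your final count goes through.

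\paragraph{Lower bound, successor step.} The genuine gap is in the lower bound, exactly where the theorem improves on Cenzer--Mauldin. Your base gadget is fine, provided the row-$i$ points converge to $p_i$ and not to $p_\infty$. The problem is what you propagate: a reduction $G$ of a $\boldsymbol{\Pi}^0_\gamma$-complete $P$ to $\homeoclass{[0,\omega^\alpha]}$ cannot be iterated. Off $P$ its values have \emph{more} points in $D^\alpha$. In any space assembled from copies of $G(y_n)$ at isolated points, $D^{\alpha+1}$ sees $y_n$ only through $\min(\height G(y_n),\alpha+1)$ and through $D^{\alpha+1}(G(y_n))$. So it cannot distinguish $|D^\alpha(G(y_n))|=1$ from $|D^\alpha(G(y_n))|=2$. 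Membership in $P$ is invisible one level up, the fibres that do matter are not tracked, and ``raises $\gamma$ by $2$'' is unsupported.

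What must be carried instead is a reduction $H$ of a $\boldsymbol{\Pi}^0_{\gamma-1}$-hard set $A$ whose values all embed into $[0,\omega^\alpha]$. Then $D^\alpha(H(x))$ is a singleton on $A$ and empty off $A$. Proposition~\ref{prop:countableOrdinal2} glues $[0,\omega^\alpha]$ and the copies of $H(x_n)$ at the single point $\omega^\alpha$, which gives $|D^\alpha(H'(x))|=1+|\{n\setsep x_n\in A\}|$. From this:
\begin{itemize}
\item ``no $n$'' gives $\boldsymbol{\Pi}^0_\gamma$-hardness for $[0,\omega^\alpha]$;
\item ``exactly $k-1$ values of $n$'' gives $D_2(\boldsymbol{\Pi}^0_\gamma)$-hardness (Proposition~\ref{prop:difHard});
\item ``infinitely many $n$'' (Lemma~\ref{lem:completeSetInductionStep}) gives a $\boldsymbol{\Pi}^0_{\gamma+1}$-hard reduction to $[0,\omega^{\alpha+1}]$ of the same kind, which feeds the next step.
\end{itemize}
The two levels per derivative come from this $\exists^\infty$, which your outline never isolates.

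\paragraph{Limit step and the $D_2$ step.} At a limit $\lambda$ with $n=0$, your sketch gives at most $\boldsymbol{\Pi}^0_\lambda$, which is the Cenzer--Mauldin bound, whereas you need $\boldsymbol{\Pi}^0_{\lambda+1}$. Getting there requires the same ``no $n$'' step, plus a check that the Cenzer--Mauldin values satisfy $|D^\lambda|\le 1$; the paper does this in Theorem~\ref{thm:countableOrdinal}.

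Your disjoint-sum argument fails at $k=2$. Suppose $G(x)\sqcup S(y)\sim[0,\omega^\alpha\cdot 2]$ exactly on $P\times P^c$. Fixing $x\in P$ gives $P^c=S^{-1}(\homeoclass{[0,\omega^\alpha]})$, which is $\boldsymbol{\Pi}^0_\gamma$ by the upper bound. That is impossible for a $\boldsymbol{\Sigma}^0_\gamma$-complete set. For $k\ge 3$ you must also rule out a count of exactly $k$ when $x\notin P$. Gluing all pieces at one common point and using the doubling trick of Proposition~\ref{prop:difHard}\ref{it:DifComplete}, as the paper does, handles both issues.
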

\noindent We note that for the Cantor set an analogy of Theorem~\ref{thm:main1Part2}\ref{it:cantorComplete} for the homeomorphism classes does not hold, as it follows from known results that the homeomorphism class of the Cantor set is $G_\delta$, see Remark~\ref{rem:cantorHomeo} for more details.

Apart from the above main results, we also obtain some additional consequences of some parts of our proofs. The first one is related to the open problem of whether the class of Banach spaces isometric to a $\C(K)$ space for some compact $K$ is Borel, see Question~\ref{que:CKBorel}.

\begin{thm}\label{thm:main3Part2}Let $\N\in\{\PP_\infty,\B\}$. Let us denote by $\C$ the class of Banach spaces isometric to some $\C(K)$ space with $K$ zero-dimensional. Then the set $\{\mu\in \N\setsep X_\mu\in \C\}$ is $\boldsymbol{\Pi}^0_3$-complete.
\end{thm}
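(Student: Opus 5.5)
We prove the upper bound and the lower bound separately.

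\emph{Upper bound.} It suffices to show that the set is $\boldsymbol{\Pi}^0_3$ in $\PP_\infty$, since the bound then passes to $\B$. By Theorem~\ref{thm:mainPart1Strong}, $X_\mu\in\C$ if and only if (a) $X_\mu$ is an $L_1$-predual and (b) condition \eqref{eq:FsigmaDeltaCondition} holds in $X_\mu$.

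For (a) we use the $\mathcal{L}_{\infty,1+}$ characterization. For every finite tuple $v_1,\dots,v_m\in V$, every rational $\varepsilon>0$ and every $\delta>0$, there must exist finitely many $w_1,\dots,w_N\in V$ satisfying two requirements: the map $e_i\mapsto w_i$ is $(1+\varepsilon)$-close to an isometry from $\ell_\infty^N$, and each $v_j$ lies within $\delta$ of their span with rational coefficients. The first requirement is an open condition on $\mu$, since it is a strict inequality on finitely many values after restricting the coefficients to rationals (a density argument). Hence (a) is $\boldsymbol{\Pi}^0_2$ (a $\forall\exists$-open condition).

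For (b), fix $\varepsilon$ and write $J_\varepsilon(\mu)$ for the set of $a\in S_{X_\mu}$ with $\max\{\|a\pm y\|\}\ge 2-\varepsilon$ for all $y$. We would formulate (b) as follows: for all rational $\varepsilon$, all $v\in V$ with $\mu(v)<1$, and all rational $\eta$, there exist finitely many $a_1,\dots,a_r\in V$ and rationals $q_i$ with $\sum|q_i|\le 1$ such that three things hold: $\mu(\sum q_ia_i-v)<\eta$, $|\mu(a_i)-1|<\eta$, and for every $y\in V$ with $|\mu(y)-1|<\eta$ we have $\max\{\mu(a_i-y),\mu(a_i+y)\}\ge 2-2\varepsilon$ (or a comparable shift). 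The final clause is a $\forall y$ of closed conditions, hence closed. The clauses before it are open, so the inner part is $\boldsymbol{\Sigma}^0_2$ and the whole formula is $\boldsymbol{\Pi}^0_3$.

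The main obstacle is justifying this approximation. Points of $J$ lie in $X_\mu$, not in $V$, and the $\varepsilon$-slack must absorb that discrepancy. The key check is that if $a\in J_\varepsilon$ and $a'\in V$ is $\eta$-close to $a$, then the rescaled $a'$ lies in $J_{\varepsilon+3\eta}$, and conversely. Since \eqref{eq:FsigmaDeltaCondition} quantifies over all $\varepsilon$, these shifts are harmless. Together, (a) and (b) place the set in $\boldsymbol{\Pi}^0_3$.

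\emph{Lower bound.} We reduce a known $\boldsymbol{\Pi}^0_3$-complete set continuously into our set. The candidate is $P_3=\{x\in 2^{\Nat\times\Nat}\setsep \forall m\ \text{the set } \{n: x(m,n)=1\} \text{ is finite}\}$. The plan is to construct a continuous map $x\mapsto\mu_x\in\B$ (hence also into $\PP_\infty$) such that $X_{\mu_x}$ has one of two forms. If $x\in P_3$, it should be a $\C(K)$ space with $K$ countable, ideally $\C(\omega^2)$ or a similar space. Otherwise it should be an $L_1$-predual that is not of the form $\C(K)$ with $K$ zero-dimensional, for example a space containing $c_0$-sums of copies of $\{f\in c: \lim f=0\text{ in a twisted way}\}$, or a predual whose extreme points accumulate at $0$.

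Concretely, for each $m$ we build a block $Z_m$. When row $m$ is finite, $Z_m$ is an isometric copy of $c$. When row $m$ is infinite, $Z_m$ is a predual $E$ in which $\ext B_{E^*}$ is not $w^*$-closed, such as $\{f\in c\setsep \lim f=\tfrac12(f(1)+f(2))\}$. The norm on $Z_m$ is defined by finite stages, each row entry $x(m,n)=1$ adding a fresh coordinate that pushes the limit functional toward the midpoint. This makes the construction continuous, because each $\mu_x(v)$ depends on only finitely many entries of $x$. We then set $X_{\mu_x}=(\bigoplus_m Z_m)_{c}$, arranged as a $\C(K)$ space with $K$ the one-point compactification of the disjoint union. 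Proposition~\ref{prop:closedExtremePoints} (closed extreme points) then separates the two cases. The delicate part is keeping each $\mu_x(v)$ determined by finitely many entries of $x$; here we would mimic the $c_0$ and $\ell_p$ reductions of \cite{CDDK2}.
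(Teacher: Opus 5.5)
Your upper bound is correct and follows the paper's argument. Theorem~\ref{thm:mainPart1Strong} reduces membership to two conditions. The first is the $\mathcal{L}_{\infty,1+}$ property, which is $\boldsymbol{\Pi}^0_2$ (the paper simply cites \cite[Theorem 3.6]{CDDK2}). The second is condition~\eqref{eq:FsigmaDeltaCondition}, which you code as a $\boldsymbol{\Pi}^0_3$ condition essentially as in Lemma~\ref{lem:FsigmaDeltaCondition}. Your unnormalized variant, with $|\mu(a_i)-1|<\eta$ and a closed clause over $y$, works once the constants are adjusted.

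The lower bound, however, has a genuine gap. You never build the continuous reduction, and the one property that makes such a reduction possible is assumed rather than achieved. You need a continuous coding of a single row in which finitely many $1$'s still give a block that is \emph{exactly} isometric to $c$, while infinitely many $1$'s give $E=\{f\in c\setsep \lim f=\tfrac12(f(1)+f(2))\}$. Since the isometry type may depend only on the tail of the row, no individual $1$ is allowed to change the space. If a $1$ genuinely moved the $w^*$-limit of the (extreme) coordinate functionals to a non-extreme point, a single $1$ would already destroy the $\C(K)$ structure and the reduction would collapse. Pointing to the $c_0$ and $\ell_p$ reductions of \cite{CDDK2} does not supply this tail invariance. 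It is exactly what Claim~\ref{claim:CodeCantor}(i) achieves in the paper's construction.

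Your route can be completed along the lines of the paper's vectors $u^\varrho_s$.
Index the coordinates of block $m$ by $1,2,t_0,t_1,\dots$. Let $u_a$ be $e_a$ on the coordinates present at the stage when $a$ is introduced ($1,2,t_0$ at stage $0$, and $t_n$ at stage $n$). Extend every vector recursively by $f(t_n)=f(t_{n-1})$ if $x(m,n)=0$, and $f(t_n)=\tfrac12\big(f(t_{n-1})+\tfrac12(f(1)+f(2))\big)$ if $x(m,n)=1$. This construction has the following properties.
\begin{enumerate}
\item New values lie between $f(t_{n-1})$ and $\tfrac12(f(1)+f(2))$. Hence norms of finite combinations are attained on finitely many coordinates and depend on finitely many entries, which gives continuity.
\item Changing one entry $x(m,n)$ alters each earlier $u_a$ by a multiple of $u_{t_n}$. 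So the span depends only on the tail of the row.
\item With finitely many $1$'s, the span is therefore the space of eventually constant sequences, whose closure is $c$.
\item With infinitely many $1$'s, every element satisfies $f(t_j)\to\tfrac12(f(1)+f(2))$. The approximation argument of Claim~\ref{claim:CodeCircle}(ii) then shows that the closed span is all of $E$.
\end{enumerate}
The closed span of all blocks together with $\mathbf 1$ is then isometric to $\C(\omega^2)$ when $x\in P_3$. Otherwise, the functionals $\delta_{(m,t_j)}$ are extreme, since the space contains $e_{(m,t_j)}$. They converge weak$^*$ to the non-extreme functional $\tfrac12(\delta_{(m,1)}+\delta_{(m,2)})$, so the space is not isometric to any $\C(L)$, and Proposition~\ref{prop:closedExtremePoints} is not even needed.

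Compare this with the paper, which gets the lower bound for free from the Cantor/circle map $\eta$ of Proposition~\ref{prop:Cantorvscircle} together with Banach--Stone. That route shows hardness already among $\C(K)$ spaces. A completed version of your argument would instead give a reduction in which every positive instance is $\C(\omega^2)$ and every instance is a subspace of $\C(\omega^2)$. As written, though, the lower bound is only a plan.
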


Another notable consequence of our methods is that the class of Banach spaces with a summable Szlenk index is $\boldsymbol{\Sigma}^0_\omega$-hard (see Corollary~\ref{cor:szlenk}),
thereby answering \cite[Question 6]{CDDK2} in the negative.

Let us outline the content of individual sections. In Section~\ref{sec:upperBounds} we recall what theorems from Part~\ref{part1} are needed and we deduce from those the upper estimates of our Borel classes including the proof of of the upper estimate from  Theorem~\ref{thm:main3Part2}. In Section~\ref{sec:cantorLower} we obtain the lower bound for the Cantor space and deduce Theorem~\ref{thm:main1Part2}\ref{it:cantorComplete} and Theorem~\ref{thm:main3Part2}. In Section~\ref{sec:lowBoundHom} we build on the works of Cenzer and Mauldin \cite{CM82, CM83} and prove Theorem~\ref{thm:main2Part2}, from which the lower bound in Theorem~\ref{thm:main1Part2}\ref{it:kIsOneComplete} and \ref{it:kIsNotOneComplete} can be quite easily deduced for the case of $\N=\PP_\infty$ and also for the case of $\N=\B$ and compacta of infinite height. In Section~\ref{sec:betaKonecne} we develop a method which enables us to obtain the lower bound in Theorem~\ref{thm:main1Part2}\ref{it:kIsOneComplete} and \ref{it:kIsNotOneComplete} for the case of $\N=\B$ and compacta of finite height. We consider this part as the most demanding step in our proof of the lower bound in Theorem~\ref{thm:main1Part2}. Finally, in Section~\ref{sec:appl} we demonstrate how to derive the proof of Theorem~\ref{thm:main1Part2} from the results of the preceding sections and present further applications.

We conclude the introduction to Part~\ref{part2} by presenting several preliminary results. The first establishes a connection between Theorem~\ref{thm:main1Part2} and Theorem~\ref{thm:main2Part2}. It is a generalization of \cite[Example 2.12]{CDDK1}; the proof is essentially identical, requiring only the replacement of the space $[0,1]^\Nat$ with the space $X$. We therefore omit the details.

\begin{lemma}\label{lem:Ck}Let $X$ be a compact metric space. Then there exists a continuous mapping $\rho\colon\mathcal K(X)\rightarrow\mathcal P$ such that $X_{\rho(K)}\equiv \C(K)$ for every $K\in\mathcal K(X)$, where by $\C(\emptyset)$ we understand the trivial space $\{0\}$.
\end{lemma}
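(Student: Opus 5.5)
The plan is to build $\rho$ by choosing a countable dense sequence in $X$ and using it to turn each $K$ into a pseudonorm on $V$. Fix a countable dense set $\{x_i\}_{i\in\Nat}$ in $X$ with metric $d$. For $i\in\Nat$ define $f_i\in\C(X)$ by $f_i(x)=\max\{0,1-i\,d(x,x_{j(i)})\}$, or more simply enumerate a countable family $(f_i)_{i\in\Nat}\subseteq \C(X)$ whose linear span is dense in $\C(X)$. One concrete choice is the functions $x\mapsto \max\{0, 1-m\,d(x,x_j)\}$ for $m,j\in\Nat$, together with the constant $1$. Their linear span is a subalgebra-generating family, and by Stone--Weierstrass the algebra they generate is dense. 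To get density of the span itself, I would instead enumerate all finite products of these functions, so that the span is exactly the generated unital algebra, which separates points.

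Then for $K\in\K(X)$ and $v=(v_i)\in V$ set
\[
\rho(K)(v):=\sup_{x\in K}\Big|\sum_i v_i f_i(x)\Big|,
\]
with the convention $\sup\emptyset=0$. This is clearly a pseudonorm on $V$, and it is a seminorm on $c_{00}$ as well.

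First I would check that $X_{\rho(K)}\equiv\C(K)$. The map $v\mapsto (\sum_i v_i f_i)|_K$ is, by definition, an isometry from $(c_{00},\rho(K))$ modulo its kernel onto a subspace of $\C(K)$. So it extends to an isometric embedding of $X_{\rho(K)}$ into $\C(K)$. Its range contains the restrictions to $K$ of the span of the $f_i$. By Stone--Weierstrass on $K$ this span is dense in $\C(K)$, since the functions separate points of $K$ and contain the constants. Hence the range is all of $\C(K)$. For $K=\emptyset$ both spaces are trivial. Note that $\rho$ takes values in $\PP$, not necessarily in $\PP_\infty$, which matches the statement.

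Next I would prove continuity. Since the topology of $\PP$ is the one inherited from $\Rea^V$, it suffices to show that for fixed $v$ the map $K\mapsto \sup_{x\in K}|g(x)|$ is continuous on $\K(X)$, where $g=\sum v_if_i\in\C(X)$. Put $h=|g|$, which is continuous. The map $K\mapsto\max h(K)$ is continuous in the Vietoris topology, and I would check both semicontinuities directly from the subbasis:
\begin{itemize}
\item $\max h(K)>t$ if and only if $K\cap h^{-1}(t,\infty)\neq\emptyset$, which is an open condition;
\item $\max h(K)<t$ if and only if $K\subseteq h^{-1}(-\infty,t)$, which is also open.
\end{itemize}
The empty set needs separate care. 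It is an isolated point of $\K(X)$, since $\{\emptyset\}=\{K\setsep K\subseteq\emptyset\}$ is open, so continuity there is automatic.

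The only step needing care is density of the span, which is where taking all finite products of the generators comes in; beyond that the argument is routine.
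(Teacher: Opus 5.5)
Your proposal is correct and follows essentially the route the paper intends. The paper omits the proof and refers to \cite[Example 2.12]{CDDK1} with $[0,1]^\Nat$ replaced by $X$: code $K$ by the sup-norm over $K$ of a fixed countable family with dense span in $\C(X)$, obtain $X_{\rho(K)}\equiv\C(K)$ from density of the restrictions, and check continuity coordinatewise via the Vietoris subbasis, with $\emptyset$ isolated. The detour through finite products is unnecessary: any countable dense subset of the separable space $\C(X)$ works, because restriction $\C(X)\to\C(K)$ is onto by Tietze, so the restrictions of that set are dense in $\C(K)$.
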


The reason why we mention this result is the following immediate consequence.

\begin{cor}\label{cor:Ck}Let $X$ be an uncountable metrizable compact space and let $K$ be an infinite zero-dimensional compact metric space. Let $\boldsymbol{\Gamma}$ be one of the classes $\boldsymbol{\Pi}^0_\alpha$ or $D_2(\boldsymbol{\Pi}^0_\alpha)$ for some countable ordinal $\alpha\geq 1$.
\begin{enumerate}[label=(\roman*)]
    \item If $\isomtrclass[\PP_\infty]{\C(K)}$ belongs to the class $\boldsymbol{\Gamma}$, then the same holds for $\isomtrclass[\B]{\C(K)}$ and also for $\homeoclass{K}^X$.
    \item If $\homeoclass{K}^X$ is $\boldsymbol{\Gamma}$-hard, then the same holds for $\isomtrclass[\PP_\infty]{\C(K)}$.
\end{enumerate}
\end{cor}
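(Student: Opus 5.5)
The plan is to reduce everything to the continuous map $\rho\colon \K(X)\to\PP$ from Lemma~\ref{lem:Ck}, together with two standard facts. The first is that classes of the form $\boldsymbol{\Pi}^0_\alpha$ and $D_2(\boldsymbol{\Pi}^0_\alpha)$ are closed under continuous preimages and under restriction to subspaces. The second is that for an infinite zero-dimensional compact metric $K$, an embedded copy of $K$ in $X$ is available because $X$ is uncountable.

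\textbf{Part (i).} The first claim, concerning $\isomtrclass[\B]{\C(K)}$, is immediate. Since $\B\subseteq\PP_\infty$ carries the subspace topology, we have $\isomtrclass[\B]{\C(K)}=\isomtrclass[\PP_\infty]{\C(K)}\cap\B$. Relative classes pass to subspaces: a relatively $\boldsymbol{\Pi}^0_\alpha$ set is the trace of a $\boldsymbol{\Pi}^0_\alpha$ set, and a difference of two traces is the trace of a difference.

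For the homeomorphism class, consider $\K_\infty(X):=\{L\in\K(X)\setsep L\text{ infinite}\}$. Its complement is a countable union of closed sets $\{L\setsep |L|\le m\}$, so $\K_\infty(X)$ is $G_\delta$ and hence Polish. Moreover, $\rho$ maps $\K_\infty(X)$ into $\PP_\infty$, because $\C(L)$ is infinite-dimensional exactly when $L$ is infinite. By the Banach--Stone theorem, for $L\in\K_\infty(X)$ we have $\C(L)\equiv\C(K)$ if and only if $L\sim K$. Since $K$ is infinite, this gives
\[
\homeoclass{K}^X=(\rho|_{\K_\infty(X)})^{-1}\big(\isomtrclass[\PP_\infty]{\C(K)}\big).
\]
As a continuous preimage, this set is in $\boldsymbol{\Gamma}(\K_\infty(X))$.

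To lift the class to $\K(X)$, use that $\K_\infty(X)$ is $G_\delta$, hence $\boldsymbol{\Pi}^0_2$. Then the expected step is to intersect with $\K_\infty(X)$ while preserving $\boldsymbol{\Gamma}$. For $\alpha\ge2$ this is fine: a trace $A\cap\K_\infty(X)$ with $A\in\boldsymbol{\Pi}^0_\alpha$ is again $\boldsymbol{\Pi}^0_\alpha$. For $D_2$, the set $(B\setminus C)\cap\K_\infty(X)=(B\cap\K_\infty(X))\setminus C$ is again $D_2(\boldsymbol{\Pi}^0_\alpha)$.

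The case $\alpha=1$ is the main obstacle. It can presumably be dodged as follows. If $\isomtrclass[\PP_\infty]{\C(K)}$ were closed (or a difference of closed sets), then so would be its trace on the closed image of a compact family. Alternatively, one can simply note that this case is vacuous: no isometry class of $\C(K)$ with $K$ infinite is closed or $D_2(\boldsymbol{\Pi}^0_1)$, since $\ell_2$ is the only space with closed class by \cite{CDDK2}, and the $D_2$ case could be handled by an analogous density argument. I would state the argument for $\alpha\ge2$ and remark that $\alpha=1$ does not occur. A cleaner alternative is to observe that the homeomorphism class is already $\boldsymbol{\Gamma}$ relative to the open-in-$\K_\infty$ structure, but I would first try the vacuity route.

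\textbf{Part (ii).} Let $A\in\boldsymbol{\Gamma}(Z)$ with $Z$ zero-dimensional Polish. By hardness there is a continuous $f\colon Z\to\K(X)$ with $A=f^{-1}(\homeoclass{K}^X)$. Then $\rho\circ f$ is continuous, but it may land outside $\PP_\infty$ where $f(z)$ is finite. To fix this, pick a point $p$ of $X$ and a copy of a sequence converging to $p$ that is disjoint from everything relevant; this is where one must use the structure of $X$.

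A simpler fix works inside $\PP$ directly. Replace $\rho(L)$ by a continuous code of $\C(L)\oplus_\infty c_0$? That would change the isometry class, so it is not usable. Instead, compose with the continuous map $\K(X)\to\K(X\times[0,1])$ given by $L\mapsto (L\times\{0\})\cup(\{x_0\}\times S)$ with $S=\{0\}\cup\{1/m\}$. That also changes the homeomorphism type, so it too is unusable.

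The honest solution is to restrict the domain. Put $Z':=f^{-1}(\K_\infty(X))$, which is $G_\delta$ in $Z$ and hence zero-dimensional Polish. On $Z\setminus Z'$, the set $A$ is empty, since $K$ is infinite. I would then reduce $A\cap Z'$ via $\rho\circ f$ into $\PP_\infty$. Finally, I would use the standard fact that for these classes, $\boldsymbol{\Gamma}$-hardness can be witnessed on the Baire space, where $A$ is chosen complete, and retract $Z$ onto a closed subset avoiding the bad part. This bookkeeping, ensuring the reduction lands in $\PP_\infty$, is the delicate point. I would handle it by choosing a specific complete set on $\Nat^\Nat$ and arguing via Wadge's lemma that its reduction can be taken into $\K_\infty(X)$, replacing $f(z)$ by $f(z)\cup C$ for a fixed copy $C\subseteq X$ of $K$ only when this preserves membership.
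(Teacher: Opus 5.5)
Your part (i) is the paper's argument: pull back along $\rho$ from Lemma~\ref{lem:Ck} using Banach--Stone, and take traces on $\B\subseteq\PP_\infty$. You rightly notice something the paper's one-line proof passes over, namely that $\rho$ sends finite compacta outside $\PP_\infty$. Your handling of this for $\alpha\ge 2$ is correct.

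For $\alpha=1$, however, the ``trace on a closed image'' and ``density'' remarks are not arguments. For $\boldsymbol{\Gamma}=\boldsymbol{\Pi}^0_1$ the implication can only hold vacuously: small copies of $K$ inside a Cantor set in $X$ converge to singletons, so $\homeoclass{K}^X$ is never closed in $\K(X)$. So in both $\alpha=1$ cases you need an actual input. The input is that no infinite-dimensional separable $\C(K)$ has a $G_\delta$ isometry class (\cite[Theorem 3.3]{CDDK2}, quoted in Section~\ref{sec:cantorLower}); taking traces on $\B$, this rules out a $G_\delta$ class in $\PP_\infty$ too. Since $\boldsymbol{\Pi}^0_1$ and $D_2(\boldsymbol{\Pi}^0_1)$ are both contained in $\boldsymbol{\Pi}^0_2$, this disposes of both cases at once.

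The genuine gap is in part (ii). Restricting to $Z'=f^{-1}(\K_\infty(X))$, with $A$ chosen $\boldsymbol{\Gamma}$-complete in $\Nat^\Nat$, is the right start. But you never show that $A$ is still $\boldsymbol{\Gamma}$-hard as a subset of $Z'$, and neither proposed finish works. ``Replacing $f(z)$ by $f(z)\cup C$ only when this preserves membership'' is not a continuous operation. Retracting onto a set avoiding the bad part can destroy hardness. For instance, take distinct $p_n\to p$ in $X$; then $x\mapsto\{p\}\cup\{p_n\setsep x_n=1\}$ reduces the $\boldsymbol{\Pi}^0_2$-complete set $A=\{x\in 2^\Nat\setsep x_n=1\text{ for infinitely many } n\}$ to $\homeoclass{[0,\omega]}^X$. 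Here $f^{-1}(\K_\infty(X))=A$, so on the good part $A$ is the whole space.

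The missing step works for $\alpha\ge 3$, which covers every use of the corollary in the paper, where $\alpha=\beta+2n+1\ge 3$.
\begin{itemize}
\item Suppose $A=B\cap Z'$ with $B\in\check{\boldsymbol{\Gamma}}(\Nat^\Nat)$. Since $Z'$ is $G_\delta$ and, for $\alpha\ge 3$, the dual class $\check{\boldsymbol{\Gamma}}$ is closed under intersection with $G_\delta$ sets, we get $A\in\check{\boldsymbol{\Gamma}}(\Nat^\Nat)$. This contradicts the completeness of $A$.
\item Hence $A\in\boldsymbol{\Gamma}(Z')\setminus\check{\boldsymbol{\Gamma}}(Z')$. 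By Wadge's lemma in the zero-dimensional Polish space $Z'$ (as in \cite[Theorem~22.10]{Kechrisbook}), $A$ is $\boldsymbol{\Gamma}$-complete there.
\item Then $\rho\circ f|_{Z'}\colon Z'\to\PP_\infty$ reduces $A$ to $\isomtrclass[\PP_\infty]{\C(K)}$. Composing with reductions into $Z'$ handles arbitrary $\boldsymbol{\Gamma}$ sets.
\end{itemize}
Alternatively, the reductions actually used, from Corollary~\ref{cor:contReduction}, take only infinite values, so $\rho\circ f$ lands in $\PP_\infty$ directly; this is implicitly what the paper's one-line proof relies on. For $\alpha\le 2$, the example shows that the hypothesis alone does not suffice along this route, and you would need information specific to $K$.
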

\begin{proof}
   Follows directly from definitions, Lemma~\ref{lem:Ck} and the fact that $\B\subseteq \PP_\infty$. 
\end{proof}

We conclude by collecting further preliminaries from descriptive set theory, focusing on general principles for proving that sets are $\boldsymbol{\Gamma}$-hard or $\boldsymbol{\Gamma}$-complete; these will be used throughout the subsequent sections.

\subsection*{Hard and complete sets} We start by recalling the following technical tool, which is well-known and its variant appears, for example, as \cite[Lemma 6]{CM83}, although the proof is omitted there. For the reader’s convenience, we therefore provide a brief sketch of the proof below.

\begin{lemma}\label{lem:disjointUnion}Let $X$ be a zero-dimensional Polish space, $\alpha\geq 2$ a countable ordinal and suppose $(\beta_n)_{n\in\Nat}$ is a non-decreasing sequence of ordinals with $\sup\{\beta_n + 1\setsep n\in\Nat\} = \alpha$. Then for every $A\in \boldsymbol{\Sigma}_\alpha^0(X)$ there is a sequence $(B_n)_{n\in\Nat}$ of pairwise disjoint sets such that $A=\bigcup_{n\in\Nat} B_n$ and $B_n\in \boldsymbol{\Pi}_{\beta_n}^0(X)$ for every $n\in\Nat$.
\end{lemma}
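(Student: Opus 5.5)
Here is the plan. Write $A=\bigcup_{m\in\Nat}A_m$ with each $A_m\in\boldsymbol{\Pi}^0_{\gamma_m}(X)$ for some $\gamma_m<\alpha$; this is the definition of $\boldsymbol{\Sigma}^0_\alpha$. Then refine this union into a disjoint one while controlling the classes. The zero-dimensionality of $X$ will be used only at the successor stage, through the fact that in a zero-dimensional Polish space every open set is a countable disjoint union of clopen sets. More generally, the reduction property of the $\boldsymbol{\Sigma}^0_\xi$ classes lets one disjointify countable unions.

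\emph{Step 1: re-index.} Since $\sup(\beta_n+1)=\alpha$ and $(\beta_n)$ is non-decreasing, each $\gamma_m<\alpha$ satisfies $\gamma_m\le\beta_n$ for all large $n$. Also $\boldsymbol{\Pi}^0_\gamma\subseteq\boldsymbol{\Pi}^0_{\beta}$ for $\gamma\le\beta$. By inserting empty sets and splitting, we may therefore assume $A=\bigcup_n C_n$ with $C_n\in\boldsymbol{\Pi}^0_{\beta_n}(X)$. We only need non-disjoint sets at this point, because each $C_m$ can be placed at an index $n\ge m$ with $\beta_n\ge\gamma_m$, with empty sets elsewhere.

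\emph{Step 2: disjointify.} Put $D_n=C_n\setminus\bigcup_{i<n}C_i$. Each $D_n$ is the intersection of a $\boldsymbol{\Pi}^0_{\beta_n}$ set with finitely many $\boldsymbol{\Sigma}^0_{\beta_i}$ sets, $\beta_i\le\beta_n$. This lies in $\boldsymbol{\Delta}^0_{\beta_n+1}$, which is one level too high. The fix is to decompose each $D_n$ further into countably many disjoint $\boldsymbol{\Pi}^0_{\beta_n}$ pieces. Each $\boldsymbol{\Sigma}^0_{\beta_i}$ set with $\beta_i\le\beta_n$ is a countable union of sets from $\bigcup_{\xi<\beta_i}\boldsymbol{\Pi}^0_\xi\subseteq\boldsymbol{\Pi}^0_{\beta_n}$. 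For $\beta_n\ge 1$, a finite intersection of such unions is again a countable union of $\boldsymbol{\Pi}^0_{\beta_n}$ sets, say $\bigcup_k E_k$. Disjointifying this union again produces differences of the same kind, so a plain induction is circular.

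\emph{Step 3: the main obstacle.} The crux is to show that in a zero-dimensional Polish space every $\boldsymbol{\Sigma}^0_{\beta+1}$ set is a countable disjoint union of $\boldsymbol{\Pi}^0_\beta$ sets. I would prove this by transfinite induction on $\beta$.
\begin{itemize}
\item For $\beta=0$, read $\boldsymbol{\Pi}^0_0$ as clopen; the claim is that open sets are disjoint unions of clopen sets, which holds by zero-dimensionality.
\item For a successor $\beta=\delta+1$, write $S=\bigcup_k P_k$ with $P_k\in\boldsymbol{\Pi}^0_{\delta+1}$. Then $P_k\setminus\bigcup_{i<k}P_i=P_k\cap\bigcap_{i<k}(X\setminus P_i)$, and each $X\setminus P_i$ is $\boldsymbol{\Sigma}^0_{\delta+1}$, hence by induction a disjoint union of $\boldsymbol{\Pi}^0_\delta$ sets. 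The finite intersection expands into a countable disjoint union of sets of the form $P_k\cap Q$ with $Q\in\boldsymbol{\Pi}^0_\delta\subseteq\boldsymbol{\Pi}^0_{\delta+1}$. Hence it is a disjoint union of $\boldsymbol{\Pi}^0_\beta$ sets.
\item For a limit $\beta$, the same expansion works, using that each complement is a disjoint union of sets of lower classes $\subseteq\boldsymbol{\Pi}^0_\beta$.
\end{itemize}
The point is that disjoint unions are closed under finite intersections by distributing the intersection over the pieces.

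\emph{Step 4: conclude.} Apply this expansion to each $D_n$, which is a finite intersection of $C_n$ with sets that are disjoint unions of $\boldsymbol{\Pi}^0_{\beta_n}$ pieces. This writes $A$ as a countable disjoint family of $\boldsymbol{\Pi}^0_{\beta_n}$-type pieces, possibly several per index $n$. Finally, re-enumerate this family as $(B_n)$ so that the $n$-th set has class at most $\beta_n$. This uses again that $\beta_n\to\alpha$ or is eventually maximal, so every class $<\alpha$ appears at infinitely many indices, and filling the remaining indices with $\emptyset$. The hypothesis $\alpha\ge2$ ensures the relevant classes $\beta_n\ge1$ are the usual ones, so no problem arises at the closed/clopen level.
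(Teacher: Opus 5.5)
Your proposal is correct and follows essentially the paper's proof. Both argue by induction, and the successor step disjointifies $\bigcup_k P_k$ by differences, splits the complements into disjoint lower-class pieces via the induction hypothesis, and distributes the intersection with $P_k$ over those pieces. The only differences are organizational: the paper first makes the union increasing so each difference $A_{n+1}\cap(X\setminus A_n)$ involves a single complement, and it treats the limit case and general $(\beta_n)$ as routine, whereas you isolate the successor-level claim and do the reindexing explicitly (your limit case tacitly uses the induction hypothesis at levels $\eta<\beta$ to split $X\setminus P_i\in\boldsymbol{\Sigma}^0_\beta$).
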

\begin{proof}Since $X$ is zero-dimensional and Polish, any open set may be written as a disjoint union of countably many clopen sets. Hence, if we denote by $\boldsymbol{\Pi}_{0}^0(X)$ the set of all clopen subsets of $X$, the statement of the lemma holds for $\alpha=1$. We proceed further by induction. The case of $\alpha$ being a limit ordinal is easy, so it suffices to handle the case of successor ordinals. Pick $A\in \boldsymbol{\Sigma}^0_{\alpha + 1}(X)$ and without loss of generality assume $\beta_n=\alpha$ for every $n\in\Nat$. Then $A = \bigcup_{n\in\Nat} A_n$ for some $A_n\in \boldsymbol{\Pi}^0_\alpha(X)$, $n\in\Nat$. Replacing each $A_n$ by $\bigcup_{i=1}^{n} A_i$, we may assume $A_n\subseteq A_{n+1}$, $n\in\Nat$. By the inductive assumption, for every $n\in\Nat$, there is a pairwise disjoint sequence $(B_{n,k})_{k\in\Nat}$ of $\boldsymbol{\Pi}^0_{\gamma_k}(X)$-sets for some $\gamma_k<\alpha$ with $X\setminus A_n = \bigcup_{k\in\Nat} B_{n,k}$. Hence, 
\[
A = A_1\cup\bigcup_{n\in\Nat} (A_{n+1}\setminus A_n) = A_1\cup\bigcup_{n\in\Nat} \bigcup_{k\in\Nat} A_{n+1}\cap B_{n,k}
\]
and $A_{n+1}\cap B_{n,k}\in \boldsymbol{\Pi}^0_{\alpha}(X)$ for every $n,k\in\Nat$. Since the sets $A_1$ and $A_{n+1}\cap B_{n,k}$, $n,k\in\Nat$, are pairwise disjoint, this finishes the proof.
\end{proof}

The following presents a method of obtaining $\boldsymbol{\Pi}_{\alpha+1}^0$-complete sets.

\begin{lemma}
\label{lem:completeSetInductionStep}
Let $\alpha\geq 2$ be a countable ordinal and $(\alpha_m)_{m=1}^\infty$ be a non-decreasing sequence of ordinals $\ge1$ such that $\sup\{\alpha_m+1\setsep m\in\Nat\}=\alpha$.
For every $m\in\Nat$, let $X_m$ be a zero-dimensional Polish space and  let $U_m\subseteq X_m$ be a $\boldsymbol{\Pi}_{\alpha_m}^0$-complete set (resp. $\boldsymbol{\Pi}_{\alpha_m}^0$-hard set).
Then the set
\[
U:=\Big\{(x_m)_{m=1}^\infty\in\prod_{m\in\Nat}X_m\setsep x_m\in U_m\text{ for infinitely many $m$'s}\Big\}
\]
is $\boldsymbol{\Pi}_{\alpha+1}^0$-complete (resp. $\boldsymbol{\Pi}_{\alpha+1}^0$-hard) subset of the zero-dimensional Polish space $\prod_{m\in\Nat}X_m$.
\end{lemma}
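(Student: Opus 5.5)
Two things have to be shown: that $U$ lies in $\boldsymbol{\Pi}^0_{\alpha+1}$ (only needed in the "complete" version), and that $U$ is $\boldsymbol{\Pi}^0_{\alpha+1}$-hard.

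\emph{Upper bound.} Write
\[
U=\bigcap_{N\in\Nat}\bigcup_{m\geq N}\pi_m^{-1}(U_m),
\]
where $\pi_m$ denotes the projection onto $X_m$. Each set $\pi_m^{-1}(U_m)$ is $\boldsymbol{\Pi}^0_{\alpha_m}$, and $\alpha_m<\alpha$. Hence each union $\bigcup_{m\geq N}\pi_m^{-1}(U_m)$ is in $\boldsymbol{\Sigma}^0_\alpha$, and the intersection over $N$ is in $\boldsymbol{\Pi}^0_{\alpha+1}$.

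\emph{Hardness.} Fix a zero-dimensional Polish space $Y$ and a set $A\in\boldsymbol{\Pi}^0_{\alpha+1}(Y)$. Write $Y\setminus A=\bigcup_n C_n$ with $C_n\in\boldsymbol{\Pi}^0_\alpha(Y)$ and $C_n\subseteq C_{n+1}$. Then
\[
A=\{y\setsep \forall n:\ y\notin C_n\},
\]
and each complement $Y\setminus C_n$ is $\boldsymbol{\Sigma}^0_\alpha$.

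We apply Lemma~\ref{lem:disjointUnion} to each $Y\setminus C_n$, using the sequence $(\alpha_m)$. This is allowed because $\sup\{\alpha_m+1\}=\alpha$ and $\alpha\geq2$. It gives a disjoint decomposition
\[
Y\setminus C_n=\bigcup_k B_{n,k},\qquad B_{n,k}\in\boldsymbol{\Pi}^0_{\alpha_k}.
\]
The goal is a continuous map $f=(f_m)_m$ such that $f_m(y)\in U_m$ for infinitely many $m$ exactly when $y\notin\bigcup_n C_n$.

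To arrange this, split $\Nat$ into infinitely many infinite blocks, one block for each $n$. Inside the block of $n$, the coordinates should test "$y\in B_{n,k}$" for the various $k$. The difficulty is that the disjointness in $k$ only makes $\{y\setsep y\in B_{n,k}\text{ for some }k\}$ a $\boldsymbol{\Sigma}^0_\alpha$ event. A naive block coding turns "for all $n$, some $k$" into "infinitely many hits", which is not the right quantifier pattern.

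So I would use the standard trick instead. Since the $C_n$ are increasing, $y\in A$ if and only if $y\notin C_n$ for infinitely many $n$. For each $n$ the set $Y\setminus C_n=\bigcup_k B_{n,k}$ is a disjoint union, so at most one $k$ is hit. Enumerate the pairs $(n,k)$ as $m=\langle n,k\rangle$ in such a way that $\alpha_k\leq\alpha_m$; this is possible since $(\alpha_m)$ is non-decreasing and we may choose $m\geq k$. Then $B_{n,k}$ is $\boldsymbol{\Pi}^0_{\alpha_m}$ as well, because every $\boldsymbol{\Pi}^0_{\alpha_k}$ set is $\boldsymbol{\Pi}^0_{\alpha_m}$ when $\alpha_k\leq\alpha_m$.

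By hardness of $U_m$ there is a continuous $f_m\colon Y\to X_m$ with $f_m^{-1}(U_m)=B_{n,k}$. Define $f(y)=(f_m(y))_m$, which is continuous. Then the number of $m$ with $f_m(y)\in U_m$ equals the number of $n$ with $y\notin C_n$, because each such $n$ contributes exactly one $k$. Therefore $f(y)\in U$ if and only if $y\notin C_n$ for infinitely many $n$, which holds if and only if $y\in A$. This is the required reduction.

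The main obstacle is this bookkeeping step: obtaining "exactly one hit per $n$" via the disjoint decomposition from Lemma~\ref{lem:disjointUnion}, while matching the Borel classes $\alpha_k\leq\alpha_m$ through the enumeration of pairs.
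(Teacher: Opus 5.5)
Your argument is correct and is essentially the paper's proof. Write $A$ as a decreasing intersection of $\boldsymbol{\Sigma}^0_\alpha$ sets, split each one into disjoint pieces of lower class via Lemma~\ref{lem:disjointUnion}, give every piece its own coordinate, and use monotonicity to turn ``infinitely many hits'' into ``$y$ lies in all of them''. The only difference is bookkeeping: the paper applies Lemma~\ref{lem:disjointUnion} along the subsequence $(\alpha_{m_i^j})_j$ of each block $N_i$, while you apply it with $(\alpha_k)_k$ and upgrade the classes through a pairing with $\langle n,k\rangle\geq k$. Also, the ``naive block coding'' you set aside is exactly the map you end up using; it already works because of the monotonicity of $(C_n)$ that you arranged at the start.
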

\begin{proof}
The fact that $U$ is a $\boldsymbol{\Pi}_{\alpha+1}^0$-set whenever $U_m$ are $\boldsymbol{\Pi}_{\alpha_m}^0$-sets is obvious, so it suffices to prove that whenever $U_m$ are $\boldsymbol{\Pi}_{\alpha_m}^0$-hard then $U$ is $\boldsymbol{\Pi}_{\alpha+1}^0$-hard.

Let $\Nat=\bigcup_{i=1}^\infty N_i$ be a decomposition of natural numbers into infinitely many infinite sets.
Now let $Y$ be a zero-dimensional Polish space and let $A\subseteq Y$ be a $\boldsymbol{\Pi}_{\alpha+1}^0$-set.
Then there are increasing sequences $(m_i^j)_{j=1}^\infty$, $i\in\Nat$, of natural numbers and sets $A_i\subseteq Y$, $i\in\Nat$, and $A_i^j\subseteq Y$, $i,j\in\Nat$, such that:
\begin{itemize}
    \item $A=\bigcap_{i=1}^\infty A_i$,
    \item $A_{i+1}\subseteq A_i$ for every $i\in\Nat$,
    \item $A_i=\bigcup_{j=1}^\infty A_i^j$ for every $i\in\Nat$,
    \item $A_i^j$, $j\in\Nat$, are pairwise disjoint for every $i\in\Nat$,
    \item $N_i = \{m_i^j\setsep j\in\Nat\}$ for every $i\in\Nat$,
    \item $A_i^j$ is a $\boldsymbol{\Pi}_{\alpha_{m_i^j}}^0$-subset of $Y$
\end{itemize}
(apply Lemma~\ref{lem:disjointUnion} to satisfy the fourth condition, the other conditions are easy to ensure).
For every $i,j\in\Nat$, fix a continuous reduction $F_i^j\colon Y\to X_{m_i^j}$ from $A_i^j$ to $U_{m_i^j}$. Define $F\colon Y\to\prod_{m\in\Nat}X_m$ by $F(y)(m)=F_i^j(y)$, $y\in Y$,
where $i,j\in\Nat$ are chosen such that $m=m_i^j$. It is straightforward to verify that $F$ is a continuous reduction from $A$ to $U$. This proves that $U$ is $\boldsymbol{\Pi}_{\alpha+1}^0$-hard.
\end{proof}

Now we will concentrate on methods of obtaining $D_2(\boldsymbol{\Pi}_\xi^0)$-complete sets. The following basic observation is most probably well-known and was noticed e.g. in \cite[Section 7]{CDM05}
for $\xi=3$. We give here the proof for the convenience of the reader.

\begin{lemma}\label{lem:firstDifComplete}Let $X$ be a Polish space, $\xi \geq 1$ and let $A\subseteq X$ be a $\boldsymbol{\Pi}_\xi^0$-complete set or a $\boldsymbol{\Sigma}_\xi^0$-complete set. Then $A\times A^c$ is a $D_2(\boldsymbol{\Pi}_\xi^0)$-complete set.
\end{lemma}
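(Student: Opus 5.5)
Membership and hardness are handled separately; the two ingredients are the basic closure properties of the classes $\boldsymbol{\Pi}^0_\xi$, $\boldsymbol{\Sigma}^0_\xi$ under continuous preimages, and the description of $D_2(\boldsymbol{\Pi}^0_\xi)$ sets as intersections of a $\boldsymbol{\Pi}^0_\xi$ set with a $\boldsymbol{\Sigma}^0_\xi$ set.

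For membership, note first that $A^c$ is $\boldsymbol{\Sigma}^0_\xi$ when $A$ is $\boldsymbol{\Pi}^0_\xi$, and vice versa. We write
\[
A\times A^c=(A\times X)\cap(X\times A^c).
\]
If $A$ is $\boldsymbol{\Pi}^0_\xi$, then $A\times X$ is $\boldsymbol{\Pi}^0_\xi$ and $X\times A^c$ is $\boldsymbol{\Sigma}^0_\xi$, since both are preimages under continuous projections. Hence $A\times A^c = B\setminus C$ with $B=A\times X$ and $C=X\times A$, both in $\boldsymbol{\Pi}^0_\xi(X\times X)$. If instead $A$ is $\boldsymbol{\Sigma}^0_\xi$, the roles swap: take $B=X\times A^c$ and $C=A\times X$. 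In both cases $A\times A^c\in D_2(\boldsymbol{\Pi}^0_\xi)$.

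For hardness, let $Y$ be a zero-dimensional Polish space and $D=B\setminus C$ with $B,C\in\boldsymbol{\Pi}^0_\xi(Y)$. Suppose first that $A$ is $\boldsymbol{\Pi}^0_\xi$-complete. Then $C^c$ is $\boldsymbol{\Sigma}^0_\xi$, and we need a reduction of $\boldsymbol{\Sigma}^0_\xi$ sets to $A^c$. This comes from completeness of $A$: a continuous $g\colon Y\to X$ with $g^{-1}(A)=C$ gives $g^{-1}(A^c)=Y\setminus C$. Likewise, take $f\colon Y\to X$ continuous with $f^{-1}(A)=B$. Then $h=(f,g)\colon Y\to X\times X$ is continuous and
\[
h^{-1}(A\times A^c)=f^{-1}(A)\cap g^{-1}(A^c)=B\cap(Y\setminus C)=D.
\]

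If instead $A$ is $\boldsymbol{\Sigma}^0_\xi$-complete, then $A^c$ is $\boldsymbol{\Pi}^0_\xi$-complete, because reductions of $\boldsymbol{\Sigma}^0_\xi$ sets to $A$ serve as reductions of their complements to $A^c$. Writing $D=(Y\setminus C)\cap B$, we reduce $Y\setminus C\in\boldsymbol{\Sigma}^0_\xi$ to $A$ by some $f$ and $B\in\boldsymbol{\Pi}^0_\xi$ to $A^c$ by some $g$, and take $h=(f,g)$ exactly as before.

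The only point needing any care is matching each factor with the correct class in the two cases; everything else is routine.
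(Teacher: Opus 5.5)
Your proof is correct and follows the paper's argument: the hardness reduction $y\mapsto(f(y),g(y))$ with $f^{-1}(A)=B$ and $g^{-1}(A^c)=Y\setminus C$ is exactly the paper's. The only difference is that the paper disposes of the $\boldsymbol{\Sigma}^0_\xi$-complete case by a ``without loss of generality'' (implicitly, swapping coordinates turns $A\times A^c$ into $A^c\times A$), whereas you redo the reduction directly.

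There is one small slip in the membership part. When $A$ is $\boldsymbol{\Sigma}^0_\xi$ you need $C=A^c\times X$, which is a $\boldsymbol{\Pi}^0_\xi$ set, not $C=A\times X$. With your choice, $B\setminus C=A^c\times A^c$, and $C$ need not be $\boldsymbol{\Pi}^0_\xi$.
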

\begin{proof}Obviously, we have $A\times A^c\in D_{2}(\boldsymbol{\Pi}_\xi^0)(X\times X) $, we need to show it is $D_2(\boldsymbol{\Pi}_\xi^0)$-hard. Since $A$ is $\boldsymbol{\Pi}_\xi^0$-complete if and only if $A^c$ is $\boldsymbol{\Sigma}_\xi^0$-complete, we may without loss of generality assume that $A$ is $\boldsymbol{\Pi}_\xi^0$-complete.
Let $Z$ be a zero-dimensional Polish space and $C,D\in \boldsymbol{\Pi}_\xi^0(Z)$. Then there exist $f:Z\to X$ continuous with $f^{-1}(A)=C$ and $g:Z\to X$ continuous with $g^{-1}(A^c)=D^c$.  Consider now the continuous mapping $f\triangle g:Z\to X\times X$ given as $(f\triangle g)(x):=(f(x),g(x))$ for $x\in Z$. Then $(f\triangle g)^{-1}(A\times A^c) = \{x\in Z\setsep f(x)\in A\; \&\; g(x)\in A^c\} = C\setminus D$ and, since $C,D$ were arbitrary, this shows that $A\times A^c$ is a $D_2(\boldsymbol{\Pi}_\xi^0)$-complete set.
\end{proof}

The method of constructing $D_2(\boldsymbol{\Pi}_{\alpha+1}^0)$-complete sets is the following (in the proposition below we also recall well-known methods of constructing $\boldsymbol{\Pi}_{\alpha+1}^0$-complete and $\boldsymbol{\Sigma}_{\alpha+1}^0$-complete sets).

\begin{prop}\label{prop:difHard}Let $X$ be a zero-dimensional Polish space, $\alpha\geq 1$ a countable ordinal and $k\in\Nat$. If $U\subseteq X$ is $\boldsymbol{\Pi}_\alpha^0$-complete set (resp. $\boldsymbol{\Pi}_\alpha^0$-hard set), then the following holds.
\begin{enumerate}[label=(\alph*)]
    \item\label{it:PiComplete} The set $A_U:=\{x\in X^\Nat\setsep \forall n\in\Nat\; x_n\notin U\}$ is $\boldsymbol{\Pi}_{\alpha+1}^0$-complete\\ (resp. $\boldsymbol{\Pi}_{\alpha+1}^0$-hard).
    \item\label{it:SigmaComplete} The set $(A_U)^c:=\{x\in X^\Nat\setsep \exists n\in\Nat\; x_n\in U\}$ is $\boldsymbol{\Sigma}_{\alpha+1}^0$-complete\\ (resp. $\boldsymbol{\Sigma}_{\alpha+1}^0$-hard).
    \item\label{it:DifComplete} The set $B_{U,k}:=\{x\in X^\Nat\setsep |\{n\in\Nat\setsep x_n\in U\}| = k\}$ is $D_2(\boldsymbol{\Pi}_{\alpha+1}^0)$-complete\\ (resp. $D_2(\boldsymbol{\Pi}_{\alpha+1}^0)$-hard).
\end{enumerate}
\end{prop}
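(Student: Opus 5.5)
Since $U$ belongs to $\boldsymbol{\Pi}^0_\alpha$, the sets $\{x\setsep x_n\in U\}$ are $\boldsymbol{\Pi}^0_\alpha$ and their complements are $\boldsymbol{\Sigma}^0_\alpha$. Hence $A_U=\bigcap_n\{x\setsep x_n\notin U\}$ lies in $\boldsymbol{\Pi}^0_{\alpha+1}$ and $(A_U)^c\in\boldsymbol{\Sigma}^0_{\alpha+1}$. For the counting set, put
\[
C_{\ge m}:=\{x\setsep |\{n\setsep x_n\in U\}|\ge m\}=\bigcup_{F\in[\Nat]^{m}}\bigcap_{n\in F}\{x\setsep x_n\in U\},
\]
which is $\boldsymbol{\Sigma}^0_{\alpha+1}$. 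Then $B_{U,k}=C_{\ge k}\setminus C_{\ge k+1}$ is an intersection of a $\boldsymbol{\Sigma}^0_{\alpha+1}$ set with a $\boldsymbol{\Pi}^0_{\alpha+1}$ set, hence a difference of two $\boldsymbol{\Pi}^0_{\alpha+1}$ sets, i.e.\ $D_2(\boldsymbol{\Pi}^0_{\alpha+1})$. These upper bounds settle the "complete" versions once hardness is shown, so it remains to prove hardness under the assumption that $U$ is only $\boldsymbol{\Pi}^0_\alpha$-hard.

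For \ref{it:SigmaComplete}, let $Y$ be zero-dimensional Polish and $A\in\boldsymbol{\Sigma}^0_{\alpha+1}(Y)$. Write $A=\bigcup_n A_n$ with $A_n\in\boldsymbol{\Pi}^0_\alpha(Y)$; taking all $\beta_n=\alpha$, Lemma~\ref{lem:disjointUnion} even lets us choose the $A_n$ pairwise disjoint, although disjointness is not needed here. Fix continuous reductions $f_n\colon Y\to X$ of $A_n$ to $U$ and set $F(y)=(f_n(y))_n$. Then $F$ is continuous and $F^{-1}((A_U)^c)=A$. Passing to complements gives \ref{it:PiComplete}, since a reduction of $A$ to $(A_U)^c$ is also a reduction of $A^c$ to $A_U$.

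For \ref{it:DifComplete}, take $C,D\in\boldsymbol{\Pi}^0_{\alpha+1}(Y)$; we must reduce $C\setminus D=C\cap D^c$. By Lemma~\ref{lem:disjointUnion}, write $D^c=\bigsqcup_j E_j$ as a \emph{disjoint} union with $E_j\in\boldsymbol{\Pi}^0_\alpha$. Disjointness ensures that at most one index $j$ has $g_j(y)\in U$, where $g_j$ reduces $E_j$ to $U$. This yields exactly one hit when $y\in D^c$ and none otherwise.

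Next we need $C$ to contribute exactly $k-1$ hits when $y\in C$, and the crucial requirement is that it contributes a number of hits different from $k-1$ when $y\notin C$. Write $C^c=\bigsqcup_j G_j$ with $G_j\in\boldsymbol{\Pi}^0_\alpha$, again by Lemma~\ref{lem:disjointUnion}. Fix a point $u\in U$. Use $k-1$ coordinates equal to the constant $u$ (possible if $U\ne\emptyset$, which holds by hardness), together with coordinates $h_j(y)$ reducing $G_j$ to $U$. This gives $k-1$ hits if $y\in C$ and $k$ hits if $y\notin C$.

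Interleave all these coordinates into one sequence and let $F$ be the resulting map. The total count is then:
\begin{itemize}
\item $y\in C\setminus D$: $(k-1)+1=k$;
\item $y\in C\cap D$: $k-1$;
\item $y\notin C$, $y\in D^c$: $k+1$;
\item $y\notin C$, $y\in D$: $k$.
\end{itemize}
The last case is the obstacle, because it also gives $k$ hits.

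I would fix it by replacing $D$ with $D'=D\cup C^c$, which is $\boldsymbol{\Sigma}^0_{\alpha+1}\cup\boldsymbol{\Pi}^0_{\alpha+1}$ and hence not in general $\boldsymbol{\Pi}^0_{\alpha+1}$. Alternatively, I would make the $C$-part contribute $k+1$ hits on $C^c$ by giving each $G_j$ two reduction coordinates. Then the counts become:
\begin{itemize}
\item $y\in C\setminus D$: $k$;
\item $y\in C\cap D$: $k-1$;
\item $y\notin C$, $y\in D^c$: $k+2$;
\item $y\notin C$, $y\in D$: $k+1$.
\end{itemize}
Only $C\setminus D$ gives exactly $k$ hits, which is what we want. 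Duplicating coordinates keeps $F$ continuous. So the main subtlety is this parity and counting bookkeeping, and the disjointness supplied by Lemma~\ref{lem:disjointUnion} is essential. Since $Y$, $C$, $D$ were arbitrary, $B_{U,k}$ is $D_2(\boldsymbol{\Pi}^0_{\alpha+1})$-hard.
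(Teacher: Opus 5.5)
Your argument is correct and uses the same counting device as the paper's proof of part (c): $k-1$ constant coordinates equal to $u$, a disjoint $\boldsymbol{\Pi}^0_\alpha$-decomposition from Lemma~\ref{lem:disjointUnion} that contributes exactly one hit, and doubled coordinates that push the count past $k$. The difference is only organizational: the paper reduces $(A_U)^c\times A_U$ to $B_{U,k}$, invokes Lemma~\ref{lem:firstDifComplete}, and handles the merely hard case by composing with a reduction from a complete set $V\subseteq X$, whereas you reduce an arbitrary $C\setminus D$ directly, which shows at once that hardness of $U$ suffices (incidentally, the $G_j$ need not be disjoint, since on $C^c$ you only need at least two hits).
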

\begin{proof}In order to shorten the notation we shall write below simply $A$, $A^c$ and $B_k$ instead of $A_U$, $(A_U)^c$ and $B_{U,k}$.

The items \ref{it:PiComplete} and \ref{it:SigmaComplete} are well-known, so in this case we just give a sketch of the proof. We shall suppose $U$ is $\boldsymbol{\Pi}_\alpha^0$-complete, the case when $U$ is only $\boldsymbol{\Pi}_\alpha^0$-hard is proved in the same way. It suffices to prove \ref{it:SigmaComplete}, because then \ref{it:PiComplete} follows by the simple observation that complement of a $\boldsymbol{\Sigma}_{\alpha+1}^0$-complete set is $\boldsymbol{\Pi}_{\alpha+1}^0$-complete. The fact that $A^c$ is a $\boldsymbol{\Sigma}_{\alpha+1}^0$-set is easy to check. In order to see that $A^c$ is $\boldsymbol{\Sigma}_{\alpha+1}^0$-hard, given any zero-dimensional Polish space $Z$ and $C\in\boldsymbol{\Sigma}_{\alpha+1}^0(Z)$ we find a sequence $(C_n)_{n\in\Nat}$ of $\boldsymbol{\Pi}_\alpha^0$-sets with $C=\bigcup_{n\in\Nat} C_n$ and, using that $U$ is $\boldsymbol{\Pi}_{\alpha}^0$-hard, we find continuous functions $f_n:Z\to X$ such that $f_n^{-1}(U) = C_n$, now we easily observe that the continuous function $f:Z\to X^\Nat$ defined as $f(x):=(f_n(x))_{n\in\Nat}$ satisfies $f^{-1}(A^c) = C$, hence $C\leq_W A^c$.

Now, let us prove \ref{it:DifComplete} for the case that $U$ is $\boldsymbol{\Pi}_\alpha^0$-complete. In order to prove that $B_{k}$ is $D_2(\boldsymbol{\Pi}_{\alpha+1}^0)$-hard, we shall show that $A^c\times A\leq_W B_{k}$, this is sufficient by Lemma~\ref{lem:firstDifComplete}. We start with the following.

\begin{claim}\label{claim:BHard}There exists a continuous function $g:X^\Nat\to X^\Nat$ such that for any $x\in X^\Nat$ we have
\[x\in A\Leftrightarrow g(x)\in A\Leftrightarrow g(x)\notin B_{1}.\]
\end{claim}
\begin{proof}[Proof of Claim~\ref{claim:BHard}] Using Lemma~\ref{lem:disjointUnion} we find a sequence of pairwise disjoint $\boldsymbol{\Pi}_\alpha^0$-sets $(A_n)_{n\in\Nat}$ with $A^c = \bigcup_{n\in\Nat} A_n$ and using that $U$ is $\boldsymbol{\Pi}_\alpha^0$-hard, we find continuous mappings $g_n:X^\Nat\to X$ with $g_n^{-1}(U)=A_n$, $n\in\Nat$. Now, we define continuous function $g:X^\Nat\to X^\Nat$ by the formula $g(x):=\big(g_n(x)\big)_{n\in\Nat}$, $x\in X^\Nat$. Then, since $\{A_n\}_n$ are pairwise disjoint sets, $x\notin A$ iff there exists unique $n\in\Nat$ with $x\in A_n$ iff there exists unique $n\in\Nat$ with $g_n(x)\in U$ iff $g(x)\in B_{1}$. On the other hand, $x\in A$ iff for every $n\in\Nat$ we have $x\notin A_n$ iff for every $n\in\Nat$ we have $g_n(x)\notin U$ iff $g(x)\in A$.
\end{proof}

Now, pick any $u\in U$ and let us define the continuous mapping $f:X^\Nat\times X^\Nat\to X^\Nat$ for $x,y\in X^\Nat$ by the formula $f(x,y)(i)=u$ if $i=1,\ldots,k-1$ and for $n\in\Nat\cup\{0\}$ we put
\[
f(x,y)\big(k-1 + 3n + i\big):=\begin{cases} 
y(n+1) & \text{ if }i=1,2,\\
g(x)(n+1) & \text{ if }i=3.
\end{cases}
\]
If $y\in X^\Nat\setminus A$, then there exists $n\in\Nat$ with $y(n)\in U$ and therefore $f(x,y)\big(j)\in U$ for every $j\in \{1,\ldots,k-1,k-1+3(n-1) + 1,k-1+3(n-1) + 2\}$, so $f(x,y)(j)\in U$ for at least $k+1$ indices and therefore $f(x,y)\notin B_{k}$. Similarly, if $x\in A$ and $y\in A$, then by Claim~\ref{claim:BHard} we have $g(x)\in A$ and therefore $f(x,y)(j)\notin U$ for every $j\geq k$ and so $f(x,y)\notin B_{k}$. Thus, we have proved that $(x,y)\notin A^c\times A$ implies $f(x,y)\notin B_k$. On the other hand, if $(x,y)\in A^c\times A$, then using Claim~\ref{claim:BHard} we have $g(x)\in B_1$ and $y\in A$, so there is exactly one index $j\geq k$ such that $f(x,y)(j)\in U$ and therefore $f(x,y)\in B_k$. This proves that $f^{-1}(B_k)=A^c\times A$, so $A^c\times A\leq_W B_k$ and $B_k$ is $D_2(\boldsymbol{\Pi}_{\alpha+1}^0)$-hard.

Now, let us observe that $B_k$ is a $D_2(\boldsymbol{\Pi}_{\alpha+1}^0)$-set. First, note that for any $k\in\Nat$ the set $B_{\geq k}:= \{x\in X^\Nat\setsep |\{n\in\Nat\setsep x_n\in U\}| \geq k\}$ is $\boldsymbol{\Sigma}_{\alpha+1}^0$. Indeed, $B_{\geq k}$ can be written as a countable union over finite sets $F\subseteq \Nat$ containing $k$ points elements of the sets $\bigcap_{n\in F}(\pi_n)^{-1}(U)$, so $B_{\geq k}$ is countable union of $\boldsymbol{\Pi}_\alpha^0$ sets and therefore it is a $\boldsymbol{\Sigma}_{\alpha+1}^0$ set. Since $B_k = B_{\geq k}\cap (B_{\geq k+1})^c$, this shows that $B_k$ is a $D_2(\boldsymbol{\Pi}_{\alpha+1}^0)$-set for every $k\in\Nat$. This finishes the proof for the case that $U$ is $\boldsymbol{\Pi}_\alpha^0$-complete set.

Finally, let us prove \ref{it:DifComplete} under the assumption $U$ is $\boldsymbol{\Pi}_\alpha^0$-hard. Pick a $\boldsymbol{\Pi}_\alpha^0$-complete set $V\subseteq X$, by the already proven part we know that $B_{V,k}$ is $D_2(\boldsymbol{\Pi}_{\alpha+1}^0)$-complete. Since $U$ is $\boldsymbol{\Pi}_\alpha^0$-hard, there is a continuous reduction $f:X\to X$ from $V$ to $U$. Consider now the continuous mapping $g:X^\Nat\to X^\Nat$ given as $g(x):=(f(x_n))_{n\in\Nat}$. Then $g^{-1}(B_{U,k}) = B_{V,k}$, so $g$ is a continuous reduction from $B_{V,k}$ to $B_{U,k}$ and therefore $B_{U,k}$ is $D_2(\boldsymbol{\Pi}_{\alpha+1}^0)$-hard.
\end{proof}

\section{Upper bounds}\label{sec:upperBounds}

In this section we collect all our results concerning the upper estimates from our main theorems mentioned above. Proofs of our upper bounds are based on the following two results from Part~\ref{part1}.

\begin{thm}\label{thm:part1Main1}
    Let $X$ be a separable Banach space, $\alpha<\omega_1$ countable nonzero ordinal and $k\in\Nat$. Then $X$ is isometric to $\C(\omega^\alpha\cdot k)$ if and only if the following conditions hold.
\begin{enumerate}[label=(K\alph*)]
    \item\label{it:CkCountable} $X$ is isometric to some $\C(K)$ space with $K$ zero-dimensional,
    \item\label{it:derivative} $s_2^\alpha(B_{X^*})$ is a nonempty subset of $k$-dimensional space, and if $k>1$ then it is not a subset of $(k-1)$-dimensional space.
\end{enumerate}
\end{thm}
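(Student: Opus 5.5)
This is essentially Theorem~\ref{thm:charactCkcountableInCk} with condition \ref{it:CkAny} strengthened to \ref{it:CkCountable}. I would therefore reduce it to that theorem together with the elementary fact that countable compact spaces are zero-dimensional.

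For the forward implication, assume $X\equiv\C(\omega^\alpha\cdot k)$. Condition \ref{it:derivative} is exactly condition \ref{it:derivativeCharacterization} of Theorem~\ref{thm:charactCkcountableInCk}, so it holds by that theorem, or directly by Corollary~\ref{cor:SzlenkDerivative2}. The latter gives $s_2^\alpha(B_{X^*})=B_{M(D^\alpha([0,\omega^\alpha k]))}$, and $D^\alpha([0,\omega^\alpha k])$ is a $k$-point set. For \ref{it:CkCountable}, it remains to see that $K=[0,\omega^\alpha\cdot k]$ is zero-dimensional. Every countable compact metric space is zero-dimensional: for $x\in K$ and $r>0$, the set $\{d(x,y)\setsep y\in K\}$ is countable, so we can pick radii $r'<r$ avoiding it. The balls $B(x,r')$ are then clopen and form a neighbourhood base at $x$. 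For ordinal intervals one can also see this directly, since intervals $(\beta,\gamma]$ are clopen in the order topology.

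For the converse, assume \ref{it:CkCountable} and \ref{it:derivative}. Then \ref{it:CkCountable} implies condition \ref{it:CkAny} of Theorem~\ref{thm:charactCkcountableInCk}, and \ref{it:derivative} coincides with \ref{it:derivativeCharacterization}. Theorem~\ref{thm:charactCkcountableInCk} then yields $X\equiv\C(\omega^\alpha\cdot k)$.

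There is no real obstacle here. The only point needing a sentence of justification is the zero-dimensionality of countable compacta in the forward direction.

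One remark on how the result will be used. The reason to state it with zero-dimensional $K$ is that, via Theorem~\ref{thm:mainPart1Strong}, condition \ref{it:CkCountable} is equivalent to $X$ being an $L_1$-predual satisfying \eqref{eq:newCondition} (or \eqref{eq:FsigmaDeltaCondition}). That reformulation is the one suited to computing Borel complexity in the upper-bound arguments that follow. This equivalence is not needed for the proof itself, but it would be recorded alongside it, in the spirit of Corollary~\ref{cor:isometricCharacterizationCKcountable}.
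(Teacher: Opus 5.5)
Your proposal is correct and matches the paper's route. The paper states only that this theorem follows from Theorem~\ref{thm:charactCkcountableInCk}, since \ref{it:CkCountable} implies \ref{it:CkAny} and \ref{it:derivative} is literally \ref{it:derivativeCharacterization}, and leaves the zero-dimensionality of $[0,\omega^\alpha\cdot k]$ in the forward direction implicit; your argument for it via radii avoiding the countable distance set is fine.
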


\begin{thm}\label{thm:part1Main2}
    Let $X$ be a separable Banach space. Then $X$ is isometric to some $\C(K)$ space with $K$ zero-dimensional if and only if $X$ is a $\mathcal{L}_{\infty,1+}$-space satisfying the following condition
    \begin{equation}\label{eq:conditionCKcountable}
    \forall\varepsilon>0:\; \overline{\aco}\Big\{a\in S_X\setsep \forall y\in S_X:\;\max\{\|a-y\|,\|a+y\|\}\geq 2-\varepsilon\Big\} = B_X.
    \end{equation}
    Moreover, $X$ is isometric to $\C(2^\Nat)$ if and only if $X$ is a $\mathcal{L}_{\infty,1+}$-space with the Daugavet property satisfying condition \eqref{eq:conditionCKcountable}.
\end{thm}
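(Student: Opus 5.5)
This is a restatement of results from Part~\ref{part1}, and the plan is to assemble it from them, using that separable $\mathcal{L}_{\infty,1+}$-spaces are exactly the separable $L_1$-preduals (\cite[\S 23 Theorem 2]{LaceyBook}).

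For the first equivalence, suppose $X\equiv \C(K)$ with $K$ zero-dimensional. Then $X$ is an $L_1$-predual, since $M(K)$ is an $L_1(\mu)$ space, hence an $\mathcal{L}_{\infty,1+}$-space. By Proposition~\ref{prop:CKcountable}, implication \ref{it:zeroDim}$\Rightarrow$\ref{it:ZeroDimFSigmadelta}, condition \eqref{eq:conditionCKcountable} holds. Note that \eqref{eq:conditionCKcountable} is literally \eqref{eq:FsigmaDeltaCondition}, and it is an isometric invariant. Conversely, if $X$ is a separable $\mathcal{L}_{\infty,1+}$-space satisfying \eqref{eq:conditionCKcountable}, then it is an $L_1$-predual satisfying \eqref{eq:FsigmaDeltaCondition}. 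Theorem~\ref{thm:mainPart1Strong}, implication \ref{it:newConditionOldVersionInThm}$\Rightarrow$\ref{it:CkZeroDim}, then gives a zero-dimensional compact $K$ with $X\equiv\C(K)$. Separability of $X$ forces $K$ to be metrizable, though this is not even needed for the statement.

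The ``Moreover'' part is the ``Moreover'' part of Theorem~\ref{thm:mainPart1Strong}, rewritten with ``$L_1$-predual'' replaced by ``$\mathcal{L}_{\infty,1+}$-space''. That replacement is legitimate by the same Lacey equivalence. The Daugavet property is likewise preserved under isometries.

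I expect no real obstacle here. The only point needing care is to record the translation between $L_1$-preduals and $\mathcal{L}_{\infty,1+}$-spaces explicitly, and to note that \eqref{eq:conditionCKcountable} and \eqref{eq:FsigmaDeltaCondition} coincide.
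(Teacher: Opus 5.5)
Your proposal is correct and follows the paper's route: the paper also derives this statement directly from Theorem~\ref{thm:mainPart1Strong}, whose forward implication rests on Proposition~\ref{prop:CKcountable}. The translation uses that every $\C(K)$ space is an $\mathcal{L}_{\infty,1+}$-space and that $\mathcal{L}_{\infty,1+}$-spaces coincide with $L_1$-preduals.
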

Theorem~\ref{thm:part1Main1} follows from Theorem~\ref{thm:charactCkcountableInCk} and Theorem~\ref{thm:part1Main2} follows from Theorem~\ref{thm:mainPart1Strong} using that any $\C(K)$ space is a $\mathcal{L}_{\infty,1+}$-space and using the fact mentioned in the Introduction that $X$ is $L_1$-predual if and only if it is a $\mathcal{L}_{\infty,1+}$-space.

We begin by observing that condition~\eqref{eq:conditionCKcountable} is a $\boldsymbol{\Pi}^0_3$ condition.
\begin{lemma}\label{lem:FsigmaDeltaCondition}
The set $\{\mu\in\PP\setsep X_\mu\text{ satisfies condition }\eqref{eq:conditionCKcountable}\}$ is $\boldsymbol{\Pi}^0_3$ in $\PP$.
\end{lemma}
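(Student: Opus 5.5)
Condition \eqref{eq:conditionCKcountable} will be rewritten as a countable Boolean combination of sets of the form $\{\mu\in\PP\setsep \mu(v)<q\}$ (open) and $\{\mu\in\PP\setsep \mu(v)\le q\}$ (closed), with $v\in V$ and $q\in\Rat$, arranged in the shape $\forall\forall\exists\forall$. The target reformulation is: for every rational $\varepsilon>0$ and every $x\in V$ with $\mu(x)\le 1$, there are a finite set $A\subseteq V$ and rationals $(q_a)_{a\in A}$ with $\sum|q_a|\le 1$ such that $\mu(\sum q_a a - x)<\varepsilon$, and for every $a\in A$ we have $|\mu(a)-1|\le\varepsilon$ and $\max\{\mu(a-y),\mu(a+y)\}\ge 2-\varepsilon$ for every $y\in V$ with $|\mu(y)-1|\le\varepsilon$.

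For fixed $\varepsilon,x,A,q$, the conditions on $A$ form an intersection of closed sets, and the approximation condition is open, hence $\boldsymbol{\Sigma}^0_2$ (indeed $F_\sigma$) since $\PP$ is metrizable. The existential over the countably many pairs $(A,q)$ keeps the class at $\boldsymbol{\Sigma}^0_2$. The hypothesis $\mu(x)\le 1$ is closed, so ``$\mu(x)>1$ or \dots'' is $\boldsymbol{\Sigma}^0_2$. Intersecting over the countably many $\varepsilon$ and $x$ gives a $\boldsymbol{\Pi}^0_3$ set.

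It remains to show that this reformulation is equivalent to \eqref{eq:conditionCKcountable} in $X_\mu$. This step is where the care lies: the elements of the set in \eqref{eq:conditionCKcountable} need not lie in $V$, and the quantifier ``for every $y\in S_X$'' has to be replaced by a countable one. Write $J_\varepsilon$ for the set of $a\in S_X$ with $\max\{\|a\pm y\|\}\ge 2-\varepsilon$ for all $y\in S_X$, and $\tilde J_\varepsilon$ for the set of $a\in V$ satisfying the relaxed conditions above.

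First, if $a\in J_\varepsilon$ and $a'\in V$ satisfies $\|a-a'\|<\delta$, then $a'\in\tilde J_{\varepsilon+3\delta}$. Indeed, $y\in V$ with $|\|y\|-1|\le\varepsilon'$ lies within $\varepsilon'$ of $y/\|y\|\in S_X$, so the inequalities transfer up to small additive errors. This gives the direction from \eqref{eq:conditionCKcountable} to the reformulation, after absorbing the constants by passing from $\varepsilon$ to a smaller $\varepsilon$ and perturbing the rational coefficients.

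Conversely, suppose $a\in\tilde J_\varepsilon$. Set $a''=a/\|a\|\in S_X$, and for $y\in S_X$ approximate $y$ by some $y'\in V$ with $\|y-y'\|\le\varepsilon$. Then $a''\in J_{C\varepsilon}$ for an absolute constant $C$, and since $\|a-a''\|\le\varepsilon$, absolute convex combinations of elements of $\tilde J_\varepsilon$ are $2\varepsilon$-close to absolute convex combinations of elements of $J_{C\varepsilon}$. Because \eqref{eq:conditionCKcountable} quantifies over all $\varepsilon$, and because the sets involved are monotone in $\varepsilon$, the constant $C$ is harmless. Finally, $B_X$ equals the closure of $\{x\in V\setsep \|x\|\le 1\}$, since the set $\{x\in V\setsep\|x\|<1\}$ is dense in $B_X$. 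So requiring the approximation for all $x\in V$ with $\mu(x)\le1$ and all $\varepsilon$ is equivalent to $\overline{\aco}J_\varepsilon\supseteq B_X$ for every $\varepsilon$, and the reverse inclusion is trivial.
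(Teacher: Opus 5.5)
Your $\forall\varepsilon\,\forall x\,\exists(A,q)\,\forall y$ scheme is the same as the paper's. However, your ``thickened'' countable sets $\tilde J_\varepsilon$ break the direction from \eqref{eq:conditionCKcountable} to your reformulation. The claim that $a\in J_\varepsilon$, $a'\in V$ and $\|a-a'\|<\delta$ give $a'\in\tilde J_{\varepsilon+3\delta}$ is false. In $\tilde J_\eta$ the width of the window $|\mu(y)-1|\le\eta$ equals the slack in $2-\eta$. So replacing $y$ by $y/\|y\|$ can use up the entire slack, leaving nothing for the loss coming from $a$ or from $\|a-a'\|$.

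Concretely, in $X=\C(K)$ one has $\max\{\|a-y\|,\|a+y\|\}=\max_{t\in K}(|a(t)|+|y(t)|)$. Testing with bumps $y$ of norm close to $1-\eta$ shows that, for $\eta<\tfrac12$, $\tilde J_\eta$ is exactly the set of $a\in V$ with $1\le|a|\le 1+\eta$ on $K$. By contrast, $J_\varepsilon=\{a\in S_X\setsep |a|\ge 1-\varepsilon\text{ on }K\}$. An element of $V$ that is merely close to $J_\varepsilon$, or even to $J_0$, need not satisfy $|a|\ge 1$ on $K$, and then it lies in no $\tilde J_\eta$ with $\eta<\tfrac12$. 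So shrinking constants cannot close this step; you need a different choice of approximants. For instance, take $a'\in V$ close to $(1+\varepsilon/2)b$ with $b\in J_{\varepsilon_0}$ and $\varepsilon_0\ll\varepsilon$. A norming functional of $b\pm z$ gives $\max\{\|a'-rz\|,\|a'+rz\|\}\ge(2-\varepsilon/2)(1-\varepsilon_0)-\delta\ge 2-\varepsilon$ for $z\in S_X$ and $r\in[1-\varepsilon,1+\varepsilon]$. You then rescale the coefficients by $(1+\varepsilon/2)^{-1}$.

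The paper sidesteps all of this by using the dense subset $D_\mu=\{v/\mu(v)\setsep v\in V,\ \mu(v)\neq 0\}$ of $S_{X_\mu}$ itself. Then ``for all $y\in S_X$'' reduces to ``for all $y\in D_\mu$'' by density alone, and an element of $D_\mu$ within $\delta$ of $J_{\varepsilon_0}$ lies in $J_{\varepsilon_0+\delta}$. The division by $\mu(v)$ is harmless: each condition reads ``$\mu(w)=0$ or (a relatively closed condition on the open set $\{\mu(w)\neq 0\}$)'', and such a set is closed.

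Second, your complexity count at the innermost level is wrong as written. For fixed $a,y$, the set of $\mu$ satisfying ``$|\mu(y)-1|\le\varepsilon\Rightarrow\max\{\mu(a-y),\mu(a+y)\}\ge 2-\varepsilon$'' is the union of the open set $\{|\mu(y)-1|>\varepsilon\}$ with a closed set, which need not be closed. So ``an intersection of closed sets'' is unjustified. A countable intersection of such sets is a priori only $\boldsymbol{\Pi}^0_2$, and your final count would then be $\boldsymbol{\Pi}^0_4$. The same move was fine for $x$, because $\boldsymbol{\Sigma}^0_2$ absorbs open sets; closed sets do not. Use the strict window $|\mu(y)-1|<\varepsilon$ instead, which makes each piece a union of two closed sets. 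The converse direction of your equivalence survives this change, since you may take $\|y-y'\|<\varepsilon$.
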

\begin{proof}Let $X$ be a Banach space and let $D \subseteq S_X$ be a dense subset of $S_X$. It is rather easy to check that then $X$ satisfies condition \eqref{eq:conditionCKcountable} if and only if the following holds
\[\begin{split}
    & \forall x_0\in D \;\forall \varepsilon,\delta\in\Rat\cap(0,\infty)\; \exists A\in [D]^{<\omega}\; \exists q\in \Rat^A:\; \sum_{a\in A}|q_a|\leq 1 \; \&\\
    &\quad \Big(\Big\Vert x_0 - \sum_{a\in A} q_{a} a \Big\Vert \leq \delta \quad \& \quad \forall y\in D\;\forall a\in A:\; \max \{ \Vert a - y \Vert, \Vert a + y \Vert \} \geq 2 - \varepsilon\Big).
    \end{split}\]
Now, for any $\mu\in\PP$ we apply the above to the set $D_\mu:=\{\tfrac{v}{\mu(v)}\setsep v\in V,\; \mu(v)\neq 0\}$ which is dense in the sphere of $X_\mu$. This enables us to easily check that the set of $\mu\in\PP$ which satisfy \eqref{eq:conditionCKcountable} is indeed a $\boldsymbol{\Pi}^0_3$-set.
\end{proof}

Using certain previously known results, we obtain the following, which implies the upper estimates from Theorem~\ref{thm:main3Part2} and from Theorem~\ref{thm:main1Part2}~\ref{it:cantorComplete}.

\begin{thm}\label{thm:CkcountableAndCantor}Let $\N\in \{\PP_\infty,\B\}$. Let us denote by $\C$ the class of Banach spaces isometric to some $\C(K)$ space with $K$ zero-dimensional.
\begin{itemize}
    \item The set $\{\mu\in \N\setsep X_\mu\in \C\}$ is $\boldsymbol{\Pi}^0_3$ in the space $\N$.
    \item The isometry class of $\C(2^\Nat)$ is $\boldsymbol{\Pi}^0_3$ in the space $\N$.
\end{itemize}
\end{thm}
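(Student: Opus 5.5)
By Theorem~\ref{thm:part1Main2}, the set $\{\mu\in\N\setsep X_\mu\in\C\}$ is the intersection of two sets: the set of $\mu\in\N$ for which $X_\mu$ is an $\mathcal{L}_{\infty,1+}$-space, and the set of $\mu\in\N$ for which $X_\mu$ satisfies \eqref{eq:conditionCKcountable}. The second set is the trace on $\N$ of a $\boldsymbol{\Pi}^0_3$ subset of $\PP$ by Lemma~\ref{lem:FsigmaDeltaCondition}, hence it is $\boldsymbol{\Pi}^0_3$ in $\N$. So the first bullet reduces to showing that the class of $\mathcal{L}_{\infty,1+}$-spaces is $\boldsymbol{\Pi}^0_3$ in $\PP_\infty$. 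Since $\B\subseteq\PP_\infty$ carries the subspace topology, it suffices throughout to work in $\PP_\infty$.

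For $\mathcal{L}_{\infty,1+}$ I would rely on the known result from \cite{CDDK1} or \cite{CDDK2}, where the complexity of $L_1$-preduals in this coding is computed. If a direct argument is needed, I would use the density of $V$ in $X_\mu$ to rewrite the definition with rational data. The condition then reads: for every finite tuple $v_1,\dots,v_m\in V$ and every rational $\varepsilon>0$, there exist a finite tuple $w_1,\dots,w_N\in V$ and rational coefficients such that:
\begin{itemize}
\item each $v_i$ is within $\varepsilon$ of $\span\{w_j\}$, witnessed by rational combinations;
\item $(w_j)$ is $(1+\varepsilon)$-equivalent to the unit vector basis of $\ell_\infty^N$.
\end{itemize}
The second requirement amounts to strict inequalities $\max_j|a_j|<(1+\varepsilon)\mu(\sum a_jw_j)$ and $\mu(\sum a_jw_j)<(1+\varepsilon)\max_j|a_j|$ for all rational $a$. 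Taken over all rational $a$, this is a $G_\delta$ condition. Replacing it with the non-strict closed version, at the cost of slightly changing $\varepsilon$, gives a closed condition in $\mu$. The first requirement is open.

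This yields a "$\forall\exists(\text{closed}\cap\text{open})$" form, which is $\boldsymbol{\Pi}^0_3$. The step I expect to need care is checking that the approximating subspace $F$ may be taken to have a near-$\ell_\infty$ basis consisting of vectors in $V$, and that the small perturbation involved does not spoil the equivalence between the rational condition and the $\mathcal{L}_{\infty,1+}$ property. This is a standard perturbation argument: bases close in norm give Banach–Mazur distances close to each other.

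For the second bullet, the "Moreover" part of Theorem~\ref{thm:part1Main2} shows the isometry class of $\C(2^\Nat)$ is the intersection of the $\boldsymbol{\Pi}^0_3$ set just obtained with the set of $\mu$ for which $X_\mu$ has the Daugavet property. The latter is $G_\delta$ in this coding; I would cite \cite{complexityDaugavet} for this. Alternatively, it follows by expressing the Daugavet property through the known slice characterization: for all $x\in S_X$, all slices $S$ of $B_X$ and all $\varepsilon>0$ there is $y\in S$ with $\|x+y\|>2-\varepsilon$. With rational data this is a $\forall\exists$ over open conditions, i.e.\ $G_\delta$. The intersection is then $\boldsymbol{\Pi}^0_3$, as claimed.
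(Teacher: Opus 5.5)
Your proposal is correct and follows essentially the paper's proof: reduce to $\PP_\infty$, write the first class as the intersection of the $\mathcal{L}_{\infty,1+}$ set (which the paper cites as $\boldsymbol{\Pi}^0_2$ from \cite[Theorem 3.6]{CDDK2}, so your $\boldsymbol{\Pi}^0_3$ fallback is more than enough) with the $\boldsymbol{\Pi}^0_3$ set from Lemma~\ref{lem:FsigmaDeltaCondition}, and for $\C(2^\Nat)$ also intersect with the $\boldsymbol{\Pi}^0_2$ Daugavet set from \cite[Theorem 4.3]{complexityDaugavet}. The one loose point is your alternative slice-based argument for the Daugavet property: slices are indexed by functionals in $X_\mu^*$, not by countable data from $V$, so you would need the equivalent formulation $B_X=\overline{\co}\{y\in B_X\setsep \|x+y\|>2-\varepsilon\}$ for all $x\in S_X$ and $\varepsilon>0$ to make it work, although citing \cite{complexityDaugavet}, as you and the paper do, makes this unnecessary.
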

\begin{proof}Since $\B\subseteq \PP_\infty$, it suffices to give the proof for $\N=\PP_\infty$. By \cite[Theorem 3.6]{CDDK2}, we have that $L:=\{\mu\in\PP_\infty\setsep X_{\mu} \text{ is a }\mathcal{L}_{\infty,1+}\text{-space}\}$ is $\boldsymbol{\Pi}^0_2$. Hence, by Theorem~\ref{thm:part1Main2} and Lemma~\ref{lem:FsigmaDeltaCondition} we obtain that the set
\[
\{\mu\in \N\setsep X_\mu\in \C\} = L\cap \{\mu\in\PP_\infty\setsep X_\mu\text{ satisfies condition }\eqref{eq:conditionCKcountable}\}
\]
is $\boldsymbol{\Pi}^0_3$. By \cite[Theorem 4.3]{complexityDaugavet}, the set $D:=\{\mu\in\PP_\infty\setsep X_{\mu} \text{ has the Daugavet property}\}$ is $\boldsymbol{\Pi}^0_2$. Hence, by Theorem~\ref{thm:part1Main2} and Lemma~\ref{lem:FsigmaDeltaCondition} we obtain that
\[
\isomtrclass{\C(2^\Nat)} = L\cap D\cap \{\mu\in\PP_\infty\setsep X_\mu\text{ satisfies condition }\eqref{eq:conditionCKcountable}\}
\]
is $\boldsymbol{\Pi}^0_3$ as well.
\end{proof}

To estimate the isometry class of $\mathcal{C}(K)$ spaces with $K$ countable, it remains to analyze the complexity of condition~\ref{it:derivative}. This will be carried out in a series of lemmas.

\begin{lemma}
\label{lem:useckyUzavrene}
Let $X$ be a Banach space and let $\mathcal K:=\mathcal K(B_{X^*},w^*)$ be the hyperspace of all weak* compact subsets of $B_{X^*}$ (equipped with the Vietoris topology).
Let $n\in\mathbb N$ be fixed.
Then the set
\[
\mathcal H_n:=\big\{K\in\mathcal K:K\text{ is contained in an $n$-dimensional subspace of }X^*\big\}
\]
is closed in $\mathcal K$.
\end{lemma}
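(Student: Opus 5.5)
The plan is to show that the complement $\mathcal K\setminus\mathcal H_n$ is open in the Vietoris topology. First I reformulate membership in $\mathcal H_n$ linearly: $K$ is contained in an $n$-dimensional subspace of $X^*$ if and only if $\dim\span K\le n$. (If $X^*$ has dimension less than $n$ the statement is trivial, so assume $\dim X^*\ge n$.) This in turn holds if and only if every family $x_0^*,\dots,x_n^*\in K$ is linearly dependent. So $K\notin\mathcal H_n$ means there are $n+1$ linearly independent functionals $x_0^*,\dots,x_n^*\in K$.

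The key step is to witness linear independence by finitely many points of $X$. If $x_0^*,\dots,x_n^*$ are linearly independent, then the map $X\to\Rea^{n+1}$, $x\mapsto(x_i^*(x))_{i=0}^n$, is surjective. Indeed, otherwise its range lies in a hyperplane $\{t\colon\sum c_it_i=0\}$, which gives $\sum c_ix_i^*=0$. Hence there exist $x_0,\dots,x_n\in X$ such that the matrix $M=(x_i^*(x_j))_{i,j}$ satisfies $\det M\neq0$. Conversely, if some functionals $y_0^*,\dots,y_n^*$ satisfy $\det(y_i^*(x_j))\neq 0$, they are linearly independent, since any nontrivial relation among them would give a relation among the rows of the matrix.

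Now fix $K\notin\mathcal H_n$ with witnesses $x_i^*$ and $x_j$. By continuity of the determinant and $w^*$-continuity of evaluation at each $x_j$, there are $w^*$-open neighbourhoods $U_i\ni x_i^*$ such that $\det(y_i^*(x_j))\neq0$ whenever $y_i^*\in U_i$ for every $i$. The set $\{L\in\mathcal K\setsep L\cap U_i\neq\emptyset \text{ for } i=0,\dots,n\}$ is Vietoris open and contains $K$. Every $L$ in it contains $n+1$ linearly independent functionals, so $L\notin\mathcal H_n$. Therefore $\mathcal K\setminus\mathcal H_n$ is open, and $\mathcal H_n$ is closed.

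The only point requiring care is the translation between linear independence, which is an algebraic notion, and a $w^*$-open condition. This is handled by the determinant witness above. The empty set lies in $\mathcal H_n$ and causes no issue.
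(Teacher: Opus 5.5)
Your proof is correct and follows the paper's route. You pick $n+1$ linearly independent functionals in $K$, witness their independence by finitely many points of $X$, and use the Vietoris-open set of compacta meeting $w^*$-neighbourhoods of these functionals. The one difference is how you show independence persists for nearby functionals: you use continuity of the determinant, while the paper chooses the $x_j$ biorthogonally ($x_i^*(x_j)=0$ for $i\neq j$) and proves independence with an explicit diagonal-dominance estimate.
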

\begin{proof}
We must show that the complement of the set $\mathcal H_n$ in $\mathcal K$ is open.
So fix $L\in\mathcal K\setminus\mathcal H_n$.
Then we can find linearly independent $x_1^*,\ldots,x_{n+1}^*\in L$.
Further, we can find $\eta>0$ and $x_1,\ldots,x_{n+1}\in X$ such that $x_i^*(x_i)>\eta$ for $i=1,\ldots,n+1$, and such that $x_i^*(x_j)=0$ for $i\neq j$ with $i,j\in\{1,\ldots,n+1\}$.
For every $i\in\{1,\ldots,n+1\}$, let $U_i$ be the weak* open subset of $B_{X^*}$ given by
\[
U_i:=\Big\{z^*\in B_{X^*}\setsep-\frac\eta{n+1}<z^*(x_j)<\frac\eta{n+1}<\eta<z^*(x_i),j\in\{1,\ldots,n+1\}\setminus\{i\}\Big\}.
\]
We define an open subset $\mathcal U$ of $\mathcal K$ by
\[
\mathcal U:=\big\{K\in\mathcal K\setsep K\cap U_i\neq\emptyset\text{ for every } i=1,\ldots,n+1\big\}.
\]
Then $\mathcal U$ is an open neighborhood of $L$ as $x_i^*\in L\cap U_i$, $i=1,\ldots,n+1$.
So, to complete the proof, it suffices to verify that $\mathcal U\cap\mathcal H_n=\emptyset$.
And to this end, it is enough to show that $z_1^*,\ldots,z_{n+1}^*$ are linearly independent whenever $z_i^*\in U_i$, $i=1,\ldots,n+1$.
So pick some $z_i^*\in U_i$, $i=1,\ldots,n+1$, and suppose that $a_1,\ldots,a_{n+1}\in\mathbb R$ are such that
\begin{equation}
\label{eq:linKomb}
\sum_{i=1}^{n+1}a_iz_i^*=0.
\end{equation}
Using the fact that $z_i^*\in U_i$, $i=1,\ldots,n+1$, by plugging $x_i$ into~\eqref{eq:linKomb}, we easily obtain that
\begin{equation}
\label{eq:aritmPrumer}
|a_i|\le\frac 1{n+1}\sum_{j\neq i}|a_j|\le\frac 1{n+1}\sum_{j=1}^{n+1}|a_j|,\quad i=1,\ldots,n+1.
\end{equation}
This means that, for every $i=1,\ldots,n+1$, the value of $|a_i|$ is less than or equal to the arithmetic average of all the values $|a_1|,\ldots,|a_{n+1}|$,
and that can happen only if $|a_1|=\ldots=|a_{n+1}|$.
But then~\eqref{eq:aritmPrumer} implies that $a_1=\ldots=a_{n+1}=0$,
and the linear independency of $z_1^*,\ldots,z_{n+1}^*$ follows.
\end{proof}

\begin{lemma}\label{lem:szlenkComplexity}
Let $\beta$ be either $0$ or a countable limit ordinal and let $n\in\Nat\cup\{0\}$ be such that $\beta+n>0$. Then for every $\varepsilon>0$ the mapping 
\[
\K(\ell_\infty,w^*)\ni F\mapsto s_\varepsilon^{\beta + n}(F)\in \K(\ell_\infty,w^*)
\]
is $\boldsymbol{\Sigma}_{\beta+2n+1}^0$-measurable.
\end{lemma}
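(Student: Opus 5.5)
The plan is to prove the statement by transfinite induction along the normal form $\beta+n$. There are two building blocks. The first is that the single derivative $F\mapsto s_\varepsilon(F)$ is $\boldsymbol{\Sigma}^0_3$-measurable. The second is that the map $(F_m)_{m}\mapsto\bigcap_m F_m$, restricted to decreasing sequences, is $\boldsymbol{\Sigma}^0_2$-measurable. We combine these with the composition rule: if $f$ is $\boldsymbol{\Sigma}^0_\xi$-measurable and $g$ is $\boldsymbol{\Sigma}^0_{\eta+1}$-measurable, then $g\circ f$ is $\boldsymbol{\Sigma}^0_{\xi+\eta}$-measurable. This holds because $g^{-1}$ of an open set is a countable union of $\boldsymbol{\Pi}^0_\eta$ sets, and $f^{-1}$ of a $\boldsymbol{\Pi}^0_\eta$ set lies in $\boldsymbol{\Pi}^0_{\xi+\eta-1}$ by a routine induction on $\eta$. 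Throughout, we work inside the balls $rB_{\ell_\infty}$ with the metrizable $w^*$ topology; every $w^*$-compact set is bounded, so $\K(\ell_\infty,w^*)$ is the increasing union of the compact metrizable spaces $\K(rB_{\ell_\infty})$. For the Vietoris topology, it suffices to check preimages of the subbasic sets $\{F\setsep F\subseteq U\}$ and $\{F\setsep F\cap U\neq\emptyset\}$, with $U$ ranging over a countable family.

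For the single derivative, fix a countable basis $(W_j)$ of $w^*$-open subsets of $rB_{\ell_\infty}$. First we show that for each $j$ the set $\{F\setsep \diam(F\cap W_j)<\varepsilon\}$ is $\boldsymbol{\Sigma}^0_2$. Since $\|x\|=\sup_k|x(k)|$ is a supremum of $w^*$-continuous functions, $\diam(F\cap W_j)>t$ holds iff there are $k$ and basic sets $V,V'\subseteq W_j$, separated in the $k$-th coordinate by more than $t$, with $F\cap V\neq\emptyset\neq F\cap V'$. This is an open condition on $F$, so $\diam(F\cap W_j)\geq\varepsilon$ is $G_\delta$, and its negation is $F_\sigma$. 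Next, by compactness, for a closed $C$ we have $s_\varepsilon(F)\cap C=\emptyset$ iff there is a finite $J$ with $F\subseteq (rB\setminus C)\cup\bigcup_{j\in J}W_j$ and $\diam(F\cap W_j)<\varepsilon$ for $j\in J$. Indeed, every point of $F\setminus s_\varepsilon(F)$ has a basic neighbourhood $W_j$ with $\diam(F\cap W_j)<\varepsilon$, and conversely such $W_j$ are disjoint from $s_\varepsilon(F)$. The resulting condition is a countable union of (open $\cap$ $F_\sigma$), hence $\boldsymbol{\Sigma}^0_2$. This handles the two subbasic preimages:
\begin{itemize}
\item $\{F\setsep s_\varepsilon(F)\subseteq U\}$ is $\boldsymbol{\Sigma}^0_2$, taking $C=rB\setminus U$;
\item $\{F\setsep s_\varepsilon(F)\cap U\neq\emptyset\}$ is $\boldsymbol{\Sigma}^0_3$, writing it as $\bigcup\{F\setsep s_\varepsilon(F)\cap\overline{W_j}\neq\emptyset\}$ over $j$ with $\overline{W_j}\subseteq U$, each term being $\boldsymbol{\Pi}^0_2$.
\end{itemize}

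For the limit step, take $\beta$ a limit ordinal and a sequence $\gamma_m=\beta_m+n_m\nearrow\beta$, where each $\beta_m$ is $0$ or a limit ordinal. By the induction hypothesis, $F\mapsto s^{\gamma_m}_\varepsilon(F)$ is $\boldsymbol{\Sigma}^0_{\beta_m+2n_m+1}$-measurable, and $\beta_m+2n_m+1<\beta$. Hence the product map $F\mapsto(s^{\gamma_m}_\varepsilon(F))_m\in\K^{\Nat}$ is $\boldsymbol{\Sigma}^0_\beta$-measurable: preimages of basic open sets are finite intersections of sets of class $<\beta$, and preimages of open sets are countable unions of these. For a decreasing sequence, $\bigcap_m F_m\subseteq U$ iff $F_m\subseteq U$ for some $m$, which is an open condition. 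Also $\bigcap_mF_m\cap U\neq\emptyset$ iff there is $j$ with $\overline{W_j}\subseteq U$ such that $F_m\cap\overline{W_j}\neq\emptyset$ for all $m$, which is $\boldsymbol{\Sigma}^0_2$. By the composition rule, $s^\beta_\varepsilon$ is $\boldsymbol{\Sigma}^0_{\beta+1}$-measurable, and the case $n=0$ is done. (When $\beta=0$ the starting point $s^0_\varepsilon=\mathrm{id}$ is continuous, which is $\boldsymbol{\Sigma}^0_1$-measurable.)

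For successor steps we write $s^{\beta+n}_\varepsilon=s_\varepsilon\circ s^{\beta+n-1}_\varepsilon$. If $s^{\beta+n-1}_\varepsilon$ is $\boldsymbol{\Sigma}^0_{\xi}$-measurable with $\xi=\beta+2(n-1)+1$, then applying the composition rule with $\eta=2$ gives $\boldsymbol{\Sigma}^0_{\xi+2}=\boldsymbol{\Sigma}^0_{\beta+2n+1}$, which is the claimed class. The main obstacle is the single-derivative estimate, in particular the precise compactness reformulation of $s_\varepsilon(F)\cap C=\emptyset$. A second point of care is the composition rule at limit levels, including the convention $\boldsymbol{\Sigma}^0_1$ for the identity when $\beta=0$.
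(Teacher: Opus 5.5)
Your argument is correct and follows the same structure as the paper's proof: the pure limit case $s^\beta_\varepsilon$ and the pure finite case are glued together by composition. The differences are that you prove from scratch the two building blocks ($\boldsymbol{\Sigma}^0_3$-measurability of $s_\varepsilon$ and $\boldsymbol{\Sigma}^0_2$-measurability of the intersection of decreasing sequences), which the paper imports from the proof of \cite[Theorem 5.7]{CDDK2}, and that you reach $\beta+n$ by composing with $s_\varepsilon$ $n$ times rather than with $s^n_\varepsilon$ at once. One correction: your composition rule is false as stated for infinite $\eta$ (with $\xi=1$, $\eta=\omega$ it would make every $\boldsymbol{\Sigma}^0_{\omega+1}$-measurable map $\boldsymbol{\Sigma}^0_\omega$-measurable), but you only apply it with $\eta\in\{1,2\}$, where your justification is valid, so simply state it for finite $\eta$.
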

\begin{proof}If $\beta=0$ or $n=0$, then this was exactly shown in the proof of \cite[Theorem 5.7]{CDDK2}. For $\alpha = \beta + n$, where $\beta$ is a limit ordinal and $n\in\Nat$ we observe that by the above $s_\varepsilon^\alpha = s_\varepsilon^n\circ s_\varepsilon^\beta$ is a composition of a $\boldsymbol{\Sigma}_{\beta+1}^0$-measurable mapping with a $\boldsymbol{\Sigma}_{2n+1}^0$-measurable mapping, which is a $\boldsymbol{\Sigma}_{\beta+2n+1}^0$-measurable mapping.
\end{proof}

We will later show that the estimate in Lemma~\ref{lem:szlenkComplexity} is optimal for $\varepsilon = 2$, as the mapping $s_2^{\beta+n}$ fails to be $\boldsymbol{\Pi}_{\beta+2n+1}^0$-measurable (see Corollary~\ref{cor:szlenkComplexity}).

We are now ready to derive the upper bounds from Theorem~\ref{thm:main1Part2}~\ref{it:kIsOneComplete}, \ref{it:kIsNotOneComplete}.

\begin{thm}\label{thm:upperBoundFinal}
Let $\beta$ be either $0$ or a countable limit ordinal and let $n\in\Nat\cup\{0\}$ be such that $\beta+n>0$. Pick $\N\in\{\PP_\infty,\B\}$.
\begin{enumerate}[label=(\roman*)]
    \item The isometry class of $\C(\omega^{\beta+n})$ in $\N$ is $\boldsymbol{\Pi}^0_{\beta+2n+1}$.
    \item  The isometry class of $\C(\omega^{\beta+n}\cdot k)$ in $\N$ is $D_2(\boldsymbol{\Pi}^0_{\beta+2n+1})$ for every $k\in\Nat$ with $k\geq 2$.
\end{enumerate}
\end{thm}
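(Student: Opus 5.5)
By the usual reduction it is enough to treat $\N=\PP_\infty$; the bound for $\B$ then follows because $\B\subseteq\PP_\infty$ carries the subspace topology. Theorem~\ref{thm:part1Main1} splits the isometry class of $\C(\omega^{\beta+n}\cdot k)$ as the intersection of two sets:
\[
\{\mu\setsep X_\mu\in\C\}\quad\text{and}\quad \{\mu\setsep X_\mu \text{ satisfies \ref{it:derivative}}\}.
\]
By Theorem~\ref{thm:CkcountableAndCantor}, the first set is $\boldsymbol{\Pi}^0_3$. Since $\beta+n>0$, we have $\beta+2n+1\geq 3$, so this set lies in $\boldsymbol{\Pi}^0_{\beta+2n+1}$. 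What remains is to bound the complexity of condition \ref{it:derivative}.

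To handle \ref{it:derivative}, I will use the continuous (or at least Borel, of low class) assignment $\mu\mapsto (B_{X_\mu^*},w^*)$. This is presumably available from \cite{CDDK2}, Section~5. There, for $\mu\in\PP_\infty$, the dual ball $B_{X_\mu^*}$ is identified with a weak$^*$-compact subset of $B_{\ell_\infty}$ via evaluation on the countable set $V$, namely $x^*\mapsto (x^*(v)/\text{normalization})_{v}$. The map $\mu\mapsto B_{X_\mu^*}\in\K(\ell_\infty,w^*)$ is continuous, or at least Borel of class low enough to be absorbed. This identification is a linear weak$^*$-homeomorphic embedding, so it preserves Szlenk derivatives, up to harmless rescaling of $\varepsilon$ that I will build into the choice of coordinates so that norm diameters agree. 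It also preserves finite-dimensionality of spans. Composing with Lemma~\ref{lem:szlenkComplexity}, the map $\Phi\colon\mu\mapsto s_2^{\beta+n}(B_{X_\mu^*})$ is $\boldsymbol{\Sigma}^0_{\beta+2n+1}$-measurable. Hence preimages of closed sets under $\Phi$ are $\boldsymbol{\Pi}^0_{\beta+2n+1}$, and preimages of open sets are $\boldsymbol{\Sigma}^0_{\beta+2n+1}$.

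Now I write \ref{it:derivative} as a Boolean combination of closed conditions on $\Phi(\mu)$:
\begin{itemize}
\item Nonemptiness: $\{F\setsep F\neq\emptyset\}$ is clopen in the hyperspace. In fact nonemptiness is automatic once $s_2^{\beta+n}$ is contained in a $k$-dimensional space, but I will keep it anyway; its preimage is both $\boldsymbol{\Pi}^0$ and $\boldsymbol{\Sigma}^0_{\beta+2n+1}$.
\item Containment in a $k$-dimensional space: the set $\mathcal H_k$ is closed by Lemma~\ref{lem:useckyUzavrene}, so $\Phi^{-1}(\mathcal H_k)\in\boldsymbol{\Pi}^0_{\beta+2n+1}$.
\item Non-containment in a $(k-1)$-dimensional space: $\mathcal H_{k-1}$ is closed, so $\Phi^{-1}(\mathcal K\setminus\mathcal H_{k-1})\in\boldsymbol{\Sigma}^0_{\beta+2n+1}$.
\end{itemize}

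For $k=1$ the isometry class is therefore an intersection of $\boldsymbol{\Pi}^0_{\beta+2n+1}$ sets, which gives (i). For $k\geq2$ it is a $\boldsymbol{\Pi}^0_{\beta+2n+1}$ set intersected with a $\boldsymbol{\Sigma}^0_{\beta+2n+1}$ set, i.e.\ a difference of two $\boldsymbol{\Pi}^0_{\beta+2n+1}$ sets, which gives (ii).

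The main obstacle is the coding step: justifying that $\mu\mapsto B_{X_\mu^*}$ into $\K(\ell_\infty,w^*)$ is continuous on $\PP_\infty$, or of a class that composes without raising the level. Care is needed because the measurability class of Lemma~\ref{lem:szlenkComplexity} would worsen if the coding map were only Borel of higher class. I will also need to check that the identification matches the $\varepsilon$ in the Szlenk derivative, since norm diameters in $X_\mu^*$ must correspond to diameters computed in $\ell_\infty$. My first approach is to embed via a dense set of unit vectors $D_\mu$ as in Lemma~\ref{lem:FsigmaDeltaCondition}, which is isometric on the dual norm, and to cite the corresponding construction in the proof of \cite[Theorem 5.7]{CDDK2}. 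If that map turns out to be merely Borel of class one, I will instead apply Lemma~\ref{lem:szlenkComplexity} at the level of the Szlenk derivative of the first step, as done in \cite{CDDK2}, where the same count $\beta+2n+1$ already accounts for the coding.
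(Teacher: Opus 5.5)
Your proposal is correct and follows the paper's own proof essentially step by step. The paper likewise intersects the $\boldsymbol{\Pi}^0_3$ set from Theorem~\ref{thm:CkcountableAndCantor} with the preimages of $\{F\neq\emptyset\}$, of $\mathcal H_k$ and (for $k\ge 2$) of the complement of $\mathcal H_{k-1}$ under $s_2^{\beta+n}\circ\Omega$. The coding you were unsure about is exactly the continuous map $\Omega\colon\P\to\K(B_{\ell_\infty},w^*)$ from \cite[Lemma 4.7 and Lemma 4.9]{CDDK2}. It identifies $\Omega(\mu)$ with $B_{X_\mu^*}$ by a norm-isometric, $w^*$-homeomorphic affine bijection, so no rescaling of $\varepsilon$ and no fallback are needed.

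One correction to an aside: nonemptiness is not implied by containment in a $k$-dimensional subspace, since $\emptyset$ lies in every subspace. For example, $s_2^{\alpha}(B_{\C(\omega^\gamma)^*})=\emptyset$ whenever $1\le\gamma<\alpha$. So the nonemptiness clause, which you rightly keep, is not redundant.
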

\begin{proof}We shall use Theorem~\ref{thm:part1Main1}. Recall that \ref{it:CkCountable} is a $\boldsymbol{\Pi}^0_3$-condition by Theorem~\ref{thm:CkcountableAndCantor}. In order to shorten the notation, put $\alpha:=\beta+n$.

By \cite[Lemma 4.7 and Lemma 4.9]{CDDK2}, there is a continuous mapping $\Omega:\P\to \K(B_{\ell_\infty},w^*)$ such that for every $\mu\in\P$ there exists a bijection from $\Omega(\mu)\subseteq \ell_\infty$ onto $B_{X_\mu^*}$ which is simultaneously a $\|\cdot\|$-$\|\cdot\|$ isometry, a $w^*$-$w^*$ homeomorphism and an affine isomorphism. (In particular, for every $\mu\in\P$ and every $m\in\Nat$, subsets of $B_{X_\mu^*}$ contained in an $m$-dimensional space correspond to subsets of $\Omega(\mu)$ contained in an $m$-dimensional space).
Further, by Lemma~\ref{lem:szlenkComplexity}, the mapping $s_2^{\alpha}$ is $\boldsymbol{\Sigma}_{\beta + 2n + 1}^0$-measurable.

Since $\{\emptyset\}$ is clopen in $\K(B_{\ell_\infty},w^*)$, the set
\[
\{\mu\in\P\setsep s^\alpha_2(\Omega(\mu))\neq \emptyset\}
\]
is both $\boldsymbol{\Sigma}_{\beta + 2n + 1}^0$ and $\boldsymbol{\Pi}_{\beta + 2n + 1}^0$. Thus, the set
\[
P:=\{\mu\in\P_\infty\setsep X_\mu \text{ satisfies \ref{it:CkCountable} and $s^{\alpha}_2(B_{X_\mu^*})\neq \emptyset$}\}
\]
is the intersection of a $\boldsymbol{\Pi}_3^0$-set with a set which is $\boldsymbol{\Pi}_{\beta + 2n + 1}^0$, which is a $\boldsymbol{\Pi}_{\beta + 2n + 1}^0$-set.

Therefore, by Lemma~\ref{lem:useckyUzavrene} we obtain that
\[
Q = P \cap \{\mu\in\P\setsep s^\alpha_2(\Omega(\mu)) \text{ is subset of a $k$-dimensional space}\}
\]
is a $\boldsymbol{\Pi}_{\beta + 2n + 1}^0$-set. This finishes the proof for $k=1$, because for $k=1$ we have $Q=\isomtrclass{\C(\omega^\alpha\cdot k)}$.

For $k>1$ we subtract from $Q$ the set
\[
\{\mu\in\P\setsep s^\alpha_2(\Omega(\mu)) \text{ is subset of a $(k-1)$-dimensional space}\},
\]
which is a $\boldsymbol{\Pi}_{\beta + 2n + 1}^0$-set. Thus, in this case we obtain that $\isomtrclass{\C(\omega^\alpha\cdot k)}$ is a $D_2(\boldsymbol{\Pi}_{\beta + 2n + 1}^0)$-set.
\end{proof}

We do not state the corresponding upper bounds from Theorem~\ref{thm:main2Part2} explicitly at this point, as they follow directly from Corollary~\ref{cor:Ck}.

\section{Lower bound for the isometry class of \texorpdfstring{$\C(2^\Nat)$}{C(Cantor)}}\label{sec:cantorLower}

By Theorem~\ref{thm:CkcountableAndCantor}, the isometry class of $\C(2^\Nat)$ is $\boldsymbol{\Pi}^0_3$. The aim of this section is to show that it is, in fact, $\boldsymbol{\Pi}^0_3$-complete. Since $\B \subseteq \P_\infty$, it suffices to prove that $\isomtrclass[\B]{\C(2^\Nat)}$ is $\boldsymbol{\Pi}^0_3$-hard. This will follow almost immediately from the following central result of this section.

\begin{prop} \label{prop:Cantorvscircle}
There exists a continuous mapping $ \eta : 2^{\mathbb{N} \times \mathbb{N}} \to \mathcal{B} $ such that
\begin{enumerate}[label=(\roman*)]
    \item if each row $ \sigma(i, \cdot) $ of $ \sigma \in 2^{\mathbb{N} \times \mathbb{N}} $ contains $ 1 $ only finitely many times, then $ X_{\eta(\sigma)} $ is isometric to $ \C(2^{\mathbb{N}}) $,
    \item if $ \sigma \in 2^{\mathbb{N} \times \mathbb{N}} $ contains a row $ \sigma(i, \cdot) $ in which $ 1 $ appears infinitely many times, then $ X_{\eta(\sigma)} $ is isometric to $ \C(K) $ for some compact space $ K $ that contains a copy of the circle. 
\end{enumerate}
\end{prop}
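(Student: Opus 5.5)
We will construct, continuously in $\sigma$, a compact metric space $K_\sigma$ together with a continuous choice of a norming structure, and set $\eta(\sigma)$ to be a code of $\C(K_\sigma)$ with respect to a fixed sequence of functions. The guiding idea: start from the Cantor set $2^\Nat$ split into countably many disjoint clopen pieces $C_i$ (one per row $i$), and inside $C_i$ perform a sequence of identifications controlled by the row $\sigma(i,\cdot)$. Concretely, inside $C_i$ fix a sequence of ``layers''; the $j$-th bit $\sigma(i,j)$ decides whether at scale $2^{-j}$ we glue (via a quotient) two adjacent clopen pieces along a finite set of points in a way approximating a circle. If only finitely many bits are $1$, only finitely many gluings happen and the quotient of $C_i$ is again a perfect zero-dimensional compact metric space (finite identifications of points in a Cantor set keep it homeomorphic to the Cantor set, since it stays zero-dimensional, metrizable and without isolated points); if infinitely many bits are $1$, the successive gluings accumulate to an inverse-limit-like structure containing a circle. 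A cleaner way, which I would actually try first, is to realize $K_\sigma$ as a closed subset of $2^\Nat\times \mathbb T$ (or of a fixed compact metric space $Z$) given as $K_\sigma=\bigcup_i \{c\}\times \ldots$, i.e. describe $K_\sigma$ through a continuous map $\sigma\mapsto K_\sigma\in\K(Z)$ and then apply Lemma~\ref{lem:Ck} to get a continuous map into $\PP$.

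So the steps are: (1) fix $Z$ compact metric and build a Vietoris-continuous map $\sigma\mapsto K_\sigma\in\K(Z)$ such that each $K_\sigma$ is perfect, it is zero-dimensional when every row has finitely many $1$'s, and contains a circle otherwise. A candidate: for each $i$ let $K^i_\sigma\subseteq 2^\Nat\times[0,1]^2$ consist of a Cantor set placed along a circle $S_i$ of radius $2^{-i}$, where the $j$-th time $\sigma(i,\cdot)$ takes value $1$ we fill in more of the arcs at dyadic level $j$; the points of the Cantor-type set on $S_i$ get denser, and in the limit (infinitely many $1$'s) the whole circle is filled. Continuity in the Vietoris topology holds because bit $j$ only changes the set within Hausdorff distance $\approx 2^{-j}$. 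Then $K_\sigma$ is the closure of $\bigcup_i K^i_\sigma$ with the circles shrinking to a point, plus a fixed Cantor set to ensure perfectness. If row $i$ has finitely many $1$'s, $K^i_\sigma$ is a finite union of Cantor sets in arcs, hence zero-dimensional and perfect; so $K_\sigma$ is a countable union of zero-dimensional closed sets plus a limit point, hence zero-dimensional (countable union of closed zero-dimensional subsets of a separable metric space), perfect, thus homeomorphic to $2^\Nat$. (2) Compose with $\rho$ from Lemma~\ref{lem:Ck} to get a continuous map into $\PP$ with $X_{\rho(K_\sigma)}\equiv\C(K_\sigma)$.

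(3) The main obstacle is that Lemma~\ref{lem:Ck} lands in $\PP$ rather than in $\B$: we need the codes to be genuine norms on $c_{00}$. I would handle this by choosing the dense sequence of functions used in the construction of $\rho$ so that it is linearly independent on every $K_\sigma$ simultaneously, e.g. by using functions depending only on the fixed Cantor set part $C_0\subseteq K_\sigma$ (present for all $\sigma$) interleaved in a way that finite linear combinations vanishing on $K_\sigma$ must vanish on $C_0$ and hence be trivial; alternatively, use functions on $Z$ which are linearly independent on $C_0$ and whose restrictions still generate $\C(K_\sigma)$ for every $\sigma$ (by Stone–Weierstrass, fix a countable point-separating algebra on $Z$ and pass to a linearly independent-on-$C_0$ Gram–Schmidt-type enumeration). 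Checking that this modified coding is still continuous in $\sigma$ and still dense in each $\C(K_\sigma)$ is the delicate point; the topological properties in (1) are comparatively routine.
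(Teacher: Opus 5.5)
\textbf{There is a genuine gap: step (1) is impossible.} No construction, however clever, can carry it out, so the whole route through compacta and Lemma~\ref{lem:Ck} is blocked.

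The reason is descriptive-set-theoretic. In $\K(Z)$ the homeomorphism class of $2^\Nat$ is a $G_\delta$ set: it is the intersection of the $G_\delta$ set of infinite perfect compacta with the $G_\delta$ set of zero-dimensional compacta (Remark~\ref{rem:cantorHomeo}). Suppose $\sigma\mapsto K_\sigma$ were Vietoris-continuous, with $K_\sigma\sim 2^\Nat$ when every row has finitely many $1$'s and $K_\sigma$ containing a circle otherwise. Then $\{\sigma\setsep \text{every row contains } 1 \text{ only finitely often}\}$ would be a continuous preimage of a $G_\delta$ set, hence $G_\delta$. But this set is $\boldsymbol{\Pi}^0_3$-complete, by Proposition~\ref{prop:difHard} applied to the $\boldsymbol{\Pi}^0_2$-complete set of sequences with infinitely many $1$'s. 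So it is not even $\boldsymbol{\Sigma}^0_3$.

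The obstruction is already present for a single row: $\{\varrho\in 2^\Nat\setsep \varrho \text{ has finitely many } 1\text{'s}\}$ is $\boldsymbol{\Sigma}^0_2$-complete, hence not $G_\delta$. This is exactly the phenomenon the paper emphasizes. The isometry class of $\C(2^\Nat)$ is strictly more complex than the homeomorphism class of $2^\Nat$, so hardness cannot be transported from $\K(Z)$ through $\rho$ here.

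In your concrete candidate, the break shows up in the continuity claim. If the arcs at dyadic level $j$ are filled at the $j$-th \emph{occurrence} of a $1$, then a single $1$ at an arbitrarily late position already produces the level-one change. So ``bit $j$ changes the set only within Hausdorff distance $\approx 2^{-j}$'' conflates the $j$-th bit with the $j$-th occurrence of a $1$. Indexing the filling by position instead might give continuity, but then (i) or (ii) must fail, by the argument above. Consequently, the issue you single out as the main obstacle (landing in $\B$ rather than $\PP$) is not the real difficulty.

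\textbf{The missing idea} is to build the Banach spaces directly. Then only the norms of finite combinations of fixed generating vectors need to depend continuously on $\sigma$, and no underlying compact set has to move continuously. The paper does this on the dyadic rationals $D\subseteq[0,1)$.
For $\varrho\in 2^\Nat$ and $s\in D_n$, the vector $u^\varrho_s\in\ell_\infty(D)$ is the indicator of $s$ on the levels $D_0,\dots,D_n$. It is then extended level by level: at level $m$ it copies the left neighbour if $\varrho(m)=0$ and averages the two neighbours if $\varrho(m)=1$.
\begin{itemize}
\item The closed span $Y^\varrho$ depends only on the tail of $\varrho$.
\item For $\varrho\equiv 0$ the $u^\varrho_s$ are indicators of dyadic intervals, which gives $\C(2^\Nat)$.
\item With infinitely many averaging steps every element becomes uniformly continuous, which gives $\C$ of the circle.
\item Each finite combination is monotone on the top-level dyadic intervals. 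Hence its norm is a maximum over finitely many points and depends on only finitely many bits of $\varrho$, which yields continuity of $\eta$.
\item The triangular structure of the $u^\varrho_s$ gives linear independence, so the values lie in $\B$ for free.
\end{itemize}
The rows are then placed in $\ell_\infty(\Nat\times D)$ together with $\mathbf 1_{\Nat\times D}$, which produces the one-point compactification of the row compacta.
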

\begin{proof}\setcounter{claim}{0}
In what follows, we will consider the interval $ [0, 1) $ as a compact space homeomorphic to a circle (for instance via the mapping $ t \mapsto e^{2\pi it} $). Let $ D $ denote the set of all dyadic numbers in $ [0, 1) $. We consider the decomposition
$$ D = \bigcup_{n=0}^{\infty} D_{n}, $$
where $ D_{0} = \{ 0 \}, D_{1} = \{ \frac{1}{2} \}, D_{2} = \{ \frac{1}{4}, \frac{3}{4} \}$, and, in general, $ D_{n} = \{ \frac{1}{2^{n}}, \frac{3}{2^{n}}, \dots, \frac{2^{n}-1}{2^{n}} \} $ for $n\in\Nat$.

Given $ \varrho \in 2^{\mathbb{N}} $, we define points $ u_{s}^{\varrho} \in \ell_{\infty}(D), s \in D, $ as follows. For $ s \in D_{n} $, we define for $t\in \bigcup_{m=0}^{n} D_{m}$
\[
u_s^{\varrho}(t)=\begin{cases}
1, & \text{ if }t=s,\\
0, & \text{ if }t\neq s,\\
\end{cases}
\]
and recursively for $ m \geq n+1 $ and $t\in D_m$ we put
\[
u_s^{\varrho}(t)=\begin{cases}
u_{s}^{\varrho}(t-\tfrac{1}{2^{m}}), & \text{ if }\rho(m)=0,\\
\frac{1}{2} \big[ u_{s}^{\varrho}(t-\tfrac{1}{2^{m}}) + u_{s}^{\varrho}(t+\tfrac{1}{2^{m}}) \big], & \text{ if }\rho(m)=1,\\
\end{cases}
\]
where we use the convention $ u_{s}^{\varrho}(1) = u_{s}^{\varrho}(0) $.

Let $ Y^{\varrho} \subseteq \ell_{\infty}(D) $ be the closed linear span of the vectors $ u_{s}^{\varrho}, s \in D $. The basic idea behind this construction is that, as we shall show below, $Y^\varrho$ is isometric either to $\C([0,1))$ or to $\C(2^\Nat)$, depending on whether $\varrho$ contains the value $1$ infinitely often or not. We arrive at this conclusion through a sequence of claims, the first of which addresses the case in which $Y^\varrho$ is isometric to $\C(2^\Nat)$.

\begin{claim}\label{claim:CodeCantor}
\begin{enumerate}[label=(\roman*)]    \item\label{it:sameSeqSameSpaces} If $ \varrho, \varrho' \in 2^{\mathbb{N}} $ have the same coordinates with finitely many exceptions, then $ Y^{\varrho} = Y^{\varrho'} $.
    \item\label{it:whenWeObtainCantor} If $ \varrho $ contains $ 1 $ finitely many times, then $ Y^{\varrho} $ is isometric to $ \C(2^{\mathbb{N}}) $. Moreover, the point $ u_{0}^{\varrho} = \mathbf{1}_{D} $ corresponds to the constant function $ 1 $.
\end{enumerate}
\end{claim}
\begin{proof}[Proof of Claim~\ref{claim:CodeCantor}] \ref{it:sameSeqSameSpaces} It is sufficient to assume that $ \varrho \in 2^{\mathbb{N}} $ and $ \varrho' \in 2^{\mathbb{N}} $ have the same coordinates with only one exception, say $ \varrho(m) = 0 $ and $ \varrho'(m) = 1 $ or vice versa. Pick $n\in\Nat$ and $s\in D_n$. Then obviously we have
\begin{equation}\label{eq:smallU}
n\geq m\;\Rightarrow\; u_{s}^{\varrho'} = u_{s}^{\varrho}.  
\end{equation}
Moreover, we shall observe that
\begin{equation}\label{eq:bigU}
    n\in \{0,\ldots,m-1\}\;\Rightarrow\;u_{s}^{\varrho'} = u_{s}^{\varrho} + \sum_{t \in D_{m}} [u_{s}^{\varrho'}(t) - u_{s}^{\varrho}(t)] u_{t}^{\varrho}.
\end{equation}
Indeed, pick $n\in \{0,\ldots,m-1\}$. We first observe that the equality
\[
u_{s}^{\varrho'}(a) = u_{s}^{\varrho}(a) + \sum_{t \in D_{m}} [u_{s}^{\varrho'}(t) - u_{s}^{\varrho}(t)] u_{t}^{\varrho}(a)
\]
holds for every $a\in D_k$ with $k<m$, then we easily check that it also holds for every $a\in D_m$ and finally, we prove by induction on $k$ that it holds for every $a\in D_k$ with $k\geq m$. This shows that \eqref{eq:bigU} holds. Hence, using \eqref{eq:smallU} and \eqref{eq:bigU} we obtain that the vectors $ u_{s}^{\varrho'} $ have the same linear span as the vectors $ u_{s}^{\varrho} $, which finishes the proof of \ref{it:sameSeqSameSpaces}.

\noindent\ref{it:whenWeObtainCantor} Using the already proven part \ref{it:sameSeqSameSpaces}, we can assume that $\varrho(m)=0$ for every $m\in\Nat$. Then, given $n\in\Nat\cup\{0\}$ and $s\in D_n$, we observe $u_s^\varrho$ is equal to the characteristic function $\chi_{[s,s+2^{-n})\cap D}$. Further, for any $s\in D_n$ with $n\in\Nat$ there exists a unique sequence $x_s\in \{0,1\}^n$ with $s = \sum_{i=1}^n \tfrac{x_s(i)}{2^i}$, for such an $x_s$ we denote $C(x_s):=\{x\in 2^\Nat\setsep x|_n=x_s\}\subseteq 2^\Nat$. For $s=0$ we denote $C(x_s) = 2^\Nat$. Then it is easy to check that the mapping $Y^\varrho\ni u_s^\varrho\mapsto \chi_{C(x_s)}$ extends to a linear isometry between $Y^\varrho$ and $\C(2^\Nat)$.
\end{proof}

Our second claim addresses the case in which $Y^\varrho$ is isometric to $\C([0,1))$.

\begin{claim}\label{claim:CodeCircle}
Let $\varrho\in2^\Nat$.
\begin{enumerate}[label=(\roman*)]
    \item\label{it:biorthogonalSeqInSpace} Let $ n \in \mathbb{N} $, $k\in\{0,\ldots,n\}$ and $s\in D_k$. Then there exists $ \widetilde{u}_{s}^{\varrho} \in Y^\varrho$ satisfying the following conditions. For $t\in D_j$ with $j\leq n$ we have $\widetilde{u}_{s}^{\varrho}(t)=1$ if $t=s$ and $\widetilde{u}_{s}^{\varrho}(t)=0$ otherwise. For $t\in D_j$ with $j\geq n+1$ we have 
    \[
        \widetilde{u}_s^{\varrho}(t)=\begin{cases}
\widetilde{u}_{s}^{\varrho}(t-\tfrac{1}{2^{j}}), & \text{ if }\rho(j)=0,\\
\frac{1}{2} \big[ \widetilde{u}_{s}^{\varrho}(t-\tfrac{1}{2^{j}}) + \widetilde{u}_{s}^{\varrho}(t+\tfrac{1}{2^{j}}) \big], & \text{ if }\rho(j)=1.\\
\end{cases}
\]
    \item\label{it:restrictionInurSpace}  The restriction of any $ f \in \C([0, 1)) $ to $ D $ belongs to $ Y^{\varrho} $.
    \item\label{it:ManyOnesImpliesCircle} If $ \varrho $ contains $ 1 $ infinitely many times, then $ Y^{\varrho} $ is isometric to $ \C([0, 1)) $. Moreover, the point $ u_{0}^{\varrho} = \mathbf{1}_{D} $ corresponds to the constant function $ 1 $.
\end{enumerate}
\end{claim}
\begin{proof}[Proof of Claim~\ref{claim:CodeCircle}] 
\ref{it:biorthogonalSeqInSpace} Pick $n\in\Nat$, $k\leq n$ and $s\in D_k$. If $k=n$, it suffices to put $\widetilde{u}_{s}^{\varrho} = u_{s}^{\varrho}$. Assuming $k\leq n-1$ and $\widetilde{u}_{t}^{\varrho}\in Y^\varrho$ was defined for every $t\in D_{l}$ with $k+1\leq l\leq n$, we put
\[
\widetilde{u}_{s}^{\varrho} := u_{s}^{\varrho} - \sum_{l=k+1}^{n} \sum_{t \in D_{l}} u_{s}^{\varrho}(t) \widetilde{u}_{t}^{\varrho}.
\]
Now, it is straightforward to prove that $\widetilde{u}_{s}^{\varrho}$ is as required.

\smallskip

\noindent\ref{it:restrictionInurSpace} For $ n \in \mathbb{N} $, we can consider the linear combination
\[ w_{n} = \sum_{k=0}^{n} \sum_{s \in D_{k}} f(s) \widetilde{u}_{s}^{\varrho},
\]
where $ \widetilde{u}_{s}^{\varrho} $ are as in the already proven part \ref{it:biorthogonalSeqInSpace}. Then $ w_{n} \in Y^{\varrho} $ and it is equal to $ f $ in the dyadic numbers $ \frac{i}{2^{n}}, 0 \leq i < 2^{n} $. Moreover, for $ t \in [\frac{i}{2^{n}},\frac{i+1}{2^{n}}]\cap D $, the value $ w_{n}(t) $ is between the values $ w_{n}(\frac{i}{2^{n}}) $ and $ w_{n}(\frac{i+1}{2^{n}}) $. It follows that $ w_{n} \to f|_{D} $ in $ \ell_{\infty}(D) $.

\smallskip

\noindent\ref{it:ManyOnesImpliesCircle} Due to the already proven part \ref{it:restrictionInurSpace} it remains to show that each $ u \in Y^{\varrho} $ can be extended to a continuous function on $ [0, 1) $. It is sufficient to check this for the points $ u_{s}^{\varrho} $. For $ s \in D_{n} $, the function $ u_{s}^{\varrho} $ is initially defined in the dyadic numbers $ \frac{i}{2^{n}}, 0 \leq i < 2^{n} $, with one value $ 1 $, and the others $ 0 $. In the steps $ m \geq n+1 $ where $ \varrho(m) = 0 $, the value in a new dyadic number $ t \in D_{m} $ is copied from the nearest left one, i.e., $ t - \frac{1}{2^{m}} $. This does not increase the maximal difference of values in neighboring dyadic numbers. In the steps $ m \geq n+1 $ where $ \varrho(m) = 1 $, the value in a new dyadic number $ t \in D_{m} $ is the average of the values in the nearest left one and the nearest right one, i.e., $ t \pm \frac{1}{2^{m}} $. This halves the maximal difference of values in neighboring dyadic numbers. Since there are infinitely many such steps, the function $ u_{s}^{\varrho} $ is uniformly continuous, and thus it can be extended to a continuous function on $ [0, 1) $.
\end{proof}

Our third claim will help us later to show that the mapping $\eta$ constructed below is continuous.

\begin{claim} \label{claim:Cantorvscircle4}
Let $ F \subseteq D $ be finite and $ \alpha \in \Rea^F $. Then the norm $ \Vert \sum_{s \in F} \alpha_{s} u_{s}^{\varrho} \Vert $ depends only on finitely many coordinates of $ \varrho $.
\end{claim}

\begin{proof}[Proof of Claim~\ref{claim:Cantorvscircle4}]
Let $ n \in \mathbb{N} $ be such that $ F \subseteq \bigcup_{m=0}^{n} D_{m} $. Then
$$ w^{\varrho} = \sum_{s \in F} \alpha_{s} u_{s}^{\varrho} $$
is monotone on $ [\frac{i}{2^{n}}, \frac{i+1}{2^{n}}] \cap D $ for $ 0 \leq i < 2^{n} $. Hence
$$ \Vert w^{\varrho} \Vert = \max \{ |w^{\varrho}(\tfrac{i}{2^{n}})| \setsep 0 \leq i < 2^{n} \}. $$
It remains to note that the value $ w^{\varrho}(\tfrac{i}{2^{n}}) $ depends only on the coordinates $ \varrho(m) $ for $ 1 \leq m \leq n $.
\end{proof}

Let $ \sigma \in 2^{\mathbb{N} \times \mathbb{N}} $. For $i\in\Nat$ and $s\in D$ we define $ v_{i, s}^{\sigma} \in \ell_{\infty}(\mathbb{N} \times D) $ by
\[ v_{i, s}^{\sigma}(j, t) :=\begin{cases} u_{s}^{\sigma(i, \cdot)}(t), &  \text{ if }j=i \text{ and }t\in D,\\
0, & \text{ if } j \neq i\text{ and }t\in D.\end{cases}
\]
Let $ Z^{\sigma} \subseteq \ell_{\infty}(\mathbb{N} \times D) $ be the closed linear span of the points $\{ v_{i, s}^{\sigma} \setsep i\in\Nat,\; s\in D\}$ and the point $ \mathbf{1}_{\mathbb{N} \times D} $. Using Claim~\ref{claim:CodeCantor} and Claim~\ref{claim:CodeCircle}, we deduce that $ Z^{\sigma} $ is isometric to $ \C(K) $ for the space $ K = (\bigcup_{i=1}^{\infty} K_{i}) \cup \{ \infty \} $ (i.e., the one-point compactification of the topological sum $ \bigcup_{i=1}^{\infty} K_{i} $), where $ K_{i} $ is a copy of $ 2^{\mathbb{N}} $ if $ \sigma(i, \cdot) $ contains $ 1 $ finitely many times, and it is a copy of $ [0, 1) $ otherwise.

It remains to show that there is a continuous mapping $ \eta : 2^{\mathbb{N} \times \mathbb{N}} \to \mathcal{B} $ such that $ X_{\eta(\sigma)} $ is isometric to $ Z^{\sigma} $ for each $ \sigma \in 2^{\mathbb{N} \times \mathbb{N}} $. We can identify $\B$ with the set of all norms on the space $c_{00}((\Nat\times D)\cup \{*\})$ and, for $\gamma\in (\Nat\times D)\cup\{*\}$ we denote by $e_{\gamma}\in\ell_\infty((\Nat\times D)\cup \{*\})$ the characteristic function of $\{\gamma\}$. Then, given $\sigma\in 2^{\Nat\times\Nat}$ we define $\eta(\sigma)\in \B$ as follows. For finite sets $L\subseteq \Nat$ and $F_i\subseteq D$, $i\in L$, we put $G:=\{(i,s)\in\Nat\times D\setsep i\in L,\; s\in F_i\}$ and for $\alpha\in \Rea^{\{*\}\cup G}$ we put
\[
\eta(\sigma)\Big(\alpha_{*} e_{*} + \sum_{\gamma\in G}\alpha_\gamma e_\gamma\Big):=\Big\|\alpha_{*} \mathbf{1}_{\mathbb{N} \times D} + \sum_{i \in L} \sum_{s \in F_{i}} \alpha_{i, s} v_{i, s}^{\sigma}\Big\|_{Z^\sigma},
\]
which immediately implies that $\eta(\sigma)\in \PP_\infty$ and $X_{\eta(\sigma)}$ is linearly isometric to $Z^\sigma$. It remains to show that $\eta(\sigma)$ is actually a norm and that $\eta$ is continuous. In order to show this, notice that given $L$, $F_i$ for $i\in L$ and $\alpha$ as above, for $z^\sigma:=\alpha_{*} \mathbf{1}_{\mathbb{N} \times D} + \sum_{i \in L} \sum_{s \in F_{i}} \alpha_{i, s} v_{i, s}^{\sigma}$ we have
\[
\Vert z^{\sigma} \Vert = \max \bigg( \{ |\alpha_*| \} \cup \Big\{ \Big\Vert \alpha_* u_{0}^{\sigma(i, \cdot)} + \sum_{s \in F_{i}} \alpha_{i, s} u_{s}^{\sigma(i, \cdot)} \Big\Vert_{\ell_\infty(D)} \setsep i \in L \Big\} \bigg).
\]
Hence, if $z^\sigma=0$ then $\alpha_*=0$ and, since the set $\{ u_{s}^{\sigma(i, \cdot)}\setsep s\in F_i\}$ is linearly independent in $\ell_\infty(D)$ for every $i\in L$, we obtain that $\alpha\equiv 0$. Thus, $\eta(\sigma)$ is indeed a norm. Moreover, using Claim~\ref{claim:Cantorvscircle4} we observe the norm $ \Vert z^{\sigma} \Vert $ depends only on finitely many coordinates of $ \sigma $, hence $\eta$ is a continuous function on $ 2^{\mathbb{N} \times \mathbb{N}} $.
\end{proof}

As indicated above, now we have all the necessary ingredients in order to obtain main outcome of Section~\ref{sec:cantorLower}.

\begin{proof}[Proof of Theorem~\ref{thm:main1Part2}\ref{it:cantorComplete}] As mentioned at the beginning of Section~\ref{sec:cantorLower}, by Theorem~\ref{thm:CkcountableAndCantor}, it suffices to prove that $\isomtrclass[\B]{\C(2^\Nat)}$ is $\boldsymbol{\Pi}^0_3$-hard.

Let us recall that the set $U:=\{x\in 2^\Nat\setsep \forall m_0\in\Nat\; \exists m\geq m_0:\; x_m=1\}$ is $\boldsymbol{\Pi}_2^0$-complete, see \cite[p. 179]{Kechrisbook}. Hence, using Proposition~\ref{prop:difHard} we deduce that
\[
P_3:=\{A\in (2^{\Nat})^\Nat\setsep \forall n\in\Nat\; A_n\notin U\}
\]
is $\boldsymbol{\Pi}_3^0$-complete.

Consider now the continuous mapping $\eta:2^{\Nat\times\Nat}\to \B$ from Proposition~\ref{prop:Cantorvscircle}, which in particular satisfies that \[
\eta^{-1}(\isomtrclass[\B]{\C(2^\Nat)}) = \{\sigma\in 2^{\Nat\times\Nat}\setsep \forall i\in\Nat\; \sigma(i,\cdot)\notin U\},
\]
which is a set that can be easily identified with $P_3$. Hence, we have $P_3\leq_W \isomtrclass[\B]{\C(2^\Nat)}$ and therefore $\isomtrclass[\B]{\C(2^\Nat)}$ is $\boldsymbol{\Pi}^0_3$-hard.
\end{proof}

\begin{proof}[Proof of Theorem~\ref{thm:main3Part2}] 
    Let us denote by $\C$ the class of Banach spaces isometric to some $\C(K)$ space with $K$ zero-dimensional. By Theorem~\ref{thm:CkcountableAndCantor} and the fact that $\B\subseteq \P_\infty$, it suffices to show that the set $S:=\{\mu\in\B\setsep X_\mu\in \C\}$ is $\boldsymbol{\Pi}^0_3$-hard.
    
    Consider the continuous mapping $\eta:2^{\Nat\times\Nat}\to \B$ from Proposition~\ref{prop:Cantorvscircle}. Then, in the same way as in the proof of Theorem~\ref{thm:main1Part2}\ref{it:cantorComplete} above we deduce $\eta^{-1}(\isomtrclass[\B]{\C(2^\Nat)})$ is $\boldsymbol{\Pi}_3^0$-complete set. But the construction in Proposition~\ref{prop:Cantorvscircle} ensures that for every $ \sigma \in 2^{\Nat\times\Nat} $, either $X_{\eta(\sigma)}$ is isometric to $\C(2^\Nat)$, or $X_{\eta(\sigma)}$ is isometric to $\C(K)$ for some compact space $K$ containing a copy of the circle. In the latter case, $ \eta(\sigma) \notin S $ since $K$ is not zero-dimensional. Indeed, if $X_{\eta(\sigma)}$ were isometric to $\C(L)$ for some zero-dimensional compact space $L$, then the Banach--Stone theorem would imply that $K$ and $L$ are homeomorphic, which is impossible. Hence, we have $\eta^{-1}(S) = \eta^{-1}(\isomtrclass[\B]{\C(2^\Nat)})$ and so $\eta$ is a continuous reduction from a $\boldsymbol{\Pi}_3^0$-complete set to $S$, which implies that $S$ is $\boldsymbol{\Pi}_3^0$-hard.
\end{proof}

\begin{remark}
    Let us note that from the proof of Proposition~\ref{prop:Cantorvscircle} we also obtain that $\isomtrclass[\B]{\C([0,1))}$ is $G_\delta$-hard, where we think of $[0,1)$ as a compact set homeomorphic to a circle. However, this estimate is suboptimal because there does not exist an infinite-dimensional separable $\C(K)$ space with $G_\delta$ isometry class, see e.g. \cite[Theorem 3.3]{CDDK2}.
\end{remark}

\begin{remark}\label{rem:cantorHomeo}
We note that analogous result for the homeomorphism class of the Cantor set does not hold as it is $G_\delta$. Indeed, let $X$ be an uncountable metrizable compact space. By \cite[Exercise 4.30 and 4.31]{Kechrisbook}, the set 
\[A:=\{K\in\K(X)\setsep K\text{ is infinite and perfect}\}\]
is $G_\delta$ and by \cite[Proposition 5.1]{DR18}, the set 
\[B:=\{K\in\K(X)\setsep K\text{ is zero-dimensional}\}\]
is $G_\delta$ as well. By Brouwer's theorem (see e.g. \cite[Theorem 7.4]{Kechrisbook}) we therefore obtain that $\homeoclass{2^\Nat}^X = A\cap B$ is $G_\delta$.

Let us note that $\homeoclass{2^\Nat}^X$ is even $G_\delta$-complete. Let us present the argument. Since there exists an embedding $\iota:2^\Nat\to X$, the mapping $\K(2^\Nat)\ni L\mapsto \iota(L)\in \K(X)$ witnesses that $\homeoclass{2^\Nat}^{2^\Nat}\leq _W \homeoclass{2^\Nat}^X$; hence, we may without loss of generality assume $X= 2^\Nat$. Since $X$ is perfect, $A$ is known to be dense in $\K(X)$ (see e.g. \cite[Exercise 8.8]{Kechrisbook}) and since $X$ is zero-dimensional we have $B=\K(X)$, so we obtain that $\homeoclass{2^\Nat}^X = A\cap B = A$ is dense and $G_\delta$. Also, it is well known (and easy to check) that finite sets are dense in $\mathcal{K}(X)$, thus the complement of  $\homeoclass{2^\Nat}^X$ is dense in $\K(X)$ as well.  Hence, $\homeoclass{2^\Nat}^X$ cannot be $F_\sigma$ as otherwise $\homeoclass{2^\Nat}^X$ and its complement would be comeager sets with empty intersection. Thus, $\homeoclass{2^\Nat}^X$ is $G_\delta$ but not $F_\sigma$ and so it is $G_\delta$-complete, see e.g. \cite[Lemma 1.1]{CDDK2}.
\end{remark}

\section{Lower bounds for the homeomorphism classes}\label{sec:lowBoundHom}

Building on the construction by Cenzer and Mauldin from \cite{CM82} and \cite{CM83} we shall obtain lower estimate on the Borel class of $\homeoclass{K}$ for any infinite countable compact space $K$ and present the proof of Theorem~\ref{thm:main2Part2}.

We begin by outlining several consequences of the construction of Cenzer and Mauldin from \cite{CM82,CM83}, which we shall use later. Concerning the initial step, using the proof of \cite[Proposition 4.1]{CM82}, we obtain the following.

\begin{prop}\label{prop:initialOrdinal}
    For any $B\in\boldsymbol{\Pi}^0_2(\Nat^\Nat)$ there exists a continuous reduction $H:\Nat^\Nat\to \K(2^\Nat)$ of $B$ to $\homeoclass{[0,\omega]}$ such that $H(x)$ is either finite or homeomorphic to $[0,\omega]$ for every $x\in \Nat^\Nat$.
\end{prop}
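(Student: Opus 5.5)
Since $B\in\boldsymbol{\Pi}^0_2(\Nat^\Nat)$, I will write $B=\bigcap_{m}O_m$ with each $O_m$ open and $O_{m+1}\subseteq O_m$. For $x\in\Nat^\Nat$ put
\[
N(x):=\sup\{m\setsep x|_j \text{ witnesses } x\in O_m \text{ for some } j\},
\]
meaning the basic clopen set $[x|_j]$ is contained in $O_m$. Then $x\in B$ iff $N(x)=\infty$. The approach, following the proof of \cite[Proposition 4.1]{CM82}, is to build $H(x)$ in stages along the initial segments of $x$, adding one new isolated point whenever the witnessed level increases. More precisely, I fix in $2^\Nat$ a point $p=0^\infty$ and the sequence of points $p_m=0^m10^\infty$, which converges to $p$. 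For a finite sequence $s\in\Nat^{<\omega}$ I let $N(s)$ be the largest $m\le|s|$ with $[s]\subseteq O_m$ (and $0$ if there is none). Then I set
\[
H(x):=\{p\}\cup\{p_m\setsep 1\le m\le N(x)\}.
\]
This set is finite if $N(x)<\infty$, and equals $\{p\}\cup\{p_m\setsep m\in\Nat\}\sim[0,\omega]$ if $N(x)=\infty$. So $H$ reduces $B$ to $\homeoclass{[0,\omega]}$, and every value $H(x)$ is either finite or homeomorphic to $[0,\omega]$, as required.

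The only step needing care is continuity of $H$ into $\K(2^\Nat)$ with the Vietoris (Hausdorff-metric) topology. Fix $x$ and $\varepsilon=2^{-k}$. All the points $p_m$ with $m\ge k$ lie within $2^{-k}$ of $p$. Hence, up to Hausdorff distance $2^{-k}$, the set $H(x)$ is determined by the finite set $\{p\}\cup\{p_m\setsep m<k,\ m\le N(x)\}$, that is, by the value $\min(N(x),k)$. It remains to check that this value is determined by a finite initial segment of $x$. If $N(x)\ge k$, this is witnessed at some finite stage $x|_j$, and every $y$ with $y|_j=x|_j$ also has $N(y)\ge k$. If $N(x)<k$, I want $N(y)=N(x)$ for $y$ near $x$. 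Here I expect the main obstacle: $N$ is only lower semicontinuous, because nearby $y$ may enter $O_{N(x)+1}$ even though $x$ does not.

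The fix I would try first is to make the number of points added depend on the number of ``changes'' along $x$ rather than on membership directly. That is, count how many times the sequence $N(x|_j)$ increases and place the new points ever closer to $p$. With this, a $y$ close to $x$ that gains extra levels only adds points near $p$, within $2^{-k}$. Concretely, I would replace $p_m$ by points $p_{m}$ whose distance to $p$ is at most $2^{-j}$, where $j$ is the length of the initial segment at which level $m$ is first witnessed. Then any extra points that $y$ acquires beyond the common prefix $x|_j$ are within $2^{-j}$ of $p$. This gives continuity. Distinctness of the chosen points can be ensured by indexing them by $(m,j)$, and the dichotomy finite versus $[0,\omega]$ is unaffected, since the points still converge only to $p$.
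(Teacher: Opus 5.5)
Your final construction, the one after the fix, is correct. It gives a self-contained proof, whereas the paper just points to the construction in the proof of \cite[Proposition 4.1]{CM82} and observes that each $H(x)$ produced there has at most one accumulation point.

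You rightly noticed that the first version $H(x)=\{p\}\cup\{p_m\setsep m\le N(x)\}$ is discontinuous, because $\{x\setsep N(x)\ge m\}=O_m$ is only open. The fix works: place the point for level $m$ within $2^{-j_m(x)}$ of $p$, where $j_m(x)$ is the least $j$ with $N(x|_j)\ge m$. Then $y|_J=x|_J$ implies $j_m(y)=j_m(x)$ whenever either side is at most $J$. So $H(x)$ and $H(y)$ agree outside the open $2^{-J}$-ball around $p$, and both contain $p$.

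In a write-up, drop the first construction entirely. Also state explicitly that $N(s)\le|s|$ forces $j_m(x)\ge m$. This is what makes the points of $H(x)$ accumulate only at $p$, so $H(x)$ is closed, and is a convergent sequence with its limit when $x\in B$. A concrete choice is $q_{m,j}=0^j10^m10^\infty$ for $m\le j$.

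Compared with the citation, your route builds the dichotomy ``finite or $\sim[0,\omega]$'' in by design rather than reading it off someone else's construction. Your ``count the increases'' variant is even shorter. The map $x\mapsto\sigma_x$, with $\sigma_x(j)=1$ iff $N(x|_{j+1})>N(x|_j)$, is a continuous reduction of $B$ to the set $U=\{\sigma\in 2^\Nat\setsep \sigma(j)=1\text{ for infinitely many } j\}$. Composing it with the continuous map $\sigma\mapsto\{p\}\cup\{p_j\setsep \sigma(j)=1\}$ finishes the proof.
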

\begin{proof}This follows immediately from the proof of \cite[Proposition 4.1]{CM82}. Indeed, by the statement of this result we obtain a continuous mapping $H:\Nat^\Nat\to \K(2^\Nat)$ such that $H(x)$ is finite if and only if $x\notin B$ and from the construction provided in the proof we easily observe that each $H(x)$ has at most one accumulation point.
\end{proof}

Concerning limit ordinals, slightly modifying the construction from the proof of \cite[Theorem 5]{CM83}, we obtain the following.
\begin{thm}\label{thm:countableOrdinal}For any countable limit ordinal $\lambda$ and any $B\in\boldsymbol{\Pi}^0_\lambda(\Nat^\Nat)$ there exists a continuous reduction $H:\Nat^\Nat\to \K(2^\Nat)$ of $B$ to $\homeoclass{[0,\omega^\lambda]}$ such that $H(x)$ is infinite compact set, homeomorphic to a subset of $[0,\omega^{\lambda}]$ for every $x\in \Nat^\Nat$.
\end{thm}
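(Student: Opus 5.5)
We will argue by transfinite recursion on the limit ordinal $\lambda$, following the construction in the proof of \cite[Theorem 5]{CM83}. Along the way we maintain a stronger invariant for every countable ordinal $\gamma\ge 1$ of the form $\gamma=\delta+1$ or $\gamma$ limit. The invariant asks for continuous maps $H:\Nat^\Nat\to\K(2^\Nat)$ that reduce suitable $\boldsymbol{\Pi}^0$-sets to $\homeoclass{[0,\omega^\gamma]}$. They must also satisfy that every value $H(x)$ is a nonempty compact set homeomorphic to a subset of $[0,\omega^\gamma]$, that is, a countable compactum of height at most $\gamma+1$. Moreover, whenever $H(x)\not\sim[0,\omega^\gamma]$, the set $D^\gamma(H(x))$ is empty.

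The base step is Proposition~\ref{prop:initialOrdinal}. Successor steps of the form $\omega^{\gamma+1}$ come from the usual ``infinitely many copies converging to a point'' construction, whose Borel bookkeeping is the one in Lemma~\ref{lem:completeSetInductionStep}. In the present statement, however, we only need the limit case, so we may take as given the reductions for all smaller ordinals produced by \cite{CM83}.

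Fix a limit $\lambda$ and choose an increasing sequence of successor ordinals $\lambda_m\nearrow\lambda$. Write $B=\bigcap_m B_m$ with $B_m\in\boldsymbol{\Pi}^0_{\lambda_m}(\Nat^\Nat)$ and $B_{m+1}\subseteq B_m$; this is possible since $\lambda$ is a limit. By the induction hypothesis we have continuous maps $H_m$ reducing $B_m$ to $\homeoclass{[0,\omega^{\lambda_m}]}$. Each $H_m(x)$ is nonempty and embeds into $[0,\omega^{\lambda_m}]$.

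Fix pairwise disjoint clopen sets $C_m\subseteq 2^\Nat$ converging to a point $p$, together with homeomorphisms $\phi_m:2^\Nat\to C_m$. Then put
\[
H(x):=\{p\}\cup\bigcup_{m\in\Nat}\phi_m\big(H'_m(x)\big).
\]
Here $H'_m(x)$ is a modification of $H_m(x)$ chosen as follows: if $x\in B_1\cap\dots\cap B_m$ it is a copy of $[0,\omega^{\lambda_m}]$, and otherwise a single point or a "small" compactum. To make this work continuously, we replace $H_m$ by a reduction of the $\boldsymbol{\Pi}^0_{\lambda_m}$ set $B_1\cap\dots\cap B_m=B_m$ (this set is already decreasing). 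Continuity of $H$ follows since $\phi_m(\cdot)\to p$ uniformly in the Vietoris sense. Hence the map is continuous into $\K(2^\Nat)$, and every $H(x)$ is infinite because it has infinitely many nonempty pieces.

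It remains to verify the two directions.

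If $x\in B$, every piece is homeomorphic to $[0,\omega^{\lambda_m}]$. The union together with $p$ is then a compactum whose point $p$ has Cantor--Bendixson rank $\sup_m(\lambda_m+1)=\lambda$, and which has a single point of rank $\lambda$. By the Mazurkiewicz--Sierpi\'nski classification it is homeomorphic to $[0,\omega^\lambda]$, and of course it embeds into $[0,\omega^\lambda]$.

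If $x\notin B$, there is an $m_0$ with $x\notin B_m$ for all $m\ge m_0$. For those $m$ the pieces have height at most $\lambda_m+1<\lambda$ and fail to be copies of $[0,\omega^{\lambda_m}]$. Here the invariant matters: we need the heights of the later pieces to stay bounded below $\lambda$. Otherwise $p$ could still get rank $\lambda$ and $H(x)$ would be homeomorphic to $[0,\omega^\lambda]$.

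This boundedness is the main obstacle. The remedy, as in \cite{CM83}, is to arrange the reductions so that the failure is "witnessed at the first bad index". Concretely, we intersect the target: once $x\notin B_{m_0}$, the pieces for $m\ge m_0$ are taken to be copies of a bounded-height compactum. One way is to use instead $H''_m(x):=$ a reduction of $B_m$ in which non-members are sent to sets of height $\le\lambda_{m_0}$. I expect to achieve this by a layered construction: piece $m$ is a sequence of sub-blocks $j\le m$, each coding membership in $B_j$ with height $\lambda_j$, and sub-blocks beyond the first failure collapse to a point. With this in place, $p$ has rank at most $\lambda_{m_0}+1<\lambda$, so $H(x)$ has height below $\lambda+1$ and is not homeomorphic to $[0,\omega^\lambda]$. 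It is still infinite and, having height at most $\lambda$, homeomorphic to a subset of $[0,\omega^\lambda]$. The delicate point is making this collapse continuous in $x$. Since $x\notin B_j$ is only a $\boldsymbol{\Sigma}^0_{\lambda_j}$ event, the collapse must instead be encoded inside the reductions $H_j$ themselves, by choosing them with the bounded-rank property for non-members. This is where I would follow \cite[Theorem 5]{CM83} most closely.
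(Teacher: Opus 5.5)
\textbf{There is a genuine gap, and it is at the heart of the statement.} Most of your outline is correct. The pieces $\phi_m(H'_m(x))$ shrinking to $p$ give a continuous map with infinite values. For $x\in B$ you correctly get $D^\lambda(H(x))=\{p\}$ and $D^{\lambda+1}(H(x))=\emptyset$, hence $H(x)\sim[0,\omega^\lambda]$. The case $x\notin B$, however, is the actual content of the theorem, and it is not proved. Let $m_0$ be the first index with $x\notin B_{m_0}$. You need every piece with $m\ge m_0$ to have height bounded by an ordinal $<\lambda$ that depends only on $m_0$.

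Your invariant does not give this. It only yields $D^{\lambda_m}(H_m(x))=\emptyset$ for $x\notin B_m$, i.e.\ $\height(H_m(x))\le\lambda_m$, and $\lambda_m\to\lambda$. Concretely, suppose $x\notin B_1$ and $H_m(x)\sim[0,\omega^{\delta_m}]$ with $\delta_m+1=\lambda_m$; nothing in your invariant excludes this. Then $p\in D^\gamma(H(x))$ for every $\gamma<\lambda$, so $D^\lambda(H(x))=\{p\}$ and $H(x)\sim[0,\omega^\lambda]$, although $x\notin B$.

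Neither remedy you offer closes this gap. The layered ``collapse after the first failure'' is, as you note, not continuous. The closing suggestion (``encode the collapse inside the $H_j$'') is not a construction either. What it would require is a stratified family: reductions $H_m$ of $B_m$ to $\homeoclass{[0,\omega^{\lambda_m}]}$ with $\height(H_m(x))\le\lambda_j+1$ whenever $x\notin B_j$ for some $j\le m$. This is a coupling across indices, which no condition on a single reduction of $B_m$ provides. Building it needs a continuous operation on codes that realizes the minimum of Cantor--Bendixson heights. In other words, you need an intersection that commutes with all derivatives $D^\alpha$, and you give no such device.

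\textbf{How the paper handles this step.} The paper does not assemble $H$ from pieces. It takes the map from the proof of \cite[Theorem 5]{CM83}, which already satisfies $D^{\lambda+1}(H(x))=\emptyset$ and $x\notin B\iff D^\lambda(H(x))=\emptyset$. The bounded-height requirement you are missing is thereby inherited from the reduction $G$ of \cite[Theorem 6.2]{CM82}. The paper then notes that $H(x)=G(x)\cap C_\lambda$, where $D^\alpha(G(x)\cap C_\lambda)=D^\alpha(G(x))\cap D^\alpha(C_\lambda)$ by (the proof of) \cite[Lemma 5.2]{CM82}; this is exactly the min-of-heights mechanism. Since $D^\lambda(C_\lambda)=\{0\}$, the set $D^\lambda(H(x))$ has at most one point, and Mazurkiewicz--Sierpi\'nski then finishes both cases. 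Infiniteness is obtained there by a small modification of the construction ensuring $G(x)$ is infinite. Your device for infiniteness (infinitely many nonempty clopen pieces) is simpler, but it only becomes useful once the main step above is supplied.
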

\begin{proof}
This follows essentially from the proof of \cite[Theorem 5]{CM83}\footnote{Note that our notation differs slightly from that of \cite[Theorem 5]{CM83}: the set $B$ considered here is the complement of the set denoted by $B$ in that paper.}. Indeed, by the statement of this result, we obtain a continuous mapping $H:\Nat^\Nat\to \K(2^\Nat)$ such that for every $x\in \Nat^\Nat$ we have $D^{\lambda+1}(H(x))=\emptyset$, and such that $x\notin B$ is equivalent to $D^{\lambda}(H(x))=\emptyset$. Hence, it remains to observe that by the construction of $H$ provided in the proof of \cite[Theorem 5]{CM83}, each $H(x)$ is infinite compact and $D^{\lambda}(H(x))$ contains at most one point.

In the construction, the set $H(x)$ is defined as $H(x) = G(x)\cap C_\lambda$ and, by the proof of \cite[Lemma 5.2]{CM82}, we have $D^{\alpha}(H(x)) = D^\alpha(G(x))\cap D^{\alpha}(C_\lambda)$ for any countable ordinal $\alpha$.

First, let us prove that $D^{\lambda}(H(x))$ contains at most one point. Indeed, by \cite[Definition 5.9]{CM82} we have $C_\lambda = \theta((C_{\lambda_n})_n)$, where $\lambda_n$ is an increasing sequence with $\sup_n\lambda_n = \lambda$ and therefore, using \cite[Proposition 5.10 and Lemma 5.7]{CM82} we have 
\[
D^{\lambda}(C_\lambda) = \theta\Big((D^\lambda(C_{\lambda_n}))_n\Big) = \theta(\emptyset,\emptyset,\ldots) \stackrel{\text{\cite[Definition 5.3]{CM82}}}{=} \{0\},
\]
so we obtain that $D^{\lambda}(H(x))\subseteq D^\lambda(C_\lambda)$ contains at most one point.

Now, since by \cite[Lemma 5.2]{CM82} we have $\height(H(x)) = \min\{\height(G(x)),\height(C_\lambda)\}$, it remains to prove that each $G(x)$ is infinite, that is, $D(G(x))\neq \emptyset$.
The mapping $G$ is obtained from the proof of \cite[Theorem 6.2]{CM82} (the case when $\beta = \lambda +1$ and $k=0$\footnote{The terminology in \cite{CM82} agrees with ours, but the indexing differs by one: the additive and multiplicative classes of rank $\lambda$ are denoted there by $\mathbf{\Sigma}^0_{\lambda+1}$ and $\mathbf{\Pi}^0_{\lambda+1}$.}). We shall slightly modify the construction from the proof of \cite[Theorem 6.2]{CM82} in such a way that each $G(x)$ is infinite (in the proof of \cite[Theorem 6.2]{CM82}, $G(x)$ is called $H(x)$, so below we shall say $H(x)$ instead of $G(x)$). If $\lambda = \omega$, it suffices to assume without loss of generality on \cite[p. 377, line -12]{CM82} that $A=\bigcup_{n=2}^\infty A_n$, where each $A_n$ is $\boldsymbol{\Pi}^0_n$, and then the construction gives that $\height(H_n(x)) \geq n-1\geq 1$ for every $n$ and then we obtain $\height(H(x))\geq 1$. In the case when $\lambda > \omega$, it suffices to assume on \cite[p. 378, line 16]{CM82} that $\alpha_0\geq \omega$ and then in the proof we obtain $\height(H(x))\geq \alpha_0\geq \omega$. This finishes the proof that in any case $G(x)$ is an infinite compact.
\end{proof}

\begin{remark}\label{rem:cenzerMauldinLoweEstimate}Note that Proposition~\ref{prop:initialOrdinal} implies that $\homeoclass{[0,\omega]}$ is $\boldsymbol{\Pi}^0_2$-hard and Theorem~\ref{thm:countableOrdinal} implies that $\homeoclass{[0,\omega^\lambda]}$ is $\boldsymbol{\Pi}^0_{\lambda}$-hard for any countable limit ordinal $\lambda$. Those estimates will be further improved. Note that using the construction from the proof of \cite[Theorem 7]{CM83}, it is possible to obtain a result similar to Proposition~\ref{prop:initialOrdinal} and Theorem~\ref{thm:countableOrdinal} also for the case of a countable successor ordinal. However, we shall not need this variant and therefore we do not formulate the corresponding result here.
\end{remark}

The main new ingredient is the following.

\begin{prop}\label{prop:countableOrdinal2}
Let $\gamma>0$ and $\alpha>0$ be countable ordinals.
Suppose that there are a zero-dimensional Polish space $X$, a $\boldsymbol{\Pi}^0_\gamma$-hard set $A\subseteq X$ and a continuous reduction $H:X\to\K(2^\Nat)$ of $A$ to $\homeoclass{[0,\omega^\alpha]}$ such that $H(x)$ is homeomorphic to a subset of $[0,\omega^\alpha]$ for every $x\in X$.

Then there exist a $\boldsymbol{\Pi}^0_{\gamma+1}$-hard set $A_0\subseteq X^\Nat$, $D_2(\boldsymbol{\Pi}^0_{\gamma+1})$-hard sets $A_k\subseteq X^\Nat$, $k\in\Nat$, a $\boldsymbol{\Pi}^0_{\gamma+2}$-hard set $A_\infty\subseteq X^\Nat$ and a continuous map $H':X^\Nat\to\K(2^\Nat)$ such that
\begin{enumerate}[label=(\alph*)]
	\item\label{it:konecne} $H'$ is a continuous reduction of $A_{k-1}$ to $\homeoclass{[0,\omega^\alpha\cdot k]}$ for every $k\in\Nat$;
	\item\label{it:nekonecne} $H'$ is a continuous reduction of $A_\infty$ to $\homeoclass{[0,\omega^{\alpha+1}]}$;
    \item\label{it:indPredp} $H'(x)\in \bigcup_{k\in\Nat}\homeoclass{[0,\omega^{\alpha}\cdot k]} \cup \homeoclass{[0,\omega^{\alpha+1}]}$ for every $x\in X^\Nat$.
\end{enumerate}
\end{prop}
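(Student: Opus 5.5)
We would glue the compacta $H(x_m)$ together into a single compact set so that the number of sequence members landing in $\homeoclass{[0,\omega^\alpha]}$ becomes the number of points of maximal rank. Fix pairwise disjoint clopen sets $W_m\subseteq 2^\Nat$, $m\in\Nat$, converging to a point $p\notin\bigcup_m W_m$, together with homeomorphisms $\phi_m\colon 2^\Nat\to W_m$. For $x=(x_m)_m\in X^\Nat$ we put
\[
H'(x):=\{p\}\cup\bigcup_{m\in\Nat}\phi_m\big(H(x_m)\big).
\]
Since the $W_m$ shrink to $p$ and each $\phi_m$ induces a continuous map $\K(2^\Nat)\to\K(W_m)$, the map $H'$ is continuous in the Vietoris topology: a basic neighborhood involves only finitely many $W_m$ up to a small ball around $p$.

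Next we compute the topological type of $H'(x)$. Each $H(x_m)$ is homeomorphic to a subset of $[0,\omega^\alpha]$, so $D^{\alpha+1}(H(x_m))=\emptyset$, and $D^\alpha(H(x_m))$ has at most one point. Moreover, that point exists iff $H(x_m)\sim[0,\omega^\alpha]$, i.e.\ iff $x_m\in A$. Indeed, a compact subset of $[0,\omega^\alpha]$ with nonempty $D^\alpha$ is homeomorphic to $[0,\omega^\alpha]$ by Mazurkiewicz--Sierpi\'nski, since its height is $\alpha+1$ and $|D^\alpha|=1$. Let $N(x):=|\{m\setsep x_m\in A\}|$. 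The derivatives commute with the clopen pieces away from $p$, so $D^\alpha(H'(x))\setminus\{p\}$ has exactly $N(x)$ points. If $N(x)=\infty$, these points accumulate at $p$, hence $p\in D^{\alpha+1}(H'(x))$ and $D^{\alpha+1}(H'(x))=\{p\}$, which gives $H'(x)\sim[0,\omega^{\alpha+1}]$.

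If $N(x)=k<\infty$, the situation at $p$ is more delicate. When infinitely many $H(x_m)$ are nonempty, $p$ is still a limit point, and its rank is at most $\alpha$. It equals $\alpha$ only if the heights of the $H(x_m)$ are cofinal in $\alpha$, which is automatic for $\alpha$ a successor or when the pieces are "large". To avoid this case analysis, we would instead arrange $p$ to always have rank exactly $\alpha$. We fix a second sequence of clopen sets $W'_m\to p$ and place in them copies of compacta $[0,\omega^{\delta_m}]$ with $\delta_m\nearrow\alpha$ if $\alpha$ is a limit, or $\delta_m=\alpha-1$ if $\alpha$ is a successor. This forces $p\in D^\alpha(H'(x))$ while adding no other points of rank $\ge\alpha$. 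Then $|D^\alpha(H'(x))|=N(x)+1$ when $N(x)<\infty$, so $H'(x)\sim[0,\omega^\alpha\cdot(N(x)+1)]$; otherwise $H'(x)\sim[0,\omega^{\alpha+1}]$. This proves (c), and it shows that $H'$ reduces $B_{A,k-1}$ to $\homeoclass{[0,\omega^\alpha\cdot k]}$, and $\{x\setsep N(x)=\infty\}$ to $\homeoclass{[0,\omega^{\alpha+1}]}$. This choice of padding is the step I expect to need the most care: one must check via the additivity of $D^\beta$ over clopen pieces that $p$ gets rank exactly $\alpha$ (respectively $\alpha+1$), using that $D^\beta$ of a union of clopen pieces accumulating only at $p$ is the union of the pieces' derivatives plus possibly $p$.

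Finally we set $A_{k}:=B_{A,k}=\{x\setsep N(x)=k\}$ for $k\geq 0$ and $A_\infty:=\{x\setsep N(x)=\infty\}$. Then $A_0=A_A$ in the notation of Proposition~\ref{prop:difHard}, which is $\boldsymbol{\Pi}^0_{\gamma+1}$-hard by part \ref{it:PiComplete}, and $A_k$ for $k\in\Nat$ is $D_2(\boldsymbol{\Pi}^0_{\gamma+1})$-hard by part \ref{it:DifComplete}. For $A_\infty$ we apply Lemma~\ref{lem:completeSetInductionStep} with $\alpha_m=\gamma$, whose supremum condition $\sup(\alpha_m+1)=\gamma+1$ gives $\boldsymbol{\Pi}^0_{\gamma+2}$-hardness. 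The requirement $\gamma+1\ge2$ there holds since $\gamma>0$, and the spaces involved are zero-dimensional Polish, so the hypotheses are met.
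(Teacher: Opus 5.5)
Your proposal is correct and is essentially the paper's proof. Your padding (the point $p$ together with the copies of $[0,\omega^{\delta_m}]$ in the $W'_m$) is a homeomorphic copy of $[0,\omega^\alpha]$ with $p$ as its top point, which is exactly the copy of $[0,\omega^\alpha]$ that the paper glues onto the accumulation point $\infty$; the hardness of $A_0$, $A_k$ and $A_\infty$ then follows from Proposition~\ref{prop:difHard} and Lemma~\ref{lem:completeSetInductionStep} just as in the paper. Your side remark that $p$ would automatically have rank $\alpha$ for successor $\alpha$ is false (take all $H(x_m)$ finite), but the padding makes it irrelevant.
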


\begin{proof}
Let $L_1$ be the one-point compactification of $\bigcup_{n=1}^\infty(2^\Nat\times\{n\})$ and let $\infty$ be the single element of the remainder $L_1\setminus\bigcup_{n=1}^\infty(2^\Nat\times\{n\})$.
Let $L_2$ be the topological sum of $L_1$ and $[0,\omega^\alpha]$.
Finally, let $L$ be the compact space obtained from $L_2$ by gluing the points $\infty$ and $\omega^\alpha$.
(That is, $L$ is the quotient space of $L_2$ given by the equivalence relation $xEy\iff(x=y)\vee\{x,y\}=\{\infty,\omega^\alpha\}$).
In the following, we will identify elements of $L_2$ with the corresponding elements of $L$.
Then, in particular, we will have $\infty=\omega^\alpha$ and the set $[0,\omega^\alpha]$ becomes a compact subset of $L$.
Note that $L$ is second countable compact Hausdorff space, and therefore it is metrizable. Note also that $L$ is zero-dimensional.
Recall that every zero-dimensional separable metrizable space can be embedded into $2^\Nat$ (see e.g.~\cite[Theorem~7.8]{Kechrisbook}).
In particular, there is an embedding $\iota:L\to2^\Nat$ of $L$ into $2^\Nat$.
In the following, we will identify the compact space $L$ with its homeomorphic image $\iota(L)\subseteq 2^\Nat$, so that $\K(L)$ becomes a subspace of $\K(2^\Nat)$.
This allows us to construct the desired map $H'$ in such a way that all its values are compact subsets of $L$ (instead of $2^\Nat$).

We put
\[
A_k=\big\{(x_n)\in X^\Nat\setsep|\{n\in\Nat\setsep x_n\in A\}|=k\big\},\quad k\in\Nat\cup\{0,\infty\}.
\]
Then $A_k$ is $\boldsymbol{\Pi}^0_{\gamma+1}$-hard in $X^\Nat$ if $k=0$ (by Proposition~\ref{prop:difHard}\ref{it:PiComplete}), $D_2(\boldsymbol{\Pi}^0_{\gamma+1})$-hard if $k\in\Nat$ (by Proposition~\ref{prop:difHard}\ref{it:DifComplete}) and $\boldsymbol{\Pi}^0_{\gamma+2}$-hard if $k=\infty$ (by Lemma~\ref{lem:completeSetInductionStep}).
Let $H':X^\Nat\to\K(L)$ be given by
\[
H'((x_n)_{n=1}^\infty)=[0,\omega^\alpha]\cup\bigcup_{n=1}^\infty(H(x_n)\times\{n\}),\quad(x_n)_{n=1}^\infty\in X^\Nat.
\]
The correctness of the definition (that is, the fact that every set from the range of $H'$ is compact) easily follows from the definition.
Further, $H'$ is continuous.
To see that, we must check for a fixed open set $U\subseteq L$ that the preimages of $\{K\in\K(L)\setsep K\cap U\neq\emptyset\}$ and $\{K\in\K(L)\setsep K\subseteq U\}$ are open subsets of $X^\Nat$.
The former easily follows from continuity of $H$.
The latter follows from continuity of $H$ together with the facts that a) if $[0,\omega^\alpha]\not\subseteq U$ then the corresponding preimage is empty and b) if $[0,\omega^\alpha]\subseteq U$ then there is $N\in\Nat$ such that $\bigcup_{n=N}^\infty(2^\Nat\times\{n\})\subseteq U$.

We fix $x=(x_n)_{n=1}^\infty\in X^\Nat$.
Note that the $\alpha$th Cantor-Bendixson derivative of $H'(x)$ is the union of $\{\omega^\alpha\}$ and of disjoint homeomorphic copies of all the $\alpha$th Cantor-Bendixson derivatives of $H(x_n)$, $n\in\Nat$.
By our assumption, for every $n\in\Nat$, the $\alpha$th Cantor-Bendixson derivative of $H(x_n)$ is either empty (if $x_n\notin A$) or a singleton (if $x_n\in A$).
Consequently, the $\alpha$th Cantor-Bendixson derivative of $H'(x)$ consists exactly of $1+|\{n\in\Nat\setsep x_n\in A\}|$ distinct points.
So, for every $k\in\Nat$, we have $x\in A_{k-1}$ if and only if the $\alpha$th Cantor-Bendixson derivative of $H'(x)$ consists exactly of $k$ distinct points if and only if $H'(x)$ is homeomorphic to $[0,\omega^\alpha\cdot k]$.
This proves~\ref{it:konecne}.
If $x\in A_\infty$ then the $\alpha$th Cantor-Bendixson derivative of $H'(x)$ consists of infinitely many points which accumulate at $\infty$ (and nowhere else).
Consequently, $H'(x)$ is homeomorphic to $[0,\omega^{\alpha+1}]$ in this case.
As $X^\Nat=A_\infty\cup\bigcup_{k=0}^\infty A_k$, both~\ref{it:nekonecne} and~\ref{it:indPredp} easily follow.
\end{proof}

We now have all the necessary ingredients to establish the following result, which constitutes the final step in the proof of Theorem~\ref{thm:main2Part2}.

\begin{cor}\label{cor:contReduction}
Let $\beta$ be either $0$
or a countable limit ordinal and let $n\in\Nat\cup\{0\}$.
Then there exists a zero-dimensional Polish space $X$ and a continuous map $H:X\to \K(2^\Nat)$ such that
\begin{enumerate}[label=(\roman*$_n$)]
    \item\label{it:image} $H(x)\in \bigcup_{k\in\Nat}\homeoclass{[0,\omega^{1+\beta+n}\cdot k]} \cup \homeoclass{[0,\omega^{1+\beta+n+1}]}$ for every $x\in X$;
    \item\label{it:small} $H$ is a continuous reduction of some $\boldsymbol{\Pi}^0_{2+\beta+2n+1}$-hard set to $\homeoclass{[0,\omega^{1+\beta+n}]}$;
    \item\label{it:mid} for every $k\geq 2$, $H$ is a continuous reduction of some $D_2(\boldsymbol{\Pi}^0_{2+\beta+2n+1})$-hard set to $\homeoclass{[0,\omega^{1+\beta+n}\cdot k]}$;
    \item\label{it:next} $H$ is a continuous reduction of some $\boldsymbol{\Pi}^0_{2+\beta+2n+2}$-hard set to $\homeoclass{[0,\omega^{1+\beta+n+1}]}$.
\end{enumerate}
\end{cor}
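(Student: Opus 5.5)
The plan is to argue by induction on $n$, with Proposition~\ref{prop:countableOrdinal2} supplying the inductive step. In each step we feed it $\alpha:=1+\beta+n$, $\gamma:=2+\beta+2n+1$, and the $\boldsymbol{\Pi}^0_{\gamma}$-hard set from \ref{it:small}. Throughout, note that $1+\beta=\beta$ when $\beta$ is a limit ordinal and $1+\beta=1$ when $\beta=0$. Similarly, $2+\beta=\beta$ for limit $\beta$ and $2+\beta=2$ for $\beta=0$.

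\emph{Base case $n=0$.} We first need a zero-dimensional Polish space $X_0$, a $\boldsymbol{\Pi}^0_{\gamma_0}$-hard set $B\subseteq X_0$, and a continuous reduction $H_0$ of $B$ to $\homeoclass{[0,\omega^{\alpha_0}]}$ whose values are homeomorphic to subsets of $[0,\omega^{\alpha_0}]$, where $\alpha_0:=1+\beta$. The indexing works out so that $\gamma_0$ should be $2+\beta$.
\begin{itemize}
\item If $\beta=0$, take $X_0=\Nat^\Nat$, a $\boldsymbol{\Pi}^0_2$-complete $B$, and $H_0$ from Proposition~\ref{prop:initialOrdinal}. Each value is finite or homeomorphic to $[0,\omega]$, so it embeds in $[0,\omega]$.
\item If $\beta$ is a limit ordinal, take a $\boldsymbol{\Pi}^0_\beta$-complete $B\subseteq\Nat^\Nat$ and $H_0$ from Theorem~\ref{thm:countableOrdinal} with $\lambda=\beta$.
\end{itemize}
Applying Proposition~\ref{prop:countableOrdinal2} with $\gamma=\gamma_0$ and $\alpha=\alpha_0$ gives $H':X_0^\Nat\to\K(2^\Nat)$ together with the sets $A_0,A_k,A_\infty$.
\begin{itemize}
\item Item \ref{it:image} is part \ref{it:indPredp} of the proposition.
\item Item \ref{it:small} is part \ref{it:konecne} with $k=1$, using $A_0$, which is $\boldsymbol{\Pi}^0_{\gamma_0+1}=\boldsymbol{\Pi}^0_{2+\beta+1}$-hard.
\item Item \ref{it:mid} is part \ref{it:konecne} with $k\ge2$, using the $D_2(\boldsymbol{\Pi}^0_{2+\beta+1})$-hard sets $A_{k-1}$.
\item Item \ref{it:next} is part \ref{it:nekonecne}: $A_\infty$ is $\boldsymbol{\Pi}^0_{2+\beta+2}$-hard and is reduced to $\homeoclass{[0,\omega^{1+\beta+1}]}$.
\end{itemize}
The space $X_0^\Nat$ is zero-dimensional Polish, as required.

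\emph{Inductive step $n\to n+1$.} Let $H$ and $X$ be as in the statement for $n$. By \ref{it:next}, $H$ reduces a $\boldsymbol{\Pi}^0_{2+\beta+2n+2}$-hard set $A$ to $\homeoclass{[0,\omega^{1+\beta+n+1}]}$. By \ref{it:image}, every value $H(x)$ is homeomorphic to $[0,\omega^{1+\beta+n}\cdot k]$ or to $[0,\omega^{1+\beta+n+1}]$, and both of these embed into $[0,\omega^{1+\beta+n+1}]$. So Proposition~\ref{prop:countableOrdinal2} applies with $\alpha=1+\beta+n+1$ and $\gamma=2+\beta+2n+2$. The new exponents $\gamma+1=2+\beta+2(n+1)+1$ and $\gamma+2=2+\beta+2(n+1)+2$ are exactly those required for $n+1$. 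Items (i$_{n+1}$)--(iv$_{n+1}$) then follow from parts \ref{it:indPredp}, \ref{it:konecne} and \ref{it:nekonecne} as in the base case.

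The main obstacle is the base-case bookkeeping of ordinal arithmetic. One has to check that "homeomorphic to a subset of $[0,\omega^\alpha]$" holds for the Cenzer--Mauldin reductions, and that the shifts $1+\beta$ and $2+\beta$ produce the advertised indices in both cases. For $\beta=0$ the base hardness is $\boldsymbol{\Pi}^0_2$, which matches $2+0$. For limit $\beta$ it is $\boldsymbol{\Pi}^0_\beta=\boldsymbol{\Pi}^0_{2+\beta}$ with exponent $\omega^\beta=\omega^{1+\beta}$. Both check out.
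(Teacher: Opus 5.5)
Your proof is correct and matches the paper's argument. The base case feeds Proposition~\ref{prop:initialOrdinal} (with $\gamma=2$, $\alpha=1$) or Theorem~\ref{thm:countableOrdinal} (with $\gamma=\alpha=\beta$) into Proposition~\ref{prop:countableOrdinal2}, and the inductive step applies Proposition~\ref{prop:countableOrdinal2} with $\alpha=1+\beta+n+1$, $\gamma=2+\beta+2n+2$, using (i$_n$) and (iv$_n$).

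The only blemish is your opening sentence. It announces the parameters $\alpha=1+\beta+n$, $\gamma=2+\beta+2n+1$ and the set from (ii$_n$), which would not work: for instance, $[0,\omega^{\alpha}\cdot 2]$ does not embed into $[0,\omega^{\alpha}]$. The parameters you actually use in the inductive step are the right ones.
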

\begin{proof}We shall prove the assertion by induction. For $n=0$ this follows directly from Proposition~\ref{prop:countableOrdinal2} using that its assumption is satisfied by Proposition~\ref{prop:initialOrdinal} (with $\gamma=2$ and $\alpha=1$) if $\beta=0$ and by Theorem~\ref{thm:countableOrdinal} (with $\gamma=\alpha=\beta$) if $\beta$ is a limit ordinal.
Now, assuming that we have found $H$ satisfying \ref{it:image}--\ref{it:next}, using conditions \ref{it:image} and \ref{it:next} we see that assumptions of Proposition~\ref{prop:countableOrdinal2} hold for $\gamma=2+\beta+2(n+1)$ and $\alpha=1+\beta+n+1$, so an application of Proposition~\ref{prop:countableOrdinal2} gives us $H'$ satisfying conditions $(\text{i}_{n+1})$--$(\text{iv}_{n+1})$.
\end{proof}

\begin{proof}[Proof of Theorem~\ref{thm:main2Part2}]
    Let $X$, $\beta$ and $n$ be as in the assumptions. Pick $k\in\Nat$, put $K:=[0,\omega^{\beta+n}\cdot k]$ and denote $\boldsymbol{\Gamma}:=\boldsymbol{\Pi}^0_{\beta+2n+1}$ if $k=1$ and $\boldsymbol{\Gamma}:=D_2(\boldsymbol{\Pi}^0_{\beta+2n+1})$ if $k\geq 2$. By Theorem~\ref{thm:upperBoundFinal} and Corollary~\ref{cor:Ck}, the homeomorphism class of $K$ in $\mathcal{K}(X)$ belongs to the class $\boldsymbol{\Gamma}$, so it suffices to prove it is $\boldsymbol{\Gamma}$-hard. Since there exists an embedding $\iota:2^\Nat\to X$, the mapping $\K(2^\Nat)\ni L\mapsto \iota(L)\in \K(X)$ witnesses that $\homeoclass{K}^{2^\Nat}\leq _W \homeoclass{K}^X$; hence, we may without loss of generality assume $X= 2^\Nat$. Finally, by Corollary~\ref{cor:contReduction}\ref{it:small} and \ref{it:mid}, there exists a continuous reduction from a $\boldsymbol{\Gamma}$-hard set to the homeomorphism class of $K$, which proves that $\homeoclass{K}^{2^\Nat}$ is $\boldsymbol{\Gamma}$-hard.
\end{proof}

Let us note that, as we shall see in Section~\ref{sec:appl}, from the results above we can easily deduce Theorem~\ref{thm:main1Part2}~\ref{it:kIsOneComplete} and \ref{it:kIsNotOneComplete} for the case of $\N=\PP_\infty$ and with a little additional effort also for the case of $\N=\B$ and countable compacta of infinite height (that is, the case when $\beta\neq 0$). The missing case is therefore when $\N=\B$ and $\beta=0$. This will be handled in the next section.

\section{Lower bounds for compacta of finite height in the coding \texorpdfstring{$\B$}{B}}\label{sec:betaKonecne}

The main aim of this section is to develop methods leading to the proof of the lower estimate in Theorem~\ref{thm:main1Part2}~\ref{it:kIsOneComplete} and \ref{it:kIsNotOneComplete} for the case $\N=\B$ and compacta of finite height (that is, for the case when $\beta = 0$). In Subsection~\ref{subsec:boolean} we gather certain preliminary observations concerning Boolean algebras. Those are used in Subsection~\ref{subsec:generalFromBoolean} to describe a general method of assigning to each point of a Polish space a norm $\mu\in \B$ in such a way that $X_\mu$ is linearly isometric to a certain $\C(K)$ space with $K$ zero-dimensional. This is used further in Subsection~\ref{subsec:omegaTimesK}, where we obtain results leading to lower bounds for the complexities of the isometry classes of $\C(\omega\cdot k)$ spaces in $\B$. Finally, in Subsection~\ref{subsec:successor} we provide a step towards the successor step which completes our construction leading to the lower bound in Theorem~\ref{thm:main1Part2}~\ref{it:kIsOneComplete} and \ref{it:kIsNotOneComplete} for the case $\N=\B$ and compacta of finite height.

\subsection{Boolean algebras}\label{subsec:boolean}

Let $T$ be a set and $E\subseteq \pot(T)$ be such that $T\in E$. Then we shall denote as $B(E)\subseteq \pot(T)$ the Boolean algebra generated by $E$. It is well-known and easy to check, see e.g. \cite[Chapter 11]{boolBookHalmos}, that then $B(E)$ consists of finite unions of finite intersections of elements and complements of elements from $E$.

By the Stone duality, see e.g. \cite[Theorem 7.8]{HandbookBool1}, for any Boolean algebra $A$ there exists a unique (up to homeomorphism) zero-dimensional compact space $K_A$ for which there exists a surjective and injective homomorphism (i.e., an isomorphism) of Boolean algebras $s:A\to \Clop(K_A)$, where $\Clop(K_A):=\{C\subseteq K_A\setsep C \text{ is clopen}\}$. Thus, for the moment consider the compact $K_{B(E)}$ associated to the Boolean algebra $B(E)$ as mentioned above.

Since by the Stone-Weierstrass theorem we have $\C(K_{B(E)}) = \closedSpan\{\chi_C\setsep C\in\Clop(K_{B(E)})\}$, the Banach space $\C(K_{B(E)})$ is linearly isometric to
\[
X(E):=\closedSpan\{\chi_B\setsep B\in B(E)\}\subseteq \ell_\infty(T).
\]
We shall need the following easy observation.
\begin{lemma}\label{lem:closedHull}
    Let $T$ be a set and $E\subseteq \pot(T)$ be such that $T\in E$ and $E\cup\{\emptyset\}$ is closed under finite intersections. Then
    \[
    X(E) = \closedSpan\{\chi_B\setsep B\in E\}\subseteq \ell_\infty(T).
    \]
\end{lemma}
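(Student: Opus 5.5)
The inclusion $\closedSpan\{\chi_B\setsep B\in E\}\subseteq X(E)$ is immediate, because $E\subseteq B(E)$. Hence all the work lies in the reverse inclusion. Since $X(E)$ is the closed linear span of $\{\chi_B\setsep B\in B(E)\}$, and the right-hand side is a closed linear subspace, I will show that $\chi_B\in \span\{\chi_C\setsep C\in E\}$ for every $B\in B(E)$. Write $Y:=\span\{\chi_C\setsep C\in E\}$. Note that $\chi_\emptyset=0\in Y$ trivially, so I may freely use $E':=E\cup\{\emptyset\}$, which is closed under finite intersections.

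I will use the description of $B(E)$ recalled above: each element is a finite union of finite intersections of members of $E$ and complements of members of $E$. The plan has three steps.

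\textbf{Step 1.} Consider a finite intersection $C_1\cap\dots\cap C_p\cap (T\setminus D_1)\cap\dots\cap(T\setminus D_q)$ with $C_i,D_j\in E$. If $p=0$, insert $C_1=T\in E$. Put $C:=C_1\cap\dots\cap C_p\in E'$. Then
\[
\chi_{C\setminus(D_1\cup\dots\cup D_q)}=\chi_C\prod_{j=1}^q(1-\chi_{D_j}).
\]
Expanding the product gives $\sum_{J\subseteq\{1,\dots,q\}}(-1)^{|J|}\chi_{C\cap\bigcap_{j\in J}D_j}$. Each set occurring in this sum lies in $E'$ by closure under intersections, so the term belongs to $Y$.

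\textbf{Step 2.} For a finite union $B=P_1\cup\dots\cup P_r$ of such intersections, I use inclusion–exclusion:
\[
\chi_B=\sum_{\emptyset\neq I\subseteq\{1,\dots,r\}}(-1)^{|I|+1}\chi_{\bigcap_{i\in I}P_i}.
\]
A finite intersection of sets of the form in Step 1 is again of that form, so each summand lies in $Y$ by Step 1. Hence $\chi_B\in Y$.

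\textbf{Step 3.} Taking closed spans of $Y$ gives $X(E)\subseteq\closedSpan\{\chi_B\setsep B\in E\}$, which completes the argument.

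There is no real obstacle. The only point needing care is bookkeeping: empty intersections must be handled via $T\in E$, and the empty set via $E\cup\{\emptyset\}$, so that every indicator produced by the expansions actually comes from $E$ or is zero.
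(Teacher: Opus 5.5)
Your proof is correct. It takes a somewhat different route from the paper, although both rest on the same two identities: $\chi_{A\cap B}=\chi_A\chi_B$, and $\chi_{T\setminus A}=\chi_T-\chi_A$, which is available because $T\in E$.

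You start from the normal-form description of $B(E)$ as finite unions of finite intersections of members of $E$ and their complements. You handle the basic intersections by expanding $\chi_C\prod_j(1-\chi_{D_j})$, and the unions by inclusion--exclusion. This gives an explicit expression of each $\chi_B$, $B\in B(E)$, as a signed sum, with coefficients $\pm 1$, of indicators of sets in $E\cup\{\emptyset\}$.

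The paper instead uses a ``good sets'' argument. It lets $\mathcal{C}$ be the family of all $C\subseteq T$ with $\chi_C\in\operatorname{span}\{\chi_B : B\in E\}$. It checks that $\mathcal{C}$ is closed under intersections, by bilinear expansion of $\chi_A\chi_B$, and under complements, using $T\in E$. It then concludes $B(E)\subseteq\mathcal{C}$ by minimality of the generated algebra.

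What each approach buys: the paper's route needs only the definition of $B(E)$ as the smallest algebra containing $E$, with no normal form and no inclusion--exclusion bookkeeping. Yours relies on the normal-form description, which the paper recalls just before the lemma, so it is legitimately available. In exchange you get concrete formulas rather than an existence argument.
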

\begin{proof}
Without loss of generality, we may assume that $\emptyset\in E$. Let $\mathcal C\subseteq \pot(T)$ be the collection of all sets $C\subseteq T$ for which $\chi_C\in\span\{\chi_B\setsep B\in E\}$;
we trivially have $E\subseteq \mathcal C$. Now, let us observe that $\C$ is a Boolean algebra. Since $T\in\C$, it suffices to show that $\C$ is closed under finite intersections and complements. Let $A,B\in \C$. Then $\chi_A = \sum_{V\in \F_A}\alpha_V\chi_{V}$ and $\chi_B = \sum_{W\in\F_B}\beta_W\chi_{W}$ for some finite families $\F_A,\F_B\subseteq E$ and coefficients $\alpha\in \Rea^{\F_A}$, $\beta\in\Rea^{\F_B}$. Therefore, 
\[
\chi_{A\cap B} = \chi_A \chi_B = \sum_{V\in \F_A}\sum_{W\in\F_B} \alpha_V\beta_W\chi_{V\cap W}.
\]
Since $E$ is closed under finite intersections, we obtain $A\cap B\in \C$. Next, let $A\in\C$. Since $T\in E$, we have $\chi_{T\setminus A} = \chi_T - \chi_A\in \span\{\chi_B\setsep B\in E\}$,
which implies that $T\setminus A\in \C$. Thus, $\C$ is a Boolean algebra containing $E$. Therefore, $B(E)\subseteq \C$ which yields
\[
X(E)
=\closedSpan\{\chi_B\setsep B\in B(E)\}
\subseteq \closedSpan\{\chi_B\setsep B\in\mathcal C\}
\subseteq \closedSpan\{\chi_B\setsep B\in E\}.
\]
This completes the proof, as the opposite inclusion is obvious.
\end{proof}

In what follows, we will be interested in the construction where we associate to a countable sequence $(A_n)$ of Boolean algebras a Boolean algebra $A$ such that $K_A$ is homeomorphic to the one-point compactification of the disjoint union of compact spaces $K_{A_n}$, $n\in\Nat$. This is achieved by the following classical construction, see e.g. \cite[Proposition 8.7 and Proposition 8.10]{HandbookBool1}.
\begin{prop}\label{prop:onePointCompactification}
    Let $(A_i)_{i\in I}$ be a family of nontrivial Boolean algebras.
    
    If $I$ is infinite, then the one-point compactification of the disjoint union of $K_{A_i}$, $i\in I$, is homeomorphic to the Stone space corresponding to the Boolean subalgebra
    \[
    \Pi_{i\in I}^w\; A_i:=\big\{a\in \Pi_{i\in I}A_i\setsep \{i\in I\setsep a_i\neq 0\}\text{ is finite or }\{i\in I\setsep a_i\neq 1
    \}\text{ is finite}\big\}
    \]
    of the Boolean algebra $\Pi_{i\in I} A_i$.

    If $I$ is finite, then the disjoint union of compact spaces $K_{A_i}$, $i\in I$, is homeomorphic to the Stone space corresponding to the product Boolean algebra $\Pi_{i\in I}A_i$.
\end{prop}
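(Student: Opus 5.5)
The plan is to pass through Stone duality. A Boolean algebra homomorphism between algebras corresponds contravariantly to a continuous map between their Stone spaces. Ultrafilters on the product or weak product can then be described explicitly.

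\emph{Finite $I$.} An ultrafilter $u$ on $\Pi_{i\in I}A_i$ contains exactly one of the orthogonal idempotents $e_i$. Here $e_i$ is the element that equals $1$ in coordinate $i$ and $0$ elsewhere, and $\bigvee_i e_i=1$ with $e_i\wedge e_j=0$. If $u$ contains $e_i$, then $u$ corresponds to the ultrafilter $\{a_i\setsep a\in u\}$ on $A_i$. Conversely, every ultrafilter $p$ on $A_i$ yields $\{a\setsep a_i\in p\}$. This gives a bijection $K_{\Pi A_i}\to\bigsqcup_i K_{A_i}$. It is continuous because the preimage of a basic clopen set $\widehat{b}\subseteq K_{A_i}$ is the basic clopen set determined by the element that equals $b$ at $i$ and $0$ elsewhere. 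A continuous bijection from a compact space to a Hausdorff space is a homeomorphism.

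\emph{Infinite $I$.} Set $A=\Pi^w_{i\in I}A_i$; it is a subalgebra because the two finiteness conditions are preserved by $\wedge$, $\vee$ and complement. Each $e_i$ lies in $A$. Let $L=\bigl(\bigsqcup_i K_{A_i}\bigr)\cup\{\infty\}$. Define $\Phi:L\to K_A$ as follows. A point $p\in K_{A_i}$ is sent to $\{a\in A\setsep a_i\in p\}$. The point $\infty$ is sent to the set of cofinite elements $u_\infty=\{a\in A\setsep \{i\setsep a_i\neq 1\}\text{ finite}\}$. A routine check shows that $u_\infty$ is an ultrafilter: for each $a\in A$ exactly one of $a$, $\neg a$ is cofinite.

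For surjectivity, take an ultrafilter $u$. If $u$ contains some $e_i$, then $u$ comes from $K_{A_i}$ exactly as in the finite case. Otherwise $u$ contains no element with finite support. Indeed, such an element is bounded by $\bigvee_{i\in F}e_i$, and $u$ contains $\neg e_i$ for every $i\in F$, hence contains $\neg\bigvee_{i\in F}e_i$. Since $u$ must contain either $a$ or $\neg a$, and finite-support elements are excluded, $u$ contains only cofinite elements, so $u=u_\infty$. Injectivity is clear.

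The main point to verify is continuity at $\infty$. Take a basic clopen set $\widehat a$ containing $u_\infty$, so $a$ is cofinite, equal to $1$ outside a finite set $F$. Its preimage under $\Phi$ contains $\infty$ together with all of $K_{A_i}$ for $i\notin F$, and meets each remaining $K_{A_i}$ in a clopen set. This preimage is therefore a neighbourhood of $\infty$ in the one-point compactification. At points of $K_{A_i}$, continuity is the same computation as in the finite case. Since $L$ is compact and $K_A$ is Hausdorff, $\Phi$ is a homeomorphism. Nontriviality of the $A_i$ ensures that each $K_{A_i}$ is nonempty; it is not otherwise essential.

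The only delicate step is the classification of the ultrafilters not containing any $e_i$. Alternatively one can simply cite \cite[Proposition 8.7 and Proposition 8.10]{HandbookBool1}, as the paper does.
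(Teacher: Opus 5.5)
Your proof is correct. The paper gives no argument of its own for this proposition; it only refers to \cite[Proposition 8.7 and Proposition 8.10]{HandbookBool1}. You instead prove it directly, using the ultrafilter computation that presumably underlies those cited results. You sort the ultrafilters of $\Pi_{i\in I}A_i$ (resp.\ $\Pi^w_{i\in I}A_i$) according to which idempotent $e_i$ they contain. In the weak product this leaves exactly one further ultrafilter, the ultrafilter $u_\infty$ of cofinite elements. You then finish with the compact-to-Hausdorff bijection argument. Doing it by hand makes visible why neighbourhoods of $\infty$ in the one-point compactification correspond to the basic clopen sets $\widehat a$ with $a$ cofinite. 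The citation buys brevity.

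One correction to your closing remark: in the infinite case, nontriviality of the $A_i$ is used essentially, not merely to make the $K_{A_i}$ nonempty. It is precisely what makes your ``routine check'' valid. If only finitely many $A_i$ were nontrivial, then $0$ would be cofinite, so $u_\infty$ would not be a proper filter. Moreover, every element of the weak product would be simultaneously of finite support and cofinite, which would also break your injectivity step ($e_i\notin u_\infty$) and your surjectivity step. The statement itself can then fail. For instance, if all $A_i$ are trivial, the weak product is trivial and has empty Stone space, while the one-point compactification of the empty union is a point.
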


In what follows, given a Boolean algebra $A$, we shall be interested in certain classical topological properties of the space $K_A$. Let us recall briefly the necessary notions and results, for a more detailed account we refer the reader e.g. to \cite[Construction 17.6 and 17.7]{HandbookBool1}. We say that $a\in A$ is \emph{atom} if $a>0$ but there is no $x\in A$ with $0<x<a$. Further, by $I(\operatorname{At}A)$ we denote the ideal generated by atoms of $A$. We emphasize the following.

\begin{lemma}\label{lem:cantorBendixson}
    Let $A$ be a Boolean algebra, $K_A$ its Stone space. Then the Cantor-Bendixson derivative $(K_A)'$ is homeomorphic to the Stone space $K_{A_1}$, where $A_1 = A/I(\operatorname{At}A)$.
\end{lemma}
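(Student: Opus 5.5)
We plan to prove the lemma by working entirely inside the Stone duality. Identify $A$ with $\Clop(K_A)$ via the Stone isomorphism $s$, so that elements of $A$ are clopen subsets of $K_A$. The first step is to describe the atoms of $A$ topologically: a nonzero clopen set $a$ is an atom if and only if $a=\{p\}$ for an isolated point $p$ of $K_A$. Indeed, if $\{p\}$ is open then it is clopen and clearly minimal. Conversely, if $a$ contains two distinct points, zero-dimensionality (clopen sets separate points) gives a clopen $x$ with $0<x<a$. Consequently, $I(\At A)$ consists exactly of the finite sets of isolated points, i.e. the clopen sets contained in $K_A\setminus D(K_A)$. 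Here we use that a compact set consisting of isolated points is finite.

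The second step is to relate $\Clop(D(K_A))$ to the quotient. Consider the restriction homomorphism $r\colon \Clop(K_A)\to \Clop(D(K_A))$, $r(C)=C\cap D(K_A)$. It is clearly a Boolean homomorphism, and its kernel is $\{C\setsep C\cap D(K_A)=\emptyset\}$. By the first step this kernel is exactly $I(\At A)$, since a clopen set disjoint from $D(K_A)$ is a compact set of isolated points and hence finite. Thus $r$ induces an injective homomorphism $A/I(\At A)\to \Clop(D(K_A))$.

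The remaining step, and the only one requiring some care, is surjectivity of $r$. Every clopen subset $W$ of the closed set $D(K_A)$ must be the trace of a clopen subset of $K_A$. We would argue as follows: $W$ and $D(K_A)\setminus W$ are disjoint closed subsets of the compact zero-dimensional space $K_A$, so they can be separated by a clopen set $C$ with $W\subseteq C$ and $C\cap (D(K_A)\setminus W)=\emptyset$. Such a $C$ is obtained by covering $W$ with finitely many basic clopen sets missing the compact set $D(K_A)\setminus W$, which works because in a compact zero-dimensional space clopen sets form a basis and points are separated from compact sets by clopen sets. Then $r(C)=W$.

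Hence $A/I(\At A)\cong \Clop(D(K_A))$. Since $D(K_A)$ is a closed subspace of $K_A$, it is compact and zero-dimensional, so by uniqueness in Stone duality $D(K_A)$ is homeomorphic to $K_{A_1}$. The degenerate case where $K_A$ has no accumulation points (so $A$ is finite and $A_1$ is trivial) is consistent with $D(K_A)=\emptyset$, provided the Stone space of the trivial algebra is taken to be empty.
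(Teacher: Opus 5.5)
Your proof is correct. The paper does not prove this lemma itself; it only cites \cite[Construction 17.6 and 17.7]{HandbookBool1}, so there is no competing argument in the text to compare against. What you give is the standard self-contained verification behind that reference:
the atoms of $\Clop(K_A)$ are the singletons of isolated points, so $I(\At A)$ is exactly the set of clopen sets disjoint from $D(K_A)$;
this is the kernel of the restriction homomorphism $C\mapsto C\cap D(K_A)$;
surjectivity onto $\Clop(D(K_A))$ follows by separating the disjoint closed sets $W$ and $D(K_A)\setminus W$ by a clopen subset of $K_A$.

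The citation buys brevity. Your version makes the lemma independent of the Handbook at the cost of a few lines. Your treatment of the degenerate case (finite $A$, $A_1=\{0\}$, $K_{A_1}=\emptyset=D(K_A)$) agrees with the paper's stated convention that the trivial algebra has empty Stone space.
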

\noindent(Note that given a trivial Boolean algebra $A = \{0\}$, for its Stone space we have $K_A = \emptyset$ and, more generally, for a finite Boolean algebra with $|A| = 2^n$ its Stone space $K_A$ has $n$ points.)

\subsection{General construction}\label{subsec:generalFromBoolean}

We shall use the following way of assigning, to a given point $x$ in a Polish space $X$, a zero-dimensional compact space $K_x$.

\begin{definition}\label{def:R}Let $X$ be a Polish space and $M$, $N$ be infinite sets with $M$ countable.
Let $R\colon X\times M\to\pot(N)$ be an arbitrary mapping. We define the following conditions.
\begin{enumerate}[label=(R\arabic*), series=propertiesR]
\item\label{it:R1} For every $x\in X$, the collection $\{R(x,m)\setsep m\in M\}\cup\{\emptyset\}$ is closed under finite intersections.
\item\label{it:R2} For every $x\in X$, there is $m\in M$ such that $R(x,m)=N$.
\item\label{it:R3} For every $x\in X$, the family of characteristic functions $\{\chi_{R(x,m)}\setsep m\in M\}$ is linearly independent in $\ell_\infty(N)$.
\end{enumerate}
If the mapping $R$ satisfies \ref{it:R1}, then for every $x\in X$ the Banach space $Z_x:=\overline\span(\{\chi_{R(x,m)}\setsep m\in M\}\cup \{\chi_N\})\subseteq\ell_\infty(N)$ is by Lemma~\ref{lem:closedHull} isometric to $\C(K_x)$, where $K_x$ is the zero-dimensional compact space obtained by Stone duality from the Boolean algebra $B_x$ generated by $\{R(x,m)\setsep m\in M\}\cup \{N\}$. We say that $K_x$ and $B_x$ are \emph{the compact space associated to $x$} and \emph{the Boolean algebra associated to $x$}, respectively. Whenever we want to emphasize the role of $R$ we write $K_{R,x}$ and $B_{R,x}$ instead of $K_x$ and $B_x$.
\end{definition}

Conditions \ref{it:R2} and \ref{it:R3} enable us to deduce that there is an assignment mapping each $x\in X$ to $F(x)\in\B$ with $X_{F(x)}$ being isometric to the Banach space $\C(K_x)$. Let us describe the way how this is achieved.

\begin{definition}
    Let $X$ be a Polish space and $M$, $N$ be infinite sets with $M$ countable. Let $R\colon X\times M\to\pot(N)$ be a mapping that meets condition \ref{it:R1}. We define $F:X\to \PP$, the \emph{map induced by $R$}, as follows. Since $M$ is countable, we can identify $\PP$ with the set of all pseudonorms on the space $c_{00}(M)$ and, for $m\in M$, we denote by $e_m\in c_{00}(M)$ the characteristic function of $\{m\}$. Then the mapping $F:X\to \PP$ is defined for $x\in X$ by the formula
\[
F(x)\big(\sum_{m\in G}\lambda_me_m\big):=\Big\|\sum_{m\in G}\lambda_m\chi_{R(x,m)}\Big\|_{Z_x},\qquad G\subseteq M\text{ finite},\; \lambda=(\lambda_m)_{m\in G}\in\Rea^G.
\]
The following condition on the mapping $R$ will be used in order to guarantee the map $F$ is continuous. Given $(x,z)\in X\times N$ and $G\subseteq M$ we denote $S_z^G(x):=\{m\in G\setsep z\in R(x,m)\}$.
\begin{enumerate}[label=(R\arabic*), resume*=propertiesR]
\item\label{it:R4} Given finite $G\subseteq M$ and $x\in X$ there exists an open neighborhood $U\subseteq X$ of the point $x$ such that for any $y\in U$ there exists $H\subseteq M$ finite with $H\supseteq G$ satisfying $\{S_z^H(y)\setsep z\in N\} = \{S_z^H(x)\setsep z\in N\}$.\footnote{In condition \ref{it:R4}, we could equivalently consider only the possibility $ H = G $. However, our version will be somewhat more suitable for our purposes.}
\end{enumerate}
\end{definition}

The role of conditions \ref{it:R2} and \ref{it:R3} is clear from the following.

\begin{lemma}\label{lem:toB}Let $X$ be a Polish space and $M$, $N$ be infinite sets with $M$ countable. Let $R\colon X\times M\to\pot(N)$ be a mapping that meets conditions \ref{it:R1} and \ref{it:R2}. Then for the map $F$ induced by $R$ we have that $X_{F(x)}$ is linearly isometric to $\C(K_{x})$ for every $x\in X$.

Moreover, if $R$ satisfies condition \ref{it:R3}, then $F$ has values in $\B\subseteq\P$.
\end{lemma}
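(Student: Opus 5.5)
The plan is to identify $X_{F(x)}$ with $Z_x$ directly, by means of the map sending $e_m$ to $\chi_{R(x,m)}$, and then read off both claims from Lemma~\ref{lem:closedHull} and condition \ref{it:R3}.

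Fix $x\in X$. By definition, $F(x)$ is the pseudonorm on $c_{00}(M)$ obtained by pulling back the norm of $\ell_\infty(N)$ along the linear map
\[
T_x\colon c_{00}(M)\to \ell_\infty(N),\qquad T_x\Big(\sum_{m\in G}\lambda_m e_m\Big)=\sum_{m\in G}\lambda_m\chi_{R(x,m)}.
\]
Since $F(x)(v)=\|T_xv\|$, the map $T_x$ induces a linear isometry from the quotient $c_{00}(M)/\{v\setsep F(x)(v)=0\}$ onto $\span\{\chi_{R(x,m)}\setsep m\in M\}$. This extends to a linear isometry of the completion $X_{F(x)}$ onto $\overline{\span}\{\chi_{R(x,m)}\setsep m\in M\}$; one small point is that restricting to rational coefficients, i.e.\ to $V$, changes nothing, because $V$ is dense for the pseudonorm.

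Next, condition \ref{it:R2} gives some $m_0$ with $\chi_{R(x,m_0)}=\chi_N$. Hence this closed span coincides with $Z_x$, which by definition also includes $\chi_N$. By \ref{it:R1} and \ref{it:R2}, the family $E=\{R(x,m)\setsep m\in M\}$ contains $N$, and $E\cup\{\emptyset\}$ is closed under finite intersections. Lemma~\ref{lem:closedHull} then gives $Z_x=X(E)$. Since $E$ together with $N$ generates $B_x$, the discussion preceding Lemma~\ref{lem:closedHull} (Stone duality combined with Stone--Weierstrass) shows that $X(E)\equiv\C(K_x)$. This proves the first claim.

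To see that $F(x)\in\PP_\infty$, note that $X_{F(x)}$ must be infinite-dimensional. Under \ref{it:R3} this is immediate: the family $\{\chi_{R(x,m)}\setsep m\in M\}$ is linearly independent and $M$ is infinite. For the moreover part, \ref{it:R3} also says exactly that $T_x$ is injective on $c_{00}(M)$, so $F(x)$ extends to a norm on $c_{00}(M)$, and therefore $F(x)\in\B$.

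The only point needing care is this infinite-dimensionality. Without \ref{it:R3} it is not automatic, and one would have to place $F$ in $\PP$ rather than $\PP_\infty$ or argue separately. I expect the paper intends to conclude $F(x)\in\PP$ there, with $\B$ obtained only under \ref{it:R3}.
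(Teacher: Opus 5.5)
Your proof is correct and is essentially the paper's argument. The map $e_m\mapsto\chi_{R(x,m)}$ identifies $X_{F(x)}$ isometrically with $\overline{\span}\{\chi_{R(x,m)}\setsep m\in M\}$, which by \ref{it:R2} equals $Z_x\equiv\C(K_x)$, and \ref{it:R3} makes $F(x)$ a norm. You also observe that $X_{F(x)}$ is infinite-dimensional (needed since $\B\subseteq\PP_\infty$) because of \ref{it:R3} and $M$ being infinite; this correctly fills in a point the paper leaves implicit.
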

\begin{proof}Let $F:X\to \PP$ be the map induced by $R$ and pick $x\in X$. Since \ref{it:R2} holds for $R$, it follows $\C(K_x)$ is linearly isometric to $\overline{\span}\{\chi_{R(x,m)}\setsep m\in M\}\subseteq \ell_\infty(N)$, therefore the mapping $X_{F(x)}\ni e_m\mapsto \chi_{R(x,m)}$ naturally extends to an isometry between $X_{F(x)}$ and $\C(K_x)$. If \ref{it:R3} holds for $R$, the family $\{\chi_{R(x,m)}\setsep m\in M\}$ is linearly independent, so $F(x)$ is a norm, which implies that $F$ has values in $\B\subseteq \PP$.
\end{proof}

Continuity of the mapping $F$ is guaranteed by condition \ref{it:R4}.

\begin{lemma}\label{lem:Fcont}Let $X$ be a Polish space and $M$, $N$ be infinite sets with $M$ countable. Let $R\colon X\times M\to\pot(N)$ be a mapping that meets conditions \ref{it:R1} and \ref{it:R4}. Then the map $F$ induced by $R$ is continuous.
\end{lemma}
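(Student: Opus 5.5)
Since $\PP$ carries the topology of pointwise convergence on $c_{00}(M)$ (equivalently on $V$), it suffices to show that for every finite $G\subseteq M$ and every $\lambda\in\Rea^G$ the map $x\mapsto F(x)\big(\sum_{m\in G}\lambda_m e_m\big)$ is continuous. For rational $\lambda$ this gives continuity of every coordinate function $\mu\mapsto\mu(v)$, $v\in V$, which is exactly continuity into $\PP\subseteq\Rea^V$. The plan is to show that this map is in fact locally constant.

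The key observation is that the norm is computed pointwise in $\ell_\infty(N)$. For $z\in N$ we have
\[
\Big(\sum_{m\in G}\lambda_m\chi_{R(x,m)}\Big)(z)=\sum_{m\in S_z^G(x)}\lambda_m ,
\]
and therefore
\[
F(x)\Big(\sum_{m\in G}\lambda_m e_m\Big)=\sup_{z\in N}\Big|\sum_{m\in S_z^G(x)}\lambda_m\Big|=\max\Big\{\Big|\sum_{m\in S}\lambda_m\Big|\setsep S\in\{S_z^G(x)\setsep z\in N\}\Big\}.
\]
The family $\{S_z^G(x)\setsep z\in N\}$ is a family of subsets of the finite set $G$, so this supremum is a maximum over finitely many values. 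Consequently, the value $F(x)(\sum\lambda_m e_m)$ depends only on the family $\{S_z^G(x)\setsep z\in N\}\subseteq\pot(G)$.

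Now fix $x$ and $G$, and let $U$ be the neighbourhood given by \ref{it:R4}. For $y\in U$ we obtain some finite $H\supseteq G$ with $\{S_z^H(y)\setsep z\in N\}=\{S_z^H(x)\setsep z\in N\}$. Intersecting with $G$, and using $S_z^G(\cdot)=S_z^H(\cdot)\cap G$, we get
\[
\{S_z^G(y)\setsep z\in N\}=\{S\cap G\setsep S\in\{S_z^H(y)\setsep z\in N\}\}=\{S_z^G(x)\setsep z\in N\}.
\]
Hence $F(y)(\sum\lambda_m e_m)=F(x)(\sum\lambda_m e_m)$ for all $y\in U$ and all $\lambda\in\Rea^G$ simultaneously. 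So each coordinate of $F$ is locally constant, and in particular continuous.

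Condition \ref{it:R1} is used only so that $F$ is well defined with $Z_x$ as in Definition~\ref{def:R}. The norm on $Z_x$ is just the restriction of the sup-norm of $\ell_\infty(N)$, so the formula above is legitimate. The only step needing care is the reduction from the family indexed by $H$ to the family indexed by $G$, and the image computation above handles it. I expect no real obstacle here.
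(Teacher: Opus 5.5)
Your proof is correct and follows the paper's argument. Both show that each coordinate $x\mapsto F(x)\big(\sum_{m\in G}\lambda_m e_m\big)$ is locally constant on the neighbourhood $U$ from \ref{it:R4}, by writing the $\ell_\infty(N)$-norm as $\sup_{z\in N}\big|\sum_{m\in S_z(\cdot)}\lambda_m\big|$; the only cosmetic difference is that the paper pads $\lambda$ with zeros on $H\setminus G$, whereas you intersect the sets $S_z^H$ with $G$.
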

\begin{proof}
We pick a finite set $G\subseteq M$ and $\lambda\in\Rea^G$,
we must show that the map $X\ni x\mapsto F(x)\big(\sum_{m\in G}\lambda_me_m\big)$ is continuous. Given $x\in X$, let $U\subseteq X$ be as in condition \ref{it:R4}. Then for any $y\in U$ we pick a finite set $H\subseteq M$ with $G\subseteq H$ as in \ref{it:R4} and denoting $\lambda_m=0$ for $m\in H\setminus G$ we obtain
\begin{equation*}
\begin{split}
F(x)\big(\sum_{m\in G}\lambda_me_m\big) & =\Big\|\sum_{m\in H}\lambda_m\chi_{R(x,m)}\Big\|_{\ell_\infty(N)}
=\sup\Big\{\Big|\sum_{\substack{m\in S_z^H(x)}}\lambda_m\Big|\setsep z\in N\Big\}\\
& =\sup\Big\{\Big|\sum_{\substack{m\in S_z^H(y)}}\lambda_m\Big|\setsep z\in N\Big\} = F(y)\big(\sum_{m\in G}\lambda_me_m\big).
\end{split}
\end{equation*}
\end{proof}

\subsection{Lower bound for spaces \texorpdfstring{$\C(\omega\cdot k)$}{C(omega.k)}}\label{subsec:omegaTimesK}

The initial step of our argument is to define mapping $R_1$, which results in the proof of the lower bound for spaces $\C(\omega\cdot k)$.

\begin{definition}\label{def:R1}
    Let $X_1:=2^{\Nat\times \Nat}$ and $M_1:=\{\emptyset\}\cup \Nat\cup \Nat^2$. Given $A = (a_{i,j})_{i,j\in\Nat}\in X_1$ and $n\in\Nat$ we put $A_n:=\{m\in\Nat\setsep a_{n,m}=1\}$. Let $R_1:X_1\times M_1\to \pot(M_1)$ be defined for $(A,t)\in X_1\times M_1$ as follows
\[
R_1(A,t):=\begin{cases} M_1 & \text{ if $t=\emptyset$},\\
\{t\} & \text{ if $t\in\Nat^2$},\\
\{t\}\cup (A_t\times \{t\}) & \text{ if $t\in\Nat$}.
\end{cases}
\]
\end{definition}

\begin{lemma}\label{lem:R1isok}
    The mapping $R_1:X_1\times M_1\to \pot(M_1)$ from Definition~\ref{def:R1} satisfies \ref{it:R1}, \ref{it:R2}, \ref{it:R3} and \ref{it:R4}. Consequently, there exists a continuous mapping $F:X_1\to \B$ such that $X_{F(x)}$ is linearly isometric to $\C(K_x)$ for every $x\in X_1$.
\end{lemma}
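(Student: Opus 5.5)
We verify the four conditions directly from Definition~\ref{def:R1}; the ``consequently'' part then follows from Lemma~\ref{lem:toB} (using \ref{it:R1}, \ref{it:R2}, \ref{it:R3}) together with Lemma~\ref{lem:Fcont} (using \ref{it:R1}, \ref{it:R4}). Throughout, fix $A\in X_1$ and write $R=R_1(A,\cdot)$.

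For \ref{it:R1} we check pairwise intersections case by case. $R(\emptyset)=M_1$ is neutral. For $t\neq t'$ in $\Nat^2$ the singletons are disjoint. For $t\in\Nat^2$ and $n\in\Nat$, the intersection $\{t\}\cap R(n)$ is either $\{t\}$ (if $t\in A_n\times\{n\}$) or $\emptyset$. For distinct $n,n'\in\Nat$, the sets $\{n\}\cup(A_n\times\{n\})$ and $\{n'\}\cup(A_{n'}\times\{n'\})$ are disjoint, since the second coordinates differ. So the family together with $\emptyset$ is closed under finite intersections. Condition \ref{it:R2} holds with $m=\emptyset$.

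For \ref{it:R3}, suppose $\sum_{t\in G}\lambda_t\chi_{R(t)}=0$ with $G$ finite. We evaluate at well-chosen points:
\begin{itemize}
\item[] at a point $z\in\Nat^2$ whose second coordinate lies outside $G\cap\Nat$ and which is itself not in $G$, only $\chi_{R(\emptyset)}$ is nonzero, so $\lambda_\emptyset=0$;
\item[] at $z=n\in G\cap\Nat$, only $R(n)$ (besides $R(\emptyset)$) contains $n$, so $\lambda_n=0$;
\item[] at $z=t\in G\cap\Nat^2$, the remaining term is $\lambda_t$, so $\lambda_t=0$.
\end{itemize}
Such a point $z$ exists because $G$ is finite.

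The main point is \ref{it:R4}, which we handle with $H=G$ and the basic clopen neighbourhood $U=\{B\in X_1\setsep b_{i,j}=a_{i,j}\text{ for all }(j,i)\in G\cap\Nat^2\}$. Here $t=(j,i)\in R(A,i)$ exactly when $a_{i,j}=1$, so the coordinates fixed by $U$ are precisely those that decide membership of the finitely many points of $G\cap\Nat^2$ in the sets $R(\cdot,i)$ with $i\in G$.

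We then show that $z\mapsto S^G_z(B)$ takes the same set of values for $B\in U$ as for $A$, by computing the value at each $z$.
\begin{itemize}
\item[] For $z\in\Nat$, $S^G_z=\{\emptyset\}\cup(\{z\}\cap G)$, which is independent of the code.
\item[] For $z=(j,i)\in G$, $S^G_z=\{\emptyset,z\}\cup(\{i\}\cap G\text{ if }a_{i,j}=1)$, which is fixed on $U$.
\item[] For $z=(j,i)\notin G$, $S^G_z=\{\emptyset\}\cup(\{i\}\cap G\text{ if }a_{i,j}=1)$.
\end{itemize}
The last case is where the value depends on the code, but only through which sets occur. The possible values are $\{\emptyset\}$ and $\{\emptyset,i\}$ for $i\in G\cap\Nat$. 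The value $\{\emptyset\}$ is always attained, for instance at a point $z\in\Nat\setminus G$. The value $\{\emptyset,i\}$ is always attained at $z=i$ by the first case. Hence the collection $\{S^G_z(B)\setsep z\in M_1\}$ is identical for all $B\in U$, which gives \ref{it:R4}.
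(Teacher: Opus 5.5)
Your proof is correct and follows the paper's argument. The paper leaves \ref{it:R1}--\ref{it:R3} to the reader, and your checks of them are fine. For \ref{it:R4} you use the same coordinate neighbourhood $U$ and the same key observation as the paper: the values $S_z(B)$ at points $z\in\Nat^2\setminus G$ are already attained at points of $\Nat$, so the collection of values does not depend on $B\in U$.

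There are two cosmetic slips. First, your formulas for $S^G_z$ presuppose $\emptyset\in G$; this is why the paper takes $H=G\cup\{\emptyset\}$, and you can equally enlarge $G$ beforehand. Second, you did not list the point $z=\emptyset\in N$, which only contributes the code-independent value $\{\emptyset\}$.
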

\begin{proof}The proof that $R_1$ meets conditions \ref{it:R1}, \ref{it:R2} and \ref{it:R3} is straightforward and therefore left to the reader. In order to prove \ref{it:R4}, pick $G\subseteq M_1$ finite and $A\in X_1$. Let $U$ be the neighborhood of $A$ defined as $U:=\bigcap_{(i,j)\in G}\{B\in X_1\setsep B(j,i)=A(j,i)\}$. Put $H:=G\cup\{\emptyset\}$. Then given any $B\in U$ we have $S_\emptyset^H(B) = \{\emptyset\} = S_\emptyset^H(A)$ and for $j\in\Nat$ we obtain
\[
S_j^H(B) = S_j^H(A) = \begin{cases}\{\emptyset,j\} & \text{ if }j\in H,\\
\{\emptyset\} & \text{ if }j\notin H.
\end{cases}
\]
Further, for $(i,j)\in \Nat^2$ we have
\[
S_{(i,j)}^H(B) = \begin{cases}\{\emptyset,(i,j),j\} & \text{if $(i,j)\in H$ and $j\in H$ and $i\in B_j$},\\
\{\emptyset,(i,j)\} & \text{if $(i,j)\in H$ and ($j\notin H$ or $i\notin B_j$)},\\
\{\emptyset,j\} & \text{if $(i,j)\notin H$ and $j\in H$ and $i\in B_j$},\\
\{\emptyset\} & \text{otherwise}.
\end{cases}
\]
Hence, $\{S_{(i,j)}^H(B)\setsep (i,j)\in \Nat^2\setminus H\} \subseteq \{S_\emptyset^H(B), S_j^H(B)\setsep j\in \Nat\} = \{S_\emptyset^H(A), S_j^H(A)\setsep j\in \Nat\}$. Finally, since for $(i,j)\in H$ we have 
\[i\in B_j\Leftrightarrow B(j,i)=1\Leftrightarrow A(j,i)=1\Leftrightarrow i\in A_j,
\]
we obtain $\{S_{(i,j)}^H(B)\setsep (i,j)\in H\} = \{S_{(i,j)}^H(A)\setsep (i,j)\in H\}$ and therefore $\{S_z^H(B)\setsep z\in M_1\} = \{S_z^H(A)\setsep z\in M_1\}$.
\end{proof}

Now, we shall identify the homeomorphic types of the compact sets $K_x$, $x\in X_1$ using Lemma~\ref{lem:cantorBendixson}.

\begin{prop}\label{prop:characterizationOfKALevel2}Let $R_1:X_1\times M_1\to \pot(M_1)$ be as in Definition~\ref{def:R1}, let $A\in 2^{\Nat\times\Nat}$ and $k\in\Nat$.
    \begin{enumerate}
        \item\label{it:c} $K_A$ is homeomorphic to $[0,\omega\cdot k]$ if and only if $|F|=k-1$ where $F=\{n\in\Nat: |A_n|=\omega\}$.
        \item $K_A$ is homeomorphic to $[0,\omega^2]$ if and only if $A_n$ is infinite for infinitely many $n\in\Nat$.
    \end{enumerate}
\end{prop}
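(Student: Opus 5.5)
Our plan is to compute the Boolean algebra $B_A$ generated by $\{R_1(A,t)\setsep t\in M_1\}$, which is closed under finite intersections by \ref{it:R1}. We then read off the Cantor--Bendixson derivatives of $K_A$ through Lemma~\ref{lem:cantorBendixson}.

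First we identify the atoms of $B_A$. Every singleton $\{(i,j)\}$ with $(i,j)\in\Nat^2$ belongs to $B_A$. Hence for each $n$ the singleton $\{n\}=R_1(A,n)\setminus (A_n\times\{n\})$ lies in $B_A$ as well, because $A_n\times\{n\}$ is a finite union of singletons when $A_n$ is finite. When $A_n$ is infinite we instead observe that $\{n\}\cup((A_n\setminus F)\times\{n\})\in B_A$ for finite $F$, but $\{n\}$ itself is not in $B_A$. This follows from the normal form: every element of $B_A$ is a finite Boolean combination of the generators, so membership of $n$ cannot be separated from cofinitely many points of $A_n\times\{n\}$. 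Also $\{\emptyset\}\notin B_A$, since $M_1$ is the only generator containing $\emptyset$.

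It is cleaner to describe $K_A$ directly. Elements of $B_A$ are exactly the sets $S\subseteq M_1$ such that
\begin{itemize}
\item for each $n$, $S$ either contains $n$ together with all but finitely many points of $A_n\times\{n\}$, or omits $n$ and all but finitely many of those points;
\item $S$ agrees with this pattern ($n\in S$) for all but finitely many $n$, either uniformly included (when $\emptyset\in S$) or uniformly excluded, and $S\cap(\Nat^2\setminus\bigcup_n A_n\times\{n\})$ is finite or cofinite accordingly.
\end{itemize}
We verify that this family is a Boolean algebra containing the generators and contained in $B_A$, by writing each such set as a finite combination of generators. With this description, $K_A$ is the one-point compactification (point $\emptyset$) of the discrete set $\Nat^2\cup\Nat$, where each $n$ with $A_n$ infinite is the limit of the sequence $(A_n\times\{n\})$ and is otherwise isolated. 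The same conclusion can be reached via Proposition~\ref{prop:onePointCompactification}: $B_A\cong\Pi^w_{n}B^n$, where $B^n$ is the finite/cofinite-type algebra on $\{n\}\cup(A_n\times\{n\})$, plus countably many two-point factors for the remaining singletons.

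Next we apply Lemma~\ref{lem:cantorBendixson}. The ideal $I(\At B_A)$ consists of finite sets of isolated points. The quotient $B_A/I(\At B_A)$ therefore has Stone space $\{\emptyset\}\cup\{n\setsep A_n\text{ infinite}\}$. In this space the points $n$ are isolated, and $\emptyset$ is a limit point exactly when there are infinitely many such $n$. Consequently, if $F=\{n\setsep |A_n|=\omega\}$ is finite with $|F|=k-1$, then $D^2(K_A)=\emptyset$ and $D(K_A)$ has exactly $k$ points, so $K_A\sim[0,\omega\cdot k]$ by the Mazurkiewicz--Sierpi\'nski classification. Here $K_A$ is infinite and countable, hence scattered metrizable. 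If $F$ is infinite, then $D(K_A)\sim[0,\omega]$, $D^2(K_A)=\{\emptyset\}$, and so $K_A\sim[0,\omega^2]$. The two cases are exhaustive and yield distinct homeomorphism types, which gives both equivalences.

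The main obstacle is justifying rigorously which sets lie in $B_A$, and in particular that $\{n\}\notin B_A$ when $A_n$ is infinite. We plan to handle this with the normal form of $B(E)$ recalled in Subsection~\ref{subsec:boolean}. For each generator $R_1(A,t)$ with $t\neq n$, either $R_1(A,t)$ contains both $n$ and all of $A_n\times\{n\}$, or it meets $\{n\}\cup(A_n\times\{n\})$ in at most one point. Hence every finite intersection of generators and their complements either contains $n$ and cofinitely many points of $A_n\times\{n\}$, or misses $n$ and cofinitely many of them. This invariant is preserved under finite unions, and it forbids $\{n\}\in B_A$.
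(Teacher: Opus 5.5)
Your route is the paper's: identify the atoms of $B_A$, pass to $B_A/I(\At B_A)$, use Lemma~\ref{lem:cantorBendixson} to see that $D(K_A)$ is homeomorphic to $\{\emptyset\}\cup F$ (points of $F$ isolated, $\emptyset$ a limit point exactly when $F$ is infinite), and conclude by Mazurkiewicz--Sierpi\'nski. Your invariant argument showing $\{n\}\notin B_A$ for infinite $A_n$ is correct, and it makes rigorous a point the paper only asserts.

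One step is wrong as written: the ``exactly the sets'' description of $B_A$. Put $C_n:=R_1(A,n)=\{n\}\cup(A_n\times\{n\})$. Your second bullet only requires $n\in S$ (resp.\ $n\notin S$) for all but finitely many $n$. The finitely many exceptional points allowed in each block can therefore spread over infinitely many blocks.

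Concretely, suppose every $A_n$ is infinite and let $S=\{(\min A_n,n)\setsep n\in\Nat\}$. This $S$ satisfies both bullets, but $S\notin B_A$. Indeed, a finite intersection of generators and complements of generators that omits the point $\emptyset$ must have a generator other than $M_1$ as one of its factors. Hence every element of $B_A$ omitting $\emptyset$ lies in a finite union of sets $C_m$ and singletons of $\Nat^2$. Such a union meets only finitely many of the sets $\Nat\times\{n\}$, whereas $S$ meets all of them. So the verification that your family is ``contained in $B_A$'' would fail.

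The correct requirement is that, for all but finitely many $n$, the whole block $C_n$ is contained in $S$ (if $\emptyset\in S$) or disjoint from $S$ (if $\emptyset\notin S$). Equivalently, as in the paper, modulo finite subsets of $\Nat^2$ every element of $B_A$ is $\bigcup_{n\in G}C_n$ or its complement, for some finite $G\subseteq\Nat$.

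With that correction the rest of your argument goes through. Alternatively, you can rely on your weak-product identification $B_A\cong\Pi^w_n B^n$, which is correct and, via Proposition~\ref{prop:onePointCompactification}, gives the same one-point-compactification picture of $K_A$.
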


\begin{proof}First, we shall find the atoms of $B_A$, let us denote those as $\At(A)$. Of course, we have $\{(n,m)\}\in \At(A)$ for every $(n,m)\in \Nat\times\Nat$. Further, notice that all the elements of $B_A$ are, modulo finite subsets of $\Nat\times\Nat$, sets of the form
\[
\bigcup_{i\in G}\{i\}\cup (A_i\times \{i\})\quad \text{and}\quad M_1\setminus \bigcup_{i\in G}\Big(\{i\}\cup (A_i\times \{i\})\Big),
\]
where $G\subseteq \Nat$ is a finite set. Therefore,
\[
\At(A) = \Big\{\{(n,m)\}\setsep (n,m)\in\Nat\times\Nat\Big\} \cup \Big\{\{n\}\setsep n\in\Nat\text{ is such that $A_n$ is finite}\Big\}.
\]
Now, we shall deal with individual cases.

If for every $n\in\Nat$ the set $A_n$ is finite, modulo finite subsets of $M_1$, the Boolean algebra $B_A$ consists just of $\emptyset$ and $M_1$, therefore the Boolean algebra $B_A/I(\At(A))$ is trivial. Thus, by Lemma~\ref{lem:cantorBendixson}, $(K_A)'$ consists of just one point and $K_A$ is homeomorphic to $[0,\omega]$.

If $F=\{n\in\Nat\setsep |A_n|=\omega\}\neq\emptyset$ is finite set,
then $B_A/I(\At(A))$ consists of
(equivalence classes of)
unions of finitely many sets of the form $\{i\}\cup (A_i\times \{i\})$, $i\in F$ and their complements, so $B_A/I(\At(A))$ has exactly $2\cdot 2^{|F|} = 2^{|F|+1}$ elements and so $(K_A)'$ has exactly $|F|+1$ points, so $K_A$ is homeomorphic to $[0,\omega\cdot (|F|+1)]$.

Finally, assume there are infinitely many $n\in\Nat$ such that the set $A_n$ is infinite. Then $B_A/I(\At(A))$ is isomorphic to a Boolean algebra generated by $\{\{n\}\setsep n\in\Nat\}\subseteq \pot(\Nat)$, so $(K_A)'$ is homeomorphic to $[0,\omega]$ and therefore $K_A$ is homeomorphic to $[0,\omega^2]$.
\end{proof}

\begin{prop}\label{prop:R1step}Let $R_1:X_1\times M_1\to \pot(M_1)$ be as in Definition~\ref{def:R1}. Then the following holds.
\begin{enumerate}[label=(\roman*)]
    \item\label{it:K1sets} For every $x\in X_1$ the compact $K_x$ is homeomorphic either to $[0,\omega\cdot k]$ for some $k\in\Nat$ or to $[0,\omega^{2}]$.
    \item\label{it:K1sets+2} The set $\{x\in X_1\setsep K_x\text{ is homeomorphic to }[0,\omega^{2}]\}$ is $\boldsymbol{\Pi}_{4}^0$-hard.
    \item\label{it:K1sets+1} The set $\{x\in X_1\setsep K_x\text{ is homeomorphic to }[0,\omega]\}$ is $\boldsymbol{\Pi}_{3}^0$-hard.
    \item\label{it:K_1sets+D1} For every $k\geq 2$, the set $\{x\in X_1\setsep K_x\text{ is homeomorphic to }[0,\omega\cdot k]\}$ is $D_2(\boldsymbol{\Pi}_{3}^0)$-hard.
\end{enumerate}
\end{prop}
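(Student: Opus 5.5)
Item \ref{it:K1sets} is immediate from Proposition~\ref{prop:characterizationOfKALevel2}: for $A\in X_1$, either $F=\{n\setsep |A_n|=\omega\}$ is finite, say $|F|=k-1$, and then $K_A\sim[0,\omega\cdot k]$, or $F$ is infinite and then $K_A\sim[0,\omega^2]$. The remaining items are therefore statements about the set $F$, and I would prove them by reducing to the sets of Proposition~\ref{prop:difHard} and Lemma~\ref{lem:completeSetInductionStep}.

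Let $U:=\{x\in 2^\Nat\setsep x_m=1\text{ for infinitely many }m\}$, which is $\boldsymbol{\Pi}^0_2$-complete by \cite[p.~179]{Kechrisbook}. Identify $X_1=2^{\Nat\times\Nat}$ with $(2^\Nat)^\Nat$ via $A\mapsto (a_{n,\cdot})_{n\in\Nat}$. This is a homeomorphism, and under it the row $a_{n,\cdot}$ lies in $U$ if and only if $A_n$ is infinite, i.e.\ iff $n\in F$. Consequently:
\begin{itemize}
\item the set in \ref{it:K1sets+1} is $\{A\setsep \forall n\; a_{n,\cdot}\notin U\}=A_U$, which is $\boldsymbol{\Pi}^0_3$-complete by Proposition~\ref{prop:difHard}\ref{it:PiComplete};
\item the set in \ref{it:K_1sets+D1} is $\{A\setsep |\{n\setsep a_{n,\cdot}\in U\}|=k-1\}=B_{U,k-1}$, which for $k\geq 2$ is $D_2(\boldsymbol{\Pi}^0_3)$-complete by Proposition~\ref{prop:difHard}\ref{it:DifComplete} applied with $k-1\in\Nat$;
\item the set in \ref{it:K1sets+2} is $\{A\setsep a_{n,\cdot}\in U\text{ for infinitely many }n\}$.
\end{itemize}

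For the last of these, apply Lemma~\ref{lem:completeSetInductionStep} with $X_m=2^\Nat$, $U_m=U$, $\alpha_m=2$ and $\alpha=3=\sup\{\alpha_m+1\}$; this gives $\boldsymbol{\Pi}^0_{4}$-completeness. Completeness in particular gives hardness, which is all that is claimed.

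The only point needing any care is checking that the identification of $X_1$ with $(2^\Nat)^\Nat$ is a homeomorphism of zero-dimensional Polish spaces and that the lemma's hypotheses ($\alpha\geq 2$, $\alpha_m\geq 1$ nondecreasing) are met, which they are. I do not expect a real obstacle: all the work is in Proposition~\ref{prop:characterizationOfKALevel2} and in the earlier hardness results.
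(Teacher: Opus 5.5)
Your proposal is correct and follows the paper's proof essentially step for step. Both use the same $\boldsymbol{\Pi}^0_2$-complete set $U$ and the identification of $X_1$ with $(2^\Nat)^\Nat$ by rows. Both invoke Proposition~\ref{prop:difHard} for items (iii) and (iv) and Lemma~\ref{lem:completeSetInductionStep} (with $\alpha_m=2$, $\alpha=3$) for item (ii), and use Proposition~\ref{prop:characterizationOfKALevel2} to convert homeomorphism type into the cardinality of $F$.
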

\begin{proof}Let us recall that the set $U:=\{x\in 2^\Nat\setsep \forall m_0\in\Nat\; \exists m\geq m_0:\; x_m=1\}$ is $\boldsymbol{\Pi}_2^0$-complete, see \cite[p. 179]{Kechrisbook}. Hence, using Proposition~\ref{prop:difHard} we deduce that the sets
\[
P_3:=\{A\in (2^{\Nat})^\Nat\setsep \forall n\in\Nat\; A_n\notin U\},\quad  P_3^k:=\{A\in (2^\Nat)^\Nat\setsep |\{n\in\Nat\setsep A_n\in U\}|=k\}
\]
are $\boldsymbol{\Pi}_3^0$-complete and $D_2(\boldsymbol{\Pi}_3^0)$-complete, respectively (for every $k\geq 1$). Moreover, by Lemma~\ref{lem:completeSetInductionStep} the set
\[
Q_4:=\{A\in (2^{\Nat})^\Nat\setsep A_n\in U\text{ for infinitely many $n$'s}\}
\]
is $\boldsymbol{\Pi}_4^0$-complete. We shall identify each $A\in (2^\Nat)^\Nat$ with $A\in 2^{\Nat\times \Nat} = X_1$ and we note that for $A\in 2^{\Nat\times\Nat}$ and $n\in\Nat$ we have $A_n\in U$ iff the set $\{m\in\Nat\setsep a_{n,m}=1\}$ is infinite.

Now, using Proposition~\ref{prop:characterizationOfKALevel2} we obtain that \ref{it:K1sets} holds and moreover,
\begin{itemize}
    \item $A\in P_3$ if and only if $K_A$ is homeomorphic to $[0,\omega]$,
    \item for every $k\geq 1$, $A\in P_3^k$ if and only if $K_A$ is homeomorphic to $[0,\omega\cdot (k+1)]$,
    \item $A\in Q_4$ if and only if $K_A$ is homeomorphic to $[0,\omega^2]$.
\end{itemize}
Hence, since the set $P_3$ is $\boldsymbol{\Pi}^0_3$-hard, we obtain that \ref{it:K1sets+1} holds and since $P_3^k$ are $D_2(\boldsymbol{\Pi}_3^0)$-hard sets for every $k\in\Nat$, \ref{it:K_1sets+D1} holds. Finally, since $Q_4$ is $\boldsymbol{\Pi}^0_4$-hard, we obtain that \ref{it:K1sets+2} holds.
\end{proof}

To summarize the current state of the argument, we summarize the results obtained so far. The remaining successor step will be addressed in the following subsection.

\begin{thm}\label{thm:mainBInitialStep} Given $k\in\Nat$, the isometry class $\isomtrclass{\C(\omega\cdot k)}$ in $\B$ is $\boldsymbol{\Pi}^0_3$-hard if $k=1$ and $D_2(\boldsymbol{\Pi}^0_3)$-hard if $k\geq 2$.
\end{thm}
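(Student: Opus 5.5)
The plan is to combine Lemma~\ref{lem:R1isok} with Proposition~\ref{prop:R1step}. Lemma~\ref{lem:R1isok} applied to $R_1$ from Definition~\ref{def:R1} gives a continuous map $F\colon X_1\to\B$ with $X_{F(x)}\equiv\C(K_x)$ for every $x\in X_1=2^{\Nat\times\Nat}$. Here $K_x$ is the Stone space of the Boolean algebra associated to $x$. The key step is to show that for each $k\in\Nat$
\[
F^{-1}\big(\isomtrclass[\B]{\C(\omega\cdot k)}\big)=\{x\in X_1\setsep K_x\sim[0,\omega\cdot k]\}.
\]
Once this equality is established, $F$ is a continuous reduction of the set on the right to the isometry class. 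The set on the right is $\boldsymbol{\Pi}^0_3$-hard for $k=1$ by Proposition~\ref{prop:R1step}\ref{it:K1sets+1} and $D_2(\boldsymbol{\Pi}^0_3)$-hard for $k\geq 2$ by Proposition~\ref{prop:R1step}\ref{it:K_1sets+D1}. Hardness transfers along continuous reductions: if $A\leq_W P$ via $g$ and $P=F^{-1}(Q)$, then $A\leq_W Q$ via $F\circ g$. Hence the isometry class inherits the corresponding hardness.

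To verify the displayed equality, first take $x$ with $K_x\sim[0,\omega\cdot k]$. Then $X_{F(x)}\equiv\C(K_x)\equiv\C(\omega\cdot k)$, so $F(x)$ lies in the isometry class. Conversely, suppose $X_{F(x)}\equiv\C(\omega\cdot k)$. Then $\C(K_x)\equiv\C([0,\omega\cdot k])$, and the Banach--Stone theorem gives $K_x\sim[0,\omega\cdot k]$.

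Alternatively, one can avoid Banach--Stone and use only invariants already computed in the paper. By Proposition~\ref{prop:R1step}\ref{it:K1sets}, $K_x$ is homeomorphic to some $[0,\omega\cdot m]$ or to $[0,\omega^2]$. By Corollary~\ref{cor:SzlenkDerivative2}, the Szlenk derivative $s_2^1(B_{X^*})$ of these spaces distinguishes them. For $[0,\omega\cdot m]$ it is the unit ball of an $m$-dimensional space. For $[0,\omega^2]$ it is infinite-dimensional. These derivatives are isometric invariants, so they pin down $m=k$.

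No step here is a genuine obstacle, since all the substantial work is already contained in Proposition~\ref{prop:characterizationOfKALevel2}, Proposition~\ref{prop:R1step} and Lemma~\ref{lem:R1isok}. The only point needing care is the one above: the class of $K_x$ is determined exactly by the isometry type of $X_{F(x)}$, and Banach--Stone, or equivalently the Szlenk invariant, supplies this.
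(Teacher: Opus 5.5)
Your proposal is correct and is essentially the paper's proof: the continuous map $F\colon X_1\to\B$ from Lemma~\ref{lem:R1isok} reduces the hard sets of Proposition~\ref{prop:R1step}\ref{it:K1sets+1} and \ref{it:K_1sets+D1} to the isometry classes. The only difference is that you explicitly justify the identity $F^{-1}(\langle \C(\omega\cdot k)\rangle_{\equiv})=\{x\in X_1\setsep K_x\sim[0,\omega\cdot k]\}$, via Banach--Stone or alternatively via the Szlenk derivatives of Corollary~\ref{cor:SzlenkDerivative2}, whereas the paper states it without comment.
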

\begin{proof}
Pick the mapping $R_1$ from Definition~\ref{def:R1}. By Lemma~\ref{lem:R1isok}, there exists a continuous mapping $F:X_1\to \B$ such that $F(x)\in \isomtrclass{\C(K_x)}$ for every $x\in X_1$. Hence, for every compact space $K$ we have 
\[
F^{-1}(\isomtrclass{\C(K)}) = \{x\in X_1\setsep K_x\text{ is homeomorphic to }K\}
\]
and an application of Proposition~\ref{prop:R1step}\ref{it:K1sets+1} and \ref{it:K_1sets+D1} gives  that $F$ is a continuous reduction from a $\boldsymbol{\Pi}^0_3$-hard set to $\isomtrclass{\C(\omega)}$ and, for any $k\geq 2$, from a $D_2(\boldsymbol{\Pi}^0_3)$-hard set to $\isomtrclass{\C(\omega\cdot k)}$.
\end{proof}

\subsection{Successor step}\label{subsec:successor}

Now, given a mapping $R\colon X\times M\to\pot(N)$ we would like to have a way to produce another mapping $R^+$ in such a way that compact spaces $K_{R^+,x}$ are more complicated than compact spaces $K_{R,x}$. This is achieved using the following construction.

\begin{definition}\label{def:succ}Let $X$ be a Polish space and $M$, $N$ be infinite sets with $M$ countable. Let $R\colon X\times M\to\pot(N)$ be an arbitrary mapping. Then we define $M^+:=(M\times \Nat)\cup \{\infty\}$, $N^+:=(N\times \Nat)\cup \{\infty\}$ and $R^+:X^\Nat\times M^+\to \pot(N^+)$ for $x = (x_i)_{i\in\Nat}\in X^\Nat$ and $m\in M^+$ as
    \[
        R^+(x,m)=
        \begin{cases}
            R(x_i,m')\times \{i\},&\text{if }m=(m',i)\in M\times\Nat,\\
            N^+,&\text{if }m=\infty.
        \end{cases}
\]
We say that $R^+$ is the \emph{successor of the mapping $R$}.
\end{definition}

The idea behind this construction is the following.

\begin{lemma}\label{lem:R+IsOnePointCompactification}Let $X$ be a Polish space and $M$, $N$ be infinite sets with $M$ countable. Let $R\colon X\times M\to\pot(N)$ be a mapping that meets the conditions \ref{it:R1} and \ref{it:R2}. Then for $R^+:X^\Nat\times M^+\to \pot(N^+)$, the successor of $R$, and for any  $x\in X^\Nat$, $K_{R^+,x}$ is homeomorphic to the one-point compactification of the disjoint union of compact spaces $K_{R,x_i}$, $i\in \Nat$.
\end{lemma}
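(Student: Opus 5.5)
The plan is to identify the Boolean algebra $B_{R^+,x}$ with the weak product $\Pi^w_{i\in\Nat} B_{R,x_i}$ and then apply Proposition~\ref{prop:onePointCompactification}. Fix $x=(x_i)_{i\in\Nat}\in X^\Nat$. By definition, $B_{R^+,x}$ is the subalgebra of $\pot(N^+)$ generated by the sets $R(x_i,m')\times\{i\}$ together with $N^+$. For each $i$, condition \ref{it:R2} gives some $m_i$ with $R(x_i,m_i)=N$, so the slab $N\times\{i\}$ lies in $B_{R^+,x}$. Consequently the singleton $\{\infty\}=N^+\setminus\bigcup_i (N\times\{i\})$ does \emph{not} lie in it, since only finite unions are available. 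Instead, every element is determined by its traces on the slabs $N\times\{i\}$ together with the bit telling whether $\infty$ belongs to it.

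First step: define $\Phi\colon B_{R^+,x}\to\Pi_{i}B_{R,x_i}$ by $\Phi(b)_i=\{z\in N\setsep (z,i)\in b\}$. Slicing commutes with complements, with $N^+\mapsto N$ in each coordinate, and with intersections, so $\Phi$ is a Boolean homomorphism. Each generator $R(x_i,m')\times\{i\}$ is sent into $B_{R,x_i}$ in coordinate $i$ and to $\emptyset$ elsewhere, so $\Phi$ indeed lands in $\Pi_i B_{R,x_i}$.

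Second step: show the image lies in $\Pi^w$ and that $\Phi$ is injective. Let $\mathcal D$ be the family of $b\subseteq N^+$ such that either $\infty\notin b$ and $b$ meets only finitely many slabs, each trace lying in $B_{R,x_i}$, or $\infty\in b$ and $b$ contains all but finitely many slabs, the remaining traces lying in $B_{R,x_i}$. The family $\mathcal D$ contains the generators and $N^+$, and it is closed under complements and intersections. Hence $B_{R^+,x}\subseteq\mathcal D$, so $\Phi$ maps into $\Pi^w$.

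Injectivity follows from this description: $b$ is determined by its slices together with the $\infty$-bit, and that bit is forced by the slices. It equals $0$ when cofinitely many slices are $\emptyset$ and $1$ when cofinitely many are $N$. Each $B_{R,x_i}$ is nontrivial because it contains $N\neq\emptyset$, so these two alternatives cannot both hold.

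Third step (surjectivity onto $\Pi^w$): given an element with finitely many nonzero coordinates $a_i\in B_{R,x_i}$, the set $\bigcup_i a_i\times\{i\}$ belongs to $B_{R^+,x}$. This holds because the map $c\mapsto c\times\{i\}$ sends generators of $B_{R,x_i}$ to generators of $B_{R^+,x}$, and it converts complements within $N$ into differences with $N\times\{i\}$. The cofinite-$N$ elements are complements in $N^+$ of the former. Finally, apply Proposition~\ref{prop:onePointCompactification} with $I=\Nat$ infinite: the Stone space of $\Pi^w_i B_{R,x_i}$ is the one-point compactification of $\bigsqcup_i K_{R,x_i}$.

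I expect the only delicate point to be the bookkeeping in the second step, namely verifying that $\mathcal D$ is closed under the Boolean operations and that the $\infty$-bit is uniquely determined. I would handle this by a direct case check, using \ref{it:R2} to guarantee that each slab is present and that each $B_{R,x_i}$ is nontrivial.
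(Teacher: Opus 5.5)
Your proposal is correct and follows the same route as the paper. You identify $B_{R^+,x}$ with the weak product $\Pi^w_{i\in\Nat}B_{R,x_i}$, using \ref{it:R2} exactly to put the slabs $N\times\{i\}$ into $B_{R^+,x}$, and then invoke Proposition~\ref{prop:onePointCompactification}. Your slicing map $\Phi$ is the inverse of the paper's $\varphi$, and your family $\mathcal D$ plus the surjectivity step just spells out the paper's claim that $B_{R^+,x}$ consists of the finite unions $\bigcup_{i\in F}(B_i\times\{i\})$ and their complements.
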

\begin{proof}First, we note that by definition $B_{R,x}$ always contains $\emptyset$ and $N$, so $K_{R,x}\neq\emptyset$ for every $x\in X$. Fix $x = (x_i)_{i\in\Nat}\in X^\Nat$. Then the Boolean algebra $B_{R^+,x}$ consists exactly of sets of the form $\bigcup_{i\in F}(B_i\times\{i\})$ where $B_i\in B_{R,x_i}$, $i\in F$, and $F\subseteq\Nat$ is a finite set, and of their complements (here we are using the assumption that $R$ meets \ref{it:R2} as otherwise it could happen that $N\in B_{R,x_i}$ by definition, but $ N \times \{ i \} \notin B_{R^+,x}$). Given $i\in I$ and $B_i\in B_{R,x_i}$ we consider $\varphi(B_i\times \{i\})\in \Pi_{i\in I}^w\; B_{R,x_i}$ defined as $\varphi(B_i\times \{i\})(i)=B_i$ and $\varphi(B_i\times \{i\})(j)=\emptyset$ if $j\neq i$. Now, it is easy to see that $\varphi$ naturally extends to an isomorphism between $B_{R^+,x}$ and $\Pi_{i\in I}^w\; B_{R,x_i}$, so application of Proposition~\ref{prop:onePointCompactification} finishes the proof.
\end{proof}

The following shows that the construction of a successor of the mapping $R$ is well-designed in the sense it preserves condition \ref{it:R4} and therefore also continuity of the induced map.

\begin{lemma}\label{lem:R+isok}Let $X$ be a Polish space and $M$, $N$ be infinite sets with $M$ countable. Let $R\colon X\times M\to\pot(N)$ be a mapping that meets conditions \ref{it:R1} and \ref{it:R4}. Then $R^+$ meets conditions \ref{it:R1}, \ref{it:R2} and \ref{it:R4} and therefore there exists a continuous mapping $F:X^\Nat\to \P$ such that $X_{F(x)}$ is isometric to $\C(K_x)$ for every $x\in X^\Nat$.

If moreover $R$ meets the condition \ref{it:R3}, then $R^+$ meets condition \ref{it:R3} as well and so the mapping $F$ has values in $\B\subseteq \P$.
\end{lemma}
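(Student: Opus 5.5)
We verify the four conditions for $R^+$ directly, then invoke Lemma~\ref{lem:toB} and Lemma~\ref{lem:Fcont} applied to $R^+$ on the Polish space $X^\Nat$ (with $M^+$ countable and $N^+$ infinite).

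\ref{it:R1}: fix $x=(x_i)\in X^\Nat$. The intersection of two sets of the form $R(x_i,m')\times\{i\}$ and $R(x_j,m'')\times\{j\}$ is empty when $i\neq j$. When $i=j$ it equals $(R(x_i,m')\cap R(x_i,m''))\times\{i\}$, which by \ref{it:R1} for $R$ is either empty or of the form $R(x_i,m''')\times\{i\}$. Intersections with $R^+(x,\infty)=N^+$ are trivial. \ref{it:R2} holds with $m=\infty$.

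\ref{it:R3}, assuming \ref{it:R3} for $R$: suppose $\lambda_\infty\chi_{N^+}+\sum\lambda_{(m',i)}\chi_{R(x_i,m')\times\{i\}}=0$. Evaluating at the point $\infty\in N^+$, which lies in no $R(x_i,m')\times\{i\}$, gives $\lambda_\infty=0$. Restricting the remaining relation to $N\times\{i\}$ for each $i$ and using linear independence of $\{\chi_{R(x_i,m')}\}_{m'}$ gives all other coefficients zero.

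\ref{it:R4}, which is the only step needing care: given a finite $G\subseteq M^+$ and $x\in X^\Nat$, let $I$ be the finite set of indices $i$ with $(m',i)\in G$ for some $m'$, and put $G_i=\{m'\setsep (m',i)\in G\}$. For each $i\in I$, apply \ref{it:R4} for $R$ to get an open $U_i\ni x_i$. Take $U=\{y\in X^\Nat\setsep y_i\in U_i \text{ for } i\in I\}$, which is open in the product topology. For $y\in U$, choose finite $H_i\supseteq G_i$ with $\{S^{H_i}_z(y_i)\setsep z\in N\}=\{S^{H_i}_z(x_i)\setsep z\in N\}$, and set $H=\{\infty\}\cup\bigcup_{i\in I}H_i\times\{i\}\supseteq G$. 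Now compute the sets $S^H_w$ for $w\in N^+$:
\begin{itemize}
\item at $w=\infty$ one gets $\{\infty\}$;
\item at $w=(z,i)$ with $i\in I$ one gets $\{\infty\}\cup S^{H_i}_z(\cdot)\times\{i\}$;
\item at $w=(z,i)$ with $i\notin I$ one gets $\{\infty\}$.
\end{itemize}
Hence $\{S^H_w(y)\setsep w\in N^+\}=\{S^H_w(x)\setsep w\in N^+\}$. The subtle point is that the index sets $H_i$ depend on $y$, but this is exactly the flexibility \ref{it:R4} allows. Note also that we do not need \ref{it:R2} for $R$ here.

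With \ref{it:R1}, \ref{it:R2} and \ref{it:R4} established, Lemma~\ref{lem:Fcont} gives that the induced map $F$ is continuous, and Lemma~\ref{lem:toB} gives $X_{F(x)}\equiv\C(K_{R^+,x})$. Under \ref{it:R3}, Lemma~\ref{lem:toB} further gives that $F$ takes values in $\B$.
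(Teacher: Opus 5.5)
Your proof is correct and follows the paper's argument. For \ref{it:R4} you take the product neighbourhood over the finitely many indices occurring in $G$ and the $y$-dependent set $H=\{\infty\}\cup\bigcup_i H_i\times\{i\}$, then compute $S^H_w$ case by case, exactly as the paper does; the only differences are cosmetic: you use per-index sets $G_i$ instead of first enlarging $G$ to the form $\{\infty\}\cup(G'\times N')$, and you write out \ref{it:R1}--\ref{it:R3}, which the paper leaves to the reader.
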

\begin{proof}The proof that $R^+$ satisfies conditions \ref{it:R1} and \ref{it:R2} is straightforward and therefore omitted. In order to prove $R^+$ meets \ref{it:R4}, pick $x\in X^\Nat$ and finite $G\subseteq M^+$. We may without loss of generality enlarge the set $G$ and so we assume $G = \{\infty\}\cup (G'\times N')$ for some finite sets $G'\subseteq M$ and $N'\subseteq \Nat$. Now, using the assumption for each $i\in N'$ find an open neighborhood $U_i\subseteq X$ of the point $x_i$ such that for every $y_i\in U_i$ there exists a finite set $H_i(y_i)\supseteq G'$ satisfying $\{S_z^{H_i(y_i)}(y_i)\setsep z\in N\} = \{S_z^{H_i(y_i)}(x_i)\setsep z\in N\}$. Put $U:=\bigcap_{i\in N'} \pi_i^{-1}(U_i)$ and pick any $y\in U$. Consider now the set $H:=\{\infty\}\cup \bigcup_{i\in N'} (H_i(y_i)\times \{i\})\supseteq G$. Note that $S_\infty^H(y) = \{\infty\} = S_\infty^H(x)$ and for $z = (z_0,i_0)\in N\times \Nat$ we have
\[\begin{split}
S_z^H(y)  & = \{\infty\}\cup \{(m,i)\in H\setsep (z_0,i_0)\in R^+(y,(m,i))\}\\
&  = \{\infty\}\cup \{(m,i_0)\in H\setsep z_0\in R(y_{i_0},m)\},
\end{split}\]
so if $i_0\notin N'$ we obtain $S_z^H(y) = \{\infty\} = S_z^H(x)$ and if $i_0\in N'$ we obtain
\[\begin{split}
 S_z^H(y)\setminus\{\infty\} & = \{(m,i_0)\setsep m\in H_{i_0}(y_{i_0})\;\&\; z_0\in R(y_{i_0},m)\}\\
 & = \{(m,i_0)\setsep m\in S_{z_0}^{H_{i_0}(y_{i_0})}(y_{i_0})\} = S_{z_0}^{H_{i_0}(y_{i_0})}(y_{i_0})\times \{i_0\},
\end{split}\]
and therefore, since $\{S_{z_0}^{H_{i_0}(y_{i_0})}(y_{i_0})\setsep z_0\in N\} = \{S_{z_0}^{H_{i_0}(y_{i_0})}(x_{i_0})\setsep z_0\in N\}$ for every $i_0\in N'$, we obtain
\[
\{S_z^H(y)\setsep z\in N^+\} = \{S_z^H(x)\setsep z\in N^+\},
\]
which is what was to be proved in order to verify condition \ref{it:R4} for $R^+$.

The rest of the proof is immediate, using Lemma~\ref{lem:toB} and Lemma~\ref{lem:Fcont}.
\end{proof}

The following shows that complexity of sets corresponding to the successor of a suitable mapping $R$ is increasing.

\begin{prop}\label{prop:sucStep}Let $X$ be an uncountable Polish space and $M$, $N$ be infinite sets with $M$ countable and $R\colon X\times M\to\pot(N)$ be a mapping satisfying conditions \ref{it:R1}, \ref{it:R2} and $\alpha$,$\beta$ be countable nonzero ordinals. Assume the following holds.
\begin{enumerate}[label=(\roman*)]
    \item\label{it:KalphaMoznosti} For every $x\in X$ the compact $K_x$ is homeomorphic either to $[0,\omega^\alpha\cdot k]$ for some $k\in\Nat$ or to $[0,\omega^{\alpha + 1}]$.
    \item\label{it:hardAlphaSet} The set $\{x\in X\setsep K_x\text{ is homeomorphic to }[0,\omega^{\alpha +1}]\}$ is $\boldsymbol{\Pi}_\beta^0$-hard.
\end{enumerate}
Then for the mapping $R^+:X^\Nat\times M^+\to\pot(N^+)$ the following holds.
\begin{enumerate}[label=(\roman*')]
    \item\label{it:Kalpha+OnePointCompactification} For every $y\in X^\Nat$ the compact $K_y$ is homeomorphic either to $[0,\omega^{\alpha+1}\cdot k]$ for some $k\in\Nat$ or to $[0,\omega^{\alpha + 2}]$.
    \item\label{it:alpha+2} The set $\{y\in X^\Nat\setsep K_y\text{ is homeomorphic to }[0,\omega^{\alpha +2}]\}$ is $\boldsymbol{\Pi}_{\beta+2}^0$-hard.
    \item\label{it:alpha+1} The set $\{y\in X^\Nat\setsep K_y\text{ is homeomorphic to }[0,\omega^{\alpha +1}]\}$ is $\boldsymbol{\Pi}_{\beta+1}^0$-hard.
    \item\label{it:Dalpha+1} For every $k\geq 2$, the set $\{y\in X^\Nat\setsep K_y\text{ is homeomorphic to }[0,\omega^{\alpha +1}\cdot k]\}$ is $D_2(\boldsymbol{\Pi}_{\beta+1}^0)$-hard.
\end{enumerate}
\end{prop}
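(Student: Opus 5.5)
By Lemma~\ref{lem:R+IsOnePointCompactification}, for every $y=(y_i)_{i\in\Nat}\in X^\Nat$ the compact space $K_y=K_{R^+,y}$ is homeomorphic to the one-point compactification of the disjoint union of the spaces $K_{y_i}=K_{R,y_i}$, $i\in\Nat$. The plan is to read off the Cantor--Bendixson structure of $K_y$ from that of the pieces and then transfer hardness through Proposition~\ref{prop:difHard} and Lemma~\ref{lem:completeSetInductionStep}.

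\textbf{The topological computation.} By \ref{it:KalphaMoznosti}, each $K_{y_i}$ is either $[0,\omega^\alpha\cdot k_i]$, which has height $\alpha+1$ and no points of $D^{\alpha+1}$, or $[0,\omega^{\alpha+1}]$, whose derivative $D^{\alpha+1}$ is a single point. For a point of $K_y$ other than the compactification point $\infty$, its Cantor--Bendixson rank is computed inside the clopen piece containing it. Hence
\[
D^{\alpha+1}(K_y)\setminus\{\infty\}
\]
consists of exactly one point for each $i$ in the set
\[
J(y):=\{i\setsep K_{y_i}\sim[0,\omega^{\alpha+1}]\}.
\]
The point $\infty$ is in every derivative $D^\gamma(K_y)$ with $\gamma\le\alpha+1$, because every neighbourhood of $\infty$ contains infinitely many pieces $K_{y_i}$, each of which has nonempty $D^{\alpha}$. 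Now there are two cases.
\begin{itemize}
\item If $J(y)$ is finite with $|J(y)|=k-1$, then $D^{\alpha+1}(K_y)$ has exactly $k$ points, so $K_y$ has height $\alpha+2$. By the Mazurkiewicz--Sierpi\'nski classification, $K_y\sim[0,\omega^{\alpha+1}\cdot k]$.
\item If $J(y)$ is infinite, then $D^{\alpha+1}(K_y)$ is an infinite sequence of points converging to $\infty$. Thus $D^{\alpha+2}(K_y)=\{\infty\}$ and $K_y\sim[0,\omega^{\alpha+2}]$.
\end{itemize}
This proves \ref{it:Kalpha+OnePointCompactification}.

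\textbf{Reducing to the complexity lemmas.} Put
\[
U:=\{x\in X\setsep K_x\sim[0,\omega^{\alpha+1}]\},
\]
which is $\boldsymbol{\Pi}^0_\beta$-hard by \ref{it:hardAlphaSet}. By the computation above:
\begin{itemize}
\item the set in \ref{it:alpha+1} equals $A_U=\{y\setsep \forall n\; y_n\notin U\}$;
\item the set in \ref{it:Dalpha+1} for $k\ge2$ equals $B_{U,k-1}$;
\item the set in \ref{it:alpha+2} equals $\{y\setsep y_n\in U\text{ for infinitely many }n\}$.
\end{itemize}
Proposition~\ref{prop:difHard} then gives that the first set is $\boldsymbol{\Pi}^0_{\beta+1}$-hard and that the second is $D_2(\boldsymbol{\Pi}^0_{\beta+1})$-hard, since $k-1\ge1$. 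For the third set, Lemma~\ref{lem:completeSetInductionStep} applied with the constant sequence $\alpha_m=\beta$ gives that it is $\boldsymbol{\Pi}^0_{\beta+2}$-hard.

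\textbf{Technical points.} Proposition~\ref{prop:difHard} and Lemma~\ref{lem:completeSetInductionStep} are stated for zero-dimensional Polish spaces, whereas $X$ is only assumed to be uncountable Polish. To get around this, I would fix a continuous injection of $\Nat^\Nat$ (or an embedding of $2^\Nat$) into $X$ and consider $U'$, the preimage of $U$. Since $U$ is hard, every $\boldsymbol{\Pi}^0_\beta$ subset of a zero-dimensional space already reduces to $U$ through a map into $X$. The argument of Proposition~\ref{prop:difHard} only uses such reductions coordinatewise, so it works verbatim with target $X^\Nat$.

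Lemma~\ref{lem:completeSetInductionStep} also formally requires $\beta+1=\alpha\ge2$. For $\beta\ge1$ this holds automatically.

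I expect the main obstacle to be the Cantor--Bendixson bookkeeping at the point $\infty$. It must be checked carefully that $\infty$ survives exactly to level $\alpha+1$ in the finite case, so that it contributes one extra point to $D^{\alpha+1}(K_y)$, and exactly to level $\alpha+2$ in the infinite case.
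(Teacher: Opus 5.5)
Your proposal is correct and follows the paper's proof. Via Lemma~\ref{lem:R+IsOnePointCompactification} you express the three homeomorphism types of $K_y$ through $|\{i\setsep y_i\in U\}|$ being $0$, $k-1$ or infinite, then invoke Proposition~\ref{prop:difHard} and Lemma~\ref{lem:completeSetInductionStep}; your Cantor--Bendixson bookkeeping at $\infty$, which the paper leaves implicit, is right.

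Your zero-dimensionality remark addresses a point the paper passes over silently. This is harmless there, since the spaces $X_n$ in Theorem~\ref{thm:finiteOrdinalB} are zero-dimensional. The fix that works is the coordinatewise one: take a complete set $V\subseteq 2^\Nat$ and a reduction $f\colon 2^\Nat\to X$ of $V$ to $U$, and pull back along $(x_n)_n\mapsto(f(x_n))_n$. Taking the preimage of $U$ under an embedding of $2^\Nat$ would not work, since that preimage need not be hard.
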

\begin{proof}Given $y\in X^\Nat$, by Lemma~\ref{lem:R+IsOnePointCompactification} the compact space $K_y$ is homeomorphic to the one-point compactification of the compact spaces $K_{y_i}$, $i\in\Nat$. By \ref{it:hardAlphaSet}, the set $Z:=\{x\in X\setsep K_x\text{ is homeomorphic to }[0,\omega^{\alpha + 1}]\}$ is $\boldsymbol{\Pi}_{\beta}^0$-hard and we have
\begin{itemize}
    \item $K_y$ is homeomorphic to $[0,\omega^{\alpha + 1}]$ if and only if $y_i\notin Z$ for every $i\in \Nat$,
    \item more generally, for every $k\in\Nat$, compact space $K_y$ is homeomorphic to $[0,\omega^{\alpha + 1}\cdot k]$ if and only if $|\{i\in\Nat\setsep y_i\in Z\}| = k-1$,
    \item $K_y$ is homeomorphic to $[0,\omega^{\alpha + 2}]$ if and only if $\{i\in\Nat\setsep y_i\in Z\}$ is an infinite set.
\end{itemize}
This proves that \ref{it:Kalpha+OnePointCompactification} holds. Since $Z$ is $\boldsymbol{\Pi}_{\beta}^0$-hard, the set
\[
\{y\in X^\Nat\setsep K_y\sim[0,\omega^{\alpha +1}]\} = \{y\in X^\Nat\setsep \forall i\in \Nat\;y_i\notin Z\}
\]
is $\boldsymbol{\Pi}_{\beta+1}^0$-hard by Proposition~\ref{prop:difHard}, which proves \ref{it:alpha+1} and for every $k\geq 2$ the set
\[
\{y\in X^\Nat\setsep K_y\sim [0,\omega^{\alpha +1}\cdot k]\} = \{y\in X^\Nat\setsep |\{i\in\Nat\setsep y_i\in Z\}| = k-1\}
\]
is $D_2(\boldsymbol{\Pi}_{\beta+1}^0)$-hard by Proposition~\ref{prop:difHard}, which proves \ref{it:Dalpha+1}. Finally, the set
\[
\{y\in X^\Nat\setsep K_y\sim [0,\omega^{\alpha +2}]\} = \{y\in X^\Nat\setsep y_i\in Z \text{ for infinitely many $i$'s}\}
\]
is $\boldsymbol{\Pi}_{\beta+2}^0$-hard by Lemma~\ref{lem:completeSetInductionStep}, which proves \ref{it:alpha+2}.
\end{proof}

The main outcome of the section may be summarized by the following theorem from which the lower bound in Theorem~\ref{thm:main1Part2}~\ref{it:kIsOneComplete} and \ref{it:kIsNotOneComplete} for the case $\N=\B$ and compacta of finite height easily follow, we refer the reader to Section~\ref{sec:appl}, where the details are provided.

\begin{thm}\label{thm:finiteOrdinalB} For every $n\in\Nat$ there exists a Polish space $X_n$ and a continuous mapping $F_n:X_n\to \B$ with the following properties.
\begin{itemize}
    \item $F_n(x)\in \bigcup_{k\in\Nat}\isomtrclass{\C(\omega^n\cdot k)}\cup \isomtrclass{\C(\omega^{n+1})}$ for every $x\in X_n$;
    \item $F_n$ is a continuous reduction of some $\boldsymbol{\Pi}^0_{2n+1}$-hard set to $\isomtrclass{\C(\omega^n)}$;
    \item for every $k\geq 2$, $F_n$ is a continuous reduction of some $D_2(\boldsymbol{\Pi}^0_{2n+1})$-hard set to $\isomtrclass{\C(\omega^n\cdot k)}$;
    \item $F_n$ is a continuous reduction of some $\boldsymbol{\Pi}^0_{2n+2}$-hard set to $\isomtrclass{\C(\omega^{n+1})}$.
\end{itemize}     
\end{thm}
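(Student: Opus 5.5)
We argue by induction on $n$, building a sequence of maps $R_n\colon X_n\times M_n\to\pot(N_n)$ that satisfy \ref{it:R1}--\ref{it:R4}, and then take $F_n$ to be the map induced by $R_n$. The base case is $R_1$ from Definition~\ref{def:R1} on $X_1=2^{\Nat\times\Nat}$. For $n\geq1$ we set $X_{n+1}:=X_n^\Nat$ and $R_{n+1}:=R_n^+$, the successor of $R_n$ from Definition~\ref{def:succ}. Alongside the construction we carry the following inductive hypothesis $(H_n)$:
\begin{itemize}
\item every $K_{R_n,x}$ is homeomorphic to $[0,\omega^n\cdot k]$ for some $k\in\Nat$ or to $[0,\omega^{n+1}]$;
\item the set $\{x\setsep K_x\sim[0,\omega^{n+1}]\}$ is $\boldsymbol{\Pi}^0_{2n+2}$-hard;
\item the set $\{x\setsep K_x\sim[0,\omega^n]\}$ is $\boldsymbol{\Pi}^0_{2n+1}$-hard;
\item for $k\geq2$ the set $\{x\setsep K_x\sim[0,\omega^n\cdot k]\}$ is $D_2(\boldsymbol{\Pi}^0_{2n+1})$-hard.
\end{itemize}

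For $n=1$, $(H_1)$ is exactly Proposition~\ref{prop:R1step}, and Lemma~\ref{lem:R1isok} gives \ref{it:R1}--\ref{it:R4} for $R_1$. For the inductive step, Lemma~\ref{lem:R+isok} shows that $R_n^+$ again satisfies \ref{it:R1}, \ref{it:R2}, \ref{it:R3} and \ref{it:R4}. We then apply Proposition~\ref{prop:sucStep} with $\alpha=n$ and $\beta=2n+2$; its hypotheses are the first two items of $(H_n)$. Its conclusions \ref{it:Kalpha+OnePointCompactification}--\ref{it:Dalpha+1} give the four items of $(H_{n+1})$, since $\beta+1=2(n+1)+1$ and $\beta+2=2(n+1)+2$. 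The space $X_n^\Nat$ is Polish, and it is uncountable because $X_1$ is.

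With $(H_n)$ in hand, we let $F_n\colon X_n\to\B$ be the map induced by $R_n$. By Lemmas~\ref{lem:toB} and \ref{lem:Fcont} (via \ref{it:R1}--\ref{it:R4}), $F_n$ is continuous, takes values in $\B$, and satisfies $X_{F_n(x)}\equiv\C(K_x)$. For any countable compact $K$, the Banach--Stone theorem then gives
\[
F_n^{-1}(\isomtrclass{\C(K)})=\{x\in X_n\setsep K_x\sim K\}.
\]
Consequently the first bullet of the statement follows from the first item of $(H_n)$, and the three reduction bullets follow from the remaining three items.

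We expect no serious obstacle, because all the heavy lifting was done in the preceding lemmas. The points that need care are bookkeeping. First, the ordinal indices must match: $\beta=2n+2$ at level $n$ must produce exactly $2(n+1)+1$ and $2(n+1)+2$. Second, we must confirm that Proposition~\ref{prop:sucStep} needs only \ref{it:R1} and \ref{it:R2} for its hypotheses, while the remaining conditions are supplied by Lemma~\ref{lem:R+isok}. Third, the hardness notions are defined relative to zero-dimensional Polish spaces, and $X_n=(2^{\Nat\times\Nat})^{\Nat^{n-1}}$ is zero-dimensional, so all uses of Proposition~\ref{prop:difHard} and Lemma~\ref{lem:completeSetInductionStep} are legitimate.
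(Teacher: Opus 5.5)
Your proposal is correct and follows essentially the same route as the paper's proof. Both arguments use the same induction $R_{n+1}=R_n^+$ on $X_{n+1}=X_n^\Nat$, obtain \ref{it:R1}--\ref{it:R4} from Lemmas~\ref{lem:R1isok} and \ref{lem:R+isok}, get the hardness statements from Propositions~\ref{prop:R1step} and \ref{prop:sucStep} (with $\alpha=n$, $\beta=2n+2$), and pass to isometry classes via Lemmas~\ref{lem:toB} and \ref{lem:Fcont} together with Banach--Stone. Your additional checks (that $X_n$ is uncountable and zero-dimensional, as the proof of Proposition~\ref{prop:sucStep} implicitly requires, and that continuity of $F_n$ comes from Lemma~\ref{lem:Fcont}) make explicit what the paper leaves implicit.
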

\begin{proof}
We define sequence of mappings $R_n:X_n\times M_n\to\pot(M_n)$ satisfying conditions \ref{it:R1}, \ref{it:R2}, \ref{it:R3} and \ref{it:R4} as follows. For $n=1$ we use the mapping defined in Definition~\ref{def:R1} and if $R_n$ was already defined, we let $X_{n+1}=(X_n)^\Nat$ and $R_{n+1}$ be the successor of $R_n$, that is, $R_{n+1}:=(R_n)^+$. Note that then each $R_n$ satisfies conditions \ref{it:R1}, \ref{it:R2}, \ref{it:R3} and \ref{it:R4} by Lemma~\ref{lem:R1isok} and Lemma~\ref{lem:R+isok}. Moreover, using Proposition~\ref{prop:R1step} and Proposition~\ref{prop:sucStep} we inductively obtain that for every $n\in \Nat$ for the mapping $R_n$ the following holds.
\begin{itemize}
    \item For every $y\in X_n$ the compact $K_y$ is homeomorphic either to $[0,\omega^{n}\cdot k]$ for some $k\in\Nat$ or to $[0,\omega^{n+1}]$.
    \item The set $\{y\in X_n\setsep K_y\text{ is homeomorphic to }[0,\omega^{n+1}]\}$ is $\boldsymbol{\Pi}_{2n+2}^0$-hard.
    \item The set $\{y\in X_n\setsep K_y\text{ is homeomorphic to }[0,\omega^{n}]\}$ is $\boldsymbol{\Pi}_{2n+1}^0$-hard.
    \item For every $k\geq 2$, the set $\{y\in X_n\setsep K_y\text{ is homeomorphic to }[0,\omega^{n}\cdot k]\}$ is $D_2(\boldsymbol{\Pi}_{2n+1}^0)$-hard.
\end{itemize}
By Lemma~\ref{lem:toB}, for every $n\in\Nat$ there exists a continuous mapping $F_n:X_n\to \B$ such that $F_n(x)\in \isomtrclass{\C(K_x)}$ for every $x\in X_n$. Hence, for every compact space $K$ we have 
\[
F_n^{-1}(\isomtrclass{\C(K)}) = \{x\in X_n\setsep K_x\text{ is homeomorphic to }K\},
\]
which implies that $F_n$, $n\in\Nat$, are as required.
\end{proof}

\section{Proof of Theorem~\ref{thm:main1Part2}~\ref{it:kIsOneComplete}, \ref{it:kIsNotOneComplete} and further applications}\label{sec:appl}

We start by showing how the proof of Theorem~\ref{thm:main1Part2}~\ref{it:kIsOneComplete} and \ref{it:kIsNotOneComplete} may be deduced from results presented in previous sections. We start with the following (probably well-known) general lemma.

\begin{lemma}\label{lem:PSameAsB}
    Let $X$ and $Y$ be Polish spaces, $Z\subseteq Y$ and $f:X\to Y$ be a $\boldsymbol{\Sigma}^0_2$-measurable mapping. Let $\boldsymbol{\Gamma}$ be one of the classes $\boldsymbol{\Sigma}^0_\alpha, \boldsymbol{\Pi}^0_\alpha, D_2(\boldsymbol{\Pi}^0_\alpha)$ with $\omega\le\alpha < \omega_1$.
    Then whenever $f^{-1}(Z)$ is $\boldsymbol{\Gamma}$-hard, $Z$ is $\boldsymbol{\Gamma}$-hard as well.
\end{lemma}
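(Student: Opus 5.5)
The plan is to compose reductions, using that $\boldsymbol{\Sigma}^0_2$-measurable maps preserve the classes $\boldsymbol{\Sigma}^0_\alpha$, $\boldsymbol{\Pi}^0_\alpha$ for $\alpha\ge\omega$ under preimages, and then to apply Wadge's lemma in its Borel-game form. Fix a zero-dimensional Polish space $W$ and a set $C\in\boldsymbol{\Gamma}(W)$; we want a continuous reduction of $C$ to $Z$. Since $f^{-1}(Z)$ is $\boldsymbol{\Gamma}$-hard, there is a continuous $g\colon W\to X$ with $g^{-1}(f^{-1}(Z))=C$. The composition $h:=f\circ g\colon W\to Y$ is then $\boldsymbol{\Sigma}^0_2$-measurable and satisfies $h^{-1}(Z)=C$. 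The only remaining problem is that $h$ is not continuous.

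To fix this, I will change the topology. By the standard change-of-topology result (Kechris, Theorem 13.11 together with the fact that $\boldsymbol{\Sigma}^0_2$-measurable maps become continuous after refining by countably many $\boldsymbol{\Delta}^0_2$ sets), there is a finer zero-dimensional Polish topology $\tau'$ on $W$ in which $h$ is continuous. Moreover, every $\tau'$-open set is $\boldsymbol{\Sigma}^0_2$ in the original topology, so by induction every $\boldsymbol{\Sigma}^0_\xi(\tau')$ set is $\boldsymbol{\Sigma}^0_{1+\xi}$ in the original topology. The key point is therefore not to start from $C$ directly. Instead, given a target set $C\in\boldsymbol{\Gamma}(W)$, I will pick $C$ to be a $\boldsymbol{\Gamma}$-complete set and note that hardness only needs to be tested on one complete set in one zero-dimensional space. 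Since $\alpha\ge\omega$ gives $1+\alpha=\alpha$, the set $C$ viewed in $(W,\tau')$ still belongs to $\boldsymbol{\Gamma}$. This requires care: the identity $(W,\tau')\to(W,\tau)$ is continuous, so $C$ remains in $\boldsymbol{\Gamma}(W,\tau')$, and $\boldsymbol{\Gamma}$-hardness of $C$ in $(W,\tau')$ must be argued.

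The cleanest route is as follows. Let $C'\subseteq(W,\tau')$ be the same set. By Wadge's lemma for zero-dimensional Polish spaces, $C$ (with respect to $\tau$) is continuously reducible to $C'$ (with respect to $\tau'$) if $C'$ is not in the dual class $\check{\boldsymbol{\Gamma}}(\tau')$. Suppose instead that $C'\in\check{\boldsymbol{\Gamma}}(\tau')$. Then, by the remark above ($\check{\boldsymbol{\Gamma}}(\tau')\subseteq\check{\boldsymbol{\Gamma}}(\tau)$ because $1+\alpha=\alpha$, and $D_2$ and its dual behave in the same way), $C$ would lie in $\check{\boldsymbol{\Gamma}}(\tau)$, contradicting completeness. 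Hence there is a continuous $k\colon(W,\tau)\to(W,\tau')$ reducing $C$ to $C'$, and $h\circ k$ is a continuous reduction of $C$ to $Z$. This shows that $Z$ is $\boldsymbol{\Gamma}$-hard.

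I expect the main obstacle to be verifying the class transfer $\boldsymbol{\Sigma}^0_\xi(\tau')\subseteq\boldsymbol{\Sigma}^0_{1+\xi}(\tau)$, together with its consequence for $D_2(\boldsymbol{\Pi}^0_\alpha)$ and its dual class. This is exactly where the hypothesis $\alpha\ge\omega$ is used. A second point to check is that Wadge's lemma applies to these non-self-dual classes, which follows from Borel determinacy.
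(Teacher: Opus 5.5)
Your proof is correct and follows essentially the same route as the paper. Both arguments refine the topology on the zero-dimensional domain so that $h=f\circ g$ becomes continuous. Both then use that $\tau'$-open sets are low in the original Borel hierarchy, so for $\alpha\ge\omega$ the classes $\boldsymbol{\Gamma}$ and $\check{\boldsymbol{\Gamma}}$ are the same for $\tau$ and $\tau'$, and conclude by Wadge's lemma.

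The differences are cosmetic. The paper phrases the last step as ``$A$ remains $\boldsymbol{\Gamma}$-complete with respect to $\tau'$, so $h$ itself is the desired reduction''. You instead extract the reduction $k\colon(W,\tau)\to(W,\tau')$ explicitly and compose. Also, your bound that $\tau'$-open sets are $\boldsymbol{\Sigma}^0_2(\tau)$ is a correct slight sharpening of the paper's $\boldsymbol{\Sigma}^0_3(\tau)$.
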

\begin{proof}
Suppose $f^{-1}(Z)$ is $\boldsymbol{\Gamma}$-hard.
Fix some $\boldsymbol{\Gamma}$-complete set $A\subseteq 2^\Nat$; then there exists a continuous mapping $g:2^\Nat\to X$ with $g^{-1}(f^{-1}(Z)) = A$.
Let us denote $h:=f\circ g$, this is $\boldsymbol{\Sigma}^0_2$-measurable mapping.
Pick a countable base $\B_Y$ of the topology on $Y$ and denote the topology on $2^\Nat$ by $\tau$.
Then $h^{-1}(B)$ is an $F_\sigma$-set for every $B\in \B_Y$ and so there are closed sets $F_k^B$, $k\in\Nat$, such that $h^{-1}(B)$ is a countable union of sets from $\{F_k^B\setsep k\in\Nat\}$ for every $B\in \B_Y$.
By \cite[Lemma 13.2 and Lemma 13.3]{Kechrisbook}, the topology $\tau'$ on $2^\Nat$ generated by $\tau\cup \{F_k^B\setsep B\in\B_Y,\;k\in\Nat\}$ is a Polish topology. Every $\tau'$-open subset of $2^\Nat$ can be written as the countable union of sets of the form $U\cap \bigcap_{(B,k)\in H} F_k^B$ where $U\in\tau$ and $H\subseteq\B_Y\times \Nat$ is a finite set.
In particular, every $\tau'$-open set is a $\boldsymbol{\Sigma}^0_3$-set with respect to the topology $\tau$.
Moreover, the topology $\tau'$ is zero-dimensional because sets $F_k^B$ are $\tau'$-clopen, and the mapping $h:(2^\Nat,\tau')\to Y$ is continuous because all the sets $h^{-1}(B)$, $B\in\B_Y$, are $\tau'$-open.

So we have $\tau\subseteq \tau'\subseteq\boldsymbol{\Sigma}^0_3(2^\Nat,\tau)$.
It follows by an easy induction that $\boldsymbol{\Pi}^0_{k}(2^\Nat,\tau)\subseteq\boldsymbol{\Pi}^0_{k}(2^\Nat,\tau')\subseteq \boldsymbol{\Pi}^0_{k+2}(2^\Nat,\tau)$ and $\boldsymbol{\Sigma}^0_{k}(2^\Nat,\tau)\subseteq\boldsymbol{\Sigma}^0_{k}(2^\Nat,\tau')\subseteq \boldsymbol{\Sigma}^0_{k+2}(2^\Nat,\tau)$ for every $k\in\Nat$.
Therefore $\boldsymbol{\Pi}^0_{\alpha}(2^\Nat,\tau') = \boldsymbol{\Pi}^0_{\alpha}(2^\Nat,\tau)$ and $\boldsymbol{\Sigma}^0_{\alpha}(2^\Nat,\tau') = \boldsymbol{\Sigma}^0_{\alpha}(2^\Nat,\tau)$ for every $\alpha\geq \omega$, so we have $\boldsymbol{\Gamma}(2^\Nat,\tau) = \boldsymbol{\Gamma}(2^\Nat,\tau')$.

Recall that, in any zero-dimensional Polish space, a set is $\boldsymbol{\Gamma}$-complete if and only if it is in $\boldsymbol{\Gamma}$ and not in $\check{\boldsymbol{\Gamma}}$ (in the case $\Gamma=\boldsymbol{\Sigma}^0_\alpha$ or $\Gamma=\boldsymbol{\Pi}^0_\alpha$, see e.g.~\cite[Theorem~22.10]{Kechrisbook}; the case $\Gamma=D_2(\boldsymbol{\Pi}^0_\alpha)$ is analogous).
Since by the above these two notions coincide for topologies $\tau$ and $\tau'$, we obtain that the set $h^{-1}(Z) = A$ is $\boldsymbol{\Gamma}$-complete with respect to the topology $\tau'$.
Thus, $h:(2^\Nat,\tau')\to Y$ is a continuous reduction from a $\boldsymbol{\Gamma}$-complete set $A$ to the set $Z$ and therefore $Z$ is $\boldsymbol{\Gamma}$-hard.
\end{proof}

\begin{proof}[Proof of Theorem~\ref{thm:main1Part2}~\ref{it:kIsOneComplete} and \ref{it:kIsNotOneComplete}] Let $\beta$ and $n$ be as in the assumptions. Pick $k\in\Nat$, put $K:=[0,\omega^{\beta+n}\cdot k]$ and denote $\boldsymbol{\Gamma}:=\boldsymbol{\Pi}^0_{\beta+2n+1}$ if $k=1$ and $\boldsymbol{\Gamma}:=D_2(\boldsymbol{\Pi}^0_{\beta+2n+1})$ if $k\geq 2$. By Theorem~\ref{thm:upperBoundFinal}, the isometry class of $\C(K)$ in $\N$ belongs to the class $\boldsymbol{\Gamma}$, no matter whether $\N=\PP_\infty$ or $\N=\B$. In order to see it is also $\boldsymbol{\Gamma}$-hard, we shall consider several cases.

First, we observe that $\isomtrclass[\PP_\infty]{\C(K)}$ is $\boldsymbol{\Gamma}$-hard by Theorem~\ref{thm:main2Part2} and Corollary~\ref{cor:Ck}. Hence, it remains to deal with the case $\N=\B$.

Assume $\beta$ is a limit ordinal. Then $\boldsymbol{\Gamma}$ is not a finite Borel class. By~\cite[Proposition~3.6]{CDDK1}, there is an $F_\sigma$-measurable map $\Phi\colon\P_\infty\to\B$ such that $\Phi^{-1}\big(\isomtrclass[\mathcal{\B}]{\C(K)}\big)=\isomtrclass[\mathcal{\P_\infty}]{\C(K)}$. Hence, using Lemma~\ref{lem:PSameAsB} and the fact that $\isomtrclass[\mathcal{\P_\infty}]{\C(K)}$ is $\boldsymbol{\Gamma}$-hard by the already proven part, we obtain that the isometry class of $\C(K)$ in $\B$ is also $\boldsymbol{\Gamma}$-hard.

Finally, assume $\beta = 0$. Then by Theorem~\ref{thm:finiteOrdinalB} there exists a continuous reduction from a $\boldsymbol{\Gamma}$-hard set to $\isomtrclass[\B]{\C(K)}$ and therefore $\isomtrclass[\B]{\C(K)}$ is $\boldsymbol{\Gamma}$-hard as well.
\end{proof}

\begin{remark}
Although we do not formulate the precise statement, a result analogous to Theorem~\ref{thm:main1Part2}~\ref{it:kIsOneComplete} and \ref{it:kIsNotOneComplete} holds for $\beta\neq 0$ even for the coding $(SB(X),\tau)$ (instead of $\B$), where $\tau$ is an arbitrary admissible topology (as defined in~\cite{GodSR}).
The proof is still the same, we just apply~\cite[Theorem~3.11]{CDDK1} instead of~\cite[Proposition~3.6]{CDDK1} and \cite[Theorem~3.3]{CDDK1} instead of the fact that $\B\subseteq \PP_\infty$.
\end{remark}

Let us notice at this point that our method of proof implies in particular that Lemma~\ref{lem:szlenkComplexity} cannot be much improved. The next result may be viewed as a refinement of \cite[Corollary~4.14]{CDDK2} in the particular case $\varepsilon=2$.

\begin{cor}\label{cor:szlenkComplexity}
Let $\beta$ be either $0$ or a countable limit ordinal and let $n\in\Nat\cup\{0\}$ be such that $\beta+n>0$. Then the mapping
\[
\K(\ell_\infty)\ni F\mapsto s_{2}^{\beta+n}(F)\in \K(\ell_\infty)
\]
is not $\boldsymbol{\Pi}_{\beta+2n+1}^0$-measurable.
\end{cor}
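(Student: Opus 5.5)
Suppose, towards a contradiction, that the mapping $s_2^{\alpha}$, with $\alpha:=\beta+n$, is $\boldsymbol{\Pi}^0_{\beta+2n+1}$-measurable. Preimages of open sets under $s_2^\alpha$ are then $\boldsymbol{\Pi}^0_{\beta+2n+1}$. Preimages of closed sets are complements of preimages of open sets, so they are $\boldsymbol{\Sigma}^0_{\beta+2n+1}$. The plan is to rerun the proof of Theorem~\ref{thm:upperBoundFinal} with this extra information. We will obtain a bound for $\isomtrclass{\C(\omega^\alpha)}$ in $\N=\PP_\infty$ that is strictly better than $\boldsymbol{\Pi}^0_{\beta+2n+1}$, which contradicts the hardness part of Theorem~\ref{thm:main1Part2}\ref{it:kIsOneComplete}.

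Concretely, take the continuous map $\Omega:\P\to\K(B_{\ell_\infty},w^*)$ from \cite[Lemmas 4.7, 4.9]{CDDK2} used in that proof. By Lemma~\ref{lem:useckyUzavrene}, the set of $F$ contained in a $1$-dimensional subspace is closed. The set $\{\emptyset\}$ is clopen. Hence, under our assumption,
\[
\{\mu\setsep s_2^\alpha(\Omega(\mu))\text{ is nonempty and contained in a $1$-dimensional space}\}
\]
is a $\boldsymbol{\Sigma}^0_{\beta+2n+1}$-set. By Theorem~\ref{thm:part1Main1} with $k=1$, the isometry class $\isomtrclass[\PP_\infty]{\C(\omega^\alpha)}$ is the intersection of this set with the set given by \ref{it:CkCountable}. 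The latter is $\boldsymbol{\Pi}^0_3$ by Theorem~\ref{thm:CkcountableAndCantor}.

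Now split by cases. If $\beta+2n+1>3$, then $\boldsymbol{\Pi}^0_3\subseteq\boldsymbol{\Delta}^0_{\beta+2n+1}$. The class is therefore $\boldsymbol{\Sigma}^0_{\beta+2n+1}$, and being $\boldsymbol{\Pi}^0_{\beta+2n+1}$-complete it cannot be, a contradiction. The remaining case is $\beta=0$, $n=1$, where the index is $3$. There we only get that the class is the intersection of a $\boldsymbol{\Sigma}^0_3$ set with a $\boldsymbol{\Pi}^0_3$ set, i.e.\ $D_2(\boldsymbol{\Pi}^0_3)$, which is no contradiction. This case is the main obstacle.

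For it I would drop the \ref{it:CkCountable} factor by restricting to a family where it holds automatically. Use the continuous map $F:X_1\to\B$ from Lemma~\ref{lem:R1isok}, or Lemma~\ref{lem:Ck} composed with the reductions of Section~\ref{sec:lowBoundHom}. Every $X_{F(x)}$ is a $\C(K)$ space with $K$ countable, so on $X_1$ the class equals the preimage of the $\boldsymbol{\Sigma}^0_3$ set above under the continuous map $\Omega\circ F$. The preimage is then $\boldsymbol{\Sigma}^0_3$. On the other hand, it is $\boldsymbol{\Pi}^0_3$-hard by Proposition~\ref{prop:R1step}\ref{it:K1sets+1}, which is impossible. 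The same restriction works uniformly for all $\alpha$, using Corollary~\ref{cor:contReduction} together with Lemma~\ref{lem:Ck}. So I would present the argument in this restricted form throughout, which avoids the case split altogether.

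One detail needs checking: the bijection provided by $\Omega$ transports Szlenk derivatives and finite-dimensionality correctly. This was already used in the proof of Theorem~\ref{thm:upperBoundFinal}.
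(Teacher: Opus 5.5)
Your proof is correct. In the generic case $\beta+2n+1>3$ it is exactly the paper's argument: rerun the proof of Theorem~\ref{thm:upperBoundFinal} with the assumed measurability, so that $\isomtrclass{\C(\omega^{\beta+n})}$ becomes $\boldsymbol{\Sigma}^0_{\beta+2n+1}$, which contradicts completeness.

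The difference is in the exceptional case $\beta=0$, $n=1$. The paper handles it by a bootstrap. If $s_2^1$ were $\boldsymbol{\Pi}^0_3$-measurable, then $s_2^2=s_2^1\circ s_2^1$ would be $\boldsymbol{\Pi}^0_4$-measurable, hence $\boldsymbol{\Pi}^0_5$-measurable. This contradicts the already settled case $\beta=0$, $n=2$.

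You instead restrict to the codes produced by the lower-bound constructions: $F$ from Lemma~\ref{lem:R1isok}, or $\rho\circ H$ from Lemma~\ref{lem:Ck} and Corollary~\ref{cor:contReduction}. On these codes \ref{it:CkCountable} holds automatically, so the $\boldsymbol{\Pi}^0_3$ factor disappears. The preimage of the closed set $\{G\neq\emptyset,\ G \text{ in a } 1\text{-dimensional space}\}$ under $s_2^1\circ\Omega\circ F$ is then exactly $F^{-1}(\isomtrclass{\C(\omega)})=\{x\setsep K_x\sim[0,\omega]\}$. This set is $\boldsymbol{\Pi}^0_3$-hard by Proposition~\ref{prop:R1step}\ref{it:K1sets+1}, and a $\boldsymbol{\Pi}^0_3$-hard set cannot be $\boldsymbol{\Sigma}^0_3$, so the argument is valid. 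The uniform version for every $\alpha=\beta+n$ via Corollary~\ref{cor:contReduction} also works, since the indices there cover all $\alpha\geq 1$.

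Comparing the two routes:
\begin{itemize}
\item The paper's route is shorter if Theorem~\ref{thm:main1Part2} is used as a black box. However, it needs the composition estimate for Borel-measurable maps and the next case $n=2$.
\item Your route is uniform in $\alpha$ and avoids the case split.
\item Your route does not need the $\boldsymbol{\Pi}^0_3$ upper bound from Theorem~\ref{thm:CkcountableAndCantor}. It does not even need the Part~I characterization, since on this family Corollary~\ref{cor:SzlenkDerivative2} alone identifies when $s_2^\alpha(B_{X^*})$ is nonempty and one-dimensional.
\item Your route shows slightly more: measurability already fails on the sets $\Omega(\mu)$ that code duals of $\C(K)$ with $K$ countable.
\end{itemize}
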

\begin{proof}Suppose first that $\beta+n\ge 2$. 
Then, if $s_{2}^{\beta+n}$ was $\boldsymbol{\Pi}_{\beta+2n+1}^0$-measurable,
we would obtain in the proof of Theorem~\ref{thm:upperBoundFinal} that $\isomtrclass{\C(\omega^{\beta+n})}$
is a $\boldsymbol{\Sigma}^0_{\beta+2n+1}$-set, which would contradict Theorem~\ref{thm:main1Part2}.

Now suppose that $\beta=0$ and $n=1$.
If $s_{2}^{1}$ was $\boldsymbol{\Pi}_{3}^0$-measurable, then $s_{2}^{2}=s_{2}^{1}\circ s_{2}^{1}$ would be $\boldsymbol{\Pi}_{4}^0$-measurable, a contradiction with what we already proved.
\end{proof}

The continuous reductions constructed in the previous sections have a well-controlled behavior. In particular, the following result is an immediate corollary of Theorem~\ref{thm:countableOrdinal} and Lemma~\ref{lem:Ck}.

\begin{cor}\label{cor:limitOrdinal}
    For any countable limit ordinal $\lambda$ and any $\boldsymbol{\Pi}^0_\lambda$-hard set $A\subseteq \Nat^\Nat$ there exists a continuous mapping $H:\Nat^\Nat\to \P_\infty$ such that 
    \begin{itemize}
        \item $H$ is a continuous reduction of $A$ to $\isomtrclass{\C(\omega^\lambda)}$,
        \item $X_{H(x)}$ is linearly isometric to $\C(K)$ with $K$ being a compact subset of $[0,\omega^\lambda]$ for any $x\in \Nat^\Nat$.
    \end{itemize}
\end{cor}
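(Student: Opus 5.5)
The plan is to compose the two earlier constructions. Given a countable limit ordinal $\lambda$ and a $\boldsymbol{\Pi}^0_\lambda$-hard set $A\subseteq\Nat^\Nat$, I would produce a reduction of $A$ into $\K(2^\Nat)$ via Theorem~\ref{thm:countableOrdinal}, and then push it into $\P_\infty$ via the map $\rho$ of Lemma~\ref{lem:Ck}. The one subtlety is that Theorem~\ref{thm:countableOrdinal} is stated for $B\in\boldsymbol{\Pi}^0_\lambda(\Nat^\Nat)$, i.e.\ for sets \emph{in} the class, while here $A$ is only assumed hard.

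So first I would check whether $A$ itself is $\boldsymbol{\Pi}^0_\lambda$. If it is (the intended reading, $A$ complete), I apply Theorem~\ref{thm:countableOrdinal} directly to $B:=A$. This gives a continuous $H_0:\Nat^\Nat\to\K(2^\Nat)$ with $H_0^{-1}(\homeoclass{[0,\omega^\lambda]})=A$. Moreover, every $H_0(x)$ is an infinite compact set homeomorphic to a subset of $[0,\omega^\lambda]$.

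If $A$ is merely hard and not in the class, no reduction into a $\boldsymbol{\Pi}^0_\lambda$ set can exist, because continuous preimages of $\boldsymbol{\Pi}^0_\lambda$ sets are $\boldsymbol{\Pi}^0_\lambda$, and $\isomtrclass{\C(\omega^\lambda)}$ is $\boldsymbol{\Pi}^0_\lambda$ by Theorem~\ref{thm:upperBoundFinal}. So I read the hypothesis as $A\in\boldsymbol{\Pi}^0_\lambda$, which is also what Theorem~\ref{thm:countableOrdinal} provides.

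Next, let $\rho:\K(2^\Nat)\to\P$ be the continuous map from Lemma~\ref{lem:Ck}, which satisfies $X_{\rho(L)}\equiv\C(L)$. I set $H:=\rho\circ H_0$, which is continuous. Since each $H_0(x)$ is infinite, $\C(H_0(x))$ is infinite-dimensional, so $H$ takes values in $\P_\infty$. Also $X_{H(x)}\equiv\C(K)$ with $K=H_0(x)$ homeomorphic to a compact subset of $[0,\omega^\lambda]$; after identifying $K$ with that homeomorphic copy, this gives the second bullet.

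It remains to verify the reduction property:
\[
H(x)\in\isomtrclass{\C(\omega^\lambda)}\iff \C(H_0(x))\equiv\C(\omega^\lambda)\iff H_0(x)\sim[0,\omega^\lambda]\iff x\in A.
\]
The middle equivalence is the Banach--Stone theorem.

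The only step needing care is the one already isolated: making sure the hypothesis matches Theorem~\ref{thm:countableOrdinal}, namely that $A$ lies in $\boldsymbol{\Pi}^0_\lambda$ and not merely that it is hard. The remaining steps are direct substitutions.
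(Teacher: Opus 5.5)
Your argument is the paper's own: the corollary is obtained by composing the reduction from Theorem~\ref{thm:countableOrdinal} with the map $\rho$ of Lemma~\ref{lem:Ck}. The infiniteness of each $H_0(x)$ gives values in $\P_\infty$, and Banach--Stone gives the reduction property. You are also right that the hypothesis has to be read as $A\in\boldsymbol{\Pi}^0_\lambda(\Nat^\Nat)$ (e.g.\ $A$ complete), since that is what Theorem~\ref{thm:countableOrdinal} requires.

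However, your justification for that reading is wrong. Theorem~\ref{thm:upperBoundFinal} with $\beta=\lambda$, $n=0$ only places $\isomtrclass{\C(\omega^\lambda)}$ in $\boldsymbol{\Pi}^0_{\lambda+1}$. By Theorem~\ref{thm:main1Part2} it is in fact $\boldsymbol{\Pi}^0_{\lambda+1}$-complete, hence not $\boldsymbol{\Pi}^0_\lambda$. So merely hard sets that are not $\boldsymbol{\Pi}^0_\lambda$ (e.g.\ $\boldsymbol{\Pi}^0_{\lambda+1}$-complete ones) certainly do reduce continuously to the isometry class.

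What actually rules them out is the second bullet. For an infinite compact $K\subseteq[0,\omega^\lambda]$ one has $\C(K)\equiv\C(\omega^\lambda)$ iff $D^\lambda(K)\neq\emptyset$. By Corollary~\ref{cor:SzlenkDerivative2} and compactness, this holds iff $s_2^\gamma(B_{\C(K)^*})\neq\emptyset$ for all $\gamma<\lambda$. Via Lemma~\ref{lem:szlenkComplexity} and the map $\Omega$ from the proof of Theorem~\ref{thm:upperBoundFinal}, this condition is a countable intersection of sets of Borel class below $\lambda$. Hence any $H$ satisfying the second bullet pulls the isometry class back to a $\boldsymbol{\Pi}^0_\lambda$ set.
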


In what follows we shall be using the fact that whenever $(*)$ is a property of Banach spaces preserved by linear isometries and whenever $\boldsymbol{\Gamma}$ is one of the classes $\boldsymbol{\Sigma}^0_\alpha$, $\boldsymbol{\Pi}^0_\alpha$, $D_2(\boldsymbol{\Pi}^0_\alpha)$ for some $\alpha < \omega_1$ with $\alpha\geq \omega$, then $\{\mu\in \P_\infty\setsep X_\mu\text{ has }(*)\}$ is $\boldsymbol{\Gamma}$-hard iff $\{\mu\in \B\setsep X_\mu\text{ has }(*)\}$ is $\boldsymbol{\Gamma}$-hard. This follows from Lemma~\ref{lem:PSameAsB} together with the fact that by~\cite[Proposition~3.6]{CDDK1}, there is an $F_\sigma$-measurable map $\Phi\colon\P_\infty\to\B$ such that $X_{\Phi(\mu)}\equiv X_\mu$ for every $\mu\in\PP_\infty$.  As an immediate corollary to Corollary~\ref{cor:limitOrdinal} we therefore obtain the following.

\begin{cor}\label{cor:limitOrdinalIsomorphProperty}If $\lambda$ is a countable limit ordinal and $(P)$ is a property of Banach spaces preserved by linear isometries, which holds for the Banach space $ \C(\omega^{\lambda}) $, but not for $\C(\alpha)$ with $\alpha< \omega^{\lambda} $, then the set 
    \[
    \{\mu\in \N\setsep X_\mu\text{ has }(P)\}
    \]
    is $\boldsymbol{\Pi}^0_\lambda$-hard for both $\N=\PP_\infty$ and $\N=\B$.
\end{cor}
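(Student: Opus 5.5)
The plan is to first settle the case $\N=\PP_\infty$ and then transfer it to $\B$. Let $A\subseteq\Nat^\Nat$ be $\boldsymbol{\Pi}^0_\lambda$-hard, and fix the mapping $H:\Nat^\Nat\to\P_\infty$ from Corollary~\ref{cor:limitOrdinal}. It is a continuous reduction of $A$ to $\isomtrclass[\PP_\infty]{\C(\omega^\lambda)}$. Moreover, for every $x$ the space $X_{H(x)}$ is isometric to $\C(K_x)$ for some compact $K_x\subseteq[0,\omega^\lambda]$.

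The key step is to show that
\[
H^{-1}\big(\{\mu\in\PP_\infty\setsep X_\mu\text{ has }(P)\}\big)=A.
\]
If $x\in A$, then $X_{H(x)}\equiv\C(\omega^\lambda)$. This space has $(P)$, and since $(P)$ is preserved by linear isometries, $X_{H(x)}$ has $(P)$ as well.

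If $x\notin A$, then $K_x$ is a compact subset of $[0,\omega^\lambda]$ that is not homeomorphic to $[0,\omega^\lambda]$. The construction behind Corollary~\ref{cor:limitOrdinal} (via Theorem~\ref{thm:countableOrdinal}) guarantees that $K_x$ is infinite, and it is countable. By Mazurkiewicz--Sierpi\'nski, $K_x\sim[0,\omega^\gamma\cdot k]$ for some $\gamma$ and $k$. Here $\gamma\le\lambda$, because $K_x\subseteq[0,\omega^\lambda]$ gives $\height(K_x)\le\lambda+1$.

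If $\gamma=\lambda$, then $D^\lambda(K_x)\subseteq D^\lambda([0,\omega^\lambda])=\{\omega^\lambda\}$, which forces $k=1$. That would make $K_x\sim[0,\omega^\lambda]$, which is excluded. Hence $\gamma<\lambda$, so $\omega^\gamma\cdot k<\omega^\lambda$ because $\lambda$ is a limit ordinal. Thus $X_{H(x)}\equiv\C(\alpha)$ with $\alpha<\omega^\lambda$, and by hypothesis this space fails $(P)$.

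This establishes the displayed identity. Since $H$ is continuous and $A$ is $\boldsymbol{\Pi}^0_\lambda$-hard, the set $\{\mu\in\PP_\infty\setsep X_\mu\text{ has }(P)\}$ is $\boldsymbol{\Pi}^0_\lambda$-hard.

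For $\N=\B$, I would use the remark just before the statement. Lemma~\ref{lem:PSameAsB} applies because $\lambda\ge\omega$, and it is combined with the $F_\sigma$-measurable map $\Phi\colon\P_\infty\to\B$ of \cite[Proposition~3.6]{CDDK1}, which satisfies $X_{\Phi(\mu)}\equiv X_\mu$. Together these show that the preimage under $\Phi$ of the $\B$-set equals the $\PP_\infty$-set. The latter is $\boldsymbol{\Gamma}$-hard, so the $\B$-set is $\boldsymbol{\Pi}^0_\lambda$-hard too.

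I expect the main obstacle to be the non-$A$ case, namely verifying that every value $K_x$ really yields some $\C(\alpha)$ with $\alpha<\omega^\lambda$. This requires both the infiniteness of $K_x$ and the ordinal bookkeeping at height $\lambda$. Both ingredients are provided by Theorem~\ref{thm:countableOrdinal} together with the observation $|D^\lambda(K_x)|\le1$.
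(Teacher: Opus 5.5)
Your proof is correct and follows the paper's route. The paper presents the statement as an immediate consequence of Corollary~\ref{cor:limitOrdinal}, leaving implicit the case analysis you spell out (infinite $K_x\subseteq[0,\omega^\lambda]$ with $K_x\not\sim[0,\omega^\lambda]$ gives $\C(K_x)\equiv\C(\omega^\gamma\cdot k)$ with $\gamma<\lambda$). It then transfers hardness from $\PP_\infty$ to $\B$ via the $F_\sigma$-measurable map $\Phi$ and Lemma~\ref{lem:PSameAsB}, exactly as you do. (Corollary~\ref{cor:limitOrdinal} rests on Theorem~\ref{thm:countableOrdinal}, which applies only to sets in $\boldsymbol{\Pi}^0_\lambda$, so it is safest to take $A$ to be $\boldsymbol{\Pi}^0_\lambda$-complete rather than merely hard.)
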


The following consequence in particular answers in negative \cite[Question 6]{CDDK2}. It shows that the upper estimates on classes of Banach spaces involving the Szlenk index obtained in \cite{CDDK2} are actually quite close to the lower estimates we are able to obtain using the methods developed above.

\begin{cor}\label{cor:szlenk}Let $\N\in \{\B,\P_\infty\}$.
    \begin{itemize}
        \item The set $\{\mu\in \N\setsep X_\mu \text{ has summable Szlenk index}\}$ is $\boldsymbol{\Sigma}^0_\omega$-hard.\\
        (And it is a $\boldsymbol{\Sigma}^0_{\omega+2}$-set by \cite[Proposition 5.8]{CDDK2}.)
        \item For any $\alpha\in [1,\omega_1)$, the set $\{\mu\in \N\setsep Sz(X_\mu)\leq \omega^\alpha\}$ is $\boldsymbol{\Sigma}^0_{\omega^\alpha}$-hard.\\
        (And it is a $\boldsymbol{\Pi}^0_{\omega^\alpha+1}$-set by \cite[Theorem 5.7]{CDDK2}.)
\end{itemize}
\end{cor}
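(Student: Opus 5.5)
The plan is to derive both items from Corollary~\ref{cor:limitOrdinal}, applied to \emph{complements}. The key input is that, for compact $K\subseteq[0,\omega^\lambda]$, the space $\C(K)$ either is isometric to $\C(\omega^\lambda)$ or has strictly smaller height. In the latter case the Szlenk-type property in question holds, and in the former it fails.

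Let $\Gamma$ be $\boldsymbol{\Sigma}^0_\omega$ (first item) or $\boldsymbol{\Sigma}^0_{\omega^\alpha}$ (second item), and put $\lambda:=\omega$, resp.\ $\lambda:=\omega^\alpha$; in both cases $\lambda$ is a countable limit ordinal. Let $Z$ be a zero-dimensional Polish space and $A\in\Gamma(Z)$. Then $Z\setminus A$ is $\boldsymbol{\Pi}^0_\lambda$. Fix a $\boldsymbol{\Pi}^0_\lambda$-complete set $C\subseteq\Nat^\Nat$ and a continuous reduction $g$ of $Z\setminus A$ to $C$. Corollary~\ref{cor:limitOrdinal} gives a continuous $H:\Nat^\Nat\to\P_\infty$ reducing $C$ to $\isomtrclass{\C(\omega^\lambda)}$, with $X_{H(x)}\equiv\C(K_x)$ for some compact $K_x\subseteq[0,\omega^\lambda]$. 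Then $x':=g(z)$ satisfies $z\in A$ iff $K_{x'}\not\sim[0,\omega^\lambda]$.

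Since $D^\lambda(K_{x'})\subseteq D^\lambda([0,\omega^\lambda])=\{\omega^\lambda\}$, either $K_{x'}\sim[0,\omega^\lambda]$, or $\height(K_{x'})=\gamma+1$ with $\gamma<\lambda$. In the latter case $K_{x'}$ is infinite, because $H(x')\in\P_\infty$, so $K_{x'}\sim[0,\omega^\gamma k]$ for some $k\in\Nat$. It therefore suffices to check the following, using the classical computation of Szlenk indices of $\C(K)$ (Samuel; see \cite{CAUSEY_C(K)_index}):
\begin{enumerate}[label=(\alph*)]
\item for $\lambda=\omega$: $\C(\omega^\gamma k)$ with $\gamma<\omega$ is isomorphic to $c_0$, hence has summable Szlenk index, while $\C(\omega^\omega)$ has $Sz=\omega^2>\omega$, so its index is not summable;
\item for $\lambda=\omega^\alpha$: if $\omega^\delta\le\gamma<\omega^{\delta+1}$ then $Sz(\C(\omega^\gamma k))=\omega^{\delta+1}$. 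For $\gamma<\omega^\alpha$ we have $\delta<\alpha$, so $Sz\le\omega^\alpha$, whereas $Sz(\C(\omega^{\omega^\alpha}))=\omega^{\alpha+1}>\omega^\alpha$.
\end{enumerate}
The step I expect to need the most care is (a): that summability of the Szlenk index is an isomorphic invariant, that it holds for $c_0$, and that it fails whenever $Sz>\omega$. I would cite the standard facts from \cite{CDDK2} and the literature for this rather than reprove them.

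Given (a) and (b), $H\circ g$ is a continuous reduction of $A$ to the set in question inside $\P_\infty$, which proves hardness for $\N=\P_\infty$. For $\N=\B$, note that $\lambda\ge\omega$ and the property is invariant under linear isometries. Lemma~\ref{lem:PSameAsB} therefore applies to the $F_\sigma$-measurable map $\Phi:\P_\infty\to\B$ of \cite[Proposition~3.6]{CDDK1}, and transfers the hardness to $\B$. This is exactly the argument of Corollary~\ref{cor:limitOrdinalIsomorphProperty}, run for the complementary property.
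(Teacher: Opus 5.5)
Your proposal is correct and follows the paper's proof. The paper applies Corollary~\ref{cor:limitOrdinalIsomorphProperty} to the complementary properties (non-summable Szlenk index, resp.\ $Sz>\omega^\alpha$) with $\lambda=\omega$, resp.\ $\lambda=\omega^\alpha$, and combines this with the same well-known Szlenk-index facts for compact $K\subseteq[0,\omega^\lambda]$; you simply unpack that corollary explicitly via Corollary~\ref{cor:limitOrdinal}, the Cantor--Bendixson dichotomy and Lemma~\ref{lem:PSameAsB}.
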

\begin{proof}We apply Corollary~\ref{cor:limitOrdinalIsomorphProperty} together with the following.
\begin{itemize}
    \item For any compact $K\subseteq [0,\omega^\omega]$, it is well-known that $\C(K)$ has a summable Szlenk index if and only if $K\not\sim [0,\omega^\omega]$.\\[3pt]
    (If $K\subseteq [0,\omega^\omega]$ and $K\not\sim [0,\omega^\omega]$, then by the Bessaga-Pe\l czy\'nski classification $\C(K)$ is isomorphic to $c_0$ and therefore it has summable Szlenk index, see e.g. \cite[Theorem 5.6]{KGL01}; on the other hand if $\C(K)$ has a summable Szlenk index then neccesarily $Sz(\C(K))\leq \omega$ but $Sz(\C(\omega^\omega)) = \omega^2$, see e.g. \cite[Theorem 2.59]{BiortogBook}.)
    \item For any compact $K\subseteq [0,\omega^{\omega^\alpha}]$, it is well-known that $Sz(\C(K))\leq \omega^\alpha$ if and only if $K\not\sim [0,\omega^{\omega^\alpha}]$.\\[3pt]
    (Indeed, the values of Szlenk indices for separable $\C(K)$ spaces are well-known, see e.g. \cite[Section 2.6]{BiortogBook}.)
\end{itemize}
\end{proof}

We note that in a similar way one could obtain consequences concerning the dentability index, see \cite{HLP09} where this index is computed for $\C(\alpha)$ spaces with $\alpha$ countable.

A variant of Corollary~\ref{cor:limitOrdinalIsomorphProperty} for successor ordinals is the following.

\begin{cor}\label{cor:succConsequences}
Let $\beta$ be either $0$
or a countable limit ordinal and let $n\in\Nat\cup\{0\}$ be such that $ \beta + n > 0 $. Assume $(P)$ is a property of Banach spaces preserved by linear isometries, which holds for the Banach space $\C(\omega^{\beta + n + 1})$, but not for $\C(\omega^{\beta+n}\cdot k)$ with $k\in\Nat$, then the set
\[
    \{\mu\in \N\setsep X_\mu\text{ has }(P)\}
\]
is $\boldsymbol{\Pi}^0_{\beta+2n+2}$-hard for both $\N=\PP_\infty$ and $\N=\B$.
\end{cor}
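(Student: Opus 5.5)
The plan is to mirror the proof of Corollary~\ref{cor:limitOrdinalIsomorphProperty}, using the controlled reductions produced earlier for successor ordinals. The key input is a continuous map from a zero-dimensional Polish space into $\N$ with the following two features. Every value must code a space isometric to either $\C(\omega^{\beta+n}\cdot k)$ for some $k\in\Nat$ or $\C(\omega^{\beta+n+1})$. Moreover, the map must be a continuous reduction of some $\boldsymbol{\Pi}^0_{\beta+2n+2}$-hard set to $\isomtrclass{\C(\omega^{\beta+n+1})}$. Given such a map $H$, the hypothesis on $(P)$ gives, for every point $x$, that $X_{H(x)}$ has $(P)$ iff $X_{H(x)}\equiv\C(\omega^{\beta+n+1})$. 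Hence
\[
H^{-1}(\{\mu\in\N\setsep X_\mu\text{ has }(P)\}) = H^{-1}(\isomtrclass{\C(\omega^{\beta+n+1})}),
\]
which is $\boldsymbol{\Pi}^0_{\beta+2n+2}$-hard, so the target set is hard as well.

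To obtain $H$ for $\N=\PP_\infty$, I would combine Corollary~\ref{cor:contReduction} with Lemma~\ref{lem:Ck} (applied with $X=2^\Nat$). The indexing has to be matched. If $\beta$ is a limit ordinal, then $1+\beta=\beta$ and $2+\beta=\beta$, so Corollary~\ref{cor:contReduction} with the same $n$ gives items \ref{it:image} and \ref{it:next} exactly as needed. If $\beta=0$, then $n\geq1$, and I apply Corollary~\ref{cor:contReduction} with $n-1$ in place of $n$. This yields values in $\bigcup_k\homeoclass{[0,\omega^{n}\cdot k]}\cup\homeoclass{[0,\omega^{n+1}]}$ and a reduction of a $\boldsymbol{\Pi}^0_{2+2(n-1)+2}=\boldsymbol{\Pi}^0_{2n+2}$-hard set to $\homeoclass{[0,\omega^{n+1}]}$, as required. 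Composing with $\rho$ from Lemma~\ref{lem:Ck} gives a continuous map into $\PP$. Its values land in $\PP_\infty$ because all of the $K$ involved are infinite, and by Banach--Stone its preimage of an isometry class equals the preimage of the corresponding homeomorphism class.

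For $\N=\B$ I would split into the same two cases. If $\beta=0$, Theorem~\ref{thm:finiteOrdinalB} (the first and fourth bullets) directly provides a continuous $F_n\colon X_n\to\B$ with exactly the two properties above, and the argument of the first paragraph applies verbatim. If $\beta$ is a limit ordinal, then $\beta+2n+2\ge\omega$. In that case the remark preceding Corollary~\ref{cor:limitOrdinalIsomorphProperty} transfers $\boldsymbol{\Pi}^0_{\beta+2n+2}$-hardness from $\PP_\infty$ to $\B$. That remark rests on Lemma~\ref{lem:PSameAsB} applied to the $F_\sigma$-measurable map $\Phi\colon\PP_\infty\to\B$ from \cite[Proposition~3.6]{CDDK1}, which preserves isometry type, so $\Phi^{-1}$ of the $(P)$-set in $\B$ is the $(P)$-set in $\PP_\infty$.

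The only genuinely delicate point is the index bookkeeping in Corollary~\ref{cor:contReduction}, where the conventions $1+\beta$ and $2+\beta$ collapse differently for finite and infinite $\beta$. I would check these two cases carefully. Everything else is a direct assembly of already established results.
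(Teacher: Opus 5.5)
Your proposal is correct and follows the paper's proof. For $\beta$ a limit ordinal you use Corollary~\ref{cor:contReduction} with Lemma~\ref{lem:Ck} for $\N=\PP_\infty$ and Lemma~\ref{lem:PSameAsB} for $\N=\B$, and for $\beta=0$ you use Theorem~\ref{thm:finiteOrdinalB} for $\N=\B$, all as the paper does. The one harmless difference is the case $\N=\PP_\infty$, $\beta=0$: the paper transfers the $\B$-result via the continuous inclusion $\B\subseteq\PP_\infty$, while you derive it from Corollary~\ref{cor:contReduction} with $n-1$ in place of $n$, and your index check there ($2+2(n-1)+2=2n+2$, exponents $n$ and $n+1$) is right.
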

\begin{proof}If $\beta$ is a limit ordinal, for $\N=\PP_\infty$ this follows from Corollary~\ref{cor:contReduction}
and Lemma~\ref{lem:Ck}, and for $\N=\B$ we then use Lemma~\ref{lem:PSameAsB} as mentioned above.

If $\beta=0$, for $\N=\B$ this follows from Theorem~\ref{thm:finiteOrdinalB} and for $\N=\PP_\infty$ we obtain it directly from the fact that $\B\subseteq \PP_\infty$.
\end{proof}

An example of a property $(P)$ to which the above result applies is provided by a recent result concerning measures of weak non-compactness. Namely, it follows from the recent thesis \cite{Smerciak} that the property ``for every bounded set $A\subseteq X$ we have $\omega(A) \leq 2n\cdot \mathrm{wk}(A)$'' (where $\omega(A)$ and $\mathrm{wk}(A)$ are measures of weak non-compactness, we refer to \cite{Smerciak} for more details) holds in $\C(\omega^n\cdot k)$ with $k\in\Nat$ but not in $\C(\omega^{n+1})$.

In relation to our Theorem~\ref{thm:main3Part2} let  us note that the following is not known to us.

\begin{ques}\label{que:CKBorel}Is it true that the set 
\[
\{\mu\in\B\setsep X_\mu\text{ is isometric to a }\C(K)\text{ space for some metric compact $K$}\}
\]
is Borel?
\end{ques}
We remark that several characterizations of Banach spaces that are isometric to a $\mathcal{C}(K)$ space are known (see, for instance, \cite{LaceyBook}). However, many of these rely on additional structure -- such as that of a lattice or a Banach algebra -- while those that avoid such extra assumptions do not appear to yield a Borel characterization.

{
\makeatletter
\let\addcontentsline\@gobblethree
\bibliography{refDst}\bibliographystyle{acm}
} 
\end{document}